\definecolor{webgreen}{rgb}{0,.4,0}
\definecolor{webbrown}{rgb}{.4,0,0}
\newtheorem{Thm}{Theorem}[section]
\newtheorem{statement}{Theorem}
\newtheorem{lemma}[Thm]{Lemma}
\newtheorem{Prop}[Thm]{Proposition}
\newtheorem{Cor}[Thm]{Corollary}
\theoremstyle{definition}
\theoremstyle{remark}
\newtheorem{remark}[Thm]{Remark}
\newtheorem{Exp}[Thm]{Example}
\newtheorem{convention}[Thm]{Convention}
\numberwithin{equation}{section}
\newcommand\mybar{\kern1pt\rule[-0.6\dp\strutbox]{0.5pt}{0.9\baselineskip}\kern1pt}
\newcommand{\comment}[1]{}
  \def\stab{\operatorname{Stab }}
 \def\Q{\mathbb{Q}}\def\Z{\mathbb{Z}}\def\RR{\mathbb{RR}}
\def\le{\leqslant} \def\ge{\geqslant}
 \def\CC{\mathfrak{C}}
  \def\RR{\mathbb{R}}
\def\F{\mathbb F}
\def\CC{\mathbb{C}}
 \def\a{\mathfrak a}
\def\e{\varepsilon} \def\DD{\Delta} \def\G{\Gamma}\def\om{\omega}
\def\a{\alpha}\def\b{\beta}\def\g{\gamma}
\def\+{\,+\,} \def\m{\,-\,} \def\={\;=\;}
\def\be{\begin{equation}}  \def\ee{\end{equation}}
\def\SS{\mathcal{S}}
\def\res{\operatorname*{Res}}
\def\aa{\alpha}\def\ss{\sigma}
\def\th{\theta}
\def\dd{\delta}
\def\la{\langle} \def\ra{\rangle}
\def\xx{ \mathrm{\bf x} }
\def\bs{ \mathrm{\bf s} }
\def\ddd{ {\underline{\delta}} }
\def\ZZ{\mathcal{Z}}
\def\NN{\mathcal{N}}
\def\lam{\lambda}
\def\um{\underline{m}}
\def\ddd{{\underline{\delta}}}
\def\supp{\operatorname{Supp}}
\def\new{\mathrm{new}}
\def\evu{{x_i=1/u}}
\def\ux{\underline{x}}
\def\us{\underline{s}}
\def\sss{\scriptscriptstyle}
\def\CG{\mybar^{\raisebox{3pt}{$\scriptscriptstyle \rm CG$}}\hspace{-0.7em}}
\def\qres#1#2{\left(\frac{#1}{#2}\right)}
\address{\parbox{\linewidth}{School of Mathematics, 127 Vincent Hall,
 206 Church St. \!SE, Minneapolis, MN 55455, USA;\\
 Institute of Mathematics ``Simion Stoilow" of the Romanian Academy\\
Calea Grivi\c tei 21, Bucharest 010702, Romania}}
\email{cad@umn.edu}
\address{Department of Mathematics, University of Pittsburgh, Pittsburgh, PA 15260}
\email{bion@pitt.edu}
\address{ \parbox{\linewidth}{Institute of Mathematics ``Simion Stoilow" of the Romanian Academy\\
Calea Grivi\c tei 21, Bucharest 010702, Romania}}
\email{vpasol@gmail.com}
\address{ \parbox{\linewidth}{Institute of Mathematics ``Simion Stoilow" of the Romanian Academy\\
Calea Grivi\c tei 21, Bucharest 010702,  Romania}}
\email{aapopa@gmail.com}
\begin{document}
\title[]{Residues of quadratic Weyl group multiple Dirichlet series}
\author{Adrian Diaconu, Bogdan Ion, Vicen\c tiu Pa\c sol, Alexandru A. Popa}
\date{\today}%
\subjclass[2010]{}
\begin{abstract}We give explicit formulas for the 
residue of the Chinta-Gunnells average attached to a finite irreducible root system, 
at the polar divisor corresponding to a simple short root.  The formula describes the 
residue in terms of the average attached to the root subsystem orthogonal to 
the relevant simple root. As a consequence, we obtain similar formulas for the residues 
of quadratic Weyl group multiple Dirichlet series over the rational function field
and over the Gaussian field. The residue formula also allows us to obtain a new expression 
for the Chinta-Gunnells average of a finite irreducible root system, as an 
average over a maximal parabolic subgroup of a rational function that has 
an explicit description reflecting the combinatorics of the root system.
\end{abstract}

\maketitle


\section{Introduction}

\subsection{} The genesis of the concept of Weyl group multiple Dirichlet series (WMDS) can be traced back to the work of Goldfeld and Hoffstein \cite{GH85}, where (using present terminology) a quadratic double Dirichlet series over $\mathbb{Q}$, of Cartan type $A_2$, was constructed as the Mellin transform of an Eisenstein series of  half-integral weight for the congruence subgroup $\G_0(4)$;  the study of the same object, in an equivalent form, was previously proposed by Siegel \cite{S}. Other examples, obtained as integral transforms of Eisenstein series  (and other automorphic objects) on the metaplectic double covers of ${\rm GL}_3$ and ${\rm GSp(4)}$,  were investigated; for the relevant results, see the survey \cites{BFH96} and the references therein.  The main application at the time was to obtain non-vanishing results for quadratic twists of central values of automorphic $L$-functions and their derivatives. In higher rank, such constructions, based on integral transforms of Eisenstein series on covers of reductive groups are difficult to obtain and analyze.  These initial investigations revealed the structural properties of such multiple Dirichlet series, and it has gradually emerged  \cites{DGH, BFH96, BFH04, FF} that these properties can be used to define a class of multiple Dirichlet series without making use of integral transforms of automorphic forms.
The general principles used to construct and analyze multiple Dirichlet series associated to finite reduced root systems were laid out in \cites{BBCFH, BBFH07, CG, CG1}.

\subsection{}\label{sec: Z-global} Following \cites{CG, CG1},  a coarse description of the class of \emph{finite} Weyl group \emph{quadratic} multiple Dirichlet series proceeds as follows. Let $\mathbb{K}$ denote a global field, let $\Phi$ be a finite (reduced) root system of rank $r$, and let $W$ denote its Weyl group.  Let $S$ be a finite set of places, which includes the set of \emph{infinite} places,
and in characteristic $0$,  the set of places dividing $2$, and large enough so  that the ring $\mathcal{O}_S$ of  $S$-integers has class number $1$. The quadratic Weyl group multiple Dirichlet series attached to the root system $\Phi$ is a series of $r$ complex variables of the form
$$
\mathcal{Z}_\Phi(s_1,\dots,s_r) = \sum_{} \frac{H(m_1,\dots,m_r)}{|m_1|^{s_1}\cdot\ldots\cdot |m_r|^{s_r}},
$$
the sum ranging over the set of $r$-tuples of non-zero integers in $\mathcal{O}_S$ modulo units. The coefficients $H(m_1,\dots, m_r)$ are required to satisfy a \emph{twisted multiplicativity} property involving the quadratic symbol,  which reflects the combinatorics of the root system $\Phi$. The twisted multiplicativity reduces the description of $H(m_1,\dots,m_r)$ to the case where all components are powers of the same prime $p$. The generating series
$$
\sum H(p^{n_1},\dots,p^{n_r}) |p|^{-n_1s_1}\cdot \ldots\cdot |p|^{-n_rs_r}
$$
are called the $p$-parts of $\mathcal{Z}_\Phi$. Chinta and Gunnells~\cite{CG} have constructed the $p$-parts through an averaging technique that uses an action of $W$ on the space of rational functions in $r$ variables.  Their construction leads to the series
$\mathcal{Z}_\Phi$ that has  meromorphic continuation to $\mathbb{C}^r$ and satisfies a group of functional equations isomorphic to $W$.

\subsection{}\label{sec: Z-local}
More precisely, let $Q$ denote the root lattice of $\Phi$, and denote by $V$ the $\mathbb{R}$-span of $\Phi$. Let $\F=\Q(u)$, with $u$ a formal parameter. The standard basis of $\F[Q]$, the $\F$-group ring of $Q$, is denoted by $\{\xx^\lambda\}_{\lambda\in Q}$; $\F(Q)$ denotes the field of fractions of $\F[Q]$, viewing the latter as the space of
Laurent polynomials in the monomials $\xx^\lam$. We fix a basis  $\Pi(\Phi)=\{\a_i\}_{1\leq i\leq r}$ of $\Phi$; denote $x_i=\xx^{\a_i}$ and treat $\xx=(x_1,\dots,x_r)$ as a multivariable. Chinta and Gunnells  defined a rational function   in $\xx=(x_1,\dots,x_r)$ which, as power series,  has the form
$$
Z_\Phi(\xx;u)=
\sum a_{\lambda}(u) \xx^\lambda,
$$
with polynomial coefficients in the extra parameter $u$. {The parameter $u$ is present in the Chinta-Gunnells action and formalizes the role played by the quadratic Gauss sum.}

The coefficients of the $p$-parts of $\mathcal{Z}_\Phi$ are (in our normalization, which is slightly different from the one in \cite{CG}) $$H(p^{n_1},\dots,p^{n_r})=a_{n_1\a_1+\cdots+n_r\a_r}(|p|^{-1/2}).$$
The function $Z_\Phi(\xx; u)$ is uniquely determined by its invariance under the Chinta-Gunnells action and the normalization $Z_\Phi(0; u)=1$. For $\mathbb{K}=\mathbb{F}_q(t)$, where $q\equiv 1\!\!\!\mod 4$ and $\mathbb{F}_q$ is the finite field with $q$ elements, we have (\cite{Fr1}*{Proposition 4.2}\footnote{In \cite{Fr1}, the WMDS $\mathcal{Z}_\Phi$ (denoted there by $\mathcal{Z}^*$) is constructed slightly differently, starting with a series $\mathcal{Z}$ with $p$-parts that correspond to the \emph{numerator} of $Z_\Phi(\xx; u)$, defined in
Convention~\ref{conv}. One can show that this construction  of  $\mathcal{Z}_\Phi$ is equivalent to the one described in \S\ref{sec: Z-global}-\ref{sec: Z-local}.}\!\!, generalizing prior observations in particular cases \cites{C,CM}),
\begin{equation}\label{eq-local-to-global}
\mathcal{Z}_\Phi(s_1,\dots,s_r)=Z_\Phi(q^{-s_1},\dots,q^{-s_r}; q^{1/2}).
\end{equation}
This is a manifestation of the same local-to-global phenomenon that classically connects the zeta function of the projective line and its Euler factors. In the case of the affine root system $D_4^{\sss (1)}$, the local-to-global theorem holds  \cite{DPP} with a correction factor that reflects the contribution of the imaginary roots. 

Aside from its role in the definition of WMDS, the Chinta-Gunnells average $Z_\Phi(\xx;u)$ has direct connections with spherical Whittaker functions on metaplectic covers of $p$-adic groups \cites{CO, McN16}, metaplectic Demazure-Lusztig operators \cite{CGP}, the combinatorial theory of crystal graphs \cites{BBF, McN11}, quantum groups and solvable lattice models in statistical mechanics \cite{BBCFG}. 
We also note that the Chinta-Gunnells action itself emerges canonically from the metaplectic representations of affine Hecke algebras \cite{SSV}.

\subsection{} Our main results give a precise description of the residues of the series $Z_\Phi(\xx,u)$, and of $\mathcal{Z}_\Phi(s_1,\dots,s_r)$ for $\mathbb{K}=\mathbb{F}_q(t)$, $q\equiv 1\!\!\!\mod 4$, and  for $\mathbb{K}=\Q(\sqrt{-1})$. Before stating them, let us introduce some notation; for the full details we refer to Section \ref{sec_CG}. The Chinta-Gunnells action  of $w\in W$ on $f\in \F(Q)$ will be denoted by $f{\CG}w$. Let $\la \cdot ,\cdot \ra$ denote the $W$-invariant inner product on $V$, normalized such that the short roots have square length $2$. If $\Phi$ is simply laced, all roots are considered to be short. The Chinta-Gunnells action depends on integers $m_\a$, $\a\in \Phi$, specified as follows: for short roots, $m_\a=2$; for long roots, $m_\a=2$ if $\Phi$ is of type $G_2$ and $m_\a=1$ otherwise. Because the long and short roots play similar roles for the root system of type $G_2$, and distinctively different roles for the other irreducible root systems, we treat the $G_2$  root system separately; we refer to  Appendix~\ref{sec_G2} for our results in this case. The statements included in this Introduction will assume that $\Phi$ is not of type $G_2$.

\subsection{}\label{sec: intro-notation}
Denote $\displaystyle\DD_{\Phi}(\xx)=\prod_{\a \in \Phi^+} (1-\xx^{m_\a \a})$ and  consider the related action $\displaystyle f\vert w = \frac{\DD_{\Phi}(\xx)}{\DD_{\Phi}(w\xx)} f\CG w $. By definition, the Chinta-Gunnells  zeta average is
\[
Z_{\Phi}(\xx;u)=\frac{\sum_{w\in W} 1|w }{\DD_{\Phi}(\xx)}.
\]
Clearly,  $Z_{\Phi}(\xx; u)=Z_\Phi(\xx; u) \CG w$, for all $w\in W$. For reducible root systems, we define  $Z_{\Psi\oplus\Psi^\prime}=Z_\Psi\cdot Z_{\Psi^\prime}$.

Fix $\a_i$ a short simple root. The function $Z_\Phi(\xx;u)$ has  a simple pole at  $x_i=1\slash u$, and we are interested
in the residue
\[
\res_{\evu}Z_\Phi(\xx; u) := \lim_{x_i\rightarrow 1\slash u} (1-u x_i) Z_\Phi(\xx;u).
\]
This can be described  in terms of the zeta average associated to the proper root sub-system $\Phi_0\subset \Phi$ that consists of the roots orthogonal on $\a_i$. The inclusion of root lattices $Q_0\subset Q$ induces canonical morphisms $\F(Q_0)\subset \F(Q)$. We regard $Z_{\Phi_0}(\xx; u)$ as an element of $\F(Q)$ in this fashion.

\begin{statement}\label{thmA}Let $\Phi$ be an irreducible root system not of type $G_2$, and let $\a_i$ be a short simple root. Then,
\begin{equation}\label{eqA}
\res_{x_i=1\slash u} Z_{\Phi}(\xx; u)=
Z_{\Phi_0}(\xx; u)\vert_{x_{i}=1\slash u} \cdot \prod_{{\substack{\a\in\Phi^+ \\ \la\a,\a_i \ra=1}}}  \frac{1}{ (1-u^2\xx^{2\a}){\vert_{x_{i}=1\slash u}}}.
\end{equation}
\end{statement}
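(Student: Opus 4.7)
My approach uses the Chinta--Gunnells invariance of $Z_\Phi$ and a reduction to the sub-root-system $\Phi_0$ orthogonal to $\alpha_i$.

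\emph{Symmetry of the residue.} For each simple reflection $s_j$ with $j \neq i$, the Chinta--Gunnells action $\CG s_j$ affects only the $x_j$-dependence of a rational function (both the Weyl part and the rational correction are functions of $x_j$ alone), so $\CG s_j$ commutes with $\res_{x_i=1/u}$. The invariance $Z_\Phi \CG s_j = Z_\Phi$ therefore descends to $R \CG s_j = R$, for $R := \res_{x_i=1/u} Z_\Phi$ and every $j \neq i$. Thus $R$ is Chinta--Gunnells invariant under the standard maximal parabolic subgroup $W_I := \langle s_j : j\neq i\rangle$.

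\emph{Reduction to $\Phi_0$ via coset decomposition.} I would decompose the averaging sum $\sum_{w \in W} 1|w$ along the left cosets of $W_0 := W_{\Phi_0}$, the pointwise stabilizer of $\alpha_i$ in $W$ (these cosets are indexed by the $W$-orbit of $\alpha_i$, i.e., the short roots of $\Phi$). Using that $|$ is a right action, the sum factors as $\sum_c (\sum_{w_0\in W_0} 1|w_0)|c$. Explicit analysis of how $W_0$ acts on $\DD_\Phi$ — noting that $W_0$ is a reflection subgroup but not in general a standard parabolic, so the action on $\DD^\perp := \DD_\Phi/\DD_{\Phi_0}$ only produces a controlled monomial correction — allows the inner sum to be related to $\DD_{\Phi_0}(\xx)\, Z_{\Phi_0}(\xx;u)$, up to a twist that must be identified through the compatibility between the $\Phi$-level and $\Phi_0$-level Chinta--Gunnells actions on $\F(Q_0)$.

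\emph{Residue extraction and main obstacle.} The pole at $x_i = 1/u$ in each summand $(\cdots)|c$ originates from factors $(1-u^2 x_i^2)^{-1}$ introduced by $\CG s_i$ in a reduced decomposition of $c$. Summing the residues across cosets and dividing by $\DD_\Phi(\xx)|_{x_i=1/u}$ should collapse to $Z_{\Phi_0}(\xx;u)|_{x_i=1/u}$ multiplied by the prescribed product, with the identity $u^2\xx^{2\alpha}|_{x_i=1/u} = \xx^{2s_i\alpha}|_{x_i=1/u}$ playing a key role in pairing denominator factors across cosets. The principal difficulty is establishing this collapse and isolating precisely the factors indexed by $\{\alpha \in \Phi^+ : \langle\alpha,\alpha_i\rangle = 1\}$: this requires a detailed combinatorial bookkeeping of the Chinta--Gunnells action along inversion sets and likely proceeds type-by-type, with separate treatment for simply-laced $\Phi$ versus the mixed-type systems $B_n, C_n, F_4$, where the short and long roots interact asymmetrically with the reflection $s_i$.
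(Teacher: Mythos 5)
Your opening step contains an error that undermines the rest of the plan. You claim that $\CG s_j$ commutes with $\res_{x_i=1/u}$ for every $j\neq i$, because $\CG s_j$ "affects only the $x_j$-dependence." That is false when $\a_j$ is a neighbor of $\a_i$ in the Dynkin diagram: the Weyl part of $\CG s_j$ substitutes $x_k\mapsto \xx^{s_j\a_k}$, and since $s_j\a_i=\a_i-\la\a_i,\a_j^\vee\ra\a_j$, the variable $x_i$ gets replaced by $x_ix_j^{-\la\a_i,\a_j^\vee\ra}$, which moves the pole $x_i=1/u$. Commutation only holds for $j$ with $\la\a_i,\a_j\ra=0$, i.e., for the smaller parabolic $W^{(i)}$ generated by simple reflections orthogonal to $\a_i$, not for the full maximal parabolic $W_I$. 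More importantly, even a corrected version of this step does not reach the target: the group one needs to control is $W_0=W_{\Phi_0}$, the stabilizer of $\a_i$, whose simple generators are the reflections $\ss_\beta$ for $\beta\in\Pi(\Phi_0)$, and the elements of $\Pi_\new(\Phi_0)$ are non-simple roots of $\Phi$. Establishing that the residue transforms under $\ss_\beta$ (for $\beta$ non-simple) in exactly the way $Z_{\Phi_0}$ transforms under the corresponding \emph{simple} $\Phi_0$-reflection is the central nontrivial fact; it requires an explicit description of the inversion set $\Phi(\ss_\beta)$ and the $J$-identity $J(x,0)J(x/u,0)+J(x,1)J(-x/u,1)=k(x)$, $J(x,1)J(x/u,0)+J(x,0)J(-x/u,1)=0$, and it is not implied by the structural observations you make.

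Your second step has a related problem. Restricting the $W$-Chinta--Gunnells action to the subgroup $W_0\subset W$ does not agree with the intrinsic $W_0$-Chinta--Gunnells action on $\F(Q_0)$: the former is built from the $J$-factors attached to $\Phi$-simple reflections, the latter from those attached to $\Phi_0$-simple reflections, and the $\ss_\beta$ for $\beta\in\Pi_\new(\Phi_0)$ have length $>1$ in $W$. So the "inner sum" over $W_0$ in your coset decomposition does not factor into $\DD_{\Phi_0}\cdot Z_{\Phi_0}$ without precisely the compatibility that needs to be proved. Finally, even granting the invariance of the residue under $W_0$, you say nothing about how to conclude: one must show the numerator of the residue, written over $D_{\Phi_0}$, is a genuine polynomial supported in $Q_0^+$ with constant term 1, and then invoke the uniqueness characterization of $Z_{\Phi_0}$. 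That polynomiality requires degree bounds (via the support analysis of numerators of $\CG$-invariant functions) and the vanishing of the double residues at $x_i=1/u$, $\xx^\a=\pm1/u$ for $\a$ with $\la\a_i,\a\ra=-1$; none of this is addressed in the proposal. The proposal identifies some correct surface features (the pairing identity at $x_i=1/u$, the need to analyze inversion sets) but leaves the actual mechanism of the collapse, and the concluding uniqueness argument, unproved.
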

We note that the zeta average of the root system of rank $1$ is $\displaystyle Z_{A_1}(x; u)=\frac{1}{1-ux}$. Therefore, each of the factors on the right-hand side of \eqref{eqA}  can be interpreted as the evaluation of the  rank $1$ zeta average $Z_{A_1}(\xx^{2\a}; u^2)$. The proof of this theorem is given in Section~\ref{sec_roots}.

We expect that Theorem~\ref{thmA} holds for \emph{affine} root systems as well, and we verified it in the case of root systems of type $D_4^{\sss (1)}$, and $A_r^{\sss (1)}$ of small rank.
A new feature in the affine case is that the two sides of the identity in Theorem~\ref{thmA}
are expected to agree up to a factor involving imaginary roots, whose determination is
problematic in general. This is a well-known feature of the affine case, already present in Macdonald's
affine generalization of the Weyl denominator formula~\cite{M1}.

\subsection{}
Let  $V_0\subset V$ be the span of $\Phi_0$ inside $V$. It is natural to consider an element
$\bs\in V_\CC:=V\otimes_\RR \CC$ as a  complex multivariable $\bs=(s_1,\dots,s_r)$, the components being the coordinates of $\bs$ with respect to the  basis $\Pi(\Phi)$.
We regard $\mathcal{Z}_\Phi(\bs)$ as a meromorphic function on $V_\CC$ in this fashion. For $\a=\sum n_i\a_i\in \Phi$, we denote $\bs_\a=\sum n_i s_i$.

The same convention can be adopted for $\mathcal{Z}_{\Phi_0}$, using the basis $\Pi_{\Phi_0}$ of $\Phi_0$ induced by the fixed basis $\Pi(\Phi)$.  We will use the inclusion $\Pi_{\Phi_0}\subset \Phi^+$ to express $\mathcal{Z}_{\Phi_0}$ using the multivariable $\bs$.
An immediate consequence of Theorem \ref{thmA} and the local-to-global principle \eqref{eq-local-to-global} is the following theorem, evaluating the corresponding residue of $\mathcal{Z}_\Phi(\bs)$.

\begin{statement}\label{thmB} Let $\mathbb{K}=\mathbb{F}_q(t)$, $q\equiv 1\!\!\!\mod 4$. Let $\Phi$ be an irreducible root system not of type $G_2$, and let $\a_i$ be a short simple root. Then
\begin{equation}\label{eqB}
\lim_{s_i\rightarrow 1\slash 2} (1-q^{1/2-s_i}) \mathcal{Z}_\Phi(\bs)=
\mathcal{Z}_{\Phi_0}(\bs)\vert_{s_{i}=1\slash 2} \cdot \prod_{{\substack{\a\in\Phi^+ \\ \la\a,\a_i \ra=1}}}  \frac{1}{ (1-q^{1-2{\bs_\a}}){\vert_{s_{i}=1\slash 2}}}.
\end{equation}
\end{statement}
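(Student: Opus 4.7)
The plan is to derive Theorem~\ref{thmB} as an immediate consequence of Theorem~\ref{thmA} via the local-to-global principle \eqref{eq-local-to-global}, which reduces the problem to a direct substitution $x_j = q^{-s_j}$, $u = q^{1/2}$.

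Under this substitution, the specialization $x_i = 1/u$ becomes $s_i = 1/2$, the factor $1 - ux_i$ becomes $1 - q^{1/2 - s_i}$, and each $u^2\xx^{2\alpha}$ becomes $q \cdot q^{-2\bs_\alpha} = q^{1 - 2\bs_\alpha}$. By \eqref{eq-local-to-global} applied to $\Phi$, the left-hand side of \eqref{eqA} is identified with $\lim_{s_i \to 1/2}(1 - q^{1/2 - s_i})\mathcal{Z}_\Phi(\bs)$, and the product over $\{\alpha \in \Phi^+ : \langle \alpha, \alpha_i\rangle = 1\}$ on the right-hand side of \eqref{eqA} is directly identified with the product on the right-hand side of \eqref{eqB}.

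The only remaining step is to identify $Z_{\Phi_0}(\xx; u)\vert_{x_i = 1/u}$ with $\mathcal{Z}_{\Phi_0}(\bs)\vert_{s_i = 1/2}$ under the same substitution. For this I would decompose $\Phi_0$ into irreducible components $\Phi_0 = \bigoplus_k \Psi_k$, apply \eqref{eq-local-to-global} to each $\Psi_k$, and multiply, using the factorization $Z_{\Psi \oplus \Psi'} = Z_\Psi \cdot Z_{\Psi'}$ stated in Section~\ref{sec: intro-notation} together with its counterpart for $\mathcal{Z}_\Phi$ (which follows from the twisted multiplicativity of the coefficients $H$ described in Section~\ref{sec: Z-global}). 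A minor bookkeeping point is to verify that viewing $\mathcal{Z}_{\Phi_0}$ via the coordinates induced by $\Pi_{\Phi_0} \subset \Phi^+$ is compatible with viewing $Z_{\Phi_0}$ as an element of $\F(Q) \supset \F(Q_0)$ via $Q_0 \subset Q$: namely, for $\beta = \sum n_j \alpha_j \in \Pi_{\Phi_0}$, the substitution $x_j = q^{-s_j}$ sends $\xx^\beta = \prod x_j^{n_j}$ to $q^{-\bs_\beta}$, which is exactly the monomial appearing in $\mathcal{Z}_{\Phi_0}(\bs)$. I do not foresee a substantial obstacle here; the entire argument is a short bookkeeping exercise once \eqref{eq-local-to-global} is in hand, consistent with the authors' indication that Theorem~\ref{thmB} is an immediate consequence of Theorem~\ref{thmA}.
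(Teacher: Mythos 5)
Your proposal is correct and is essentially the paper's own argument: the authors simply remark that Theorem~\ref{thmB} is an immediate consequence of Theorem~\ref{thmA} together with the local-to-global principle~\eqref{eq-local-to-global}, and your write-up supplies precisely the substitutions ($x_j = q^{-s_j}$, $u = q^{1/2}$, so $1-ux_i \mapsto 1-q^{1/2-s_i}$ and $u^2\xx^{2\a} \mapsto q^{1-2\bs_\a}$) and the bookkeeping about $\Phi_0$ and its coordinates that the word ``immediate'' presupposes. The only content worth stressing, which you do correctly, is that when $\Phi_0$ is reducible one needs both the factorization $Z_{\Psi\oplus\Psi'}=Z_\Psi Z_{\Psi'}$ and the corresponding factorization of $\mathcal{Z}$ (from twisted multiplicativity) to apply~\eqref{eq-local-to-global} componentwise, and that the inclusion $\Pi_{\Phi_0}\subset\Phi^+$ used to express $\mathcal{Z}_{\Phi_0}$ in the variables $\bs$ matches the embedding $\F(Q_0)\subset\F(Q)$ used for $Z_{\Phi_0}$.
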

Again, we remark that each of the factors on the right-hand side of \eqref{eqB}  can be interpreted in terms of an evaluation of the rank $1$ WMDS $\displaystyle \mathcal{Z}_{A_1}(s)=\zeta_{\mathbb{A}^1_{\F_q}}(s+1/2)=\frac{1}{1-q^{1/2-s}}$.

An analogue of Theorem~\ref{thmB} holds over number fields as well. To avoid 
technicalities, we illustrate it over the Gaussian field.

\begin{statement}\label{thmC} Let $\mathbb{K}=\mathbb{Q}(\sqrt{-1})$.
Let $\Phi$ be an irreducible root system not of type $G_2$, and let $\a_i$ be a short simple root. Then,
\begin{equation}\label{eqC}
\lim_{s_i\rightarrow 1\slash 2} (s_i-1\slash 2) \mathcal{Z}_{\Phi}(\bs)=\frac{\pi}{8}
\mathcal{Z}_{\Phi_0}(\bs)\vert_{s_{i}=1\slash 2} \cdot 
{ \displaystyle\prod_{{\substack{\a\in\Phi^+ \\ \la\a,\a_i \ra=1}}} \zeta_{\mathbb{K}}^{(2)}({2{\bs_\a}}){\vert_{s_{i}=1\slash 2}}},
\end{equation}
where $\zeta_{\mathbb{K}}^{(2)}(s)$ is the Dedekind zeta function of $\mathbb{K}$ with
the Euler factor at the prime dividing 2 removed.
\end{statement}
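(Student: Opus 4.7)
The proof of Theorem~\ref{thmC} is parallel in spirit to that of Theorem~\ref{thmB}, but over the number field $\mathbb{K}=\mathbb{Q}(\sqrt{-1})$ the direct local-to-global principle \eqref{eq-local-to-global} is unavailable. The strategy is to combine Theorem~\ref{thmA} applied locally at each Gaussian prime with the classical residue of the Dedekind zeta function $\zeta_\mathbb{K}^{(2)}(s)$ at $s=1$, which equals $\pi/8$ (this being $(1-1/2)\cdot\pi/4$, i.e.\ the residue of $\zeta_{\mathbb{K}}$ at $1$ with the ramified Euler factor at the prime above $2$ removed).

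The first step is to use the twisted multiplicativity of the coefficients $H(m_1,\ldots,m_r)$ to express $\mathcal{Z}_\Phi(\bs)$ in a form where the $\mathfrak{p}$-parts, for each prime $\mathfrak{p}$ of $\mathcal{O}_S$, are local evaluations of the Chinta-Gunnells rational function $Z_\Phi(\xx;u)$ at parameters dictated by $\mathfrak{p}$. Theorem~\ref{thmA} then computes the residue of each local $\mathfrak{p}$-part at $s_i=1/2$, with the local contribution given by the $\mathfrak{p}$-part of $\mathcal{Z}_{\Phi_0}$ evaluated at $s_i=1/2$, times a product of local Euler factors indexed by the short roots $\alpha\in\Phi^+$ with $\langle\alpha,\alpha_i\rangle=1$.

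The second step is the global assembly. Under the product over primes, the local $\mathfrak{p}$-parts of $\mathcal{Z}_{\Phi_0}$ recombine, via the twisted multiplicativity restricted to $\Phi_0$, into $\mathcal{Z}_{\Phi_0}(\bs)\vert_{s_i=1/2}$, and the local Euler factors indexed by $\alpha$ assemble into the Dedekind zeta function $\zeta_\mathbb{K}^{(2)}(2\bs_\alpha)\vert_{s_i=1/2}$, with the Euler factor at the prime above $2$ omitted because $S$ contains that prime. The overall factor $\pi/8$ appears as $\res_{s=1}\zeta_\mathbb{K}^{(2)}(s)$, reflecting the global simple pole that emerges when the local polar contributions are summed over all primes, analogous to how $\res_{s=1}\zeta_{\mathbb{A}^1_{\mathbb{F}_q}}(s)=1/\log q$ is absent in Theorem~\ref{thmB} only because the function-field WMDS collapses to a single local evaluation.

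The main obstacle is to make the global assembly rigorous, because over $\mathbb{K}=\mathbb{Q}(\sqrt{-1})$ the twisted multiplicativity of $H$ incorporates nontrivial Hilbert symbols, so $\mathcal{Z}_\Phi$ is not literally an Euler product of its $\mathfrak{p}$-parts. The residue at $s_i=1/2$ is supported on those indices where the $i$-th component is essentially a perfect square (up to $S$-units), and on such contributions the relevant Hilbert symbols simplify, so that an Euler-product structure is restored in the residue. Verifying this simplification and matching the local data on both sides of \eqref{eqC} prime by prime is the technical heart of the argument; once established, Theorem~\ref{thmC} follows by uniqueness of WMDS with prescribed $\mathfrak{p}$-parts and twisted multiplicativity.
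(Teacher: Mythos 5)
Your outline follows the same route as the paper's proof in Appendix~\ref{appC}: compute the residue via the class number formula to produce the constant $\pi/8$, match $p$-parts via Theorem~\ref{thmA}, check twisted multiplicativity, and conclude by uniqueness of WMDS with prescribed $p$-parts and twisted multiplicativity. However, two of your intermediate claims are wrong in a way that would derail the execution. The residue at $s_i=1/2$ is \emph{not} supported on tuples where the $i$-th component $m_i$ is essentially a square. What actually happens (Lemma~\ref{lem_resglobal}) is that summing over $m_i$ with $(m_j)_{j\ne i}$ fixed produces a quadratic Dirichlet $L$-series whose character is attached to $\prod_{\la\a_j,\a_i\ra=-1}m_j$, and the pole at $s_i=1/2$ survives only when that product of \emph{neighboring} coordinates is a square; the $i$-th variable then ranges over ideals supported on primes dividing the other $m_j$. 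Moreover, the residue does not become a literal Euler product; it remains a twisted-multiplicative series, and the content of the final step is to verify that its twisted multiplicativity coincides with that of the $\Phi_0$-series, not to remove the twists.

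There is also a genuinely nontrivial technical step your outline omits. In the $p$-part matching (Lemma~\ref{lem_pparts}), the local factor of the residue evaluates $Z_\Phi$ at $x_i=u=|p|^{-1/2}$ (since $s_i=1/2$), whereas Theorem~\ref{thmA} concerns the residue at $x_i=1/u$. Bridging these requires the $\ss_i$-functional equation for $Z_\Phi$ together with the change of variables $x_j\mapsto u^{-\la\a_j,\a_i\ra}x_j$ for $j\ne i$ (formula~\eqref{eFact}); without this, Theorem~\ref{thmA} cannot be applied directly to the local computation, and the $p$-parts of the two sides of~\eqref{eqC} would not visibly match.
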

To put this result in context, note that when $\Phi$ is of type $A_1$,
then $Z_\Phi(s)=\zeta_{\mathbb{K}}^{(2)}(s+1/2)$, while $\Phi_0$ and the product on the right-hand side are trivial. The
theorem in this case reduces to the classical formula for the residue of the Dedekind zeta function at $s=1$,
which explains the constant $\pi\slash 8$.

The proof of this theorem is given in Appendix~\ref{appC}, and it makes essential use of Theorem~\ref{thmA}.
We do not strive for a more general result over number fields, as it is not the main focus of this paper. Rather,
our goal is to illustrate in a concrete case the general phenomenon that results over function fields have
number field counterparts.

The extent and precise formulation of this phenomenon, relating the residues of  higher order WMDS to similar objects associated to smaller rank root systems, is not yet clear. The only other examples known at this time relate the residues of  the \emph{cubic} WMDS of type $A_3$ over  $\F_q(t)$, $q\equiv 1\!\!\!\mod 4$, and over number fields \cites{BB, C}, to the Friedberg-Hoffstein-Lieman  cubic double Dirichlet series \cite{FHL}.   There are indications that, for quadratic WMDS associated to {affine} root systems, this phenomenon is still present. For $\mathbb{K}=\mathbb{F}_q(t)$, $q\equiv 1\!\!\!\mod 4$, we verified this for affine root systems of type $D_4^{\sss (1)}$, and $A_r^{\sss (1)}$ of small rank, and we are in the process of extending this result to all simply-laced affine root systems \cite{DIPP}.

\subsection{}\label{sec: thmD}
 For $i$ a node in the Dynkin diagram of $\Phi$, we denote by $\Phi^i\subset \Phi$ the maximal parabolic root subsystem obtained by excluding the node $i$, and we denote by $W^i\subset W$ the corresponding maximal parabolic subgroup.
For $\a\in\Phi$, let  $n_i(\a)\in \Z$ denote the coefficient of $\a_i$ in the expansion of $\a$ in the basis $\Pi(\Phi)$.

For each of the nodes $i$  specified in Table \ref{table1} (we use the standard labelling of nodes in the Dynkin diagram \cite{Bou}; see also \S\ref{sec_Phi0} and Appendix \ref{sec_exc}),
\renewcommand{\arraystretch}{1.2}
\begin{table}[!ht]
\vspace{0.5em}
\begin{center}
\begin{tabular}{c|cccccccc}
$\Phi$ & $A_r$        & $B_r$ &  $C_r$  & $D_r$ & $E_6$ & $E_7$ & $E_8$ & $F_4$  \\ \hline
$i$ & Any & $i=r$ &  $2i\le r$ &  \makecell{$2i\le r+1$\\ $i\in\{r-1,r\}$}  & $i\ne 4$ & $i\in\{1,2,7\}$ &$i\in\{1,8\}$ &  $i=4$
 \end{tabular}
\end{center}
\vspace{1em}
\caption{\label{table1}Admissible nodes.}
\end{table}
we give a new formula for $Z_\Phi(\xx; u)$ as  an average over $W^i$ of a rational function that is described in terms of the root system $\Phi$. More specifically,
we construct a  rooted tree $\mathcal{K}_\Phi(\a_i)$ whose vertices are positive roots, and such that the tree root is $\a_i$. This rooted tree  resembles the Kostant cascade construction \cites{J,K}. The roots that appear as vertices are  used to define an explicit rational function $K_{\Phi,\a_i}(\xx)$, which depends neither on $x_i$ nor on $u$. We refer to \S\ref{sec_cascade} for the details and we include a few examples below.

\begin{statement}\label{thmD}Let $\Phi$ be an irreducible root system not of type $G_2$, and let
$i$ one of the admissible nodes specified in Table \ref{table1}. We have,
\begin{equation}\label{eqD}
Z_{\Phi}(\xx; u)\cdot \prod_{\substack{\a\in\Phi^{+}\\ m_\a=2 \\n_i(\a)\ge 2}} (1-u^2 \xx^{2\a})=\frac{\displaystyle\sum_{w\in W^i}\left.\frac{1}{1-ux_i}K_{\Phi,\a_i}(\xx)\right|w }{\DD_{\Phi^i}(\xx)}.
\end{equation}
\end{statement}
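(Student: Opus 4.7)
The plan is to establish \eqref{eqD} by matching both sides as $W^i$-invariant rational functions, using the residue formula from Theorem~\ref{thmA} as the main engine.

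Denote by $L_\Phi$ and $R_\Phi$ the left- and right-hand sides of \eqref{eqD}. Both are $W^i$-invariant under the Chinta--Gunnells action: for $R_\Phi$ this is by construction, as it is an average over $W^i$ (with $\DD_{\Phi^i}$ in the denominator serving as the normalizing factor for the $|$-variant of averaging); for $L_\Phi$ it holds because $Z_\Phi$ is $W$-invariant, while the product $\prod_{m_\a=2,\,n_i(\a)\ge 2}(1-u^2\xx^{2\a})$ is $W^i$-invariant. Indeed, $m_\a$ is a $W$-invariant quantity, and the subset $\{\a\in\Phi^+ : n_i(\a)\ge 2\}$ is preserved by $W^i$ since the simple reflections $s_j$ with $j\ne i$ leave the coefficient $n_i(\a)$ unchanged.

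Having reduced the problem to an equality of $W^i$-invariant rational functions, I would match both sides along the polar divisor $x_i=1/u$. On the left, Theorem~\ref{thmA} combined with the product factor in \eqref{eqD} yields a closed expression for $\res_{x_i=1/u}L_\Phi$ in terms of $Z_{\Phi_0}|_{x_i=1/u}$ and explicit products over roots with $\la \a,\a_i\ra=1$. On the right, I expect that the only term of the sum $\sum_{w\in W^i}$ contributing a pole at $x_i=1/u$ is $w=e$, since for $w\in W^i\setminus\{e\}$ the Chinta--Gunnells action displaces the divisor $1-ux_i=0$ to a divisor transverse to $x_i=1/u$. Since $K_{\Phi,\a_i}$ and $\DD_{\Phi^i}$ are both independent of $x_i$ by construction, this gives the clean expression $\res_{x_i=1/u}R_\Phi=K_{\Phi,\a_i}(\xx)/\DD_{\Phi^i}(\xx)$. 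Equating the two residues yields the combinatorial identity that the cascade construction of $K_{\Phi,\a_i}$ in \S\ref{sec_cascade} is precisely designed to satisfy. Once the residues are matched, $L_\Phi-R_\Phi$ is a $W^i$-invariant rational function with no pole along $x_i=1/u$, hence, by $W^i$-invariance, no pole along any $W^i$-translate of this divisor; controlling the remaining poles by induction on the rank of $\Phi$ (applying the same theorem to a smaller admissible node), together with a normalization check at $\xx=0$, forces $L_\Phi=R_\Phi$.

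The main obstacle is the combinatorial verification that the cascade construction of $K_{\Phi,\a_i}$ in \S\ref{sec_cascade} matches the residue formula produced by Theorem~\ref{thmA}. This requires a case-by-case analysis for each admissible node in Table~\ref{table1}: the classical types $A_r, B_r, C_r, D_r$ admit a uniform treatment via strongly orthogonal roots in parabolic subsystems, but the exceptional types $E_6, E_7, E_8, F_4$ must be examined directly at the specific nodes listed. The admissibility of the node is precisely the condition that aligns the cascade tree $\mathcal{K}_\Phi(\a_i)$ with the orthogonal subsystem $\Phi_0$ and the parabolic $\Phi^i$, so that the product of cascade factors reproduces the ratio of $\DD_{\Phi^i}$ times the orthogonal-root product of \eqref{eqA} times $Z_{\Phi_0}|_{x_i=1/u}$.
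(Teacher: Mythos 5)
Your overall strategy---exploit $W^i$-invariance of both sides, match residues at $x_i=1/u$, then invoke a uniqueness principle---is indeed the route the paper takes (via Lemma~\ref{lem_invar} and Proposition~\ref{prop_res1}). However, there are two genuine gaps.

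First, the computation of $\res_{x_i=1/u}R_\Phi$ is wrong. You assert that only $w=e$ contributes, on the grounds that every $w\in W^i\setminus\{e\}$ displaces the divisor $1-ux_i=0$. But the stabilizer $W^{(i)}=\stab_{W^i}(\a_i)$, generated by simple reflections $\ss_j$ with $j$ not adjacent to $i$ in the Dynkin diagram, is in general nontrivial (it is trivial only in very low rank). Every $w\in W^{(i)}$ fixes $\a_i$, so the factor $(1-ux_i)^{-1}$ is unmoved by the Chinta--Gunnells action of $w$, and the term $\left.\frac{1}{1-ux_i}K_{\Phi,\a_i}\right|w$ still has a simple pole at $x_i=1/u$. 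The correct residue is a full average over $W^{(i)}$, namely $\displaystyle\sum_{w\in W^{(i)}}K_{\Phi,\a_i}(\xx)\big|w\big/\DD_{\Phi^i}(\xx)$, not the single term $K_{\Phi,\a_i}(\xx)/\DD_{\Phi^i}(\xx)$. This changes the combinatorial identity you then have to verify: it involves comparing a $W^{(i)}$-average against $Z_{\Phi_0}^{(i)}|_{x_i=1/u}$ (which is exactly the induction step \eqref{e_ind} in the paper), not a closed product formula.

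Second, matching the residue at a single divisor does not by itself pin down a $W^i$-invariant rational function: one needs a negativity bound on $\deg_{x_i}$ to kill any polynomial ambiguity in the partial-fraction decomposition. This is the content of Lemma~\ref{lem_invar}(c) and the reason Proposition~\ref{prop_res1} contains a nontrivial verification (e.g.\ the inequality $|S^s|>2|T^s|$ checked case-by-case). Your proposal mentions ``controlling the remaining poles'' and ``a normalization check at $\xx=0$'', neither of which supplies this degree bound, so the uniqueness step as written is not complete. With these two points corrected, the rest of the plan---the induction on rank via a smaller admissible node and the case analysis matching $K_{\Phi,\a_i}$ against the orthogonal system---does align with the paper's argument.
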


We include here some examples of the rational function $K_{\Phi,\a_i}$. 
\begin{itemize}
\item If $\Phi$ is of type $A_r$, using the symmetry of the Dynkin diagram,
we can assume  that $2i\le r+1$. Then,
 \[K_{\Phi,\a_i}(\xx)=\prod_{j=1}^{i-1}\frac{1}{1-x_{i-j}x_{i+j}}.
 \]

\item If $\Phi$ is of type $C_r$ and $2i\le r$, the function $K_{\Phi,\a_i}$
is given by the same formula as for $A_r$.

\item If $\Phi$ is of type $B_r$, or $F_4$, and $i=r$ or, respectively, $i=4$, then $K_{\Phi,\a_i}=1$.

\item If $\Phi$ is of type $D_r$ and $r=2i$,  then
\[K_{\Phi,\a_i}(\xx)=\frac{1}{1-x_1x_{r-1}} \frac{1}{1-x_1x_{r}}\frac{1}{1-x_{r-1}x_{r}}
\prod_{j=1}^{i-2}\frac{1}{1-x_{i-j}x_{i+j}}
\frac{1}{1-x_{i+j}^2\cdot \ldots\cdot x_{r-2}^2 x_{r-1} x_r }.
\]
\end{itemize}
In principle one can obtain formulas for $Z_\Phi(\xx;u)$ as averages over a smaller subgroup,
by rewriting the definition using a system of coset representatives for the smaller subgroup.
However the kernel functions obtained in this way are tremendously more complicated than our kernel function.

\subsection{} It might be of some interest to comment on the origin of the results described above. We discovered formulas of type \eqref{eqD} as part of our investigations of $Z_\Phi(\xx;u)$ for affine root systems. For an affine root system, we adopt the notation set up in \S\ref{sec: thmD}  for its parabolic sub-systems and parabolic subgroups.
When~$\Phi$ is an affine root system of type $D_4^{\sss (1)}$,
three of the authors discovered in~\cite{DPP} that $Z_\Phi(\xx; u)$ and $Z_\Phi(\xx; u\xx^\dd)$ are related by a new type of functional equation that involves a  $3\times 3$ matrix $B(\xx;u)$;  here $\dd=\a_1+\a_2+\a_4+\a_4+2\a_5$
is the minimal positive imaginary root and the Dynkin diagram $D_4^{\sss (1)}$  is labelled as in Figure~\ref{affine-D4}.
\begin{figure}[!ht]
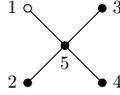

$
\dynkin [labels={1,2,5,3,4},extended]D{4}
$
\caption{$D_4^{\sss (1)}$ diagram labelling}\label{affine-D4}
\end{figure}
The matrix $B$ has an inverse with polynomial entries, which was given explicitly in~\cite{DPP}.
It turns out that the sum $B_{o, e}(\xx;u)$
of the entries in the third column of $B(\xx; u)$ determines the entire matrix $B$ and it can be expressed as follows
\[
B_{o, e}(\xx;u)=
\frac{u \xx^\dd}{x_5}\frac{\displaystyle\sum_{w\in W^5}\left.
\left[(1-ux_5)^{-1}\prod_{1\le i<j\le 4} (1-x_i x_j)^{-1} \right] \right|w  }{\DD_{\Phi^5}(\xx)}.
\]
Under the evaluation $x_1=0$, and ignoring the term ${u \xx^\dd}/{x_5}$, the above formula leads to the formula \eqref{eqD} corresponding to  the parabolic sub-system $\Phi^1$ of type $D_4$ and the node $i=5$
\[
(1-u^2\xx^{2\theta_1})Z_{\Phi^1}(x_2, \ldots, x_5) = \frac{\displaystyle\sum_{w\in W^{1,5}}\left.
\left[(1-ux_5)^{-1}\prod_{2\le i<j\le 4} (1-x_i x_j)^{-1} \right] \right|w  }{\DD_{\Phi^{1,5}}(\xx)}.
\]
Above, $\th_1=\a_2+\a_3+\a_4+2\a_5$ is the highest root of $\Phi^1$, $\Phi^{1,5}$ is the parabolic sub-system of $\Phi^1$ obtained by excluding the node $5$, and $W^{1,5}$ is its Weyl group.
The comparison of these two formulas suggests that   $B_{o, e}(\xx;u)$  is an \qq{affinization} of the finite zeta average of type $D_4$. We discovered  a similar phenomenon for
affine groups of type $A_r^{\sss (1)}$ with $r$ odd. Based on the treatment of the case $D_4^{\sss (1)}$,
Theorem~\ref{thmD} will play a role in deriving the extra functional equation for affine root systems in the ongoing work~\cite{DIPP}.  These facts prompted us to investigate the existence of such formulas for zeta averages associated to finite root systems, resulting in the discovery of  Theorem \ref{thmD}. The more fundamental Theorem \ref{thmA} was obtained in the process of proving  Theorem \ref{thmD}.

\subsection{}\label{sec1.9} To highlight  the main difficulty encountered in the proofs of Theorem \ref{thmA} and Theorem \ref{thmD}, let us point out  that, implicitly, both statements claim the existence of unexpected symmetries (in the form of extra functional equations) for certain objects. The residue in Theorem \ref{thmA} must satisfy functional equations that correspond to the simple roots in $\Phi_0$ that are not simple roots in $\Phi$, and the average over the parabolic subgroup $W^i$ in Theorem \ref{thmD} must satisfy the functional equation that corresponds to the excluded simple root. The existence of the extra symmetries, together with uniqueness results concerning rational functions with prescribed symmetries, are the main elements of both proofs.

For Theorem \ref{thmA}, the extra functional equation is proved by a detailed analysis of the Chinta-Gunnells action (Proposition \ref{prop_ssb}). We use the uniqueness result of \cite{CFG}*{Corollary 5.8}, \cite{Fr}*{Corollary 5.2}, describing $Z_\Phi(\xx; u)$ as the unique rational function invariant under $W$ with the property that $D_{\Phi}(\xx; u) Z_\Phi(\xx; u)$ is a polynomial with constant term $1$, where $D_{\Phi}(\xx; u)=\prod_{\a \in \Phi^+} (1-u^2\xx^{m_\a \a})$. However, in order to apply it, we must first show that the residue lies in the correct ambient space. This is accomplished by  revisiting,  in Section \ref{sec_numzeta}, the analysis from \cites{CFG, Fr} on the support of the numerator of a rational function invariant under the Chinta-Gunnells action.

A key ingredient in the proof of Theorem \ref{thmD} is the following uniqueness result. A rational function is uniquely determined by the residue at $x_i=1\slash u$, the invariance under the Chinta-Gunnells action of  $W^i$, and some properties of its polar divisor and the degree in $x_i$; we refer to Lemma \ref{lem_invar} for the precise conditions. The unique rational function whose residue is the one specified by Theorem \ref{thmA} is precisely  $Z_\Phi(\xx; u)$; it is remarkable that it is this precise specification of the residue that corresponds to a rational function with a larger group of functional equations. This characterization of $Z_\Phi(\xx; u)$ is different from the characterization of \cites{CFG,Fr} mentioned above.

We treat simply-laced and double-laced root systems on equal footing. However, we could have taken advantage of the following relationship between double-laced and simply-laced root systems. We denote by $\Phi^s\subseteq \Phi$ the root sub-system consisting of all short roots, and by $Q^s$ its root lattice. The inclusion of root lattices $Q^s\subset Q$ induces canonical morphisms $\F(Q^s)\subset \F(Q)$. We regard $Z_{\Phi^s}(\xx; u)$ as an element of $\F(Q)$ in this fashion. As it turns out  (see Proposition \ref{prop_double}), for $\Phi$ a double-laced root system, we have
\begin{equation}
Z_{\Phi}(\xx; u)=Z_{\Phi^s}(\xx; u).
\end{equation}
This is perhaps of independent interest.

\subsection{}\label{sec: potential}
Our results open a number of immediate questions. One set of questions is related to describing the residues, as well as formulas of type \eqref{eqD}, for the \emph{twisted}  quadratic Weyl group multiple Dirichlet series, constructed using the twisted Chinta-Gunnells action introduced in~\cite{CG1}. Such results would have implications for the description of the residues of  Eisenstein series on metaplectic $2$-covers of $p$-adic groups. A further set of questions is related to the extension of our results to the case of
higher order WMDS. Preliminary computations show that simple-minded generalizations of our Theorem \ref{thmA} and Theorem \ref{thmD} are not true.

In the case of affine root systems,  the corresponding version of Theorem \ref{thmA} plays an important technical role in the determination of the correction factor that must appear in the affine version of the local-to-global principle. It would be interesting to see if formulas  of type \eqref{eqD} hold in the affine case for an appropriate kernel function. If so, they would express the zeta average as a sum over a finite Weyl group, making the study of $Z_\Phi(\xx; u)$ more amenable.

\noindent\textbf{Acknowledgements.} Diaconu, Pa\c sol and Popa were partially supported by the CNCS-UEFISCDI grant PN-III-P4-ID-PCE-2020-2498. Ion was partially supported by the Simons Foundation grant 420882.

\section{The Chinta-Gunnells action}\label{sec_CG}

\subsection{}\label{sec2.1}
Let $\Phi$ be a finite, irreducible, reduced root system of rank $r$. We fix a basis $\Pi(\Phi)=\{\a_i\}_{1\le i\le r}$ and use $\Phi^\pm$ to refer to the corresponding sets of positive, and respectively negative, roots. The root sub-systems of short, respectively long, roots are denoted by  $\Phi^s$ and, respectively, $\Phi^\ell$. If $\Phi$ is simply-laced, we consider all roots to be \emph{short}. We extend this notation and convention to any subset of $\Phi$.

Let $Q=\bigoplus_{i=1}^r \Z \a_i$ be the root lattice of $\Phi$.  We denote by $W$ the Weyl group of $\Phi$.
For $\a\in\Phi$, let $\ss_\a$ denote the corresponding reflection. For simplicity,
we use $\ss_i$, $1\le i\le r$, to refer to the reflections corresponding to simple roots.

There is a unique  $W$-invariant inner product on  $V=Q\otimes_\Z\RR $ normalized such that the short roots have square length $2$. We use $\la\cdot ,\cdot \ra$ to denote this scalar product and  ${q}(\lam)=\frac 12 \la \lam,\lam \ra$ to refer to the associated quadratic form, which takes integral values with
this normalization. To each root $\a$ we associate the positive integer
\[
m_{\a}=\frac{2}{\gcd(2,q(\a) )} = \begin{cases} 2 & \text{ if  $q(\a)$ is odd} \\
1 & \text{ if  $q(\a)$ is even}.
        \end{cases}
\]
If $\Phi$ is simply-laced or of type $G_2$ we have $m_{\a}=2$ for all $\a\in \Phi$. If $\Phi$ is double-laced ($B_r$, $C_r$ and $F_4$), then $m_{\a}$ is $2$ if $\a$ is short, and $m_{\a}$ is $1$ if $\a$ is long. For simplicity, we denote $m_i=m_{\a_i}$.
We also consider the even sub-lattice of $Q$ defined as
$$Q_{\rm ev}=\{\lam \in Q : \la \lam, \a_i \ra \equiv 0\!\!\!\! \mod 2, \text{ for } 1\le i\le r \}.
$$
Note that $m_{\a}\a\in Q_{\rm ev}$ for $\a\in\Phi$, since the Cartan ratios ${2\la\a, \b \ra}/{\la\b, \b \ra }$  are integral for $\a,\b\in\Phi$.


\subsection{}\label{sec2.2}

For each $w$ in $W$ let $\ell(w)$ be the length of a reduced (i.e.
shortest) decomposition of $w$ in terms of simple reflections.
For $w$ in $W$ we have $\ell(w)=|\Phi(w)| $, where $\Phi(w):=\{\aa\in \Phi^+: w\aa\in\Phi^- \}$. There is a unique element of $W$ of maximal length, denoted by $w_\circ$. In this case, $\Phi(w_\circ)=\Phi^+$.

 If
$w=\ss_{i_\ell}\cdots \ss_{i_1}$ is a reduced decomposition, then
\be \label{e_phiw}
\Phi(w)=\{\a_{i_1}\prec\ss_{i_1}(\a_{i_2})\prec  \ldots  \prec
\ss_{i_1}\ss_{i_{2}}\cdots \ss_{i_{\ell-1}} (\a_{i_\ell})  \},
\ee
with the order $\prec$ dependent on the chosen reduced expression for $w$. We will also need the following well-known property of the set $\Phi(w)$ \cite{M3}*{(2.2.4)}.
\begin{lemma}\label{lem_phiw}
Assume  that $w,w^\prime\in W$ and $w^{-1}\a\in \Phi^+$ for all $\a\in \Phi(w^\prime)$.
Then, $\ell(w^\prime w)=\ell(w^\prime)+\ell(w)$ and
\[\Phi(w^\prime w)=\Phi(w)\cup w^{-1} \Phi(w^\prime).
\]
Reduced expressions of $w$, $w^\prime$, concatenate to a reduced expression of $w^\prime w$. Moreover, the order $\prec$ on $\Phi(w^\prime w)$  is
the concatenation of the order relations on $\Phi(w)$ and  $w^{-1} \Phi(w')$.
\end{lemma}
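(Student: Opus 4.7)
The plan is to first establish the set-theoretic identity
\[
\Phi(w' w) = \Phi(w) \sqcup w^{-1}\Phi(w'),
\]
and then deduce the length formula, the reducedness of the concatenated expression, and the ordering assertion from it.

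For the identity, I would take $\a \in \Phi^+$ and split on the sign of $w\a$. If $\a \in \Phi(w)$, so $w\a \in \Phi^-$, the nontrivial claim is that $\a \in \Phi(w' w)$: were $w' w \a \in \Phi^+$, then $-w\a$ would be a positive root sent to $\Phi^-$ by $w'$, i.e., $-w\a \in \Phi(w')$, and the hypothesis would force $w^{-1}(-w\a) = -\a \in \Phi^+$, contradicting $\a \in \Phi^+$. If instead $w\a \in \Phi^+$, it is immediate that $\a \in \Phi(w' w)$ iff $w\a \in \Phi(w')$, i.e., iff $\a \in w^{-1}\Phi(w')$. Disjointness follows since $\a \in \Phi(w)$ forces $w\a \in \Phi^-$, while $\a \in w^{-1}\Phi(w')$ forces $w\a \in \Phi(w') \subseteq \Phi^+$.

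Taking cardinalities yields $\ell(w' w) = \ell(w) + \ell(w')$, so any reduced expressions for $w$ and $w'$ concatenate to an expression for $w' w$ of length $\ell(w' w)$, which is therefore reduced. For the ordering assertion, I would apply \eqref{e_phiw} directly to this concatenated reduced expression $\ss_{j_{\ell'}}\cdots \ss_{j_1} \ss_{i_\ell}\cdots \ss_{i_1}$: the first $\ell(w)$ listed roots reproduce $\Phi(w)$ with its given order, while the remaining $\ell(w')$ roots, each of the form $\ss_{i_1}\cdots \ss_{i_\ell}\ss_{j_1}\cdots \ss_{j_{k-1}}(\a_{j_k}) = w^{-1}\bigl(\ss_{j_1}\cdots \ss_{j_{k-1}}(\a_{j_k})\bigr)$, trace out $w^{-1}\Phi(w')$ endowed with the image of the order on $\Phi(w')$.

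The only real obstacle is the clever invocation of the hypothesis in the first case of the set identity, which is what prevents a positive root in $\Phi(w)$ from being restored to $\Phi^+$ by further application of $w'$. Once the identity and the length additivity are in hand, everything else is bookkeeping driven by the formula \eqref{e_phiw}.
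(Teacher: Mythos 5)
Your proof is correct. The paper cites Macdonald~\cite{M3}*{(2.2.4)} for this lemma and gives no internal proof; your argument (case-split on the sign of $w\a$ to get the inversion-set identity, then cardinalities for length additivity, then reading the order off the concatenated reduced word via~\eqref{e_phiw}) is the standard one and fills that gap correctly.
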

\noindent We also use the following notation
$$\Phi^s(w)=\{\b\in \Phi(w)\mid m_\b=2 \}\quad \text{and}\quad \ell_s(w)=| \Phi^s(w)|.$$
The order relation on $\Phi(w)$ induced by a reduced expression of $w$ restricts to an order relation on $\Phi^s(w)$. If $\Phi$ is simply-laced or
of type $G_2$, then $\Phi^s(w)=\Phi(w)$; when $\Phi$ is double-laced, then $\Phi^s(w)=\Phi(w)\cap \Phi^s$.


\subsection{}
Let $\F=\Q(u)$, with $u$ a formal parameter. The standard basis of $\F[Q]$,
the $\F$-group ring of $Q$, is denoted by $\{\xx^\lambda\}_{\lambda\in Q}$.
We regard  $\F[Q]$ as the ring of Laurent polynomials in the monomials $\xx^\lam$,
and we denote by $\F(Q)$ its field of fractions. Denote $x_i=\xx^{\a_i}$ and treat $\xx=(x_1,\dots,x_r)$ as a multivariable. For $\lambda\in Q$, let  $n_i(\lambda)\in \Z$ denote the coefficient of $\a_i$ in the expansion of $\lambda$ in the basis $\Pi(\Phi)$.  With this notation, if  $\lam \in Q$, then
$$\xx^\lam =\prod x_i^{n_i(\lambda)}.$$
The canonical Weyl group left action on $Q$ induces the following action of $W$ on $\F[Q]$ and $\F(Q)$
$$w \xx^\lam=\xx^{w^{-1} \lam},\quad w\in W, ~\lam\in Q.$$
The corresponding action on the multivariable $\xx$ is $(w\xx)_i =w x_i=w \xx^{\a_i}=\xx^{w^{-1}\a_i}$.

\subsection{}
Let $1\le i\le r$. The involution $\e_i: \F(Q)\to \F(Q)$ is defined  by
$$\e_i \xx^\lam=(-1)^{\la\lambda,\a_i\ra}\xx^{\lam},\quad \lam\in Q.$$
On the multivariable $\xx$ it acts by
$(\e_i \xx)_j:=\e_i x_j=\e_i \xx^\a_j=(-1)^{\la \a_j, \a_i\ra}x_j$.
If $m_i=1$ then $\a_i\in Q_{\rm ev}$ and $\e_i \xx= \xx$, so only the sign operators $\e_i$ with $m_i=2$ are non-trivial.

For $\mu\in Q$, we denote $\e^{\mu}=\prod \e_i^{n_i(\mu)}$. We have,
\be\label{e_sign}
\e^\mu \xx^\lam=(-1)^{\la \lam,\mu\ra } \xx^\lam, \quad\text{and} \quad
\e^\mu w \xx^\lam  =w \e^{w^{-1} \mu} \xx^\lam, \quad \text{for all }~ w\in W, \mu,\lambda\in Q.
\ee

\subsection{}\label{secpm}
For $f(\xx)=f(\xx;u)\in \F(Q)$, denote by $f^+_i$, $f^-_i$  its even and odd parts with respect to $\e_i$, namely
\[f^\pm_i(\xx)=\frac 12 (f(\xx)\pm f(\e_i\xx)).\]
We routinely omit the variable $u$ from the notation $f(\xx; u)$,
as it is fixed throughout.

\noindent Chinta and Gunnells~\cites{CG,CG1} define a \emph{right} action of $W$ on $\F(Q)$, which for simple reflections is described by\footnote{
This is the same action as the one defined in~\cite{CGP}*{eq. (7)},
with $n=2$ and  $v=u^2$.}
\[
f\CG \ss_i(\xx)=\begin{cases}\displaystyle \frac{1-u\slash x_i}{1-ux_i} f^+_i(\ss_i \xx)+\frac{1}{x_i} f^-_i(\ss_i \xx) &
\text{if } m_i=2 \\
         f(\ss_i \xx)      & \text{if } m_i=1.
               \end{cases}
\]

\noindent We also need the related action, denoted by $f\vert  w$,  which for simple reflections is described by
\[
f| \ss_i(\xx):=-x_i^{m_i} f\CG\ss_i(\xx).
\]

\subsection{}\label{secZCG}
Let
\begin{equation}\label{e_dphi}
\DD_{\Phi}(\xx)=\prod_{\a \in \Phi^+} (1-\xx^{m_\a \a})\quad \text{and} \quad  D_{\Phi}(\xx; u)=\prod_{\a \in \Phi^+} (1-u^2\xx^{m_\a \a}).
\end{equation}
We have $\DD_{\Phi}(\ss_i\xx)=-1/x_i^{m_i} \DD_{\Phi}(\xx)$, so the average
\be\label{zeta-avg}
Z_{\Phi}(\xx; u):=\frac{\sum_{w\in W} 1|w }{\DD_{\Phi}(\xx)}
\ee
satisfies $Z_{\Phi}=Z_{\Phi}\CG w$ for all $w\in W$.
It was shown in~\cite{CFG} that
\be\label{e_zphi}
Z_{\Phi}(\xx; u)=\frac{N_\Phi(\xx; u)}{D_{\Phi}(\xx; u)},
\ee
with $N_\Phi(\xx; u)$ a polynomial in $\xx$ with coefficients depending on $u$.
For example, if $\Phi$ is of type $A_1$ we have
\[Z_{A_1}(x)=\frac{1}{1-ux}\quad \text{and} \quad N_{A_1}(x)=1+ux.
\]

\begin{remark}\label{rem_zdecomp} The definition of $Z_\Phi(\xx; u)$ makes sense also when $\Phi$ is not irreducible. In fact, if $\Phi=\Phi'\oplus\Phi''$ is a direct sum
of irreducible root systems, then
\[Z_\Phi(\xx)=Z_{\Phi'}(\xx') \cdot Z_{\Phi''}(\xx''),
\]
with $\xx'$, respectively $\xx''$ , the multivariables that correspond to the simple roots in $\Phi'$, respectively $\Phi''$. We generally restrict ourselves to considering irreducible root systems $\Phi$, but
we will encounter reducible sub-systems as well.
 \end{remark}

\subsection{} The Chinta-Gunnells action can be also described as
\[f\CG \ss_i (\xx) = \begin{cases}f(\ss_i \xx) J(x_i, 0)+f(\ss_i \e_i \xx) J(x_i,1)
& \text{ if $m_i=2$}\\
f(\ss_i \xx)  & \text{ if $m_i=1$}
                       \end{cases}
\]
where
\[
J(x,\dd)=\frac{1}{2} \left( \frac{1-u/x}{1-ux} + \frac{(-1)^\dd}{x}\right), \quad \dd\in \{0,1\}.
\]
The action of a general element of $W$ can be expressed as follows. Recall that
$\ell_s(w)=|\Phi^s(w)|$, with $\Phi^s(w)$ defined in \S\ref{sec2.2}.
\begin{lemma}\label{lem_CG}
Let $w\in W$ and let $\ell_s=\ell_s(w)$.
For $\ddd=(\dd_\g)_{\g\in \Phi^s(w)}\in\{0,1\}^{\ell_s}$,
define $\e_\ddd=\e^{\sum_{\g\in\Phi^s(w) } \dd_\g \g}$.
We fix a reduced decomposition of $w$ and the corresponding order relation $\prec$ on $\Phi^s(w)$.
Then,
with the usual conventions on empty sums and products, we have
\be \label{e4}
f \CG w (\xx)=\sum_{\ddd\in \{0,1\}^{\ell_s}}
f\left(w \e_\ddd \xx \right)
\prod_{\b\in \Phi^s(w)}  J\left(
(-1)^{\la\b, \sum_{\g\prec\b}\dd_\g\g \ra}
\xx^{\b},\dd_\b \right).
\ee
\end{lemma}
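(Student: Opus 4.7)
My plan is to argue by induction on $\ell = \ell(w)$. The base case $\ell = 0$ will be immediate: $w = e$ and $\Phi^s(w) = \emptyset$, so \eqref{e4} collapses to $f(\xx)$ by the empty-sum and empty-product conventions. For the inductive step, I will fix a reduced decomposition $w = \ss_{i_\ell} \cdots \ss_{i_1}$ and factor $w = w_1 \ss$ with $\ss = \ss_{i_1}$ and $w_1 = \ss_{i_\ell} \cdots \ss_{i_2}$ of length $\ell - 1$. Since $\CG$ is a right action, $f \CG w = (f \CG w_1) \CG \ss$, so I will substitute the induction hypothesis for $f \CG w_1(\yy)$ into the outer simple-reflection formula at $\yy = \ss\xx$ (when $m_i = 1$) or at $\yy = \ss\e_i^\dd \xx$ summed over $\dd \in \{0,1\}$ (when $m_i = 2$, writing $i = i_1$), and then reorganize into \eqref{e4} for $w$.

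\textbf{Combinatorial setup.} By Lemma \ref{lem_phiw}, $\Phi(w) = \{\alpha_{i_1}\} \cup \ss\Phi(w_1)$ with $\alpha_{i_1}$ first in $\prec$. Since $\ss$ preserves root lengths, $\Phi^s(w) = \ss\Phi^s(w_1)$ when $m_i = 1$, and $\Phi^s(w) = \{\alpha_{i_1}\} \sqcup \ss\Phi^s(w_1)$ when $m_i = 2$, with $\alpha_{i_1}$ smallest. In the latter case I will identify the new index $\ddd = (\dd_\beta)_{\beta \in \Phi^s(w)}$ with the inductive pair $(\dd, \ddd')$, $\ddd' = (\dd_\gamma)_{\gamma \in \Phi^s(w_1)}$, via $\dd_{\alpha_{i_1}} = \dd$ and $\dd_{\ss\gamma} = \dd_\gamma$; the case $m_i = 1$ is analogous with the outer $\dd$ absent.

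\textbf{Matching the summands.} Three computations will do the work. For the $J$-factor indexed by $\gamma \in \Phi^s(w_1)$: since $(\ss\e_i^\dd \xx)^\gamma = (-1)^{\dd \la \gamma, \alpha_i \ra} \xx^{\ss\gamma}$, the inductive $J$-factor becomes $J((-1)^{\la \gamma, \sum_{\delta \prec' \gamma} \dd_\delta \delta + \dd \alpha_i \ra} \xx^{\ss\gamma}, \dd_\gamma)$; applying $W$-invariance with $\ss$ converts the exponent to $\la \beta, \sum_{\eta \prec \beta} \dd_\eta \eta \ra$ modulo $2$ (the shift from $\ss\alpha_i = -\alpha_i$ contributes $2\dd\la \beta, \alpha_i\ra$, which is even), matching the $J$-factor for $\beta = \ss\gamma$ in \eqref{e4} for $w$. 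When $m_i = 2$, the outer $J(x_i, \dd)$ from $\CG\ss$ is precisely the $J$-factor for $\beta = \alpha_{i_1}$, whose set of $\prec$-predecessors in $\Phi^s(w)$ is empty. For the $f$-term: using that $\ss$ and $\e_i$ commute (since $\ss\alpha_i = -\alpha_i$ contributes only an even sign exponent) together with \eqref{e_sign} for the simple reflection $\ss$ (where it holds exactly, being an involution), setting $\mu' = \sum_{\gamma \in \Phi^s(w_1)} \dd_\gamma \gamma$, I will derive $w_1 \e_{\ddd'} \ss \e_i^\dd = w_1 \ss \e^{\ss\mu' - \dd\alpha_i} = w\e_\ddd$ as operators on $\F(Q)$ (the sign on $\dd\alpha_i$ is absorbed via $\e^{2\dd\alpha_i} = 1$), so $f(w_1 \e_{\ddd'} \ss \e_i^\dd \xx) = f(w \e_\ddd \xx)$. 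The main subtlety will be this sign bookkeeping under the change of variable $\gamma \leftrightarrow \ss\gamma$; once the identifications are in place, the reorganization of the double sum into \eqref{e4} for $w$ is direct.
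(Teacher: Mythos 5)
Your proof is correct and follows essentially the same route as the paper's: induction on $\ell(w)$, peeling off the rightmost reflection in a reduced word $w = w_1\ss_{i_1}$, using Lemma~\ref{lem_phiw} to identify $\Phi^s(w)$ with $\{\a_{i_1}\}\sqcup \ss_{i_1}\Phi^s(w_1)$ (or just $\ss_{i_1}\Phi^s(w_1)$ when $m_{i_1}=1$) and tracking the signs via \eqref{e_sign}. The only cosmetic difference is that the paper first proves an intermediate formula indexed by all of $\Phi(w)$ using a uniform notation $J_{m_\b}$ (with $J_1(x,\dd)=\tfrac12(1+(-1)^\dd)$), so that the induction needs no case split on $m_{i_1}$, and then discards the trivial long-root factors at the end; you instead handle the two cases $m_{i_1}\in\{1,2\}$ directly, which amounts to the same computation.
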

\begin{proof}
For the purposes of this proof, let $J_2(x,\dd)=J(x,\dd)$ and $J_1(x,\dd)=\frac 12(1+(-1)^\dd)$.
Then, the Chinta-Gunnells action can be written uniformly as
\[f\CG \ss_i (\xx) = f(\ss_i \xx) J_{m_i}(x_i, 0)+f(\ss_i \e_i \xx) J_{m_i}(x_i,1).
\]
Using this formula, we prove by induction on $\ell(w)$ that
\be\label{e41}
f \CG w (\xx)=\sum_{\ddd\in \{0,1\}^{\ell}}
f\left(w \e_\ddd \xx \right)
\prod_{\b\in \Phi(w)}  J_{m_\b}\left(
(-1)^{\la\b, \sum_{\g\prec\b}\dd_\g\g \ra}
\xx^{\b},\dd_\b \right),
\ee
where $\ddd$ runs over $\ell$-tuplets $(\dd_\g)_{\g\in \Phi(w)}\in\{0,1\}^{\ell}$
 and $\e_\ddd=\e^{\sum_{\g\in\Phi(w) } \dd_\g \g}$.
If $\ell(w)=1$ then~\eqref{e41}
is clear, and if it holds for $w$, then, using Lemma~\ref{lem_phiw}, it is easy to check
that it holds for $w\ss_i$ if $\ell(w\ss_i)=\ell(w)+1$.

In \eqref{e41} it is clear that if $m_\b=1$, only the $\ddd$ with $\ddd_\b=0$ contribute in the product,
with a factor $J_1(x, 0)=1$. Formula~\eqref{e4} follows immediately.
\end{proof}

\subsection{}
As an immediate application, we have the following divisibility property of the numerator $N_\Phi(\xx; u)$
defined by~\eqref{e_zphi}.

\begin{lemma}\label{lem_div}
(i) If $m_i=2$, then $N_\Phi(\xx; u)$ is divisible by $1+ux_i$.

(ii) If $m_\a=1$, $\a\in \Phi^+$, then $N_\Phi(\xx; u)$ is divisible by $1-u^2\xx^\a$.
\end{lemma}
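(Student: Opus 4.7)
The plan is to exploit the Chinta--Gunnells invariance $Z_\Phi\CG\sigma_\alpha = Z_\Phi$ for a well-chosen reflection $\sigma_\alpha\in W$, using Lemma~\ref{lem_CG} to express $Z_\Phi(\xx)$ as a sum whose terms are manifestly regular along the target polar locus. In a reduced irreducible system, each of the factors $(1+ux_i)$ (appearing as part of $(1-u^2x_i^2)$) and $(1-u^2\xx^\alpha)$ for long $\alpha$ appears with multiplicity exactly one in $D_\Phi$, so regularity of $Z_\Phi$ along the corresponding divisor will immediately yield the desired divisibility of $N_\Phi=Z_\Phi\,D_\Phi$.

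For~(i), I take $\sigma_\alpha=\sigma_i$. Since $\Phi^s(\sigma_i)=\{\alpha_i\}$, Lemma~\ref{lem_CG} gives
\[
Z_\Phi(\xx) = J(x_i,0)\,Z_\Phi(\sigma_i\xx) + J(x_i,1)\,Z_\Phi(\sigma_i\e_i\xx).
\]
At $x_i=-1/u$, the factors $J(x_i,d)$ are finite (the only pole of $J(x,d)$ in $x$ is at $x=1/u$), and in both $Z_\Phi$-arguments the $i$-th coordinate becomes $1/x_i=-u$; for generic remaining coordinates no factor $(1-u^2\yy^{m_\beta\beta})$ of $D_\Phi(\yy)$ vanishes at $y_i=-u$, so both $Z_\Phi$-values stay finite. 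Hence $Z_\Phi$ is regular at $x_i=-1/u$, yielding $(1+ux_i)\mid N_\Phi$.

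For~(ii), I take $\sigma_\alpha$ with $\alpha\in\Phi^+$ long. When $\alpha=\alpha_j$ is simple, $\Phi^s(\sigma_j)=\emptyset$ and the formula collapses to $Z_\Phi(\xx)=Z_\Phi(\sigma_j\xx)$; regularity at $x_j=1/u^2$ is immediate since the pulled-back $j$-th coordinate is $u^2$, where $D_\Phi$ does not vanish generically. For non-simple long $\alpha$, Lemma~\ref{lem_CG} presents $Z_\Phi(\xx)$ as a sum of terms $Z_\Phi(\sigma_\alpha\e_\ddd\xx)\prod_{\beta\in\Phi^s(\sigma_\alpha)}J(\pm\xx^\beta,\dd_\beta)$. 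On a generic point of $\{\xx^\alpha=1/u^2\}$, the $J$-factors remain finite because each $\beta\in\Phi^s(\sigma_\alpha)$ is short and hence not proportional to the long root $\alpha$, so $\xx^\beta$ stays generic. The main step will be to verify regularity of each $Z_\Phi(\sigma_\alpha\e_\ddd\xx)$ at the locus: the polar divisor of $Z_\Phi(\yy)$ lies in $\bigcup_{\beta\in\Phi^+}\{\yy^{m_\beta\beta}=1/u^2\}$, and (using $m_\beta\beta\in Q_{\rm ev}$ so that the $\e_\ddd$-sign is trivial) its pullback under $\yy=\sigma_\alpha\e_\ddd\xx$ consists of divisors $\{\xx^{m_\beta\sigma_\alpha\beta}=1/u^2\}$. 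A coincidence with $\{\xx^\alpha=1/u^2\}$ would force $m_\beta\sigma_\alpha\beta=\alpha$, which is impossible in a reduced root system with $\alpha$ long: $m_\beta=1$ gives $\beta=\sigma_\alpha\alpha=-\alpha\notin\Phi^+$, while $m_\beta=2$ requires $\sigma_\alpha\beta=\alpha/2$, not a root. Hence $Z_\Phi$ is regular along $\{\xx^\alpha=1/u^2\}$ and $(1-u^2\xx^\alpha)\mid N_\Phi$. The main obstacle is this last verification, which uses the hypothesis $m_\alpha=1$ in an essential way to rule out spurious pullback coincidences.
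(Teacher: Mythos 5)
Your proof is correct, but it routes through a genuinely different decomposition than the paper's. The paper expands $Z_\Phi=\sum_{w\in W}1\CG w/\DD_\Phi(w\xx)$ and uses Lemma~\ref{lem_CG} to locate the poles of \emph{every} summand: for (i) it observes that a pole of $1\CG w$ at $x_i=\pm 1/u$ forces $\a_i\in\Phi(w)$, and then choosing a reduced word ending in $\ss_i$ makes the relevant $J$-factor exactly $J(x_i,\dd)$, which is regular at $x_i=-1/u$; for (ii) it simply notes that all poles of $1\CG w$ lie on $\{\xx^\beta=\pm 1/u\}$ for \emph{short} $\beta\in\Phi(w)$, which never coincides with $\{\xx^\a=1/u^2\}$ for $\a$ long. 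You instead invoke a single functional equation $Z_\Phi=Z_\Phi\CG\ss_\gamma$ (with $\gamma=\a_i$ for (i), $\gamma=\a$ for (ii)) and expand only that one application of the action; you then transfer the question of regularity onto the pullback of the polar divisor of $Z_\Phi$ itself, using the factorization $Z_\Phi=N_\Phi/D_\Phi$ from \eqref{e_zphi}. For (i) this is tighter than the paper's argument: a two-term identity with both pieces visibly regular at $x_i=-1/u$. For (ii) your route is heavier, since you work with the non-simple reflection $\ss_\a$ and must carefully control which hypersurfaces the pullback $\{\xx^{m_\b\ss_\a\b}=1/u^2\}$ can hit (including the sign bookkeeping via $m_\b\b\in Q_{\rm ev}$), whereas the paper's one-line observation about the shape of the poles of $1\CG w$ already suffices. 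Both arguments ultimately rest on the same facts — $J(x,\dd)$ has a pole only at $x=1/u$, and short/long roots are never proportional in a reduced system — so the content is the same; the trade-off is that your choice of a single functional equation buys economy in (i) at the cost of extra divisor-pullback bookkeeping in (ii).
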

\begin{proof} Assume $m_i=2$. The function $1|w$ has a pole at $x_i=\pm 1\slash u$ if and only
$1\CG w$ has such a pole. By \eqref{e4}, this happens
 only if $\a_i\in \Phi(w)$, which means that $w$ has a reduced expression ending in $\ss_i$. Fixing such a reduced expression for $w$, the term in~\eqref{e4} corresponding to $\b=\a_i$ is $J(x_i, \dd_\b)$, which has no pole
at $x_i=-1\slash u$. Therefore $Z_\Phi(\xx; u)$ has no pole there, and the divisibility of $N_\Phi(\xx; u)$ follows.

Assume that $\a\in \Phi^+$ and $m_\a=1$. By~\eqref{e4}, the poles of $1|w$ are of the type
$\xx^\b=\pm 1\slash u$, for $\b\in \Phi(w)$ with $m_\b=2$. Therefore, $Z_\Phi(\xx; u)$ does not have a pole when $\xx^\a=1\slash u^2$.
Taking into account that $1-u^2 \xx^\a$ is a factor of $D_{\Phi}(\xx; u)$,
it follows that $N_\Phi(\xx; u)$ is divisible by this factor.
\end{proof}
\begin{convention}\label{conv}
We refer to $N_\Phi(\xx; u)$ as the numerator of $Z_\Phi(\xx; u)$, although it is not the numerator of the reduced
fraction expressing $Z_\Phi(\xx; u)$.
\end{convention}
The reason for considering $N_\Phi(\xx; u)$, rather than its quotient by its divisors in
Lemma~\ref{lem_div}, will become apparent in the next section.


\section{The numerator of the twisted Chinta-Gunnells average }\label{sec_numzeta}

\subsection{}
 While our main interest is the zeta average $Z_\Phi(\xx; u)$, in \S\ref{sec_proof} we shall later encounter  functions invariant under a \emph{twisted} Chinta-Gunnells action.
Such functions were studied in detail in \cites{CFG,Fr}, and since we need to consider a slightly more general setting we review here the results from loc. cit. that are needed.

\subsection{}

Let $\om_i$, $1\le i\le r$, denote the fundamental weights, and $\a_i^\vee={2\a_i}/{\la\a_i,\a_i \ra}$, $1\le i\le r$, the simple co-roots, so that $\la\om_i, \a_j^\vee\ra=\dd_{i,j}$. The weight lattice of $\Phi$ is $P=\bigoplus_{i=1}^r \Z \om_i$.  Let
$P^{+}=\sum_{i=1}^r \Z_{\ge 0}\om_i$, and $P^{++}=\sum_{i=1}^r \Z_{> 0}\om_i$, be the set of dominant weights, and respectively regular dominant weights.  With the notation
\be
\rho=\sum_{i=1}^r \om_i=\frac 12 \sum_{\a\in\Phi^+} \a,
\ee
we have $P^{++}=\rho+P^+$. On $P$ we have the natural partial order relation defined by
\be
\om'\le \om\quad  \text{if and only if}\quad  \om-\om'\in Q^+:=\sum_{i=1}^r\Z_{\ge 0}\a_i.
\ee

\subsection{}
We fix  $\ell_1, \ell_2, \ldots, \ell_r\in \Z$  (which will be called \emph{twist parameters}), and denote
\[\om=\sum_{i=1}^r \ell_i \om_i\in P,\quad\text{and}\quad  \theta=\rho+\om.
\]
Following~\cite{CFG}, we define a \emph{twisted action}\footnote{
After  the change of variables
$u \mapsto \sqrt q , x_i\mapsto x_i\slash \sqrt q$, the action $\CG_{\!\!\om}$ coincides with $|_\ell$ defined in~\cite{CFG}
with $\ell=(\ell_1, \ldots, \ell_r)$. }
by
\[
f \CG_{\!\!\om} \ss_i(\xx)=x_i^{\ell_i}\cdot
\begin{cases}\displaystyle\frac{1-u\slash x_i}{1-ux_i} f^+_i(\ss_i \xx)+\frac{1}{x_i} f^-_i(\ss_i \xx)
        & \text{\ \ if $m_i=2$, $\ell_i$ even,} \\
        \displaystyle \frac{1-u\slash x_i}{1-ux_i} f^-_i(\ss_i \xx)+\frac{1}{x_i} f^+_i(\ss_i \xx)
         & \text{\ \ if $m_i=2$, $\ell_i$ odd,}\\
         f(\ss_i \xx)  &  \text{\ \ if $m_i=1$}.
               \end{cases}
\]
The usual Chinta-Gunnells action corresponds to $\omega=0$.

\noindent We also consider the related action, denoted by $f\vert_{\om}  w$,  which for simple reflections is defined by
\[
f|_{\om} \ss_i(\xx):=-x_i^{m_i} f\CG_{\!\!\om}\ss_i(\xx).
\]

As in \eqref{zeta-avg}, we define the corresponding twisted zeta average. For $\om\in P^+$, it was shown in~\cite{CFG} that the denominator of the twisted zeta average divides $D_{\Phi}(\xx; u)$.
For $\om\in P$, the twisted zeta average may have additional poles when some $x_j=0$.
Therefore,  for any $\om\in P$, we define the $\F$-vector space
\be\label{N-space}
\NN_\om=\{N\in \F[Q] ~:~  \ f=N/D_{\Phi}(\xx; u) \text{ satisfies  } f=f\CG_{\!\!\om} w.
\text{ for all } w\in W\},
\ee
We emphasize that, unlike in \cite{CFG},  we allow numerators that are \emph{Laurent polynomials}.

\subsection{}

If $f(\xx)\in \F(Q)$ is  even with respect to all the sign operators $\e_i$ then,
for any $g(\xx)\in \F(Q)$ and $w\in W$, we have
$$[f(\xx)g(\xx)]|_\om w= f(w\xx)\cdot  g(\xx)|_\om w.$$
It follows that the characterizing property of $N\in \NN_\om$ is
\be\label{e_N}
N = \frac{1-u^2 x_i^{m_i}}{1-u^2\slash x_i^{m_i}} \cdot N\CG_{\!\!\om} \ss_i, \quad i=1,\ldots, r.
\ee
The support of a Laurent polynomial $N=\sum_{\lam \in Q} c_\lam \xx^\lam $ is $\supp (N)=\{\lam\in Q \mid c_\lam \ne 0\}.$ The equality~\eqref{e_N} is equivalent to the following linear system satisfied by the coefficients
\be\label{e_rec}
\begin{aligned}
    c_\mu-u c_\lam&=c_{\lam-\a_i}-u c_{\mu+\a_i}, & &\text{if } m_i=2 \text{ and } l_i+ \la\lam,\a_i^\vee \ra \text{ even,}\\
    c_\mu+u^2 c_\lam&=c_{\lam-2\a_i}+u^2 c_{\mu+2\a_i}, & &\text{if } m_i=2 \text{ and }  l_i+\la\lam,\a_i^\vee \ra \text{ odd,}\\
    c_\mu+u^2 c_\lam&=c_{\lam-\a_i}+u^2 c_{\mu+ \a_i}, & &\text{if } m_i=1,
    \end{aligned}
\ee
where $\mu=\ss_i (\lam) + \la\theta,\a_i^\vee \ra\a_i$. Remark that if $m_i=2$, then $q(\a_i)$ is odd, therefore
$\la\lam,\a_i \ra$ and $ \la\lam,\a_i^\vee \ra$ have the same parity.

\subsection{}
The order relation $\le$ on $P$, when restricted to a single $W$-orbit, has an equivalent description in terms of the Bruhat order on $W$. For an element $w\in W$ and $1\le i\le r$, we write $w<\ss_iw$\  if and only if $\ell(w)<\ell(\ss_iw)$. The transitive closure of this relation is called the (weak left) Bruhat order. For the basic properties of the Bruhat order we refer to \cite{Hum}*{\S5.9}. From Lemma \ref{lem_phiw} it follows that $w<\ss_i w$ if and only if $w^{-1}\a_i \in \Phi^+$.
For each weight $\eta\in P$ define $\eta_+$ to be the unique dominant element in $W\eta$, the orbit of $\eta$. Let $w_\eta\in W$ be the unique minimal length element such that $w_\eta(\eta_+)=\eta$.
\begin{Prop} \label{lm: order}
If $\ss_i\eta\neq \eta$, then $w_{\ss_i\eta}=\ss_i w_\eta$. Furthermore, the following are equivalent
\begin{enumerate}[label=(\alph*)]
\item $w_{\ss_i\eta}>w_\eta$;
\item  $w_\eta^{-1}\a_i \in \Phi^+$;
\item  $\la \eta,\a_i\ra>0$;
\item $\ss_i\eta< \eta$.
\end{enumerate}
In consequence, for any $\nu\in W\eta$,  we have $\nu\le \eta$ if and only if  $w_\nu\ge w_\eta$.
\end{Prop}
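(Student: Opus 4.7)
\emph{Proof plan.} The first claim $w_{\ss_i\eta}=\ss_i w_\eta$ will be verified in two steps. First, $(\ss_i w_\eta)\eta_+ = \ss_i\eta$ places $\ss_i w_\eta$ in the coset $w_{\ss_i\eta}W_{\eta_+}$ of the parabolic stabilizer. Second, to see that this element is the minimal length representative of the coset, I would use the standard characterization: $v$ is the minimal representative in $v W_{\eta_+}$ if and only if $v(\a_j)\in\Phi^+$ for every simple $\a_j$ with $\ss_j\in W_{\eta_+}$. Since $w_\eta$ has this property and $\ss_i$ permutes $\Phi^+\setminus\{\a_i\}$, the only way $\ss_i w_\eta(\a_j)$ could be negative is if $w_\eta\a_j=\a_i$; but then $w_\eta \ss_j w_\eta^{-1}=\ss_i$ would stabilize $\eta=w_\eta\eta_+$, contradicting $\ss_i\eta\neq\eta$.

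With that identity in hand, the equivalences are straightforward. Statement (a) rewrites as $\ell(\ss_iw_\eta)>\ell(w_\eta)$, which by the standard Coxeter criterion is (b). For (c)$\Leftrightarrow$(d) I would use $\eta-\ss_i\eta=\la\eta,\a_i^\vee\ra \a_i$ directly. For (b)$\Leftrightarrow$(c) I would write
\[
\la\eta,\a_i\ra = \la w_\eta\eta_+,\a_i\ra = \la\eta_+,w_\eta^{-1}\a_i\ra;
\]
since $\eta_+$ is dominant, the sign of the right-hand side matches the sign of $w_\eta^{-1}\a_i$ provided the pairing is nonzero, which is guaranteed because $\la\eta_+,w_\eta^{-1}\a_i\ra=0$ would again force $\ss_i$ into the stabilizer of $\eta$.

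For the closing equivalence $\nu\le\eta \Leftrightarrow w_\nu\ge w_\eta$ on the orbit $W\eta$, I would argue by induction using the previous equivalences as the inductive step. Each elementary lowering $\lam\mapsto \ss_i\lam$ with $\la\lam,\a_i\ra>0$ both decreases $\lam$ in the dominance order (by (d)) and increases $w_\lam$ by a single simple reflection in Bruhat order (by (a)); conversely, any Bruhat cover $w_\lam<\ss_iw_\lam$ in the set of minimal coset representatives arises from such a step. I would invoke the classical fact that the dominance order on a Weyl orbit is the transitive closure of these elementary moves (which itself follows from the equivalences above applied inductively), so chains in one order correspond bijectively to chains in the other.

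The main technical obstacle is the minimality argument in Step 1, specifically ruling out $w_\eta\a_j=\a_i$; everything else is then pulled cleanly out of this identity and the standard length criterion. The secondary subtlety is ensuring that the chain-based argument in the consequence uses only elementary moves of the form established in the equivalences, so that no auxiliary facts about Bruhat order beyond the simple-reflection cover relations are needed.
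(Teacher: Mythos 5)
Your proof of the first claim and of the four equivalences is correct and, unlike the paper's proof, self-contained: the paper simply cites \cite{Ion}*{Lemmas 4.1, 4.3} for the first claim and for (b)$\Leftrightarrow$(c), and declares the rest straightforward. Your treatment of the minimality of $\ss_iw_\eta$ via the minimal-coset-representative criterion, ruling out $w_\eta\a_j=\a_i$, is the right argument, and your handling of the degenerate case $\la\eta_+,w_\eta^{-1}\a_i\ra=0$ in (b)$\Leftrightarrow$(c) is careful and correct.

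However, there is a genuine gap in your proof of the closing equivalence. The ``classical fact'' you invoke --- that the dominance order on a Weyl orbit is the transitive closure of the elementary moves $\lam\mapsto\ss_i\lam$ with $\ss_i$ a \emph{simple} reflection --- is false, and it does not follow from the equivalences (a)--(d) applied inductively. Here is a counterexample in type $A_2$: take a regular dominant weight $\eta_+$, and set $\eta=\ss_1\eta_+$, $\nu=\ss_1\ss_2\eta_+$. Then $\eta-\nu=\la\eta_+,\a_2^\vee\ra(\a_1+\a_2)\in Q^+\setminus\{0\}$, so $\nu<\eta$; yet the only \emph{decreasing} simple move from $\eta$ is $\ss_2$, producing $\ss_2\eta=\ss_2\ss_1\eta_+$, which is incomparable to $\nu$. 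Thus there is no decreasing chain of simple-reflection moves from $\eta$ to $\nu$. The correct version of the chain lemma allows reflections in arbitrary \emph{positive} roots (here $\ss_{\a_1+\a_2}\eta=\nu$), which corresponds to the \emph{strong} Bruhat order on minimal coset representatives, not to the weak left order generated by $w<\ss_iw$ and its transitive closure. This also means the statement ``$\nu\le\eta\Leftrightarrow w_\nu\ge w_\eta$'' is simply false for the weak left order as defined in \S\ref{sec2.2}: in the example, $w_\nu=\ss_1\ss_2$ is \emph{not} $\ge\ss_1=w_\eta$ in weak left order (since $\ss_1\ss_2\ss_1^{-1}$ is not a simple reflection), though it is in strong Bruhat order. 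You should be aware that the paper's own framing here is slightly off: it labels the order ``weak left'' but then points to \cite{Hum}*{\S5.9}, which is the strong Bruhat order, and it is the strong order for which the closing equivalence holds. The argument you need for the remaining direction is a theorem of Deodhar type relating dominance order on an orbit to strong Bruhat order on minimal coset representatives; it cannot be reduced to single-simple-reflection covers.
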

\begin{proof}
The first claim is \cite{Ion}*{Lemma 4.3}. The equivalence between (b) and (c) is proved in  \cite{Ion}*{Lemma 4.1}. The remaining implications are straightforward.
\end{proof}
We note that, in particular, $\eta_+$, and $w_\circ \eta_+$,
are  the  largest, and respectively smallest, elements of $W\eta$.
\subsection{}
Following~\cite{CFG},  for each dominant weight $\xi\in P^+$, we  introduce the set
\[
O_\xi:=\{ \theta-w\xi ~:~ w\in W \}.
\]
This is the usual $W$-orbit of $\xi$, reflected in the origin and translated by $\theta$.

We consider the relations~\eqref{e_rec}  for  elements
$\lam, \mu\in O_\xi$ such that
\[
\lam=\theta-\eta, \quad \mu= \theta-\ss_i \eta=\ss_i \lam +\a _i \la\theta ,\a_i^\vee \ra,
\text{ with }   \ss_i \eta \le\eta.
\]
The elements in the root lattice $Q$ appearing on the right-hand side of \eqref{e_rec}
 belong to orbits $O_\tau$ with $\tau>\xi$, by the following geometric lemma from \cite{CFG},
 whose proof we include for completeness.
\begin{lemma}\label{lem_geom}
Let $\lam=\theta-\eta $, $\mu=  \theta-\ss_i \eta$ for some
$\eta\in P$ such that   $\ss_i\eta\le\eta$. Denoting $\xi:=\eta_+$,
for any $m\ge 1$ we have
\begin{enumerate}
\item If $\mu+m\a_i\in O_\tau$, then $\tau>\xi$;
\item  If $ \lam-m\a_i\in O_\tau$, then $ \tau>\xi$.
\end{enumerate}
\end{lemma}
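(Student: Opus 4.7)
The plan is to reduce both parts of the lemma to the single claim $(\eta + m\a_i)_+ > \xi$, and then verify this claim via a short norm argument together with Proposition \ref{lm: order}.

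First I would carry out the reduction. For part (2), $\lam - m\a_i = \theta - (\eta + m\a_i)$, so $\tau$ must be the dominant representative of $W(\eta + m\a_i)$. For part (1), using $\ss_i(m\a_i) = -m\a_i$, I would rewrite
\[
\mu + m\a_i = \theta - \ss_i\eta + m\a_i = \theta - \ss_i(\eta + m\a_i),
\]
so again $\tau = (\eta + m\a_i)_+$. Both parts thus amount to the same inequality $(\eta + m\a_i)_+ > \xi$.

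Next I would establish the following auxiliary fact: for any $\xi' \in P^+$ and any $\a \in Q^+ \setminus \{0\}$, one has $(\xi' + \a)_+ > \xi'$. Setting $\tau' := (\xi' + \a)_+$, the orbit inequality $\xi' + \a \le \tau'$ gives $\tau' - \xi' = \a + (\tau' - \xi' - \a) \in Q^+$. To rule out $\tau' = \xi'$, I would observe that $\xi' + \a \in W\xi'$ would force $\|\xi' + \a\|^2 = \|\xi'\|^2$, hence $2\la \xi', \a\ra + \|\a\|^2 = 0$; but $\xi' \in P^+$ gives $\la \xi', \a\ra \ge 0$ and $\a \ne 0$ gives $\|\a\|^2 > 0$, a contradiction.

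Finally, setting $\b := w_\eta^{-1}\a_i$, one computes $w_\eta^{-1}(\eta + m\a_i) = \xi + m\b$, so $(\eta + m\a_i)_+ = (\xi + m\b)_+$. The main technical step, and what I expect to be the main obstacle, is verifying $\b \in \Phi^+$ under the weak hypothesis $\ss_i\eta \le \eta$. When $\ss_i\eta < \eta$, this is the equivalence (b)~$\Leftrightarrow$~(d) of Proposition \ref{lm: order}. The equality case $\ss_i\eta = \eta$ is not directly treated by that proposition, but in that case $\ss_iw_\eta$ and $w_\eta$ both send $\xi$ to $\eta$, so they lie in the same right coset modulo the stabilizer of $\xi$ in $W$; the minimality of $w_\eta$ in this coset then forces $\ell(\ss_iw_\eta) > \ell(w_\eta)$, equivalently $w_\eta^{-1}\a_i \in \Phi^+$. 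With $m\b \in Q^+ \setminus \{0\}$, the auxiliary fact then yields $\tau > \xi$, as required.
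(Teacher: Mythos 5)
Your proof is correct and follows essentially the same route as the paper's: you identify $w_\eta^{-1}\a_i$ as a positive root using the same two-case analysis (Proposition~\ref{lm: order} when $\ss_i\eta<\eta$, length minimality of $w_\eta$ when $\ss_i\eta=\eta$), and then conclude $\tau>\xi$. The only differences are organizational: you neatly observe that $\mu+m\a_i=\theta-\ss_i(\eta+m\a_i)$ so that both parts of the lemma reduce at once to $(\eta+m\a_i)_+>\xi$, whereas the paper proves part (2) and says part (1) is similar; and for the final inequality you invoke an auxiliary norm argument showing $(\xi'+\a)_+>\xi'$ for dominant $\xi'$ and nonzero $\a\in Q^+$, whereas the paper uses the more direct chain $\tau\ge w_\eta^{-1}w\tau=\xi+m\b>\xi$ (dominance of $\tau$ plus positivity of $\b$). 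Both tools are standard and interchangeable here; one small slip in wording is that $w_\eta$ and $\ss_iw_\eta$ lie in the same \emph{left} coset of $\stab_W(\xi)$, not right coset, but this does not affect the argument.
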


\begin{proof} We prove the statement for $\lam$, the argument for $\mu$ being similar.
Let $\tau\in P^+$ such that
$$\lam-m\a_i=\theta-w\tau.$$
Since $\eta_+=\xi$, we have $\lam=\theta-w_\eta\xi$, which gives
 $\xi=w_\eta^{-1}w\tau -m w_\eta^{-1} \a_i$. We now have two cases:
(i) if   $ \ss_i \eta <\eta$, then $w_\eta^{-1} \a_i>0$ by Lemma~\ref{lm: order};
(ii) if  $ \ss_i \eta =\eta$, then $\eta=\ss_i w_\eta \xi$ and the definition of $w_\eta$ implies
$\ell(\ss_i w_\eta)> \ell(w_\eta)$. Again we obtain $w_\eta^{-1} \a_i>0$, by Lemma~\ref{lem_phiw}.

In both cases it follows that $\xi<w_\eta^{-1}w\tau$. Since $\tau$ is dominant,
we have $w_\eta^{-1}w\tau\le \tau$, showing that $\tau>\xi$.
\end{proof}

\subsection{}
Using the above lemma, one can prove, along the same lines as in~\cite{CFG}, the following result. We emphasize that in our statement we do not assume that  the twisting parameters are nonnegative.
\begin{Prop}\label{prop_orb}
Let $\om\in P$, and $N\in\NN_\om$, $N\ne 0$. Let $\xi\in P^+$ be maximal
with the property that
$$O_\xi\cap \supp(N)\ne \emptyset.$$
Then $\xi$ is strongly dominant and $O_\xi\subseteq \supp(N)$.
\end{Prop}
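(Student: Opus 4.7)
The plan is to exploit the linear system \eqref{e_rec} satisfied by the coefficients of $N$, together with Lemma \ref{lem_geom} and the maximality of $\xi$. The preliminary observation is this: whenever $\eta \in W\xi$ satisfies $\ss_i\eta \le \eta$, set $\lam = \theta - \eta$ and $\mu = \theta - \ss_i\eta$; then Lemma \ref{lem_geom} places both $\lam - m\a_i$ and $\mu + m\a_i$ (for $m\ge 1$) in orbits strictly larger than $\xi$, so by maximality their coefficients in $N$ vanish. Hence for each such pair $(\eta, \ss_i\eta)$ the right-hand side of \eqref{e_rec} is zero, dramatically simplifying the relation between $c_\lam$ and $c_\mu$.

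I would first establish $O_\xi \subseteq \supp(N)$. Fix $\lam_0 = \theta - \eta_0 \in O_\xi \cap \supp(N)$. Whenever $\eta \in W\xi$ and $\ss_i\eta < \eta$, the preliminary observation reduces \eqref{e_rec} to $c_\mu = u\,c_\lam$ in Case~1, or $c_\mu = -u^2 c_\lam$ in Cases~2 and~3. Since $u \in \F = \Q(u)$ is a formal indeterminate, the scalar is invertible, so $c_\mu \neq 0$ iff $c_\lam \neq 0$; swapping the roles of $\eta$ and $\ss_i\eta$ handles the case $\ss_i\eta > \eta$. Since any two elements of $W\xi$ are joined by a sequence of simple reflections that move the element at each step (e.g., along a reduced expression for the intervening Weyl element), propagation from $\lam_0$ yields $c_\lam \neq 0$ for every $\lam \in O_\xi$.

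Next I would prove that $\xi$ is strongly dominant by contradiction. Suppose $\la \xi, \a_i^\vee \ra = 0$ for some $i$, so $\ss_i\xi = \xi$. Apply \eqref{e_rec} with $\eta = \xi$: then $\lam = \mu = \theta - \xi$, and a short computation gives $\la \lam, \a_i^\vee \ra = \la \theta, \a_i^\vee \ra = \ell_i + 1$, which makes $\ell_i + \la \lam, \a_i^\vee \ra$ odd. Thus Case~2 (if $m_i = 2$) or Case~3 (if $m_i = 1$) of \eqref{e_rec} applies, and in either case the left-hand side collapses to $(1 + u^2)\,c_\lam$. The preliminary observation applies (Lemma \ref{lem_geom} accepts the equality case $\ss_i\eta = \eta$), so the right-hand side vanishes. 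Therefore $c_{\theta - \xi} = 0$, contradicting $\theta - \xi \in O_\xi \subseteq \supp(N)$ established in the previous step.

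The main technical hurdle I anticipate is matching the parity condition $\ell_i + \la \lam, \a_i^\vee \ra \pmod{2}$ against the three cases of \eqref{e_rec} consistently in both arguments; the remaining bookkeeping is routine. The invertibility of the scalars $u$, $-u^2$, $1+u^2$ in $\F = \Q(u)$ is automatic since $u$ is a formal indeterminate, and Lemma \ref{lem_geom} is robust enough to cover the borderline case $\ss_i\eta = \eta$ needed for the strong-dominance step, so no deeper obstruction is expected.
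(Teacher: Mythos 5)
Your proof is correct and follows essentially the same strategy as the paper's: exploit the recursion \eqref{e_rec}, use Lemma \ref{lem_geom} together with maximality to kill the right-hand side, and propagate along chains in $W\xi$. The only structural difference is the order — you establish $O_\xi\subseteq\supp(N)$ first and then derive strong dominance as a corollary, while the paper first rules out $\xi\notin P^{++}$ (using the same recursion with $\ss_i\xi=\xi$) and then turns to the orbit inclusion — and this reordering is perfectly valid. Your parity computation $\la\theta-\xi,\a_i^\vee\ra=\ell_i+1$ is correct and makes explicit (what the paper leaves implicit) that when $\ss_i\xi=\xi$ the recursion falls into Case~2 or~3 with left-hand side $(1+u^2)c_{\theta-\xi}$. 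One small imprecision: the phrase ``along a reduced expression for the intervening Weyl element'' is slightly misleading, since a reduced word for $w_{\eta_2}w_{\eta_1}^{-1}$ applied directly to $\eta_1$ could pass through stabilizing steps; the clean way, as in the paper, is to go from $\eta_1$ down to $\xi$ along a reduced expression for $w_{\eta_1}$ and then from $\xi$ up to $\eta_2$ along one for $w_{\eta_2}$, using Proposition \ref{lm: order} to guarantee each step strictly moves the element. This is a cosmetic fix, not a gap in the underlying argument.
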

\begin{proof}
Let $N=\sum c_\lam \xx^\lam$.  For a contradiction, assume that $\xi$ is not in  $P^{++}$, so
the parabolic subgroup $P_\xi$ generated by the simple reflections fixing it is nontrivial.
Let $\ss_i\in P_\xi$ and  $\lam=\theta-\xi$, $\mu=\theta-\ss_i\xi=\lam$. By Lemma~\ref{lem_geom},
the equations \eqref{e_rec} show that
$c_\lam$ is determined by some coefficients labeled by elements in sets $O_\tau$, with $\tau>\xi$.
Such coefficients must vanish by the maximality of $\xi$, so $c_\lam=0$.

Let now $\g\in O_\xi$, so that
$\g=\theta-\eta=\theta-w_\eta \xi$. Let $w_\eta=\ss_{i_\ell}\cdot \ldots\cdot \ss_{i_1}$ be a reduced
decomposition with $\ell=|w_\eta|$. By the minimality of $w_\eta$ and Lemma~\ref{lm: order},
it follows that
$\xi>\ss_{i_1} \xi>\ss_{i_2} \ss_{i_1} \xi>\cdots>\eta.$
Applying Lemma~\ref{lem_geom} a number of $\ell$ times shows that $c_\g=0$ for all $\g\in O_\xi$,
contradicting the hypothesis $O_\xi\cap \supp(N)\ne \emptyset.$

We conclude that $\xi\in P^{++}$, so the set $O_\xi$ contains $|W|$ elements. If $c_\lambda=0$, the argument in the previous paragraph shows that $O_\xi\cap \supp(N) = \emptyset$.
Therefore, $c_\lam\ne 0$, and the argument of the previous paragraph shows that  $c_\gamma \ne 0$ for all $\gamma\in O_\xi$. In conclusion, $O_\xi\subseteq \supp(N)$.
\end{proof}

\begin{remark}\label{rem_unique}
The proof of Proposition \ref{prop_orb} gives a slightly stronger result: every $N\in \NN_\om$ is uniquely determined by the coefficients $c_{\theta-\xi}$,  $\xi\in P^{++}$.
\end{remark}

\begin{Cor}\label{cor_notpol}
Let $0\neq N\in \NN_0$,  such that  $\supp(N)\not\subset Q^+$. Then there is $\lam<0$ such that $\lam\in \supp(N)$.
\end{Cor}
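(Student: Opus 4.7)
The plan is to invoke Proposition \ref{prop_orb} at a maximal orbit dominating the one that contains the offending element of $\supp(N)$, and then read off a negative element from the minimum of that orbit. Choose $\mu\in\supp(N)$ with $\mu\notin Q^+$, guaranteed by the hypothesis, and let $\xi'\in P^+$ be the unique dominant representative of the Weyl orbit of $\rho-\mu$, so that $\mu\in O_{\xi'}$ (recall $\theta=\rho$ when $\omega=0$). The set $S=\{\eta\in P^+:O_\eta\cap\supp(N)\ne\emptyset\}$ is finite and contains $\xi'$; pick any maximal element $\xi\in S$ with $\xi\ge\xi'$, which exists in a finite poset. Proposition~\ref{prop_orb} then yields $\xi\in P^{++}$ and $O_\xi\subseteq\supp(N)$.

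It remains to show $\xi>\rho$, for then $\lambda:=\rho-\xi\in O_\xi\subseteq\supp(N)$ satisfies $\xi-\rho\in Q^+\setminus\{0\}$, hence $\lambda<0$, as required. Since $\xi\in P^{++}$, we have $\xi-\rho\in P^+$, and since $O_\xi\subseteq\supp(N)\subseteq Q$ we have $\rho-\xi\in Q$. The classical fact that the inverse Cartan matrix of a finite-type root system has non-negative entries gives $P^+\cap Q\subseteq Q^+$, so $\xi-\rho\in Q^+$, i.e.\ $\xi\geq\rho$. To rule out equality, suppose $\xi=\rho$: then $\xi'\leq\xi=\rho$, and since $w\xi'\leq\xi'$ for every $w\in W$ (Proposition~\ref{lm: order}, applied to the dominant $\xi'$), we obtain $\rho-w\xi'\geq 0$ for all $w\in W$; because $O_{\xi'}\subseteq Q$, this means $O_{\xi'}\subseteq Q^+$, contradicting $\mu\in O_{\xi'}\setminus Q^+$. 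Therefore $\xi>\rho$, and the proof is complete.

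The only delicate point is that Proposition~\ref{prop_orb} constrains only the maximal orbits of $\supp(N)$, whereas the witness $\mu\notin Q^+$ may well lie in a non-maximal, non-strongly-dominant orbit. The device of lifting to a maximal $\xi\geq\xi'$ in $S$ and then using that $\xi'\leq\rho$ would force $O_{\xi'}\subseteq Q^+$ is what converts the existence of $\mu\notin Q^+$ into the strict inequality $\xi>\rho$, from which the required $\lambda<0$ is extracted as the minimum $\rho-\xi$ of $O_\xi$.
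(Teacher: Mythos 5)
Your proof is correct and follows essentially the same route as the paper's. The paper's version phrases the key step as a contradiction (``if every maximal $\xi$ equals $\rho$, then $\supp(N)\subseteq Q^+$''), whereas you anchor the argument to a specific witness $\mu\notin Q^+$, lift it to a maximal $\xi\ge\xi'$ in $S$, and then derive $\xi>\rho$; these are the same argument organized slightly differently, and both hinge on Proposition~\ref{prop_orb} plus the elementary fact that dominant elements are maximal in their $W$-orbits.
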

\begin{proof}
Take $\xi\in P^{++}$ maximal with the property that $O_\xi\cap \supp(N)\ne \emptyset$.
Since $\xi$ is strongly dominant, we have $\xi\ge \theta=\rho$.

We prove by contradiction that $\xi>\rho$ for at least one such maximal $\xi$.
Indeed, assume  that $\xi=\rho$ for all such maximal $\xi$.
It follows that $\rho\ge \tau$ for all
$\tau\in P^+$ such that $O_\tau\cap \supp(N)\ne \emptyset$. But then $\rho-w\tau\ge \rho-\tau\ge 0$, so
 $\supp(N)\subset Q^+$, contradicting the hypothesis.

Therefore, there exists such a maximal element $\xi\in P^{++}$
with $\xi>\rho$. Proposition~\ref{prop_orb} implies that $O_\xi\subset \supp(N)$,
and in particular,  $\lam=\rho-\xi<0$ is in $\supp(N)$.
\end{proof}

The following corollary is proved in~\cite{CFG} for $\om$ dominant.

\begin{Cor}\label{cor_supp} Let $\om \in P$, and assume $N\in \NN_\om$ has $\supp(N)\subset Q^+$.
Then,
\[
\supp(N)\subseteq \bigcup_{\theta\ge \xi\in P^+ } O_\xi.
\]
In particular, $\supp(N)$  is contained in the convex hull of the set $\{ \theta-w\theta\mid w\in W \}$.
\end{Cor}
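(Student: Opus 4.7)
The plan is to deduce the corollary from Proposition~\ref{prop_orb}, which already supplies the main geometric content by controlling the maximal orbit meeting the support. Extending from that single orbit to every orbit touching $\supp(N)$ is a short bookkeeping argument, and the convex-hull statement then follows from a standard fact about Weyl orbits.

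First I would observe that the orbits $\{O_\xi\}_{\xi \in P^+}$ are pairwise disjoint (they are the distinct $W$-orbits in $P$, reflected and translated by $\theta$), so every $\lambda \in \supp(N)$ belongs to exactly one $O_{\xi_0}$ with $\xi_0 \in P^+$, and the first claim reduces to showing $\xi_0 \le \theta$. Since $N \in \F[Q]$ is a Laurent polynomial, its support is finite, and hence
\[
\Xi := \{\xi \in P^+ : O_\xi \cap \supp(N) \ne \emptyset\}
\]
is finite. The key move is to enlarge $\xi_0$ to a $\le$-maximal $\xi \in \Xi$ with $\xi \ge \xi_0$; any strictly larger element of $\Xi$ would automatically lie above $\xi_0$, making such a $\xi$ maximal in all of $\Xi$. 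Proposition~\ref{prop_orb} then applies to $\xi$ and yields $\xi \in P^{++}$ together with $O_\xi \subseteq \supp(N) \subset Q^+$. Evaluating at the element $\theta - \xi \in O_\xi$, which is the unique minimum of $O_\xi$ in the partial order since $w\xi \le \xi$ for all $w\in W$ when $\xi$ is dominant, forces $\theta - \xi \in Q^+$, i.e., $\xi \le \theta$, and therefore $\xi_0 \le \theta$.

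For the convex-hull conclusion, let $\theta_+ \in P^+$ be the dominant representative of the orbit $W\theta$. The standard fact that $\mu_+ \ge \mu$ for every $\mu \in P$ (so $\mu_+ - \mu$ is a nonnegative integer combination of positive roots) yields $\theta_+ \ge \theta \ge \xi_0$, hence $\xi_0 \le \theta_+$ with both sides dominant. The classical characterization of convex hulls of Weyl orbits then gives $\xi_0 \in \conv(W\theta_+)$, and by $W$-invariance this convex hull coincides with $\conv(W\theta)$, so $w\xi_0 \in \conv(W\theta)$ for every $w\in W$. Consequently $\lambda = \theta - w\xi_0$ lies in $\theta - \conv(W\theta) = \conv\{\theta - w'\theta : w' \in W\}$, as required. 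The only subtlety is that $\theta$ itself need not be dominant, since $\om$ is arbitrary in $P$; this is precisely what forces the passage through $\theta_+$, and otherwise the argument is routine.
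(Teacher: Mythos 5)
Your proof is correct and follows essentially the same route as the paper: reduce to the maximal $\xi$ in $\Xi$, invoke Proposition~\ref{prop_orb} to get $O_\xi\subseteq\supp(N)\subseteq Q^+$, read off $\xi\le\theta$ from $\theta-\xi\in O_\xi$, and then pass to the convex hull. The only difference is that the paper cites \cite{M3}*{\S2.6} for the fact that $\theta\ge\xi$ with $\xi\in P^+$ (and $\theta\in P$ arbitrary) implies $W\xi\subseteq\conv(W\theta)$, whereas you unpack it via $\theta_+$; that unpacking is accurate and addresses exactly the subtlety (non-dominance of $\theta$) that the reference is being used to absorb.
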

\begin{proof}
By Proposition \ref{prop_orb}, for every maximal $\xi\in P^{++}$ with the property that $O_\xi\cap \supp(N)\ne \emptyset$, we have $O_\xi\subset \supp(N)$.
In consequence, $\theta-\xi\ge 0$. Since every $\tau\in P^+$ with $O_\tau\cap \supp(N)\ne\emptyset$
is smaller than such a maximal $\xi$, it follows that
 $\theta\ge \tau $ for all such $\tau$. This is  our first claim.

 The second claim
 follows from the first and the following fact: if $\theta\in P$, $\xi\in P^+$ and $\theta\ge \xi$,
 then $W\xi$ is contained in the convex hull of  $W\theta$~\cite{M3}*{\S2.6}.
\end{proof}

\subsection{}
Using that $\xi$ and $w_\circ\xi$ are the largest and, respectively, the smallest, elements of $W\xi$
for $\xi\in P^+$, we obtain the following information about the numerator $N_\Phi$ of the
untwisted zeta average, defined in~\eqref{e_zphi}.
\begin{Cor} \label{cor_degree}  For each $\lam\in\supp N_\Phi$ we have
$\lam\le \rho-w_\circ\rho =2\rho.
$
\end{Cor}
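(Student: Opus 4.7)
My plan is to invoke Corollary~\ref{cor_supp} to constrain $\supp(N_\Phi)$, and then to extract the required partial-order bound from that constraint. Since $N_\Phi(\xx;u)$ is a polynomial by~\eqref{e_zphi}, we have $\supp(N_\Phi)\subset Q^+$; and since $Z_\Phi$ is invariant under the untwisted Chinta--Gunnells action ($\om=0$, hence $\theta=\rho$), $N_\Phi\in\NN_0$. Corollary~\ref{cor_supp} therefore yields
\[
\supp(N_\Phi)\;\subseteq\; \bigcup_{\substack{\xi\in P^+\\ \xi\le \rho}}O_\xi
\;=\; \bigcup_{\substack{\xi\in P^+\\ \xi\le \rho}}\{\,\rho-w\xi : w\in W\,\},
\]
so any $\lam\in\supp(N_\Phi)$ has the form $\lam=\rho-w\xi$ with $\xi\in P^+$, $\xi\le\rho$, and $w\in W$.

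Next I will verify the asserted equality $\rho-w_\circ\rho=2\rho$: because $w_\circ$ sends $\Phi^+$ bijectively to $\Phi^-$, $w_\circ\rho=\tfrac12\sum_{\a\in\Phi^+}w_\circ\a=-\rho$. The inequality $\lam\le 2\rho$ is then equivalent to $w\xi\ge w_\circ\rho$, which I will establish by chaining $w\xi\ge w_\circ\xi\ge w_\circ\rho$. The second inequality is immediate: applying the order-reversing $w_\circ$ to $\xi\le\rho$ (i.e.\ $\rho-\xi\in Q^+$) gives $w_\circ(\rho-\xi)\in w_\circ Q^+=-Q^+$, i.e.\ $w_\circ\xi-w_\circ\rho\in Q^+$. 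For the first inequality, I set $\mu=w_\circ^{-1}w\xi\in W\xi$; the last clause of Proposition~\ref{lm: order} (with $w_\xi=1$) gives $\mu\le\xi$, i.e.\ $\xi-\mu\in Q^+$; applying $w_\circ$ and using $w_\circ\mu=w\xi$ then yields $w_\circ\xi-w\xi\in -Q^+$, that is $w\xi\ge w_\circ\xi$.

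The argument is essentially bookkeeping: the main technical input (Corollary~\ref{cor_supp}) is already in place, and the remaining steps are one-line deductions. If there is a genuine obstacle at all, it is the inequality $w\xi\ge w_\circ\xi$ in the chain above; this is just the standard maximum-of-orbit property for dominant weights, obtained here by a direct application of $w_\circ$ to the reverse inequality $w_\circ^{-1}w\xi\le\xi$ valid for dominant $\xi$.
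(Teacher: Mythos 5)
Your proof is correct and follows the same route as the paper: invoke Corollary~\ref{cor_supp} to write $\lam=\rho-w\xi$ with $\xi\le\rho$ dominant, then chain $\rho-w\xi\le\rho-w_\circ\xi\le\rho-w_\circ\rho$. You simply spell out the two elementary steps the paper leaves implicit, namely the minimality of $w_\circ\xi$ in $W\xi$ (which the paper had already recorded just after Proposition~\ref{lm: order}) and the identity $w_\circ\rho=-\rho$.
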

\begin{proof}Let $\lam\in\supp N_\Phi $. By Corollary~\ref{cor_supp} there is $\xi\le \rho$, $\xi\in P^+$, such
that
\[
\lam=\rho-w\xi\le \rho-w_\circ \xi\le \rho-w_\circ\rho.
\]
The second inequality holds because $\rho-\xi\in Q^+$, and $w_\circ$ maps $\Phi^+$ to $\Phi^-$.
\end{proof}

Finally, we will need the following result, proved in~\cite{CFG} for $\Phi$  simply-laced,
and generalized in~\cite{Fr} for arbitrary $\Phi$ (and covers of arbitrary degree).

\begin{Prop}[\cite{CFG}]\label{prop_unique} Let $Z=N/D_{\Phi}(\xx; u)$ be a rational function  invariant under the  Chinta-Gunnells action of $W$, and such that   $\supp(N)\subseteq Q^+$, and   $N(0)=1$.  Then, $Z=Z_\Phi(\xx; u)$.
\end{Prop}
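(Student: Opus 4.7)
The plan is to deduce uniqueness by considering the difference $M := N - N_\Phi$ and showing that it must vanish. I would first observe that the Chinta-Gunnells action of each simple reflection is $\F$-linear in $f$: when $m_i=1$ it is simply $f\mapsto f(\ss_i\xx)$, while for $m_i=2$ the formula in \S\ref{secpm} is a sum of operations depending linearly on $f$ via the even and odd parts $f^\pm_i$. Consequently $M/D_{\Phi}(\xx;u) = Z - Z_\Phi$ is itself Chinta-Gunnells invariant, which places $M$ in the space $\NN_0$ defined in \eqref{N-space}. Moreover $\supp(M) \subseteq Q^+$ (both $N$ and $N_\Phi$ are ordinary polynomials in $\xx$), and $M(0) = N(0) - N_\Phi(0) = 1 - 1 = 0$, so $0 \notin \supp(M)$.

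The problem then reduces to showing $M = 0$, which I would prove by contradiction. Assuming $M \ne 0$, Proposition~\ref{prop_orb} applied with $\om = 0$ (so $\theta = \rho$) furnishes a maximal $\xi \in P^{++}$ with $O_\xi \cap \supp(M) \ne \emptyset$, and in fact $O_\xi \subseteq \supp(M)$. I would then combine two constraints on this $\xi$: the support condition $\supp(M) \subseteq Q^+$ forces $\rho - \xi = \theta - \xi \ge 0$, that is $\xi \le \rho$ (exactly as in the proof of Corollary~\ref{cor_supp}); while $\xi \in P^{++} = \rho + P^+$ forces $\xi \ge \rho$. These two inequalities pin down $\xi = \rho$.

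It follows that $O_\rho \subseteq \supp(M)$; but $O_\rho = \{\rho - w\rho : w\in W\}$ contains the element $\rho - \rho = 0$ (taking $w = e$), so $0 \in \supp(M)$. This contradicts $M(0) = 0$, and the conclusion $M = 0$, i.e.\ $Z = Z_\Phi(\xx;u)$, follows.

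No step is genuinely hard: the subtraction trick reduces everything to Proposition~\ref{prop_orb}, which is already in hand. The only point requiring any verification is the $\F$-linearity of the Chinta-Gunnells action, and this is transparent from the formulas in \S\ref{secpm}. The argument is uniform in the simply-laced and double-laced cases, since the $m_i = 1$ action is also $\F$-linear; this recovers the result of \cite{CFG} and its extension in \cite{Fr} simultaneously.
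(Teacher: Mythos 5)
Your argument is correct and rests on exactly the same ingredients as the paper's proof (namely Proposition~\ref{prop_orb} and the support bound from Corollary~\ref{cor_supp}); the only cosmetic difference is that you package the uniqueness as a vanishing statement for the difference $M = N - N_\Phi$, invoking linearity of the Chinta--Gunnells action, whereas the paper phrases it via Remark~\ref{rem_unique} as the fact that the coefficients $c_{\rho-\xi}$, $\xi\in P^{++}$, determine $N$. One phrasing quibble: ``$\xi\in P^{++}=\rho+P^+$ forces $\xi\ge\rho$'' is not a standalone implication (since $P^+\not\subseteq Q^+$ in general); what you actually use is that $\rho-\xi\in Q^+$ together with $\xi\in P^{++}$ forces $\rho-\xi=0$ (an anti-dominant element of $Q^+$ is zero), which is the same standard fact the paper invokes when calling $\rho$ the smallest strongly dominant weight.
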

\begin{proof}Since the proof is short, we include it here following~\cite{CFG}.
By Remark~\ref{rem_unique}, the coefficients $c_{\rho-\xi}$ for $\xi\in P^{++}$ uniquely determine
$N$. By Corollary~\ref{cor_supp}, the coefficients  $c_{\rho-\xi}$ are non-zero only for $\xi\le \rho$.
Since $\rho$ is the smallest strongly dominant weight, we must have $\xi=\rho$. We conclude that $c_0$ uniquely determines $N$, and therefore also uniquely determines $Z$. The zeta average $Z_\Phi(\xx; u)$
satisfies the conditions in the statement, so $Z=Z_\Phi$.
\end{proof}
\noindent Proposition \ref{prop_unique} was generalized in~\cite{Fr}.
It was shown there that, for $\om$ dominant, the dimension of the space $\NN_\om$ equals the number of
strongly dominant weights $\xi$ such that $\xi\le \theta=\om+\rho$, the maximal dimension
allowed by Remark~\ref{rem_unique}.


\section{Zeta averages of double-laced root systems}\label{sec_doublelaced}

\subsection{}
In this section we assume that the root system $\Phi$ is double-laced (i.e. of type $B_r$, $C_r$, $F_4$).
By Lemma~\ref{lem_div}, the zeta average $Z_\Phi(\xx; u)$ may have poles only at $\xx^\a=\pm1\slash u$,
with $\a\in \Phi^+$ a short root, as long roots $\a$ have $m_\a=1$. This suggests a possible role for
the (simply-laced) root sub-system $\Phi^s\subset \Phi$ consisting of short roots.
The positive short roots $(\Phi^s)^+=\Phi^s\cap \Phi^+$ induce a unique canonical basis $\Pi(\Phi^s)$ of $\Phi^s$, compatible with $(\Phi^s)^+$. The basis $\Pi(\Phi^s)$ contains $\Pi(\Phi)^s$, the short simple roots of $\Phi$. We will continue to call the elements of $\Pi(\Phi)$ simple roots, and we will refer to  the elements of $\Pi(\Phi^s)\setminus \Pi(\Phi)^s$ as $\Phi^s$-simple roots.

Let $Q^s$ denote the root lattice of $\Phi^s$. The inclusion of root lattices $Q^s\subset Q$ induces canonical morphisms $\F(Q^s)\subset \F(Q)$. The zeta average $Z_{\Phi^s}(\xx; u)$ defined using the basis $\Pi(\Phi^s)$
will be regarded  as an element of $\F(Q)$ via $\F(Q^s)\subset \F(Q)$. More explicitly,
$Z_{\Phi^s}$ is by definition
a function of variables $x_j$ for $\a_j$ a short simple root, and of new variables $x_\g$ for $\g$ a
$\Phi^s$-simple root. It is regarded as an element of $\F(Q)$, and denoted by $Z_{\Phi^s}(\xx; u)$, via the substitution $x_\g=\xx^\g$ for all $\Phi^s$-simple roots $\g$.

In this section, we show that, in fact,
$Z_\Phi(\xx; u)$ and $Z_{\Phi^s}(\xx; u)$ coincide. Although this result is not needed in the sequel,
it is a straightforward application
of the uniqueness property recalled in Proposition~\ref{prop_unique}, and we include it because it might be of independent interest.
This identity can be used to derive Theorem~\ref{thmA} for double-laced systems from
the same result for simply-laced systems,  but the proof of Theorem \ref{thmA} that we present is uniform.

\subsection{}\label{sec4.2}

The group generated by the simple reflections corresponding the elements of $\Pi(\Phi)^\ell$, the long simple roots of $\Phi$,
keeps  $(\Phi^s)^+$ stable. Therefore, this group permutes the elements of $\Pi(\Phi^s)$, and hence it can be identified with a subgroup of the group of automorphisms for the Dynkin diagram of $\Phi^s$. This subgroup coincides with the full group of diagram automorphisms of $\Phi^s$, except for the case of $\Phi=C_4$. The orbit of the unique short simple root that has a long neighbor in the diagram of $\Phi$ contains all the $\Phi^s$-simple roots; all the elements in this orbit are orthogonal. The explicit action of the simple long reflections of $W$  on the Dynkin diagrams of $\Phi^s$  is indicated in Table \ref{table2} below. We use the standard labelling of the Dynkin diagram of $\Phi$  (see  \cite{Bou} and \S\ref{sec_Phi0}). By convention, $D_2=A_1\times A_1$ and $D_3=A_3$.

\begin{table}[!ht]
\begin{center}
\begin{tabular}{l|l}
$\Phi$ & \multicolumn{1}{c}{$\Phi^s$} \\\hline
$B_r\ (r\ge 3)$ & 
\begin{tikzpicture}[baseline=-1]
    \node at (-0.7,0) {$A_1^{r}$:};
    \node[label={[label distance=-0.3cm]-90:$\scriptstyle{\a_r}$}] (1) at (0,0) {$\scriptstyle\bullet$};
    \node[label={[label distance=-0.3cm]-90:$\scriptstyle{\a_r+\a_{r-1}}$}] (2) at (1.5,0) {$\scriptstyle\bullet$};
    \node at (2.5,0) {$\cdots$};\node at (2.5,0.5) {$\cdots$};
    \node[label={[label distance=-0.3cm]-90:$\scriptstyle{\a_r+\ldots +\a_2} $}] (3) at (3.5,0) {$\scriptstyle\bullet$};
    \node[label={[label distance=-0.3cm]-90:$\scriptstyle{\a_r+\ldots +\a_1} $}] (4) at (5,0) {$\scriptstyle\bullet$};
    \draw[latex-latex](1) to[out=60,in=120] node[above]{$\scriptstyle{\ss_{r-1}}$} (2);
    \draw[latex-latex](3) to[out=60,in=120] node[above]{$\scriptstyle{\ss_{1}}$} (4);
    \node[fit=(current bounding box),inner ysep=0mm,inner xsep=0]{};
\end{tikzpicture}\\
$C_r\ (r\ge 2)$ & 
$D_r$: \ \ \dynkin[labels={\a_1,\a_2,\a_{r-2},\a_{r-1}, \a_{r-1}+\a_r},involutions={
5<right>[\ss_r]4}]{D}{**...***}\\
$F_4$ & 
$D_4$: \ \ \dynkin[labels={\a_3,\a_4,\a_2+\a_3,\a_1+\a_2+\a_3},
involutions={3<above>[\ss_2]1;4<right>[\ss_1]3}]D4\\
 \end{tabular}
\end{center}
\vspace{1em}
\caption{\label{table2} Diagram automorphisms.}
\end{table}

\begin{Prop}\label{prop_double}
Let $\Phi$ be a double-laced irreducible root system, and regard $Z_{\Phi^s}(\xx;u)\in \F(Q^s)\subset \F(Q)$.
Then,
$$Z_\Phi(\xx; u)=Z_{\Phi^s}(\xx;u).$$
More explicitly,
\[
Z_{B_r}(\xx; u)=\prod_{i=1}^r Z_{A_1} (x_i\cdot \ldots\cdot x_r; u),\quad  Z_{C_r}(\xx; u)=Z_{D_r}(x_1,\ldots, x_{r-1}, x_{r-1} x_r; u),
\]
\[
Z_{F_4}(\xx; u)=Z_{D_4}(x_3,x_4, x_2 x_3, x_{1} x_2 x_3; u).
\]
\end{Prop}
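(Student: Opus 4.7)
The plan is to apply the uniqueness result Proposition~\ref{prop_unique} to $Z_{\Phi^s}(\xx;u)$ viewed in $\F(Q)\supset \F(Q^s)$. Since $\Phi^s$ is simply-laced, all its roots satisfy $m_\a=2$, and
\[
D_\Phi(\xx;u)=D_{\Phi^s}(\xx;u)\cdot \prod_{\a\in(\Phi^\ell)^+}(1-u^2\xx^\a).
\]
Writing $Z_{\Phi^s}=N_{\Phi^s}/D_{\Phi^s}$, set
\[
\tilde N(\xx;u) := N_{\Phi^s}(\xx;u)\cdot \prod_{\a\in(\Phi^\ell)^+}(1-u^2\xx^\a),
\]
so that $Z_{\Phi^s}=\tilde N/D_\Phi$. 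Since every element of $\Pi(\Phi^s)$ is a positive root of $\Phi$, hence a non-negative integer combination of $\Pi(\Phi)$, the polynomial $N_{\Phi^s}$ has support in $Q^+$ when expressed in the variables $x_1,\dots,x_r$. Therefore $\supp(\tilde N)\subseteq Q^+$, and $\tilde N(0)=N_{\Phi^s}(0;u)=1$.

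It remains to verify that $Z_{\Phi^s}$ is invariant under the Chinta-Gunnells action of every simple reflection of $\Phi$. If $\a_i\in \Pi(\Phi)^s$, then $\a_i\in \Pi(\Phi^s)$ and the CG action of $\ss_i$ as a simple reflection of $\Phi$ coincides with its CG action as a simple reflection of $\Phi^s$ (same $x_i$, same $\e_i$, same $m_i=2$); invariance under it is part of the defining property of $Z_{\Phi^s}$. If $\a_i\in \Pi(\Phi)^\ell$, then $m_i=1$ and the CG action reduces to the purely geometric action $f\CG \ss_i(\xx)=f(\ss_i\xx)$. Since $\ss_i$ fixes $\Phi^+\setminus\{\a_i\}$ and $\a_i$ is long, $\ss_i$ permutes $(\Phi^s)^+$ and restricts to a root-system automorphism $\sigma_i$ of $\Phi^s$, realizing the diagram automorphism prescribed by Table~\ref{table2}.

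The key point is that $Z_{\Phi^s}$ is invariant under any root-system automorphism $\sigma$ of $\Phi^s$. Indeed, $\sigma$ acts on $\F(Q^s)$ as a ring automorphism via $\sigma(\xx^\lam)=\xx^{\sigma\lam}$; it permutes $(\Phi^s)^+$, so $\sigma(\DD_{\Phi^s})=\DD_{\Phi^s}$; and on simple reflections one checks directly that $\sigma(f|\ss_j)=(\sigma f)|\ss_{\sigma(j)}$, using the relations $\sigma\ss_j\sigma^{-1}=\ss_{\sigma(j)}$ and $\sigma\e_j\sigma^{-1}=\e_{\sigma(j)}$. It follows that $\sigma\bigl(\sum_{w\in W(\Phi^s)} 1|w\bigr)=\sum_{w\in W(\Phi^s)} 1|(\sigma w \sigma^{-1})=\sum_{w'\in W(\Phi^s)} 1|w'$, and dividing by $\DD_{\Phi^s}$ yields $\sigma Z_{\Phi^s}=Z_{\Phi^s}$. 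Applied to $\sigma=\sigma_i$, this gives the required invariance, and Proposition~\ref{prop_unique} forces $Z_{\Phi^s}=Z_\Phi$.

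The three explicit identities follow by unwinding $Z_{\Phi^s}$ in the variables $\{\xx^\b\}_{\b\in\Pi(\Phi^s)}$ from Table~\ref{table2}: for $\Phi=B_r$ one has $\Phi^s=A_1^r$ with simple roots $\a_j+\cdots+\a_r$, so Remark~\ref{rem_zdecomp} produces the product of $Z_{A_1}(x_j\cdots x_r;u)$; for $\Phi=C_r$ and $F_4$ the substitutions $\xx^{\a_{r-1}+\a_r}=x_{r-1}x_r$, respectively $\xx^{\a_2+\a_3}=x_2x_3$ and $\xx^{\a_1+\a_2+\a_3}=x_1x_2x_3$, recover the stated formulas. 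The main obstacle is the naturality verification of the third paragraph; everything else is essentially bookkeeping.
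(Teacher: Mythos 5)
Your proof is correct and follows essentially the same strategy as the paper's: apply the uniqueness result Proposition~\ref{prop_unique} by verifying that $Z_{\Phi^s}$ has numerator supported in $Q^+$ with constant term $1$, and is Chinta-Gunnells invariant — directly for short simple roots, and via the diagram-automorphism action of long simple reflections on $\Phi^s$ (Table~\ref{table2}) otherwise. You spell out the invariance of $Z_{\Phi^s}$ under diagram automorphisms and the support claim in more detail than the paper, which simply asserts them, but the argument is the same one.
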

\begin{proof}By Lemma~\ref{lem_div}, we can write $Z_{\Phi^s}(\xx; u)= N(\xx; u)/D_{\Phi}(\xx; u)$
with a polynomial $N(\xx;u)$ such that $N(0,\ldots, 0)=1$.
By Proposition~\ref{prop_unique}, our claim follows once we show that
\be\label{e_Zinv}
Z_{\Phi^s}(\xx; u)=Z_{\Phi^s}(\xx; u)\CG \ss_i, \quad \text{for all}\ 1\le i\le r.
\ee
If $\a_i$ is a short root, then~\eqref{e_Zinv} follows from the invariance of $Z_{\Phi^s}$
under $\ss_i$.
If $\a_i$ is a long root, then $\ss_i$ acts on $\Phi^s$ by a diagram automorphism. Since a change of variables that corresponds to a diagram automorphism leaves the corresponding zeta average invariant, we obtain that  $$Z_{\Phi^s}(\xx; u)=Z_{\Phi^s}(\ss_i \xx; u)=Z_{\Phi^s}(\xx; u)\CG\ss_i.$$  This finishes the proof.
\end{proof}


\section{The orthogonal root system}\label{sec: roots}

\subsection{}
Throughout this section we assume that the root system $\Phi$ is not of type $G_2$. The case $\Phi=G_2$ is considered in Appendix~\ref{sec_G2}.   With this assumption, the roots $\a\in\Phi$ with $m_\a=2$
are precisely the short roots in $\Phi$. Recall our convention that for simply-laced root systems we consider $\Phi^s=\Phi$.

 We fix  $1\le i\le r$ such that $\a_i$ is \emph{short}. As stated in Theorem \ref{thmA}, the residue  of $Z_{\Phi}(\xx; u)$ at $x_i=1/u$ can be expressed in terms of the zeta average of the root sub-system
orthogonal to $\a_i$. We first describe some of the properties of such orthogonal root systems.

\subsection{} \label{sec_parabolic} For $i$ a node in the Dynkin diagram of $\Phi$, we denote by $\Phi^i\subset \Phi$ the maximal standard parabolic root sub-system obtained by excluding the node $i$, and we denote by $\Phi^{(i)}\subset \Phi$ the standard parabolic root sub-system obtained by excluding the node $i$ and its neighbors. We use $\Pi(\Phi^i)$ and $\Pi(\Phi^{(i)})$ to refer to the corresponding bases. The corresponding  parabolic subgroups of $W$ are denoted by  $W^i$ and $W^{(i)}$, respectively. We remark that  $W^{(i)}=\stab_{W^i}(\a_i)$, the parabolic subgroup of $W^i$, generated by the simple reflections that fix  $\a_i$. For $\a\in\Phi$, let  $n_i(\a)\in \Z$ denote the coefficient of $\a_i$ in the expansion of $\a$ in the basis $\Pi(\Phi)$.

\subsection{}
Consider now the orthogonal complement
\[
\Phi_0=\a_i^\perp:=\{\a\in\Phi : \la\a_i, \a\ra =0 \}.
\]
This is a root system containing $\Phi^{(i)}$. We denote by $W_0$ the Weyl group of $\Phi_0$, and by $\Pi(\Phi_0)$ the basis compatible with the subset of positive roots $\Phi^+_0:=\Phi^+\cap \Phi_0$. Of course,
$\Pi(\Phi_0)\cap \Pi(\Phi)=\Pi(\Phi^{(i)})$. Let
\[
\Pi_\new(\Phi_0)=\Pi(\Phi_0)\setminus \Pi(\Phi^{(i)}).
\]
The Dynkin diagram of $\Phi_0$ is obtained by
attaching  the elements $\b\in \Pi_\new(\Phi_0)$  to the Dynkin diagram of $\Phi^{(i)}$ according to the information provided by the inner products between $\b$ and the elements of $\Pi(\Phi^{(i)})$. Since the Weyl group acts simply transitively on the short roots of $\Phi$, the isomorphism type of $\Phi_0$ is independent of the node $i$ and is recorded in  Table \ref{table3}. The conventions that we use for the information displayed in Table \ref{table3} are the following: we assume that $r\ge 4$ for $D_r$, $r\ge 2$ for $C_r$, and $r \ge 3$ for $B_r$; for $C_2=B_2$, node $1$ corresponds to a short root. For the second line, the following conventions apply: $D_2=A_1\times A_1$; $D_3= A_3$;
$C_0=\emptyset$; $C_1=A_{1}^\ell$. The notation $A_1^s$, and respectively $A_1^\ell$, refers to a root system of type $A_1$ generated by a short, and respectively long, root of $\Phi$.

\begin{table}[ht]
\renewcommand{\arraystretch}{1.2}
\begin{center}
\begin{tabular}{c|cccccccc}
$\Phi$ & $A_r$       & $D_r$               & $E_6$ & $E_7$ & $E_8$
 & $B_r$ &  $C_r$               & $F_4$ \\ \hline
$\Phi_0$ & $A_{r-2}$ & $D_{r-2}\times A_1$ & $A_5$ & $D_6$ & $E_7$ &  $B_{r-1}$ &  $C_{r-2}\times A_{1}^s$ & $B_3$
 \end{tabular}
 \vspace{1em}
\end{center}
\caption{\label{table3}The Cartan type of the orthogonal root system}
\end{table}

\subsection{}\label{sec_Phi0} We provide an explicit description of the elements of $\Pi_\new(\Phi_0)$, and of the Dynkin diagram of $\Phi_0$, for $\Phi$ a classical root system, and $\Phi=F_4$. The same information in the case of exceptional simply-laced root systems is found in
Appendix~\ref{sec_exc} (and for $\Phi=G_2$ in Appendix~\ref{sec_G2}). For a connected subset
$\SS$ of nodes in the Dynkin diagram of $\Phi$, we denote by $\theta_\SS$ the longest root in the
corresponding root system (if $\Phi$ is simply-laced), and by $\theta^s_{\SS}$, respectively $\theta^\ell_{\SS}$ the dominant short, respectively long root (if $\Phi$ is double-laced).

\begin{remark}\label{rem_1} For the reader who prefers a more conceptual description
of the Dynkin diagram of $\Phi_0$, we summarize here the possible types of new simple roots that occur.
A short/long element $\b\in\Pi_\new(\Phi_0)$ is the dominant short/long root in a connected
subdiagram $D$ of the Dynkin diagram of $\Phi$, which is minimal (with respect to inclusion)
with the following properties
\begin{enumerate}[label=(\alph*)]
\item node $i$ belongs to $D$;
\item $\a_i$ is orthogonal to the dominant short/long root of $D$.
\end{enumerate}
The Cartan type of $D$, the index of the node $i$ in the standard labeling of $D$,
and the role played by the root~$\b$ in~$D$ are among the following:
\begin{enumerate}
 \item $A_3$ with the node~$i$ in position $2$ and $\b$ the highest root;
 \item $D_n$, $n\ge 4$, with the node $i$ in position $1$ and $\b$ the highest root;
 \item $C_n$, $n\ge 2$, with $i$ in position $1$ and $\b$ the dominant short root;
 \item $C_3$ with the node $i$ in position $2$ and $\b$ the dominant long root.
\end{enumerate}
\end{remark}
For double-laced root systems, for the definition of the kernel functions $K_{\a_i,\Phi}(\xx)$ in \S\ref{sec: kernel}, we need the set $\Pi_\new^*(\Phi_0)$ that consists of the elements of $\Pi_\new(\Phi_0)$ that arise, as explained above, from a diagram $D$ of type $A_3$. This set is empty, unless $\Phi=C_r$, $r\ge 4$, and $1<i<r-1$.

\subsubsection{$A_r,~ r\ge 1$:}\hspace{2mm} \dynkin[labels*={\a_1,\a_2,\a_r}]A{**...*}

The set $\Phi^+$ consists of $\a_j+\ldots +\a_k$ for $j\le k$. If $i=1$, or $i=r$, we have $\Phi_0=\Phi^{(i)}$. If $1<i<r$,
then  $\Pi_\new(\Phi_0)=\{\b=\a_i+\a_{i-1} +\a_{i+1}\}$,
and the Dynkin diagram of $\Phi_0$ is
\dynkin[labels*={\a_1,\a_{i-2},\b,\a_{i+2},\a_{r} }]A{*...***...*}

\subsubsection{$B_r,~ r\ge 3$:}\hspace{2mm} \dynkin[labels*={\a_1,\a_2,\a_{r-1},\a_{r}}]B{**...**}

The set $\Phi^+$ consists of  $\a_j+\ldots +\a_k$, $j\le k\le r$, and $\a_j+\ldots +\a_k+2(\a_{k+1}+\ldots+\a_r)$, $j\le k< r$.
In this case $\a_r$ is the only short root, and for $i=r$ the set
$\Pi_\new(\Phi_0)$ consists of $\b=\a_{r-1} +\a_{r}$. The
 Dynkin diagram of $\Phi_0$ is: \dynkin[labels*={\a_1,\a_2,\a_{r-2},\b }] B{**...**}.

\subsubsection{$C_r,~r\ge 2$:}\hspace{2mm} \dynkin[labels*={\a_1,\a_2,\a_{r-1},\a_r}]C{**...**}

The set $\Phi^+$ consists of  $\a_j+\ldots +\a_k$, $j\le k< r$, $2(\a_j+\ldots +\a_{r-1})+\a_r$, $j< r$, and $\a_j+\ldots +\a_k+2(\a_{k+1}+\ldots+\a_{r-1})+\a_r$, $j\le k< r$.
The dominant long and short roots are
\[\theta^\ell=2 \a_1+\ldots 2\a_{r-1}+\a_r,\quad \theta^s=\a_1+2 \a_2+\ldots +2\a_{r-1}+\a_r.\]

The description of  $\Pi_\new(\Phi_0)$ and the Dynkin diagram of $\Phi_0$ is included in Table \ref{table5}.
In small rank cases, the situation depicted in the table reduces to the following:
if $r=2$, $i=1$, then $\Phi_0$ is of type $A_{1}^s$ generated by $\b$; if
$r=3$, $i=1$, then $\Phi_0$ is of type $A_{1}^s\times A_{1}^\ell$ generated by $\b$ and $\a_3$;
and when $r=3$, $i=2$, then $\Phi_0$ is of type $A_{1}^s\times A_{1}^\ell$ generated by $\b$ and $\b'$.  

\begin{table}[!ht]
\renewcommand{\arraystretch}{1.2}
\begin{center}
\begin{tabular}{lcc}
Node $i$ & $\Pi_\new(\Phi_0)$  & $\Phi_0$ \\\hline
$i=1$ & $\b=\theta^s$ &  $\begin{dynkinDiagram}
[labels*={\a_3, \a_{r-1},\a_{r}}]
C{*...**}
\end{dynkinDiagram}
\begin{dynkinDiagram}[labels*={\b}]
A{*}
\end{dynkinDiagram}$\\\hline
$1<i< r-1$ & \makecell{ \vspace{-1em} \\ $\b=\theta_{\{i-1,i,i+1\}}$\\
$\b^\prime=\theta^s_{\{i,\ldots,r\}}$ \\ \vspace{-1em}}   & $\begin{dynkinDiagram}
[labels*={\a_1,\a_{i-2},\b, \a_{r-1},\a_r}]
C{*...**...**}
\end{dynkinDiagram}
\begin{dynkinDiagram}[labels*={\b^\prime}]
A{*}
\end{dynkinDiagram}$ \\\hline
$i=r-1$ & \makecell{\vspace{-1em} \\ $\b=\theta^s_{\{r-1,r\}}$ \\$\b'=\theta^\ell_{\{r-2,r-1,r\}}$ \\ \vspace{-1em}}   &$\begin{dynkinDiagram}
[labels*={\a_1,\a_{r-3}, \b'}]
C{*..**}
\end{dynkinDiagram}
\begin{dynkinDiagram}[labels*={\b}]
A{*}
\end{dynkinDiagram}$
\\\hline
\end{tabular}\vspace{1cm}
\end{center}\caption{\label{table5} The orthogonal root system in type $C_r$, $r\ge 2$}
\end{table}

\subsubsection{$D_r,~ r\ge 4$:}\hspace{2mm} \dynkin[labels*={\a_1,\a_2,\a_{r-2},\a_{r-1}, \a_{r}}] D{**...***}

The set $\Phi^+$ consists of any sum of distinct simple roots corresponding to a connected part of the Dynkin diagram, together with the roots
$ \a_j+\ldots+\a_{k-1}+2\a_{k}+\ldots +2 \a_{r-2}+\a_{r-1}+\a_r$,
$1 \le j< k \le r-2$. The longest root is
$\theta=\a_{1}+2 \a_2+\ldots 2\a_{r-2}+\a_{r-1}+\a_r$. The description of  $\Pi_\new(\Phi_0)$ and the Dynkin diagram of $\Phi_0$ is included in Table \ref{table4}.
We emphasize that when $i=r-3$, the node labeled by $\b$ is connected with nodes $\a_{r-4}$, $\a_{r-1}$, and $\a_{r}$.

\begin{table}[!ht]
\begin{center}
\begin{tabular}{lcc}
Node $i$ & $\Pi_\new(\Phi_0)$  & $\Phi_0$ \\\hline
$i=1$ & \makecell{\\ $\b=\theta$\\ \\ \vspace{-1em} }&   $\begin{dynkinDiagram}
[labels*={\a_3, \a_{r-2}, \a_{r-1},\a_{r}}]
D{*...***}
\end{dynkinDiagram}
\begin{dynkinDiagram}[labels*={\b}]
A{*}
\end{dynkinDiagram}$\\\hline
$1<i<r-2$ & \makecell{\\ $\b=\theta_{\{i-1,i,i+1\}}$\\
$\b'=\theta_{\{i,\ldots,r\}}$ \\ \\ \vspace{-2em}}&   $\begin{dynkinDiagram}
[labels*={\a_1,\a_{i-2},\b, , \a_{r-1},\a_r}]
D{*...**...***}
\end{dynkinDiagram}
\begin{dynkinDiagram}[labels*={\b'}]
A{*}
\end{dynkinDiagram}$ \\\hline
$i=r$ &\makecell{\\ $\b=\theta_{\{r-3,\ldots,r\}}$\\  \vspace{1em}}& $\begin{dynkinDiagram}
[labels*={\a_1,\a_{r-4},\b, \a_{r-3}}]
D{*..***}
\end{dynkinDiagram}
\begin{dynkinDiagram}[labels*={\a_{r-1}}]
A{*}
\end{dynkinDiagram}$
\\\hline
$i=r-2$ & \makecell{$\b=\theta_{\{r-3,r-2,r-1\}}$\\
$\b'=\theta_{\{r-3,r-2,r\}}$\\
$\b''=\theta_{\{r-1,r-2,r\}}$\\ \vspace{-1em}} & $
 \begin{dynkinDiagram}
[labels*={\a_1,\a_{r-4},\b, \b'}]
D{*..***}
\end{dynkinDiagram}
\begin{dynkinDiagram}[labels*={\b''}]
A{*}
\end{dynkinDiagram} $
 \\\hline
\end{tabular} \vspace{1em}
\end{center}\caption{\label{table4} The orthogonal root system in type $D_r$, $r\ge 4$}
\end{table}

\subsubsection{$F_4$:}\hspace{2mm} \dynkin[labels*={1,2,3,4}]F{****}

The dominant roots are
$\theta^\ell=2\a_1+3\a_2+4\a_3+2\a_4$ and $\theta^s=\a_1+2\a_2+3\a_3+2\a_4$. The description of  $\Pi_\new(\Phi_0)$ and the Dynkin diagram of $\Phi_0$ is included in Table \ref{table6}.
\begin{table}[!ht]
\renewcommand{\arraystretch}{1.2}
\begin{center}
\begin{tabular}{lcc}
Node $i$ & $\Pi_\new(\Phi_0)$  & $\Phi_0$ \\\hline
$i=3$ & \makecell{\\ \vspace{-2.5em}\\ $\b=\theta^s_{\{2,3\}}$ \\$\b'=\theta^\ell_{\{2,3,4\}}$ \\ \vspace{-1em} }
& $\begin{dynkinDiagram}[labels*={\b', \a_1, \b}]
B{***}
\end{dynkinDiagram}$\\\hline
$i=4$ & \makecell{$\b=\theta^s_{\{2,3,4\}}$\\  \vspace{-1em}} & $
\begin{dynkinDiagram}[labels*={\a_2, \a_1,\b}]
B{***}
\end{dynkinDiagram}$
\\\hline
\end{tabular}\vspace{1em}
\end{center}\caption{\label{table6} The orthogonal root system in type $F_4$}
\end{table}


\section{The residue as a zeta average}\label{sec_roots}

\subsection{}
We continue to work under the hypothesis that $\Phi$ is an irreducible root system not of type $G_2$, and $\a_i$ is a fixed short root. For $e\in\{-1, 0, 1 \}$, we denote
$$\Phi_e=\{\a\in\Phi : \la\a_i, \a\ra =e \},$$
so that $\Phi_0$ is the orthogonal root subsystem studied in the previous section.
Since $\a_i$ is short, for $\a\in \Phi$ we have $\la\a_i, \a\ra=\pm 1$, if and only if $\a$ is a short root and
$\a\notin\{\pm \a_i\}\cup \Phi_0$.
Therefore, $\Phi_{1}$, $\Phi_{-1}$ consist of short roots and we have a disjoint union
\[
\Phi=\{\pm \a_i\}\cup \Phi_0\cup \Phi_{-1}\cup \Phi_1 \cup (\Phi^\ell \setminus \Phi_0^\ell).
\]

\subsection{}
We consider the following modified version of $Z_{\Phi}(\xx; u)$, which is obtained by removing  from the denominator   of $Z_\Phi(\xx;u)$ the factors corresponding to roots in $\Phi_1^+$
\be
Z_{\Phi}^{[i]}(\xx; u)=Z_{\Phi}(\xx; u)\cdot \prod_{\a\in\Phi^+_1} (1-u^2\xx^{2\a}).
\ee
Let $Q_0$ denote the root lattice of $\Phi_0$. The inclusion of root lattices $Q_0\subset Q$ induces canonical morphisms $\F(Q_0)\subset \F(Q)$. The zeta average $Z_{\Phi_0}$ defined using the basis $\Pi(\Phi_0)$
will henceforth be regarded  as an element of $\F(Q)$ via $\F(Q_0)\subset \F(Q)$.
More explicitly, $Z_{\Phi_0}$ is by definition
a function of variables $x_j$ for  $\a_j\in \Pi(\Phi^{(i)})$ (with the notation in \S\ref{sec_parabolic}),
and of new variables $x_\b$ for
$\b\in \Pi_\new(\Phi_0)$; it is regarded as an element of $\F(Q)$, and denoted by $Z_{\Phi_0}(\xx; u)$,
via the substitution $x_\b=\xx^\b$ for all $\b\in \Pi_\new(\Phi_0)$.
With his notation, Theorem~\ref{thmA} can be restated as follows.

\begin{Thm} \label{thm_res}
Let $\Phi$ be an irreducible root system not of type $G_2$, and let $\a_i$ be a short simple root. Then,
\be
\res_{x_i=1\slash u} Z_{\Phi}^{[i]}(\xx; u)=
\left.Z_{\Phi_0}(\xx; u)\right|_{x_{i}=1\slash u}.
\ee
\end{Thm}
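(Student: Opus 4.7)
The plan is to prove the theorem via an invariance-plus-uniqueness strategy, building on the support analysis of Section \ref{sec_numzeta}. Set $R(\xx) := \res_{x_i=1/u} Z_\Phi^{[i]}(\xx; u)$; this is a well-defined rational function on the slice $\{x_i = 1/u\}$, since Lemma \ref{lem_div} rules out the pole of $Z_\Phi$ at $x_i = -1/u$, and the multiplier $\prod_{\a \in \Phi_1^+}(1 - u^2 \xx^{2\a})$ in the definition of $Z_\Phi^{[i]}$ cancels the factors of $D_\Phi$ attached to roots in $\Phi_1^+$ that would otherwise contribute extra polar loci on $\{x_i = 1/u\}$.

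The core claim is that $R$ is invariant under the Chinta--Gunnells action of $W_0$, viewed via the embedding $\F(Q_0) \hookrightarrow \F(Q)$. This splits according to $\Pi(\Phi_0) = \Pi(\Phi^{(i)}) \sqcup \Pi_\new(\Phi_0)$. For $\a_j \in \Pi(\Phi^{(i)})$, the reflection $\ss_j$ is simple in both $\Phi$ and $\Phi_0$, fixes $\a_i$, permutes $\Phi_1^+$, and the multiplier is $\e_j$-even and $\ss_j$-fixed in the plain sense; hence $Z_\Phi^{[i]}$ is CG-invariant under $\ss_j$, and this descends to $R$ because the specialization $x_i = 1/u$ commutes with the action of $\ss_j$. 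The invariance under $\ss_\b$ for $\b \in \Pi_\new(\Phi_0)$ is the main obstacle and requires genuinely new work. Although $\ss_\b \in W$ (so $Z_\Phi$ is CG-invariant under it), the element $\ss_\b$ is \emph{not} a simple reflection of $W$, and its CG action is defined only through composition along a reduced word of simple CG actions---a formula with no manifest symmetry in the variables attached to $\Pi(\Phi_0)$. The task is to show that, after multiplying by the multiplier and extracting the residue at $x_i = 1/u$, this composite action collapses to the CG action of $\ss_\b$ as a simple reflection of the smaller group $W_0$. The approach is to proceed case by case according to the type of the minimal subdiagram $D$ for which $\b$ is the dominant (short or long) root, following the classification in Remark \ref{rem_1}; the possibilities are $A_3$, $D_n$, $C_n$, and $C_3$. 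In each case, Lemma \ref{lem_CG} provides an explicit expansion of the CG action of $\ss_\b$ as a sum over sign patterns, and a careful analysis of how the Jacobian factors $J(\xx^\g, \dd)$ and sign twists $\e_\ddd$ behave on $\{x_i = 1/u\}$ should reduce the expansion to the desired $W_0$-action.

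With the $W_0$-invariance in hand, the conclusion comes from Proposition \ref{prop_unique} applied to $R$, viewed as a rational function in the variables attached to $\Pi(\Phi_0)$. Three conditions must be verified: the denominator of $R$ divides $D_{\Phi_0}(\xx; u)$, the numerator has support in $Q_0^+$, and the normalization gives $1$ at the origin of $\F(Q_0)$. The denominator statement follows by identifying the factors of $D_\Phi$ that survive the specialization: those indexed by $\Phi_0^+$ go directly to $D_{\Phi_0}$, those indexed by $\Phi_1^+$ are killed by the multiplier, and those indexed by $\Phi_{-1}^+$ are regular on $\{x_i = 1/u\}$, while contributing, in the double-laced cases, the long-root factors of $D_{\Phi_0}$ that involve new simple roots. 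The support statement follows from Corollary \ref{cor_supp} applied to $N_\Phi$ together with the bookkeeping of the specialization (shifts of exponents by integer multiples of $\a_i$). The normalization is immediate from $Z_\Phi(0; u) = 1$.

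The essential difficulty therefore lies in establishing the extra functional equation $R \CG \ss_\b = R$: there is no a priori reason for the CG action of a non-simple reflection of $W$ to simplify upon specialization, and the verification must be performed explicitly in each of the subdiagram configurations of Remark \ref{rem_1}. It is precisely this unexpected symmetry that singles out $R$ as the zeta average of the strictly smaller root system $\Phi_0$.
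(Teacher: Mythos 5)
Your high-level strategy matches the paper's: prove $W_0$-invariance (with the hard case being $\ss_\b$ for $\b\in\Pi_\new(\Phi_0)$), then apply the uniqueness result of Proposition~\ref{prop_unique}. However, your verification that $R$ actually lies in the domain of applicability of Proposition~\ref{prop_unique} contains two genuine gaps, and one factual misstatement.

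First, the denominator argument. You assert that the factors of $D_\Phi$ indexed by $\Phi_{-1}^+$ ``are regular on $\{x_i=1/u\}$,'' but being nonzero on the slice does not make them disappear from the denominator of $R$. The decomposition
$D_\Phi = (1-u^2 x_i^2)\, D_{\Phi_0}\,\prod_{\a\in\Phi_1^+}(1-u^2\xx^{2\a})\,\prod_{\a\in\Phi_{-1}^+}(1-u^2\xx^{2\a})\,\prod_{\a\in\Phi^{\ell,+}\setminus\Phi_0^{\ell,+}}(1-u^2\xx^{\a})$
shows that after taking the residue and multiplying by the $\Phi_1^+$ factors, the $\Phi_{-1}^+$ factors and the non-orthogonal long-root factors still sit in the denominator. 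To cancel them one must show that $N_\Phi|_\evu$ is \emph{divisible} by them, which is a nontrivial vanishing statement. The paper establishes this via Lemma~\ref{lem_res1}: for $\a$ with $\la\a_i,\a\ra=-1$, the double residue of $Z_\Phi$ at $x_i=1/u$, $\xx^\a=\pm1/u$ vanishes, which in turn rests on a structural fact about such roots (Lemma~\ref{lemma: transitive}) and a sign-pattern pairing in the expansion of Lemma~\ref{lem_CG}. Your sketch contains nothing corresponding to this. Relatedly, your remark that the $\Phi_{-1}^+$ factors contribute ``the long-root factors of $D_{\Phi_0}$ that involve new simple roots'' is incorrect: $\Phi_{-1}$ consists of short roots \emph{not} orthogonal to $\a_i$, hence lies outside $\Phi_0$ and contributes nothing to $D_{\Phi_0}$. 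The long-root factors of $D_{\Phi_0}$ come directly from $\Phi_0^{\ell,+}\subset\Phi^+$.

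Second, the support argument. Proposition~\ref{prop_unique} requires the numerator $N_0$ to be a polynomial in the monomials $\ux^\g$ with $\g\in\Pi(\Phi_0)$, and this is not a mere ``bookkeeping of the specialization'' from Corollary~\ref{cor_supp}. The degree bound gives $\supp(N_0)\subset Q^+$ in the ambient variables and bounds its size, but it does not by itself force $N_0\in\F[Q_0^+]$; the variables $x_j$ with $\a_j$ a neighbor of $\a_i$ do not individually lie in $Q_0$, only certain combinations ($\ux^\b$, $\b\in\Pi_\new(\Phi_0)$) do. The paper's Proposition~\ref{e_polyn} needs both the $\e_i$-evenness of $F(\ux)$ to rewrite monomials in terms of $\ux^\b$, \emph{and} the structural results on twisted-invariant numerators (Proposition~\ref{prop_orb}, Corollary~\ref{cor_notpol}) to exclude negative exponents; in type $A_r$ a separate argument using a bound on strongly dominant weights is required. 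Your proposal does not engage with these issues, so the verification of the third hypothesis of Proposition~\ref{prop_unique} is left open.

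Your description of the main difficulty — the $\ss_\b$ functional equation — is correctly identified, and your proposal to expand via Lemma~\ref{lem_CG} case by case along Remark~\ref{rem_1} is in the right spirit; the paper implements this with an explicit description of $\Phi(\ss_\b)$ (Lemma~\ref{lem_phissb}), a pairing of sign-pattern tuples, and the $J$-identities~\eqref{e_Jident}.
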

The proof, which is  technical,  will occupy the remainder of this section.

\subsection{} Before we start developing the technical elements that are needed for the proof, we give a succinct overview of the main argument.
Let
\be\label{e_Fdef}
F(\ux):=\res_{x_i=1\slash u} Z_{\Phi}^{[i]}(\xx; u),
\ee
where $\ux=(x_1,\ldots, x_{i-1}, x_{i+1}, \ldots, x_r)$. For  $\lam\in Q$, we denote  $\ux^\lam=\xx^{\lam}|_\evu.$
With this notation, Theorem~\ref{thm_res} states that $F(\ux)$ equals the evaluation of $Z_{\Phi_0}$ at
$x_\b=\ux^\b$, for all $\b\in\Pi_\new(\Phi_0)$.

The first step in the argument is to show that the evaluation at $\evu$ of the numerator
$N_\Phi$ of $Z_\Phi$ defined in~\eqref{e_zphi} factors as follows
$$
N_{\Phi}(\xx;u)|_\evu=2 \prod_{\a\in \Phi_{-1}^+\cup (\Phi^{\ell,+}\setminus \Phi_0^+)}  (1-u^2\ux^{m_\a \a}) \cdot N_0(\ux; u),
$$
for $N_0(\ux; u)$ a polynomial in $\ux$ such that  $N_0(\underline{0};u)=1$, and such that its degree
is explicitly bounded with respect to all variables. We use a general property of roots $\a\in\Phi^+$ such that $\la\a_i, \a\ra=-1$ that may be of independent interest (Lemma~\ref{lemma: transitive},) and
as a consequence, in Proposition \ref{prop_den} we show that
$$
F(\ux)=\frac{N_0(\ux; u)}{D_{\Phi_0}(\xx; u)|_\evu}~~.
$$

It is a direct consequence of its definition that $F(\ux)$ is fixed by the Chinta-Gunnells action of the simple reflections corresponding to $\Pi(\Phi^{(i)})$.  The second step in the argument is to show the remarkable fact that,
under the Chinta-Gunnells action of the reflection $\ss_\b\in W$, $\b\in\Pi_\new(\Phi_0)$, $F(\ux)$ transforms exactly as  $Z_{\Phi_0}(\xx; u)|_\evu $ 
does under the action of the \emph{simple} reflections associated to $\b$ in the Weyl group of $\Phi_0$.
This is accomplished in Proposition \ref{prop_ssb}, using an explicit description of the set $\Phi(\ss_\b)$
for $\b$ a dominant root for a root system $D$ as in Remark~\ref{rem_1}.

The third part of the argument is to show that $N_0(\ux; u)$ a polynomial in $\ux^\gamma$, $\gamma\in \Pi(\Phi_0)$. This is accomplished in Proposition \ref{e_polyn}, crucially relying on the results of Section \ref{sec_numzeta}, more specifically, on Proposition \ref{prop_orb} and Corollary \ref{cor_notpol}.

Finally, in \S \ref{sec_proof} we collect all these facts and use the uniqueness result of Proposition \ref{prop_unique} to conclude the argument.

\subsection{}\label{sec_denom} We begin with some preliminary results that are needed for the proof of Proposition \ref{prop_den}.
\begin{lemma}\label{lemma: transitive}
Let $\a \in \Phi^+$ such that $\la\a_i,\a\ra=-1$. Then, there exists a simple root $\a_j$ and $w\in W$ such that $\la\a_i,\a_j\ra=-1$   and
 \be\label{e_w}
w\a_i=\a_i, \ w \a_j =\a .
\ee
\end{lemma}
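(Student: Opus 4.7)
Plan. The plan is to first reduce the claim to a question about $W_0$-orbits via Steinberg's theorem, and then to prove that question by induction on the height of $\alpha$, with one residual sub-case handled by direct inspection.

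The key structural input is that, by Steinberg's theorem on stabilizers in Weyl groups, the stabilizer of $\alpha_i$ in $W$ is generated by the reflections through roots orthogonal to $\alpha_i$, hence equals $W_0$. Consequently the lemma is equivalent to the assertion that every $\alpha \in \Phi^+$ with $\langle \alpha_i, \alpha\rangle = -1$ lies in the $W_0$-orbit of some short simple neighbor $\alpha_j$ of $\alpha_i$. Two preliminary observations simplify the setup: such an $\alpha$ must itself be short, since for $\Phi$ not of type $G_2$ the inner product between $\alpha_i$ and a long root lies in $\{0,\pm 2\}$; and the existence of $\alpha \in \Phi^+_{-1}$ forces $\alpha_i$ to have at least one short simple neighbor, since otherwise all Cartan integers $\langle \alpha_i, \alpha_k\rangle$ with $k \neq i$ would be even, making $\langle \alpha_i, \alpha\rangle = -1$ impossible.

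Next I would prove the $W_0$-conjugacy statement by induction on the height of $\alpha$. The base case ($\alpha$ simple) is immediate. For the inductive step, pick any simple $\alpha_k$ with $\langle \alpha, \alpha_k^\vee\rangle > 0$, which exists as positive roots are never antidominant; necessarily $k \neq i$ since $\langle \alpha, \alpha_i^\vee\rangle = -1$. \emph{Case A:} if $\alpha_k \perp \alpha_i$, then $s_k \in W_0$ and $s_k\alpha \in \Phi_{-1}^+$ has strictly smaller height, so induction applies. \emph{Case B:} if $\alpha_k$ is a short simple neighbor of $\alpha_i$, then $\alpha \neq \alpha_k$ forces $\langle \alpha, \alpha_k\rangle = 1$; the element $\alpha' := \alpha - \alpha_k$ is a short positive root orthogonal to $\alpha_i$, hence lies in $\Phi_0^+$, and a direct computation gives $s_{\alpha'}(\alpha_k) = \alpha_k + \alpha' = \alpha$ with $s_{\alpha'} \in W_0$, so the choice $\alpha_j = \alpha_k$ works.

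The main obstacle lies in \emph{Case C}, where every simple $\alpha_k$ with $\langle \alpha, \alpha_k^\vee\rangle > 0$ is a \emph{long} simple neighbor of $\alpha_i$: then $s_k\alpha$ lies in $\Phi_1^+$ rather than $\Phi_{-1}^+$, and the inductive reduction breaks down. A case check against the Dynkin diagrams shows that this configuration only arises for $\Phi$ of type $C_r$ with $\alpha_i = \alpha_{r-1}$, or $\Phi$ of type $F_4$ with $\alpha_i = \alpha_3$. In both situations $\alpha_i$ has a unique short simple neighbor ($\alpha_{r-2}$ and $\alpha_4$, respectively), and I would conclude by a direct verification from the explicit structure of $\Phi_0$ and $\Pi_\new(\Phi_0)$ recorded in Tables~\ref{table5} and~\ref{table6}: using the action of the new simple reflections on the basis $\{e_k\}$, one checks that $W_0$ contains a copy of the full signed-permutation group acting on the coordinates orthogonal to $\alpha_i$, and that under this action the single $W_0$-orbit of $\alpha_j$ already exhausts $\Phi_{-1}$; so any $\alpha \in \Phi_{-1}^+$ is automatically $W_0$-conjugate to $\alpha_j$, completing the proof.
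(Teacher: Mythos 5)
Your proposal is correct, and it takes a genuinely different route from the paper. The paper argues uniformly: in the simply-laced case it takes $\beta$ to be a \emph{minimal} positive root in the $W^{(i)}$-orbit of $\alpha$ (note: $W^{(i)}$, the parabolic generated by the $s_k$ with $\alpha_k\perp\alpha_i$, not the full stabilizer $W_0$), picks a simple $\alpha_j$ with $s_\beta\alpha_j<0$, shows by the minimality of $\beta$ that $\langle\alpha_j,\alpha_i\rangle=-1$, and writes down $w=\tau^{-1}s_{\beta-\alpha_j}$ explicitly; the double-laced case is then deduced from the simply-laced one applied to the short-root sub-system $\Phi^s$, correcting by an element of the subgroup generated by long simple reflections. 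You instead invoke Steinberg's theorem to identify the stabilizer of $\alpha_i$ with $W_0$, reduce to a $W_0$-conjugacy statement, and run a height induction where Case~A decreases height via $s_k\in W_0$ and Case~B terminates with the single reflection $s_{\alpha-\alpha_k}\in W_0$. The trade-off is that your induction gets stuck precisely when the only available descent is through a long simple neighbor of $\alpha_i$ (your Case~C), which you correctly isolate to $C_r$ with $i=r-1$ and $F_4$ with $i=3$ and settle by inspecting $\Phi_0$ in coordinates; the paper's detour through $\Phi^s$ avoids any case analysis but makes the argument less self-contained. One small imprecision in your Case~C sketch: for $C_r$ the signed-permutation group $W(C_{r-2})$ alone gives only the half of $\Phi_{-1}$ of the form $\pm e_k-e_{r-1}$, and you also need the $A_1$ factor $s_{e_{r-1}+e_r}$ to reach the roots $\pm e_k+e_r$; with that included, the orbit of $\alpha_{r-2}$ does exhaust $\Phi_{-1}$ as you claim, and the $F_4$ check goes through as well.
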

\begin{proof}
Assume first that $\Phi$ is simply-laced.
Let $\b=\tau \a$ with $\tau\in W^{(i)}$ be a smallest positive root (with respect to the order $\le$) in the orbit $W^{(i)} \a$.
Clearly $\la \b, \a_i\ra=\la\tau \a,\tau \a_i \ra=-1$.
Let $\a_j$ be such that $\ss_\b \a_j\in \Phi^-$. If $\b=\a_j$, then $w=\tau^{-1}$ has the required properties since $\la \b, \a_i\ra=-1$ implies $\la \a_j, \a_i\ra=-1$. If  $\la \b, \a_j\ra=1$, then  $\la \a_j, \a_i\ra=-1$, otherwise we would have $\ss_j\in W^{(i)}$ and $\ss_j \b=\b-\a_j<\b$, contradicting the minimality of $\b$.
Let $\g=\ss_j \b=\b-\a_j$, and define $w=\tau^{-1}\ss_\g$.
Note that $\la\g,\a_i\ra=0$ and $\la\g,\a_j\ra=-1$, so $w\a_i=\a_i$ and
\[w \a_j=\tau^{-1}(\a_j+\g)=\tau^{-1}\b=\a,
\]
proving~\eqref{e_w}.

Assume now $\Phi$ is double-laced. Observe that the hypothesis implies
that $\alpha\in \Phi^s$, so $\Phi$ is not of type $B_r$ (for which all short roots are mutually orthogonal).
In particular $\Phi^s$ is irreducible (see Table \ref{table2} in \S\ref{sec4.2}), and
we can use the simply-laced case just proved. We conclude that there exists a $\Phi^s$-simple
root $\alpha^\prime$ which is a neighbor of $\alpha_i$ in the diagram of $\Phi^s$, and $w^\prime$ in the Weyl group generated by reflections  in $\Phi^s$,
such that
$$
w^\prime \a_i=\a_i, \ w^\prime \a^\prime =\a .
$$
If $\a^\prime$ is a simple root in $\Phi$, we take $w=w^\prime$. If not,
then $\alpha_i$ does not have a long neighbor in the diagram of $\Phi$  (see Table ~\ref{table2}). In particular $\alpha_i$ is fixed by the action of reflections associated to simple long roots, and there exists an element $v$ in the subgroup generated by
these reflections such that $v\a^\prime=\a_j$, with $\a_j$ a short neighbor of $\a_i$. The element $w=w^\prime v^{-1}$ then satisfies the required properties.
\end{proof}

\subsection{} We use Lemma \ref{lemma: transitive} to prove the following result.
\begin{lemma}\label{lem_res1}
Let $\a \in \Phi^+$ such that $\la\a_i,\a\ra=-1$. Then, for either choice of signs,
 \[\res_{\substack{x_i=1\slash u\\ \xx^\a=\pm 1\slash u}} Z_{\Phi}(\xx; u)=0.
\]
\end{lemma}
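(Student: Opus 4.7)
The plan is to reduce the statement, via Lemma~\ref{lemma: transitive} and the Chinta-Gunnells invariance of $Z_\Phi$, to the special case where $\a=\a_j$ is a simple root of $\Phi$, and then to handle this simple case using the Chinta-Gunnells invariance under $\ss_j$ together with Lemma~\ref{lem_div}.

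By Lemma~\ref{lemma: transitive}, there exist $w\in W$ and a simple root $\a_j$, necessarily short (since $W$ preserves root lengths and $\a$ is short), with $\la\a_i,\a_j\ra=-1$, $w\a_i=\a_i$, and $w\a_j=\a$. From $Z_\Phi=Z_\Phi\CG w^{-1}$ and Lemma~\ref{lem_CG}, we write
\[
Z_\Phi(\xx)=\sum_{\ddd}Z_\Phi(w^{-1}\e_\ddd\xx)\,\prod_{\b\in\Phi^s(w^{-1})}J\!\Bigl((-1)^{\la\b,\,\sum_{\g\prec\b}\dd_\g\g\ra}\xx^\b,\,\dd_\b\Bigr).
\]
Since $w^{-1}\a_i=\a_i$ and $w^{-1}\a=\a_j$ both lie in $\Phi^+$, neither $\a_i$ nor $\a$ belongs to $\Phi(w^{-1})$, so no $J$-factor has a pole on the codimension-two locus $\{x_i=1/u,\,\xx^\a=\pm 1/u\}$. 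Under the substitution $\yy=w^{-1}\e_\ddd\xx$, writing $\mu=\sum_\g\dd_\g\g$, one has $\yy^{\a_i}=(-1)^{\la\a_i,\mu\ra}x_i$ and $\yy^{\a_j}=(-1)^{\la\a_j,\mu\ra}\xx^\a$, so, up to a nonzero Jacobian factor, the double residue at $(x_i,\xx^\a)=(1/u,c/u)$ of each summand reduces to the double residue of $Z_\Phi$ at a locus of the form $\{\yy^{\a_i}=\pm 1/u,\,\yy^{\a_j}=\pm 1/u\}$. By Lemma~\ref{lem_div}(i), $Z_\Phi$ has no pole at $\yy^{\a_i}=-1/u$ nor at $\yy^{\a_j}=-1/u$, so the only potentially nonzero contribution is the one at $(1/u,1/u)$. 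It therefore suffices to show $\res_{x_i=1/u,\,x_j=1/u}Z_\Phi=0$ in the case $\a=\a_j$ simple.

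For this simple case, the invariance $Z_\Phi=Z_\Phi\CG\ss_j$ yields
\[
\res_{x_j=1/u}Z_\Phi=\tfrac{1-u^2}{2}\bigl(Z_\Phi(\ss_j\xx)+Z_\Phi(\ss_j\e_j\xx)\bigr)\big|_{x_j=1/u}.
\]
A direct analysis of $D_\Phi(\ss_j\xx)$ and $D_\Phi(\ss_j\e_j\xx)$ shows that the only denominator factor producing a pole at $x_i=1/u$ is $1-u^2 x_i^2$, arising from the positive short root $\a_i+\a_j$ via $\ss_j(\a_i+\a_j)=\a_i$. Taking $\res_{x_i=1/u}$ and tracking the signs induced by $\e_j$ reduces the double residue to a combination of evaluations of $Z_\Phi$ at points where some coordinate equals $-1/u$; these vanish by Lemma~\ref{lem_div}(i), giving the desired $\res_{x_i=1/u,\,x_j=1/u}Z_\Phi=0$.

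The main obstacle is this final cancellation in the simple case. The reduction step via Lemma~\ref{lemma: transitive} is conceptually clean and combinatorial, but establishing the vanishing in the base case requires careful bookkeeping of the interplay between the Chinta-Gunnells action, the sign operators $\e_j$, and the divisibility properties of the numerator $N_\Phi$ recorded in Lemma~\ref{lem_div}.
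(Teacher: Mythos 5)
Your reduction step is a genuinely different route from the paper, and it is essentially correct: by applying $Z_\Phi=Z_\Phi\CG w^{-1}$ with $w$ from Lemma~\ref{lemma: transitive}, neither $\a_i$ nor $\a$ lies in $\Phi(w^{-1})$, so the $J$-factors stay regular on the codimension-two locus, the $\e_\ddd$-signs route the singularity into $Z_\Phi(w^{-1}\e_\ddd\xx)$, and Lemma~\ref{lem_div}(i) kills every term in which a sign $-1$ appears at position $i$ or $j$. This cleanly reduces the lemma to the case of a pair of simple neighbours. (A small omission: to rule out other singular $J$-factors one needs the observation, used explicitly in the paper, that a short root of the form $m\a_i+n\a$ with $|m+n|=1$ must lie in $\{\pm\a_i,\pm\a\}$; you invoke this implicitly.)

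However, the base case has a genuine gap, and it is precisely where the real difficulty of the lemma lives. After using $\CG\ss_j$ to write $\res_{x_j=1/u}Z_\Phi=\frac{1-u^2}{2}\bigl(Z_\Phi(\ss_j\xx)+Z_\Phi(\ss_j\e_j\xx)\bigr)|_{x_j=1/u}$ and isolating the unique pole factor $1-u^2x_i^2$ (coming from $\ss_j(\a_i+\a_j)=\a_i$), what remains to be shown is that
\[
\bigl(N_\Phi(\ss_j\xx)+N_\Phi(\ss_j\e_j\xx)\bigr)\big|_{x_i=x_j=1/u}=0.
\]
Setting $\yy=\ss_j\xx$, this is the vanishing of $N_\Phi(\yy)+N_\Phi(\e_j\yy)$ at a point with $y_i=1/u^2$, $y_j=u$, and $\yy^{\a_i+\a_j}=1/u$. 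None of these coordinates equals $-1/u$, so Lemma~\ref{lem_div}(i) --- which concerns divisibility of $N_\Phi$ by $1+ux_k$ for \emph{simple} $\a_k$ --- says nothing here, and the asserted reduction to ``evaluations of $Z_\Phi$ at points where some coordinate equals $-1/u$'' is not correct. The required cancellation is the actual content of the lemma and does not follow from the ingredients you cite.

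For comparison, the paper sidesteps this by using the single functional equation $Z_\Phi=Z_\Phi\CG v$ with $v=\ss_{\a_i+\a_j}\,w^{-1}$. Because $\ell(v)=\ell(w)+3$ and the last three elements of $\Phi(v)$ are $\a_i\prec\a_i+\a\prec\a$, the entire double pole sits in the product of the last three $J$-factors; pairing the $\ddd$-tuples that flip these three entries and using the identity $J(1/u^2,0)+J(-1/u^2,1)=0$ makes the sum of paired residues vanish. In other words, folding $\ss_{\a_i+\a_j}$ into $v$ is what manufactures the explicit cancellation. Your strategy of using $w^{-1}$ alone is attractive because it isolates the simple-root base case, but it merely postpones the same difficulty; to close the base case you would in effect have to run the paper's argument for $v=\ss_{\a_i+\a_j}=\ss_i\ss_j\ss_i$ (with $\Phi(v)=\{\a_j\prec\a_i+\a_j\prec\a_i\}$), rather than stopping at a single application of $\CG\ss_j$.
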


\begin{proof}
With $w$ and $\a_j$ as in~\eqref{e_w}, we define:
\[v:= \ss_{\a_i+\a_j} w^{-1}.
\]
 We have $\ell(v)=\ell(w)+3$. With the reduced expression $\ss_{\a_i+\a_j}=\ss_i\ss_j\ss_i$,  the last three elements in $\Phi(v)$, as described in Lemma~\ref{lem_phiw}, are
\[ w \a_i=\a_i\prec w (\a_i+\a_j)=\a_i+\a \prec w \a_j=\a.
\]

We take the double residue at $\evu$, $\xx^\a=\pm 1\slash u$  in the functional equation
$Z_{\Phi}(\xx; u)=Z_{\Phi}(\xx; u)\CG v$, using~\eqref{e4} to express the right-hand side.
We have
\be\label{e_doubleres}
\res_{\substack{x_i=1\slash u\\ \xx^\a=\pm 1\slash u}} Z_\Phi(\xx; u) =
\sum_{\ddd\in \{0,1\}^{\ell_s} }
Z_\Phi(v \e_\ddd \xx)\Bigr|_{\substack{x_i=1\slash u\ \ \\ \xx^\a=\pm 1\slash u}} \cdot
\res_{\substack{x_i=1\slash u\\ \xx^\a=\pm 1\slash u}} \Pi_\ddd~~,
\ee
where $\ell_s=|\Phi^s(v)|$ and
\[\Pi_\ddd =\Pi_{v,\ddd}(\xx):= \prod_{\nu\in\Phi^s(v)}
J\left( (-1)^{\la \nu, \sum_{\mu\prec\nu}\dd_\mu \mu \ra} \xx^{\nu},\dd_\nu \right).
\]
The function
$Z_{\Phi}(v \e \xx)$ has no pole at $\evu$, $\xx^\a=\pm 1\slash u$ for any sign function $\e$,
due to the following fact.

{If $\g\in \Phi^s$, $\g=m\a_i+n\a$, for $m,n\in \Z$ with $|m+n|=1$,
then $\g\in\{\pm \a_i, \pm \a \}$. } Indeed, if $\g\ne \pm \a_i$, then $|\la \g, \a_i \ra|=|2m-n |\le 1$, since $\g$ and $\a_i$ have the same length.
It follows that $3|m|\le 2$, so $m=0$ and $|n|=1$. In consequence, $\g\in \{ \pm \a\}$, as claimed.

The poles of $Z_{\Phi}(v \e \xx)$ occur at $\xx^{v^{-1} \g}=\pm 1\slash u$, for $\g\in\Phi^{s,+}$. As explained above, a double pole for  $\evu$, $\xx^\a=\pm 1\slash u$, can only occur
for $v^{-1} \g \in \{ \a_i, \a\}$, which is impossible since $\a_i,\a\in\Phi(v)$.

For a similar reason, the product $\Pi_\ddd$  can have a  double pole
at $x_i=1\slash u$, $\xx^\a=\pm 1\slash u$, only in the terms corresponding to $\nu=\a_i$ and $\nu=\a$ since, according to the fact  above,  for   $\nu\in \Phi^s(v)\setminus\{\a_i, \a\}$ we have $u^2\xx^{2\nu}\ne 1$ under
the double evaluation.

Therefore we concentrate on the product of the last three terms in $\Pi_\ddd$, specifically
\be\label{e_J3}
J\left(\xx^{\a_i} (-1)^{\la\a_i, \nu' \ra }, \dd_{\a_i}\right)\cdot
J\left(\xx^{\a_i+\a} (-1)^{\la\a_i+\a, \nu' \ra+\dd_{\a_i} }, \dd_{\a_i+\a} \right)\cdot
J\left(\xx^{\a} (-1)^{\la\a, \nu' \ra +\dd_{\a_i}+\dd_{\a_i+\a}}, \dd_{\a}\right),
\ee
where $\nu'=\sum_{\nu\prec \a_i}\dd_\nu \nu$.
The double residue of the product~\eqref{e_J3} vanishes
unless
\be\label{e_cond}
(-1)^{\la\a_i, \nu' \ra}=1, \qquad \text{ and }\qquad
(-1)^{\la\a, \nu'\ra+\dd_{\a_i}+\dd_{\a_i+\a}}=\pm 1.
\ee
We group the terms in~\eqref{e_doubleres} in pairs corresponding to a tuples $\ddd$ and $\ddd^\prime$ that satisfy
satisfying~\eqref{e_cond}, coincide everywhere except for the last three components, and
\[\dd'_{\a_i}=1-\dd_{\a_i},\quad  \dd'_{\a_i+\a}=1-\dd_{\a_i+\a},\quad \dd'_{\a}=1-\dd_{\a}.
 \]
 Note that if $\ddd$ satisfies \eqref{e_cond} then also $\ddd'$ also satisfies~\eqref{e_cond} and $\e_{\ddd'}=\e_\ddd$. Therefore,  it is enough
to show that the double residue of $\Pi_\ddd+\Pi_{\ddd'}$ vanishes. More specifically, it is enough to show that the sum of
double residues of the products in~\eqref{e_J3} corresponding to $\ddd$ and $\ddd'$ vanishes. Now,
$\displaystyle\res_{x=1\slash u} J(x,\dd)=(1-u^2)\slash 2$ is  independent of $\dd$ and  the sum of double residues vanishes
thanks to the identity
\[
J(1\slash u^2,0)+J(-1\slash u^2,1)=0.\qedhere
\]
\end{proof}

\subsection{}
We are now ready to complete the first step in the proof of Theorem \ref{thm_res}.
\begin{Prop}\label{prop_den}We have
\[
F(\ux)=\frac{N_0(\ux; u)}
{D_{\Phi_0}(\xx; u)|_\evu},
\]
 with $N_0(\ux; u)$ a polynomial in $\ux$
 such that  $N_0(\underline{0};u)=1$ and any $\lam\in\supp(N_0(\ux; u))$ satisfies $\lam\le {\sum_{\a\in\Phi_0^+} \a }$.
\end{Prop}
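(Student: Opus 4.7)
My plan is to establish the pointwise identity
\[
N_\Phi(\xx;u)|_\evu \;=\; 2\prod_{\alpha\in\Phi_{-1}^+\cup(\Phi^{\ell,+}\setminus\Phi_0^+)}(1-u^2\ux^{m_\alpha\alpha})\cdot N_0(\ux;u),
\]
with $N_0\in\F[\ux]$ a polynomial satisfying $N_0(\underline 0;u)=1$ and support bounded by $\sum_{\alpha\in\Phi_0^+}\alpha$. The proposition then follows by a direct substitution into the residue expression for $F(\ux)=\res_{x_i=1/u}Z_\Phi^{[i]}(\xx;u)$, once one recognises the complementary factors on the right-hand side as precisely $D_{\Phi_0}(\xx;u)|_\evu$.

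\medskip

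\noindent\textbf{Factorisation via Lemma~\ref{lem_div}.} Lemma~\ref{lem_div}(i) gives divisibility of $N_\Phi$ by $1+ux_i$ (since $m_i=2$), and Lemma~\ref{lem_div}(ii) gives divisibility by $(1-u^2\xx^\alpha)$ for every $\alpha\in\Phi^{\ell,+}$. These are pairwise coprime irreducible polynomials in $\F[\xx]$, so
\[
N_\Phi(\xx;u) = (1+ux_i)\prod_{\alpha\in\Phi^{\ell,+}}(1-u^2\xx^\alpha)\cdot M(\xx;u),
\]
with $M\in\F[\xx]$ a polynomial and $M(\underline 0;u)=1$. Substituting into $Z_\Phi^{[i]} = Z_\Phi\cdot\prod_{\Phi_1^+}(1-u^2\xx^{2\alpha})$, expanding $D_\Phi$ via the partition $\Phi^+=\{\alpha_i\}\cup\Phi_0^+\cup\Phi_{-1}^+\cup\Phi_1^+\cup(\Phi^{\ell,+}\setminus\Phi_0^+)$, and cancelling (the $\Phi_1^+$ product of $D_\Phi$ cancels the multiplier, the $\Phi^{\ell,+}$ product of $D_\Phi$ cancels the factor extracted from $N_\Phi$, and $(1+ux_i)$ cancels half of $(1-u^2x_i^2)$), I obtain
\[
Z_\Phi^{[i]}(\xx;u) = \frac{M(\xx;u)}{(1-ux_i)\prod_{\alpha\in\Phi_0^{+,s}}(1-u^2\xx^{2\alpha})\prod_{\alpha\in\Phi_{-1}^+}(1-u^2\xx^{2\alpha})}.
\]

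\medskip

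\noindent\textbf{The key divisibility.} Taking the residue at $x_i=1/u$ gives
\[
F(\ux) = \frac{M|_\evu}{\prod_{\alpha\in\Phi_0^{+,s}}(1-u^2\xx^{2\alpha})|_\evu \cdot \prod_{\alpha\in\Phi_{-1}^+}(1-u^2\xx^{2\alpha})|_\evu}.
\]
By Lemma~\ref{lem_res1}, $F(\ux)$ has no pole on the zero loci of $(1\mp u\xx^\alpha)|_\evu$ for $\alpha\in\Phi_{-1}^+$. A direct pairing argument with $\alpha_i$, combined with length considerations for roots, shows that the factors $(1\mp u\xx^\alpha)|_\evu$ with $\alpha\in\Phi_{-1}^+$ are pairwise coprime in $\F[\ux]$ and coprime to the factors $(1\mp u\xx^\beta)|_\evu$ with $\beta\in\Phi_0^{+,s}$: two such factors can agree only if the underlying roots differ by some integer multiple $c\alpha_i$, but pairing with $\alpha_i$ then forces either $c=0$ (and the roots coincide) or an arithmetic impossibility. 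Hence $\prod_{\Phi_{-1}^+}(1-u^2\xx^{2\alpha})|_\evu$ divides $M|_\evu$; write $M|_\evu = \prod_{\Phi_{-1}^+}(1-u^2\xx^{2\alpha})|_\evu\cdot\tilde M$ and set $N_0 := \tilde M\cdot\prod_{\Phi_0^{+,\ell}}(1-u^2\xx^\alpha)|_\evu$ to reinstate the missing long-root factors of $D_{\Phi_0}|_\evu$ in the denominator.

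\medskip

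\noindent\textbf{Normalisation, degree bound, and main obstacle.} For $N_0(\underline 0;u)=1$, I argue first that $Z_\Phi|_{\ux=\underline 0}(x_i)=1/(1-ux_i)$: restricting at $\ux=\underline 0$ converts the Chinta--Gunnells invariance of $Z_\Phi$ under $s_i$ into the rank-one Chinta--Gunnells invariance in $x_i$, and Proposition~\ref{prop_unique} in rank one pins down the unique such rational function with constant term $1$. Consequently $N_\Phi|_{\ux=\underline 0}=1+ux_i$; evaluating all remaining factors at $\ux=\underline 0$ gives $1$ (the root-system factors involve an exponent with a nonzero $\alpha_j$-component for some $j\ne i$, hence vanish when $\ux=\underline 0$), and a direct substitution yields $N_0(\underline 0;u)=F(\underline 0;u)=1$. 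For the degree bound, Corollary~\ref{cor_degree} gives $\supp(N_\Phi)\subseteq\{\lambda\le 2\rho=\sum_{\Phi^+}\alpha\}$; using the $s_i$-bijection $\Phi_{-1}^+\xrightarrow{\sim}\Phi_1^+$ (with $s_i\alpha=\alpha+\alpha_i$) to reduce $\sum_{\Phi_1^+}\alpha-\sum_{\Phi_{-1}^+}\alpha=|\Phi_{-1}^+|\alpha_i$, a bookkeeping of the degrees subtracted in passing from $N_\Phi$ to $N_0$ yields the stated bound $\lambda\le\sum_{\alpha\in\Phi_0^+}\alpha$ on $\supp(N_0)$ viewed in $Q$. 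The main obstacle is the divisibility step: it rests entirely on the subtle double-residue vanishing of Lemma~\ref{lem_res1}, which itself depends on the transitivity property of Lemma~\ref{lemma: transitive}.
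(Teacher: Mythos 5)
Your proposal is correct and takes essentially the same approach as the paper: both proofs derive the factorization of $N_\Phi|_\evu$ from Lemma~\ref{lem_div} and Lemma~\ref{lem_res1}, and both use Corollary~\ref{cor_degree} together with the $\ss_i$-bijection $\Phi_{-1}^+\to\Phi_1^+$ for the support bound. The only difference is organizational (you factor $N_\Phi$ before evaluating at $x_i=1/u$, the paper evaluates first and factors afterward), and you spell out the normalisation $N_0(\underline 0;u)=1$ via the rank-one uniqueness, which the paper leaves implicit.
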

\begin{proof}
 By definition, we have
\[
\res_\evu Z_{\Phi}^{[i]}(\xx; u)=\frac{N_{\Phi}(\xx; u)|_\evu}{2 \prod_{\a\in \Phi_{-1}^+\cup \Phi_0^+\cup (\Phi^{\ell,+}\setminus \Phi_0^+)} (1-u^2\ux^{m_\a \a}) },
\]
where $\Phi^{\ell,+}$ denotes the positive long roots.
By Lemma \ref{lem_res1} and Lemma \ref{lem_div}, we also have
\be\label{e_degN}
N_{\Phi}(\xx; u)|_\evu=2 \prod_{\a\in \Phi_{-1}^+\cup (\Phi^{\ell,+}\setminus \Phi_0^+)}  (1-u^2\ux^{m_\a \a}) \cdot N_0(\ux; u),
\ee
for a polynomial $N_0(\ux; u)$ (the factor 2 comes from the fact that $1+ux_i$ divides $N_{\Phi}(\xx; u)$ by Lemma~\ref{lem_div}).
 This gives the formula in the statement.

To analyze the support of $N_0(\ux; u)$, we introduce some notation.
For $\lam=\sum n_j \a_j\in Q$, denote $$\us(\lam)=\sum_{j\ne i} n_j \a_j\in Q, $$
and for a set of roots $A$, let $\us(A)=\sum_{\lam\in A}\us(\lam)$.
Since $\ss_i:\Phi_{-1}^+\rightarrow \Phi_1^+$ is an isomorphism
given by $\a\mapsto \a+\a_i$, we have $\us(\Phi_{1})=\us(\Phi_{-1})$.
By Corollary~\ref{cor_degree}, all the monomials $\lam\in\supp(N_{\Phi}(\xx; u)|_\evu)$
have
\[
\lam\le \us(\Phi^+)=\us(\Phi_0^+)+2\us(\Phi_{-1}^+)+\us(\Phi^{\ell,+}\setminus \Phi_0^+ ).
\]
Combining this with~\eqref{e_degN} finishes the proof.
\end{proof}
From the Proposition \ref{prop_den} and~\eqref{e_degN}, we deduce the following equivalent formulation of
Theorem~\ref{thm_res}.
\begin{Cor} Theorem~\ref{thm_res} is equivalent to
\[N_{\Phi}(\xx; u)|_\evu=2 \prod_{\a\in \Phi_{-1}^+\cup(\Phi^{\ell,+}\setminus \Phi_0^+)} (1-u^2\ux^{m_\a \a})\cdot N_{\Phi_0}(\xx; u)|_\evu.
\]
\end{Cor}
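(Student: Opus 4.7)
The plan is to verify the equivalence by a direct algebraic manipulation that compares the explicit rational expressions for both sides of Theorem~\ref{thm_res}, using the factorization of $N_\Phi(\xx;u)|_\evu$ already proved in the course of establishing Proposition~\ref{prop_den}.

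First, I would rewrite the residue on the left-hand side of Theorem~\ref{thm_res} using its two known expressions. From the opening computation in the proof of Proposition~\ref{prop_den}, we have
\[
\res_\evu Z_\Phi^{[i]}(\xx;u) = \frac{N_\Phi(\xx;u)|_\evu}{2\,\prod_{\a\in \Phi_{-1}^+\cup \Phi_0^+\cup (\Phi^{\ell,+}\setminus \Phi_0^+)} (1-u^2\ux^{m_\a \a})}.
\]
Since $D_{\Phi_0}(\xx;u)|_\evu = \prod_{\a\in \Phi_0^+}(1-u^2\ux^{m_\a \a})$, I can split off the $\Phi_0^+$ factor from the denominator, obtaining
\[
\res_\evu Z_\Phi^{[i]}(\xx;u) = \frac{N_\Phi(\xx;u)|_\evu}{2\,\prod_{\a\in \Phi_{-1}^+\cup (\Phi^{\ell,+}\setminus \Phi_0^+)} (1-u^2\ux^{m_\a \a})\cdot D_{\Phi_0}(\xx;u)|_\evu}.
\]

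Next, I would expand the right-hand side using the definition of $Z_{\Phi_0}$ as $N_{\Phi_0}(\xx;u)/D_{\Phi_0}(\xx;u)$, so that
\[
Z_{\Phi_0}(\xx;u)|_\evu = \frac{N_{\Phi_0}(\xx;u)|_\evu}{D_{\Phi_0}(\xx;u)|_\evu}.
\]
Setting the two expressions equal and cancelling $D_{\Phi_0}(\xx;u)|_\evu$ on both sides, the identity of Theorem~\ref{thm_res} becomes equivalent to
\[
N_\Phi(\xx;u)|_\evu = 2\,\prod_{\a\in \Phi_{-1}^+\cup (\Phi^{\ell,+}\setminus \Phi_0^+)} (1-u^2\ux^{m_\a \a})\cdot N_{\Phi_0}(\xx;u)|_\evu,
\]
which is precisely the corollary's identity. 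All these manipulations are reversible: if the displayed identity holds, dividing by the same factors recovers Theorem~\ref{thm_res}.

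The argument involves no obstacles: the substantive work (showing that the residue has the correct polar divisor, and that the factor $\prod_{\a\in \Phi_{-1}^+}(1-u^2\ux^{2\a})$ drops out due to Lemma~\ref{lem_res1}) has already been absorbed into Proposition~\ref{prop_den} and equation~\eqref{e_degN}. The corollary is just the restatement of the theorem after clearing denominators and identifying $N_0(\ux;u)$ with $N_{\Phi_0}(\xx;u)|_\evu$.
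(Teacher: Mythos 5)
Your proof is correct and follows essentially the same route as the paper: combine the residue formula from the opening of the proof of Proposition~\ref{prop_den} with the factorization~\eqref{e_degN}, split off the factor $D_{\Phi_0}(\xx;u)|_\evu$, and cancel. The manipulations are indeed reversible, so the equivalence holds.
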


\subsection{} We continue by developing the technical elements that are necessary for the proof of Proposition \ref{prop_ssb}, which states
 that for every $\b\in\Pi_\new(\Phi_0)$,  the modified residue $F(\ux)$ defined in~\eqref{e_Fdef} transforms under the action of $\ss_\b\in W$ precisely as
 $Z_{\Phi_0}(\xx; u)|_\evu$ transforms under the action of the \emph{simple} reflection associated to $\b$ in $W_0$.
 This is surprising, as the reflection $\ss_\b$ is far from being a simple reflection in $W$.

By Remark~\ref{rem_1}, the elements $\b\in \Pi_\new(\Phi_0)$ are dominant roots
in subdiagrams of $\Phi$ of type $A_3$, $D_n$, or $C_n$. For the proof of Proposition \ref{prop_ssb},
we will need an explicit description of the set $\Phi(\ss_\b)$ in each  case, which is provided in the next lemma. We use the standard labelling
of the root systems from \S\ref{sec_Phi0}.

\begin{lemma} \label{lem_phissb}
\emph{(i)} Let $\b$ be the highest root in a root system $\Phi$ of type $D_n$, $n\ge 4$, or of type $A_3$. Let $\a_i=\a_1$ in the case of $D_n$, and $\a_i=\a_2$ in the case of $A_3$.
There is a reduced expression for $\ss_\b$
such that the set $\Phi(\ss_\b)$, ordered as in~\eqref{e_phiw}, is given by
\[
\{ \g_1\prec \g_1+\a_i\prec \ldots\prec \g_{t}\prec\g_{t}+\a_i\prec \b\prec
\g_{t+1}\prec \g_{t+1}+\a_i\prec \ldots\prec \g_{2t}\prec \g_{2t}+\a_i \}.
\]

\emph{(ii)} Let $\b$ be the highest short root in a root system $\Phi$ of type $C_n$,
$n\ge 2$, and let $\a_i=\a_1$. There is a reduced expression for $\ss_\b$
such that the set $\Phi(\ss_\b)$, ordered as in~\eqref{e_phiw}, is given by
\[
\{ \g_1\prec \g_1+\a_i\prec \ldots\prec \g_{t}\prec\g_{t}+\a_i\prec \b-\a_i\prec \b\prec \b+\a_i \prec
\g_{t+1}\prec \g_{t+1}+\a_i\prec \ldots\prec \g_{2t}\prec \g_{2t}+\a_i\}.
\]
In $\Phi(\ss_\b)$, the only long roots are $\b\pm \a_i$.

\emph{(iii)} Let $\b$ be the highest long root in a root system $\Phi$ of type $C_3$,
and let $\a_i=\a_2$. Then, $\ss_\b=\ss_1\ss_2 \ss_3 \ss_2\ss_1$ and
\[
 \Phi(\ss_\b)=\{\g_1\prec \g_1+\a_i \prec  \b \prec \g_2  \prec \g_2+\a_i\},
\]
where $\g_1=\a_1$ and $\g_2=\a_1+\a_2+\a_3$.

\emph{(iv)} In all cases above, we have $t=\operatorname{rank}(\Phi)-2$ and
\[
\{\g_1,\ldots, \g_{2t}\}=\Phi(\ss_\b) \cap \Phi_{-1}.
\]
In particular, $\g_j$ and $\ss_i \g_j=\g_j+\a_i$ are short roots. Furthermore,
$\la \g_j, \b^\vee \ra =1$, $1\le j\le  2t$.
\end{lemma}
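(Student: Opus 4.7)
My approach is case-by-case. In each of (i)--(iii), I exhibit an explicit reduced expression for $\ss_\b$, apply formula~\eqref{e_phiw} to read off $\Phi(\ss_\b)$ in the claimed order, and deduce (iv) uniformly.

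\textbf{Step 1 (assertion (iv)).} From $\ss_\b(\nu) = \nu - \la\nu,\b^\vee\ra\b$, one has $\nu\in\Phi(\ss_\b)$ iff $\la\nu,\b^\vee\ra\ge 1$ and $\ss_\b(\nu)<0$. Since $\b$ is dominant in each case, the values $\la\nu,\b^\vee\ra$ on $\Phi^+$ lie in $\{0,1,2\}$, and for $\nu$ with $\la\nu,\b^\vee\ra=1$ one has $\nu<\b$ (by inspection in the standard realization of each root system), so $\ss_\b(\nu)=\nu-\b<0$. The level-$2$ positive roots are $\{\b\}$ in case (i) (simply-laced), and $\{\b, \b\pm\a_i\}$ in cases (ii)--(iii), with $\b\pm\a_i$ the only long roots appearing in $\Phi(\ss_\b)$ (as seen from the $C_n$ realization, where $\la 2e_k,\b^\vee\ra\in 2\Z$ forces level-$1$ to occur only for short roots). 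The short roots of $\Phi(\ss_\b)\cap\Phi_{-1}$ thus form the set $\{\g_j\}$, with $\la\g_j,\b^\vee\ra=1$; enumeration in each standard realization gives $|\Phi(\ss_\b)\cap\Phi_{-1}|=2t$ with $t=\operatorname{rank}(\Phi)-2$. Since $\la\g_j,\a_i\ra = -1$ and $\ss_i$ preserves lengths, $\ss_i\g_j = \g_j+\a_i$ is again a short root, which proves (iv).

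\textbf{Step 2 (the reduced expressions).} For case (iii), I verify directly that $\ss_1\ss_2\ss_3\ss_2\ss_1(\b) = -\b$ using $C_3$ Cartan data, so $\ss_\b=\ss_1\ss_2\ss_3\ss_2\ss_1$ by length, and~\eqref{e_phiw} produces the five elements $\a_1\prec\a_1+\a_2\prec\b\prec\a_1+\a_2+\a_3\prec\a_1+2\a_2+\a_3$ in the claimed order. For cases (i) and (ii), I proceed by induction on the rank. The base cases ($A_3$, $D_4$, $C_2$) are handled by direct construction of a reduced expression and verification via~\eqref{e_phiw}. In the inductive step for $D_n$ (resp.\ $C_n$), one starts from a reduced expression for $\ss_{\b'}$ in the $D_{n-1}$ (resp.\ $C_{n-1}$) subsystem still containing $\a_i$, and extends it by prepending and appending short blocks of simple reflections terminating in $\ss_i$. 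Lemma~\ref{lem_phiw} is invoked to confirm that the concatenation is reduced and to identify the newly introduced pair $\{\g,\g+\a_i\}$ at each end. By construction, every such pair arises from two consecutive letters $\ss_k\ss_i$ of the reduced expression, so adjacency in the ordering of $\Phi(\ss_\b)$ is guaranteed.

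\textbf{The main obstacle} is verifying that the inductive prepend/append step produces exactly the symmetric ordering $\g_1\prec\g_1+\a_i\prec\cdots\prec\g_t\prec\g_t+\a_i\prec\b\prec\g_{t+1}\prec\g_{t+1}+\a_i\prec\cdots$ asserted in the lemma. This forces a specific sequence of simple reflections at each inductive step, depending on the branching of the Dynkin diagram near $\a_i=\a_1$ in the $D_n$ case and on the double bond in the $C_n$ case. Once the reduced expression is pinned down, the ordering follows mechanically from~\eqref{e_phiw}, and the shortness of each $\g_j$ and $\g_j+\a_i$ is guaranteed by Step 1 together with the explicit positive root descriptions in~\S\ref{sec_Phi0}.
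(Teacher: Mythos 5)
Your Step~1 is a reasonable, essentially self-contained argument for the counting and length assertions in part~(iv), relying on explicit root-system realizations rather than the induction the paper runs alongside (i)--(iii); that difference is fine. The real problem is Step~2. You explicitly identify the construction of a reduced expression producing the symmetric ordering as ``the main obstacle,'' and then simply assert that it ``forces a specific sequence of simple reflections'' and that ``once the reduced expression is pinned down, the ordering follows mechanically'' --- but you never pin it down. That is precisely the content of parts (i)--(ii), and as written the proposal contains no proof of it.

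The paper closes this gap with a concrete relation. For $D_n$ (with $\a_i=\a_1$), let $\Phi'$ be the $D_{n-1}$ subsystem on nodes $\{2,\dots,n\}$ and $\b'$ its highest root, so $\b=\a_1+\a_2+\b'=\ss_2\ss_1\b'$; hence $\ss_\b=\ss_2\ss_1\,\ss_{\b'}\,\ss_1\ss_2$. Applying Lemma~\ref{lem_phiw} to this factorization yields the decomposition
\[
\Phi(\ss_\b)=\Phi(\ss_1\ss_2)\;\cup\;\ss_2\ss_1\,\Phi(\ss_{\b'})\;\cup\;\ss_2\ss_1\ss_{\b'}\,\Phi(\ss_2\ss_1),
\]
with the order being the concatenation. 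The flanking sets contribute the new pair $\{\a_2,\a_1+\a_2\}$ at the start and $\{\b',\b'+\a_1\}$ at the end, and the middle set inherits the pairing from the induction hypothesis (via the transformation $\ss_2\ss_1$, which sends $\g\prec\g+\a_2$ to $\g+\a_2\prec\g+\a_2+\a_1$ and fixes the central element as $\b$). The $C_n$ case is analogous, and the base cases are $D_3=A_3$ (with the explicit reduced word $\ss_\b=\ss_1\ss_2\ss_3\ss_2\ss_1$) and $C_2$. Without some such formula or an equivalent explicit construction, your induction is not a proof.

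Two smaller inaccuracies. First, you describe the inductive step as using ``the $D_{n-1}$ subsystem still containing $\a_i$''; in fact the subsystem the paper uses is obtained by deleting node~$1$, so it does \emph{not} contain $\a_i=\a_1$ --- the induction hypothesis is invoked with $\a_2$ playing the role of the distinguished node in $\Phi'$. Second, the base case for the $D_n$ chain is $D_3=A_3$, not $D_4$; and your description of each consecutive pair as arising from two consecutive letters ``$\ss_k\ss_i$'' is not quite what the correct reduced expression produces (the word ends in $\ss_1\ss_2$, not in $\ss_1$).
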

\begin{proof}

(i) We prove the statement for $D_n$ by induction on $n$.
Assume $\Phi$ is of type $D_n$, $n\ge 4$, and let $\Phi'\subset \Phi$ be the root subsystem of type
$D_{n-1}$ obtained by removing
node $1$ from the Dynkin diagram of $\Phi$ (with the convention that $D_3=A_3$). Suppose the statement is
true for $\Phi'$, and we show that it is true for $\Phi$ as well.

Let $\b^\prime$ be the highest root in $\Phi^\prime$, so $\b=\a_1+\a_2+\b^\prime$. Note that $\b$ is orthogonal
to all simple roots except for $\a_2$, and $\la \beta, \a_2 \ra=1$. Since $\b=\ss_2\ss_1\b^\prime$, we have
$\ss_\b = \ss_2 \ss_1 \ss_{\b^\prime} \ss_1\ss_2.
$
From Lemma~\ref{lem_phiw}, it follows that
\be\label{e_phissb}
\Phi(\ss_\b)=\Phi(\ss_1\ss_2)\cup \ss_2\ss_1 \Phi(\ss_{\b^\prime}) \cup  \ss_2\ss_1\ss_{\b^\prime}\Phi(\ss_2\ss_1),
\ee
with the ordering $\prec$ on $\Phi(\ss_\b)$ being the concatenation of the ordering on the three sets on the right-hand side. Using that $\la \a_1,\b^\prime\ra=-1$, $\la \a_2,\b^\prime\ra =0$ we compute
\[\Phi(\ss_1\ss_2)=\{\a_2\prec \a_2+\a_1\}\quad \text{and} \quad \ss_2\ss_1\ss_{\b^\prime}\Phi(\ss_2\ss_1)=\{\b^\prime\prec \b^\prime+\a_1\}.
\]
By the induction hypothesis, the roots in $\Phi(\ss_{\b^\prime})$ come in consecutive pairs $\g\prec \g+\a_2$,
with $\b^\prime$ the central element in the set (which is of odd cardinality). We have $\la \g, \a_2\ra =-1$,
$\la \g, \a_1\ra =0$, so the corresponding elements in the set $\ss_2\ss_1 \Phi(\ss_{\b^\prime})$ are
\[
\ss_2\ss_1 (\g)=\g+\a_2 \prec \ss_2\ss_1 (\g+\a_2) =\g+\a_2+\a_1 \quad \text{and} \quad \ss_2\ss_1 (\b^\prime)=\b,
\]
finishing the induction step. Remark that $\la \a_1, \a_2\ra=\la \a_1, \b^\prime\ra=\la \a_1, \g+\a_2\ra=-1$.

The base case for induction is $D_3=A_3$, with $i=2$ the middle node.
We have $\b=\a_{1}+\a_2+\a_{3}$, $\ss_\b=\ss_{1} \ss_2 \ss_{3}\ss_2 \ss_{1}$, and formula~\eqref{e_phiw} gives
\[\Phi(\ss_\b)=\{\a_{1}\prec \a_{1}+\a_2 \prec \b\prec \a_{3}\prec \a_{3}+\a_2 \}.
\]

(ii) We again use induction on $n$, the base case being $n=2$, when $\b=\a_1+\a_2$, $\ss_\b=\ss_2\ss_1\ss_2$, and
$$
 \Phi(\ss_\b)=\{\b-\a_1\prec \b \prec \b+\a_1\}.
$$
Assume  now $\Phi$ is of type $C_n$, $n\ge 3$, and let $\Phi^\prime\subset \Phi$ be the root subsystem of type
$C_{n-1}$ obtained by removing node $1$ from the Dynkin diagram of $\Phi$. Assume that our claim is true for  $\Phi^\prime$.

Let $\b^\prime$ be the dominant short root in $\Phi^\prime$ so $\b=\a_1+\a_2+\b^\prime=\ss_2\ss_1 \b'$. Note that $\b$ is orthogonal
to all simple roots except for $\a_2$, and $\la \beta, \a_2 \ra=1$. As before we have
\[\ss_\b = \ss_2 \ss_1 \ss_{\b'} \ss_1\ss_2,
\]
and the remaining part of the argument proceeds as in part (i).

(iii) The claim follows by direct verification.

 (iv) In cases (i) and (ii), the claim follows by induction, and in (iii) by direct verification.
 \end{proof}

\subsection{} One consequence of Lemma \ref{lem_phissb} is the following identity.

\begin{lemma}\label{lem_DD}
For $\b\in\Pi_\new(\Phi_0)$, let $ \Phi(\ss_\b) \cap \Phi_{-1}=\{ \g_1, \ldots,  \g_{2t}\}$
as in Lemma~\ref{lem_phissb} (iv). Then,
\[
\prod_{\a\in \Phi_1^+} \frac{1-u^2\ux^{2\a}}{1-u^2\ux^{2\ss_\b \a}}
=\prod_{j=1}^{2t} k(\ux^{\g_j})^{-1},
\]
where $k(x)=\displaystyle \frac{1-u^2 x^{-2}}{1-x^2}$.
\end{lemma}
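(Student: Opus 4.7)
The plan is to exploit that $\ss_\b$ fixes $\a_i$---because $\b\in\Pi_\new(\Phi_0)\subset\a_i^\perp$---so $\ss_\b$ preserves each $\Phi_e$ setwise. In particular it permutes $\Phi_1$, and I split $\Phi_1^+=A\sqcup B$, where $B:=\Phi(\ss_\b)\cap\Phi_1^+$ consists of those $\a\in\Phi_1^+$ for which $\ss_\b\a\in\Phi_1^-$, and $A$ is the complement. On $A$ the map $\ss_\b$ acts as an involution (since $\a\in A$ implies $\ss_\b\a\in\Phi_1^+$, hence $\ss_\b\a\in A$ as well), so the factors in
\[
\prod_{\a\in A}\frac{1-u^2\ux^{2\a}}{1-u^2\ux^{2\ss_\b\a}}
\]
pair up and the product equals $1$. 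It suffices therefore to compute the analogous product over $B$.

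Next I identify $B$ by examining the three cases of Lemma~\ref{lem_phissb}. In each case that lemma provides $\Phi(\ss_\b)\cap\Phi_{-1}^+=\{\g_1,\dots,\g_{2t}\}$ together with partners $\g_j+\a_i=\ss_i\g_j\in\Phi(\ss_\b)\cap\Phi_1^+$. In cases (i) and (iii) the only additional element of $\Phi(\ss_\b)$ is $\b$ itself, which lies in $\Phi_0$; in case (ii) the remaining elements are $\b\pm\a_i$, which are long roots satisfying $\la\b\pm\a_i,\a_i\ra=\pm2$, so they lie in $\Phi_{\pm 2}$ and not in $\Phi_1$. Hence in all cases $B=\{\g_j+\a_i\mid 1\le j\le 2t\}$. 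Moreover, the involution $\a\mapsto -\ss_\b\a$ preserves $\Phi(\ss_\b)$, and because $\ss_\b$ fixes $\a_i$ it swaps $\Phi(\ss_\b)\cap\Phi_1^+$ with $\Phi(\ss_\b)\cap\Phi_{-1}^+$. This yields a permutation $\sigma$ of $\{1,\dots,2t\}$ with $\ss_\b(\g_j+\a_i)=-\g_{\sigma(j)}$.

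Finally, using $\ux^{\a_i}=1/u$, the two factors evaluate as
\[
1-u^2\ux^{2(\g_j+\a_i)}=1-\ux^{2\g_j},\qquad 1-u^2\ux^{2\ss_\b(\g_j+\a_i)}=1-u^2\ux^{-2\g_{\sigma(j)}},
\]
so, reindexing by $\sigma$,
\[
\prod_{\a\in\Phi_1^+}\frac{1-u^2\ux^{2\a}}{1-u^2\ux^{2\ss_\b\a}}=\prod_{j=1}^{2t}\frac{1-\ux^{2\g_j}}{1-u^2\ux^{-2\g_{\sigma(j)}}}=\prod_{j=1}^{2t}\frac{1-\ux^{2\g_j}}{1-u^2\ux^{-2\g_j}}=\prod_{j=1}^{2t}k(\ux^{\g_j})^{-1}.
\]
The only real subtlety is identifying $B$ in case (ii): the roots $\b\pm\a_i$ of $\Phi(\ss_\b)$ could a priori belong to $\Phi_1$, and one must observe that they are long to exclude them.
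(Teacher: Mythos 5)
Your proof is correct and follows essentially the same line as the paper's: cancel the factors for $\a\in\Phi_1^+\setminus\Phi(\ss_\b)$ using that $\ss_\b$ bijects that set, identify $\Phi_1^+\cap\Phi(\ss_\b)=\{\g_j+\a_i\}$, and reindex via the involution $\a\mapsto-\ss_\b\a$ on $\Phi(\ss_\b)$. The paper reaches the same reindexing bijection $\g\mapsto -\ss_\b\ss_i\g=\b-\g-\a_i$ by explicitly computing $\ss_\b(\g_j+\a_i)$ using $\la\g_j,\b^\vee\ra=1$ from Lemma~\ref{lem_phissb}(iv); your version sidesteps that computation by the structural observation that $-\ss_\b$ preserves $\Phi(\ss_\b)$ and swaps the $\Phi_{\pm1}$ parts, which is a small but clean simplification. (One tiny wording slip: in case (ii), $\Phi(\ss_\b)$ also contains $\b$ itself in addition to $\b\pm\a_i$, but since $\b\in\Phi_0$ this does not affect the identification of $B$.)
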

\begin{proof}
Since $\ss_\b\a_i=\a_i$, the reflection $\ss_\b$ keep $\Phi_e$ stable, for any  $e\in\{-1,0,1\}$.
Therefore $\ss_\b$ restricts to a bijection of $\Phi_1^+\setminus \Phi(\ss_\b)$, giving
\be\label{e_prod}
\prod_{\a\in \Phi_1^+} \frac{1-u^2\xx^{2\a}}{1-u^2\xx^{2\ss_\b \a}}
=\prod_{\g\in \Phi_1^+\cap \Phi(\ss_\b)}\frac{1-u^2\xx^{2\a}}{1-u^2\xx^{2\ss_\b \a}}
=\prod_{j=1}^{2t} \frac{1-u^2 \xx^{2(\g_j+\a_i)}}{1-u^2 \xx^{2(\g_j+\a_i-\b)}}.
\ee
Above, we used that $\Phi_{1}\cap \Phi(\ss_\b) =\{ \g_j+\a_i \mid j=1,\ldots, 2t\} $, according to Lemma~\ref{lem_phissb} (iv).
If $t=0$, we have $\Phi_1^+\cap \Phi(\ss_\b)=\emptyset$ and all the products above are equal to $1$.

We also have  $\Phi_{-1}\cap \Phi(\ss_\b) =\{ \g_j \mid 1\le j\le 2t\} $, and the
map $\g\mapsto - \ss_\b \ss_i \g=\b-\g-\a_i$ is a bijection of this set. It follows that,
for $\g\in \Phi_{-1}\cap \Phi(\ss_\b)$, we have
\[
\left.\frac{1-u^2 \xx^{2(\g+\a_i)}}{1-u^2 \xx^{2(\g+\a_i-\b)}}\right|_\evu = \frac{1-\ux^\g}{1-u^2 \ux^{-\g^\prime}}~,
\]
with $\g^\prime=\b-\g-\a_i\in \Phi_{-1}\cap \Phi(\ss_\b)  $. Taking the product of these fractions over
all $\g\in \Phi_{-1}\cap \Phi(\ss_\b)$, and comparing with~\eqref{e_prod} concludes the proof.
\end{proof}

\subsection{} We are now ready to complete the second step in the proof of Theorem \ref{thm_res}.

Remark that   $(\ss_\b \xx)_i=x_i$ and $(\e^\b\xx)_i=x_i$ for $\b\in\Pi_\new(\Phi_0)$, by~\eqref{e_sign}.
We let sign functions
$\e^\lam$ with $(\e^\lam \xx)_i=x_i$ act on the multivariable $\ux$ by restriction of their action on $\xx$,
and the reflection $\ss_\b$ by $(\ss_\b\ux)_j:=\ux^{\ss_\b\a_j}=\xx^{\ss_\b \a_j}|_\evu$ for $j\ne i$.

\begin{Prop}\label{prop_ssb} For $\b\in\Pi_\new(\Phi_0)$ we have
\[
F(\ux)=\begin{cases} F(\ss_\b \ux) J(\ux^\b,0) + F(\ss_\b\e^\b \ux) J(\ux^\b,1) & \text{ if $\b$ is short},\\
      F(\ss_\b \ux)  & \text{ if $\b$ is long}.
       \end{cases}
\]
Moreover, $F(\ux)=F(\e_i \ux)$.
\end{Prop}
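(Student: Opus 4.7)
The plan is to derive both identities from the Chinta-Gunnells invariance of $Z_\Phi$ under appropriate elements of $W$, combined with the detailed structure of $\Phi(\ss_\b)$ from Lemma~\ref{lem_phissb} and the ratio computation in Lemma~\ref{lem_DD}. Abbreviate $h(\xx):=\prod_{\a\in\Phi_1^+}(1-u^2\xx^{2\a})$, so that $Z_\Phi^{[i]}=Z_\Phi\cdot h$. For the easier $\e_i$-symmetry, I would apply $\e_i$ to both sides of $Z_\Phi=Z_\Phi\CG\ss_i$, use $\e_i^2=\id$ together with $Z_{\Phi,-}^i(\ss_i\e_i\xx)=-Z_{\Phi,-}^i(\ss_i\xx)$, and subtract to obtain
\[
Z_\Phi(\xx)-Z_\Phi(\e_i\xx)=\frac{2}{x_i}\,Z_{\Phi,-}^i(\ss_i\xx).
\]
The right-hand side is regular at $x_i=1/u$ because $(\ss_i\xx)_i=1/x_i$ specializes to $u$, which lies off the pole locus of $Z_\Phi$ (whose poles, by Lemma~\ref{lem_div}(i) combined with the form of $D_\Phi$, occur only where a coordinate equals $1/u$). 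Since $h$ is polynomial and $\e_i$-invariant, $Z_\Phi^{[i]}(\xx)-Z_\Phi^{[i]}(\e_i\xx)$ has no residue at $x_i=1/u$, yielding $F(\ux)=F(\e_i\ux)$.

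For the main $\ss_\b$-assertion, I would multiply $Z_\Phi=Z_\Phi\CG\ss_\b$ by $h(\xx)$ and take $\res_{x_i=1/u}$. Expanding the right-hand side via Lemma~\ref{lem_CG} gives a sum over $\ddd\in\{0,1\}^{\ell_s(\ss_\b)}$. Since $\a_i\notin\Phi(\ss_\b)$, the $J$-factors are regular at $x_i=1/u$; the pole comes only from $Z_\Phi(\ss_\b\e_\ddd\xx)$. Using $\ss_\b\a_i=\a_i$ one has $(\ss_\b\e_\ddd\xx)_i=(-1)^{\la\a_i,\sum_\g\dd_\g\g\ra}x_i$, and Lemma~\ref{lem_div}(i) forces $\la\a_i,\sum_\g\dd_\g\g\ra$ to be even. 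For such $\ddd$ the surviving residue identifies with $F(\e^{\ss_\b\sum_\g\dd_\g\g}\ss_\b\ux)$, and Lemma~\ref{lem_DD} computes $h(\xx)/h(\ss_\b\e_\ddd\xx)|_{x_i=1/u}=\prod_{j=1}^{2t}k(\ux^{\g_j})^{-1}$, independently of $\ddd$. The combinatorial core is then to collapse the resulting sum using the pair structure $\Phi(\ss_\b)\cap(\Phi_{1}\cup\Phi_{-1})=\{(\g_j,\g_j+\a_i)\}_{j=1}^{2t}$ from Lemma~\ref{lem_phissb}. Grouping by $\dd_\b$ (absent in case (iii), where $\b$ is long and $\b\notin\Phi^s(\ss_\b)$) and by the parities $c_j:=\dd_{\g_j}+\dd_{\g_j+\a_i}\pmod{2}$, the constraint becomes $\sum_j c_j\equiv 0\pmod{2}$. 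For each pair, the $J$-factors evaluated at $\ux^{\g_j}$ and $\ux^{\g_j+\a_i}=\ux^{\g_j}/u$ (using $n_i(\g_j)=0$), weighted by $k(\ux^{\g_j})^{-1}$ and summed over fixed $c_j$, admit explicit identities that reduce each pair to a single factor. Finally, the exponent $\ss_\b\sum_\g\dd_\g\g$ reduces, modulo the symmetries of $F$ already available---namely the $\e_i$-invariance just established, plus the Chinta-Gunnells invariance of $F$ under $W^{(i)}$ inherited from $Z_\Phi$ (each $\ss_j$ with $j\in\Pi(\Phi^{(i)})$ commutes with $\res_{x_i=1/u}$ because it preserves both $x_i$ and $\Phi_1^+$)---to exactly $\dd_\b\b$. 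The sum therefore collapses to $F(\ss_\b\ux)J(\ux^\b,0)+F(\ss_\b\e^\b\ux)J(\ux^\b,1)$ when $\b$ is short, and to $F(\ss_\b\ux)$ when $\b$ is long.

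The main obstacle will be the meticulous bookkeeping of the sign factors $(-1)^{\la\g,\sum_{\mu\prec\g}\dd_\mu\mu\ra}$ decorating the $J$-arguments, and their interaction with both the $\e$-shifts in the argument of $F$ and the $k^{-1}$ factors from Lemma~\ref{lem_DD}, carried out uniformly across the three structural cases of Lemma~\ref{lem_phissb} (simply-laced, short dominant root in $C_n$, long dominant root in $C_3$). Verifying that the ``extra'' part of $\e^{\ss_\b\sum_\g\dd_\g\g}$ beyond $\dd_\b\b$ always acts trivially on $F$, by combining the parity constraint with the $\e_i$- and $W^{(i)}$-symmetries, is the most delicate step and the crux of the proof.
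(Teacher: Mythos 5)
Your proposal follows the same route as the paper: you derive the $\e_i$-symmetry by taking residues in $Z_\Phi = Z_\Phi \CG \ss_i$ (you isolate the odd part, the paper isolates the even part --- equivalent), and you derive the $\ss_\b$-identity by taking residues in $Z_\Phi = Z_\Phi \CG \ss_\b$, expanding via Lemma~\ref{lem_CG}, discarding terms with $\e_\ddd$ flipping $x_i$ via Lemma~\ref{lem_div}(i), pairing the $J$-factors over the pairs $(\g_j,\g_j+\a_i)$ from Lemma~\ref{lem_phissb}, and collecting the $\prod k(\ux^{\g_j})^{-1}$ via Lemma~\ref{lem_DD}. This is exactly the paper's argument, so your plan is sound.

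Two small corrections to the bookkeeping you flag as the crux. First, the $W^{(i)}$-invariance of $F$ is not needed (and the paper does not use it here): for the tuplets $\ddd$ that survive the $J$-identity cancellations one has $\dd_{\g_j}=\dd_{\g_j+\a_i}$ for all $j$, so $\e_\ddd = \e^{\sum_j \dd_{\g_j}(2\g_j+\a_i)+\dd_\b\b} = \e_i^{\sum \dd_{\g_j}}\,\e^{\dd_\b\b}$ because $\e^{2\mu}=\id$; thus $\e_i$-invariance alone absorbs the extra sign, and the sign decorating the central factor $J(\cdot,\dd_\b)$ is trivial because $\la\b,\a_i\ra=0$. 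Second, the parenthetical ``(using $n_i(\g_j)=0$)'' is not needed: $\ux^{\g+\a_i}=\ux^\g\cdot \ux^{\a_i}=\ux^\g/u$ holds for any $\g$ simply because $\ux^{\a_i}=1/u$. The genuine technical work is not the reduction of $\e_\ddd$, but showing via the two identities in~\eqref{e_Jident} that the pair sums vanish unless $\dd_{\g_j}=\dd_{\g_j+\a_i}$; this requires using the parity constraint~\eqref{e_par} to normalize the sign in the second factor of each pair to the form $(-1)^{\la\g,\g'\ra+\dd_{\g+\a_i}}$, exactly as in~\eqref{e_Jlast}.
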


\begin{proof}
Throughout the proof, we denote
\[R(\ux):=\res_{x_i=1\slash u} Z_{\Phi}(\xx; u).
\]
Taking residues in the functional equation $Z_{\Phi}(\xx; u)=Z_{\Phi}(\xx; u)\CG \ss_i$, we  obtain
\be\label{e_ressi}
R(\ux)=(1-u^2) \cdot (Z_{\Phi})^+_i (\ss_i\xx; u)|_{x_i=1\slash u}.
\ee
Therefore $R(\ux)$ is even with respect to $\e_i$, and we conclude that $F(\ux)=F(\e_i \ux)$ as well.

Since $\ss_\b \a_i=\a_i$, taking residues in the functional equation $Z_{\Phi}(\xx; u)=Z_{\Phi}(\xx; u)\CG \ss_\b$ will relate
$R(\ux)$ to a linear combination of $R(\ss_\b\e\ux)$ for certain sign functions $\e$, using
 formula~\eqref{e4} for $w=\ss_\b$. If the sign function
 $\displaystyle\e_\ddd=\e^{\sum_{\g\in \Phi(\ss_\b)} \dd_\g \g}$
in~\eqref{e4} changes the sign of $x_i$, then the residue of the corresponding term
in~\eqref{e4} vanishes, since $\displaystyle\res_{x_i=-1\slash u} Z_{\Phi}=0$.
Denote by $E$  the set of all $\e_\ddd$ with $(\e_\ddd\xx)_i=x_i$. We obtain
\be\label{e5}
R(\ux)=\sum_{\substack{\ddd\in \{0,1\}^{\ell_s} \\ \e_\ddd\in E }} R(\ss_\b \e_\ddd \ux)\cdot \Pi_\ddd ~~,
\ee
where
\be\label{e_ce}
\Pi_\ddd=\Pi_{\ss_\b,\ddd}(\ux):=\prod_{\g\in\Phi^s(\ss_\b)}
J\left( (-1)^{\la\g, \sum_{\a\prec\g}\dd_\a \a \ra} \ux^{\g},\dd_\g \right),
\ee
and $\Phi^s(\ss_\b)$,  $\ell_s=|\Phi^s(\ss_\b)|$ are as in Lemma~\ref{lem_CG}. Recall that
the order $\prec$ on  $\Phi^s(\ss_\b)$ is induced by the order on $\Phi(\ss_\b)$ in~\eqref{e_phiw}.
In all cases we have, by Lemma~\ref{lem_phissb},
\[
\Phi^s(\ss_\b)=
\{ \g_1\prec \g_1+\a_i\prec \ldots\prec \g_{t}\prec\g_{t}+\a_i\prec \b\prec
\g_{t+1}\prec \g_{t+1}+\a_i\prec \ldots\prec \g_{2t}\prec \g_{2t}+\a_i \},
\]
for some $t\ge 0$, with the central element $\b$ missing if $\b$ is a long root
as in  Lemma \ref{lem_phissb} (iii). We have $t=0$ only when $\b$ is
the dominant short root in a subsystem of type $C_2=B_2$, in which case $\Phi^s(\ss_\b)=\{\b\}$.

For a sign $\e_\ddd\in E$, the condition $(\e_\ddd\xx)_i=x_i$ translates, by~\eqref{e_sign}, to
\be\label{e_par}
\sum_{j=1}^{2t} (\dd_{\g_j}+\dd_{\g_j+\a_i})\equiv 0\!\!\! \mod 2~~,
\ee
the condition being automatically satisfied if $t=0$.
Assume $t>0$, and let $\g:=\g_{2t}$, so the last two elements in $\Phi^s(\ss_\b)$ are $\g$ and $\g+\a_i$.
For $\ddd\in E$, define a tuplet $\ddd^\prime$  such that $\ddd^\prime$ and $\ddd$ are identical, except on the last two positions, for which
\[
\dd^\prime_{\g}=1-\dd_{\g},\qquad \dd^\prime_{\g+\a_i}=1- \dd_{\g+\a_i}.
\]
Note that $\e_{\ddd^\prime}=\e_\ddd\e_i$, so $R(\ss_\b \e_\ddd \ux)=R(\ss_\b \e_{\ddd^\prime} \ux)$ in~\eqref{e5}
can be taken as common factor in front of the sum $\Pi_\ddd+\Pi_{\ddd^\prime}$. To compute this sum, remark that
the first $\ell_s-2$ factors in $\Pi_{\ddd}$ and $\Pi_{\ddd^\prime}$ are the same,
and the last two factors of $\Pi_\ddd$ are
\be\label{e_Jlast}
J( (-1)^{\la\g,\g' \ra } \ux^{\g} ,  \dd_\g)\cdot
J( (-1)^{\la\g,\g'\ra +\dd_{\g+\a_i}} \ux^{\g}\slash u,  \dd_{\g+\a_i}),
\ee
with $\g'=\sum_{\a<\g} \dd_\a \a$. Here, we have used condition~\eqref{e_par}
and Lemma~\ref{lem_phissb} (iv).

We now use the following identities
\be\label{e_Jident}
\begin{gathered}
J(x,0)J(x/u,0)+J(x,1) J(-x/u,1) = k(x),\\
 J(x,1)J(x/u,0)+J(x,0) J(-x/u,1)=0,
 \end{gathered}
 \ee
where
$$k(x):=\frac{1-u^2x^{-2}}{1-x^2}.$$
By~\eqref{e_Jident},
the sum of the product in~\eqref{e_Jlast} with the corresponding product for $\Pi_{\ddd'}$
equals $k(\ux^\g)$ or 0, depending on whether $\dd_\g$ and $\dd_{\g+\a_i}$
are the same or not, respectively. We conclude that the sum $\Pi_\ddd+\Pi_{\ddd'}$ vanishes,
unless $\dd_\g=\dd_{\g+\a_i}$, when it equals $k(\ux^{\g_{2s}})$ times a product involving
only on the first $\ell_s-2$ elements of $\ddd$. When $\dd_{\g}=\dd_{\g+\a_i}$, we also have
\[
\sum_{j=1}^{2t-1} (\dd_{\g_j}+\dd_{\g_j+\a_i})\equiv 0\!\!\!\mod 2.
\]
Repeating the same reasoning with $\g=\g_{2t-1}$ and smaller indices, we conclude that only
the tuplets $\ddd$ having $\dd_{\g_j}=\dd_{\g_j+\a_i}$, for all $1\le j\le 2t$, contribute
non-trivially to the sum in~\eqref{e5}. We obtain
\[
R(\ux)= \prod_{j=1}^{2t} k(\ux^{\g_{j}})\cdot\begin{cases}
              R(\ss_\b \ux) J(\ux^\b,0) + R(\ss_\b\e^\b \ux) J(\ux^\b,1) & \text{ if $\b$ is short},\\
      R(\ss_\b \ux)  & \text{ if $\b$ is long.}
                                        \end{cases}
\]
We also have
$$F(\ux)=R(\ux)\cdot \prod_{\a\in \Phi_1^+} (1-u^2 \ux^{2\a}),$$
and the identity in Lemma~\ref{lem_DD} concludes the argument.
\end{proof}

As a consequence of Proposition~\ref{prop_ssb}, we have the following complement to Lemma~\ref{lem_res1}.
\begin{lemma} \label{lem_res2}  For  $\b\in\Pi_\new(\Phi_0)$, we have
\[\res_{\substack{ x_i=1\slash u \\
 \xx^{\b}=-1\slash u }} Z_{\Phi}(\xx; u)= 0.
\]
\end{lemma}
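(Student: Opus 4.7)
The plan is to use the functional equation from Proposition~\ref{prop_ssb} to establish that $F(\ux):=\res_{x_i=1/u}Z_\Phi^{[i]}(\xx;u)$ is regular along the hyperplane $\ux^\b=-1/u$, in the same spirit as the argument for Lemma~\ref{lem_div}(i). Since $\b\in\Phi_0$ satisfies $\la\b,\a_i\ra=0$ while every $\a\in\Phi_1^+$ satisfies $\la\a,\a_i\ra=1$, no root in $\Phi_1^+$ is proportional to $\b$, so the product $\prod_{\a\in\Phi_1^+}(1-u^2\ux^{2\a})$ does not vanish identically along $\ux^\b=-1/u$; hence regularity of $F$ there will imply regularity of $R(\ux):=\res_{x_i=1/u}Z_\Phi(\xx;u)$, which equals $F(\ux)/\prod_{\a\in\Phi_1^+}(1-u^2\ux^{2\a})$, and the iterated residue in the statement will then vanish.

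The case when $\b$ is long, which does occur (for instance, in type $C_r$ at $i=r-1$ and in $F_4$ at $i=3$), is immediate: the only factor of $D_\Phi(\xx;u)$ associated to a root proportional to $\b$ is $1-u^2\xx^\b$, which evaluates to $1+u\ne 0$ at $\xx^\b=-1/u$, so $Z_\Phi$ itself has no pole along $\xx^\b=-1/u$.

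For the main case when $\b$ is short, Proposition~\ref{prop_ssb} yields
\[F(\ux)=F(\ss_\b \ux)J(\ux^\b,0)+F(\ss_\b \e^\b \ux)J(\ux^\b,1).\]
Since $J(\ux^\b,\dd)$ is regular at $\ux^\b=-1/u$, it will suffice to show that the pole loci of $F(\ss_\b \ux)$ and $F(\ss_\b \e^\b \ux)$ avoid the hyperplane $\ux^\b=-1/u$. By Proposition~\ref{prop_den}, the pole hyperplanes of $F$ lie among $\{\ux^{m_\a\a}=1/u^2\}$ for $\a\in\Phi_0^+$, so the pole hyperplanes of $F(\ss_\b \ux)$ lie among $\{\ux^{m_\a\ss_\b\a}=1/u^2\}$; the sign operator $\e^\b$ only alters the constants in these equations and does not shift the underlying hyperplanes. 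A hyperplane of this form coincides with $\ux^\b=-1/u$ only if $m_\a\ss_\b\a$ is proportional to $\b$, which forces $\a=\b$ (since $\b$ is simple in $\Pi(\Phi_0)$) and yields the hyperplane $\ux^{-2\b}=1/u^2$, i.e., $\ux^\b=\pm u$, not $\ux^\b=-1/u$. Therefore the right-hand side is regular along $\ux^\b=-1/u$, and so is $F$.

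The main obstacle is the careful bookkeeping of how the pole hyperplanes of $F$ transform under the substitutions $\ux\mapsto\ss_\b\ux$ and $\ux\mapsto\ss_\b\e^\b\ux$; this reduces to the observation that the only root of $\Phi_0$ proportional to $\b$ is $\pm\b$ itself, together with the explicit denominator of $F$ furnished by Proposition~\ref{prop_den}. Everything else is a direct application of Proposition~\ref{prop_ssb}.
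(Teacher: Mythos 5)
Your argument is correct, and it is precisely the elaboration the paper intends: the authors state the lemma with only the remark that it is ``a consequence of Proposition~\ref{prop_ssb}'', so filling in the details via the functional equation, together with the pole-divisor description from Proposition~\ref{prop_den}, is the right move. Your separate treatment of the long case is also accurate (and your examples, $C_r$ with $i=r-1$ and $F_4$ with $i=3$, do occur), as is the reduction from $Z_\Phi$ to $F(\ux)$ via the observation that no root of $\Phi_1^+$ is proportional to $\b$.

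One step is left implicit and deserves a sentence. When you say ``a hyperplane of this form coincides with $\ux^\b=-1/u$ only if $m_\a\ss_\b\a$ is proportional to $\b$'', note that the hypersurfaces live in the $\ux$-variables, so what is directly relevant is proportionality of the \emph{restricted} exponent vectors $\bigl(n_j(\ss_\b\a)\bigr)_{j\ne i}$ and $\bigl(n_j(\b)\bigr)_{j\ne i}$, which is a priori weaker than proportionality of $\ss_\b\a$ and $\b$ in $Q$. The two are in fact equivalent here: if $\ss_\b\a - c\,\b$ is a multiple of $\a_i$, then since $\ss_\b\a,\b\in\Phi_0=\a_i^\perp$ and $\la\a_i,\a_i\ra\ne 0$, that multiple must be zero, so $\ss_\b\a=c\,\b$. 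Making this explicit closes the one small gap; the rest of the proof stands. (Also, the parenthetical ``since $\b$ is simple in $\Pi(\Phi_0)$'' is not what forces $\a=\b$; what forces it is that $\ss_\b$ preserves the line $\mathbb{R}\b$, so $\ss_\b\a\in\mathbb{R}\b$ iff $\a\in\mathbb{R}\b$, and then $\a\in\Phi_0^+$ gives $\a=\b$.)
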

\begin{Cor}\label{cor_div}
The numerator $N_0(\ux; u)$ from Proposition~\ref{prop_den} is divisible by $1+u \ux^\g$, for any short root
$\g\in \Pi(\Phi_0)$.
\end{Cor}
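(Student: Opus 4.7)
The plan is to handle separately the two subsets that partition $\Pi(\Phi_0) = \Pi(\Phi^{(i)}) \sqcup \Pi_\new(\Phi_0)$.

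Suppose first that $\g = \a_j \in \Pi(\Phi^{(i)})$ is short (hence a short simple root of $\Phi$ with $\la\a_i,\g\ra=0$). Lemma~\ref{lem_div}(i) gives $(1+ux_j) \mid N_\Phi(\xx;u)$, and since this factor does not involve $x_i$, the divisibility persists under the specialization $x_i = 1/u$, yielding $(1+u\ux^\g) \mid N_\Phi(\xx;u)|_\evu$. The factorization established in the proof of Proposition~\ref{prop_den},
\[
N_\Phi(\xx;u)|_\evu = 2 \prod_{\a \in \Phi_{-1}^+ \cup (\Phi^{\ell,+}\setminus \Phi_0^+)} (1-u^2\ux^{m_\a \a}) \cdot N_0(\ux;u),
\]
then reduces the claim to showing that $(1+u\ux^\g)$ is coprime to every factor of the product. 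The divisibility $(1+u\ux^\g) \mid (1-u^2\ux^{m_\a \a})$ would force $\a = \g$ in a reduced root system: for $\a \in \Phi_{-1}^+$ this is incompatible with $\la\a_i,\g\ra = 0 \neq \la\a_i,\a\ra$, and for $\a$ long it is ruled out because $\g$ is short.

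Suppose next that $\g \in \Pi_\new(\Phi_0)$ is short. By Lemma~\ref{lem_res2}, the residue $R(\ux) := \res_{\evu} Z_\Phi(\xx;u)$ is regular at $\ux^\g = -1/u$; since $F(\ux) = R(\ux) \cdot \prod_{\a \in \Phi_1^+}(1-u^2\ux^{2\a})$ differs from $R(\ux)$ by a polynomial factor, $F(\ux)$ is regular there as well. In the denominator
\[
D_{\Phi_0}(\xx;u)|_\evu = \prod_{\a\in\Phi_0^+}(1-u^2\ux^{m_\a\a}),
\]
the irreducible $(1+u\ux^\g)$ appears with multiplicity exactly one, contributed only by the factor $(1-u^2\ux^{2\g})$ attached to $\g$ itself (by the same coprimality observation as above). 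Hence the regularity of $F = N_0/D_{\Phi_0}(\xx;u)|_\evu$ at $\ux^\g = -1/u$ forces $(1+u\ux^\g) \mid N_0(\ux;u)$.

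The main technical point in both cases is the elementary observation that in a reduced root system $(1+u\ux^\g)$ divides $(1-u^2\ux^{m_\a\a})$ only when $\a = \g$; once this is granted, the first case follows by transferring the divisibility of $N_\Phi|_\evu$ through the factorization of Proposition~\ref{prop_den}, while the second follows from the vanishing of a residue proved in Lemma~\ref{lem_res2}.
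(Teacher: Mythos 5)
Your argument is correct and matches what the paper leaves implicit by stating Corollary~\ref{cor_div} without proof directly after Lemma~\ref{lem_res2}: Lemma~\ref{lem_div}(i) covers $\g\in\Pi(\Phi^{(i)})$, Lemma~\ref{lem_res2} covers $\g\in\Pi_\new(\Phi_0)$, and both transfer through the factorization of Proposition~\ref{prop_den}, as you do. The ``elementary observation'' you rely on in both cases — that $(1+u\ux^\g)$ and $(1-u^2\ux^{m_\a\a})$ share a nontrivial common factor only when $\a=\g$ — is true but deserves a line of justification, because the evaluation $x_i=1/u$ collapses the $\a_i$-direction: a common factor forces $u^2\ux^{m_\a\a}=(u\ux^\g)^k$ for some even $k$, which unwinds to $m_\a\a=k\g-(k-2)\a_i$ in $Q$, and taking squared lengths (with $\la\g,\a_i\ra=0$, $\g$ short) rules out every even $k$ except $k=2$, yielding $m_\a\a=2\g$ and hence $\a=\g$.
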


\subsection{} The third step in the proof of Theorem \ref{thm_res} is the following.

\begin{Prop}\label{e_polyn}
 $N_0(\ux; u)$ is a polynomial in $\ux^\g$, $\g\in \Pi(\Phi_0)$.
\end{Prop}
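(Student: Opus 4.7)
The plan is to combine Proposition~\ref{prop_ssb}, which gives $F(\ux)$ full Chinta--Gunnells invariance under the Weyl group $W_0$ of $\Phi_0$ acting on the monomials $y_\g := \ux^\g$, $\g \in \Pi(\Phi_0)$, with the structural results of Section~\ref{sec_numzeta} applied to $\Phi_0$. Let $\bar Q := \bigoplus_{j \ne i} \Z\a_j$; since every nonzero element of $Q_0$ has a nontrivial component in some $\a_j$ with $j \ne i$ (as $Q_0 \cap \Z\a_i = 0$), the canonical projection restricts to an injection $Q_0 \hookrightarrow \bar Q$, whose image I denote by $L$.

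First I would unpack the $W_0$-CG invariance of $F$, together with the observation that $D_{\Phi_0}(\xx;u)|_\evu$ is itself a polynomial in the monomials $y_\g$, into recurrence relations on the coefficients $c_\lam$ of $N_0 = \sum c_\lam \ux^\lam$. These are the $\Phi_0$-analogues of the relations~\eqref{e_rec}, and each of them shifts indices only by integer multiples of the images $\bar\g$. Consequently, the relations respect the coset decomposition of $\bar Q$ modulo $L$, and we may write $N_0 = \sum_{\tau \in \bar Q / L} N_0^{(\tau)}$ with each $N_0^{(\tau)}$ supported in $\tau + L$ and $W_0$-CG-invariant separately.

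The main step is to show $N_0^{(\tau)} = 0$ for $\tau \ne 0$. For each such $\tau$, fix a representative $\mu_\tau \in \tau$ and consider $\ux^{-\mu_\tau} N_0^{(\tau)}$, now supported in $L$; by the injectivity of $Q_0 \hookrightarrow \bar Q$ it lifts uniquely to a Laurent polynomial in $\F[Q_0]$. The shift by $\ux^{-\mu_\tau}$ twists the $W_0$-CG invariance by a weight $\omega_\tau$ depending on $\mu_\tau$, placing the lift in a twisted space $\NN_{\omega_\tau}^{(\Phi_0)}$ of the type considered in Section~\ref{sec_numzeta}. The positivity $\supp(N_0) \subseteq \bar Q_{\ge 0}$ (since $N_0$ is an honest polynomial in $\ux$), together with the degree bound from Proposition~\ref{prop_den} and the support characterizations of Proposition~\ref{prop_orb} and Corollary~\ref{cor_supp} applied to $\Phi_0$, should force vanishing. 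I expect this vanishing argument---pinning down $\omega_\tau$ precisely and matching the constraints against the support results from Section~\ref{sec_numzeta}---to be the main technical obstacle.

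Having reduced to $N_0 = N_0^{(0)}$, whose support lies in $L$, I would lift $N_0$ to $\widetilde N_0 \in \F[Q_0] \cap \NN_0^{(\Phi_0)}$. Corollary~\ref{cor_notpol} for $\Phi_0$ then yields $\supp(\widetilde N_0) \subseteq Q_0^+$: otherwise some $\lam < 0$ in $Q_0$ would lie in the support, and writing $\lam = -\sum c_\g \g$ with $c_\g \ge 0$ not all zero, its image in $\bar Q$ would have strictly negative $\a_j$-coefficient for some $j \ne i$ (as each $\g \in \Pi(\Phi_0)$ contributes a strictly positive $n_j(\g)$ for some $j \ne i$), contradicting $\supp(N_0) \subseteq \bar Q_{\ge 0}$. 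Hence $\widetilde N_0 \in \F[Q_0^+]$, proving that $N_0$ is a polynomial in the monomials $\ux^\g$, $\g \in \Pi(\Phi_0)$, as required.
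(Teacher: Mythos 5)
Your proposal is in the spirit of the paper's argument for $\Phi$ of type $A_r$ with $1<i<r$ (the paper's Case II), which is indeed a coset decomposition along the sublattice $L=\mathrm{image}(Q_0)\subset\bar Q$, followed by shifting each non-zero coset piece into a twisted numerator space $\NN_{\omega}$ for $\Phi_0$ and killing it by a degree bound plus Proposition~\ref{prop_orb}. However, the vanishing step you flag as ``the main technical obstacle'' is exactly where the proof lives, and your sketch does not supply it. In the $A_r$ case the paper obtains a \emph{strict} degree inequality precisely because the nontrivial coset pieces carry an extra factor $x_{i-1}^{2a}$ with $a>0$, eating into the bound $\lam\le 2\rho_0$ coming from Proposition~\ref{prop_den}; this strictness is what produces the contradiction $\xi-\rho_0<w_\circ(\xi-\rho_0)$. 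To run your uniform argument you would need to establish an analogous sharp estimate for each representative $\mu_\tau$, identify the twist $\omega_\tau$ exactly, and justify that the results of Section~\ref{sec_numzeta} apply to $\Phi_0$ even when it is reducible (which is the common situation; the paper deals with reducibility explicitly in \S\ref{sec_proof}, and implicitly sidesteps it in the proof of this proposition by only invoking the $\NN_\omega$ machinery when $\Phi_0=A_{r-2}$ is irreducible). As written, this is a genuine gap.

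It is also worth noting that for $\Phi$ not of type $A_r$ the paper avoids the twisted machinery entirely: there the quotient $\bar Q/L$ is $\Z/2\Z$ or trivial, and the nontrivial coset is already annihilated by $\e_i$-evenness of $N_0$ (a consequence of Proposition~\ref{prop_ssb}), combined with the rank count that $|\Pi_\new(\Phi_0)|$ equals the number of Dynkin neighbors of node~$i$, plus a short case verification that the resulting monomial substitution has integral exponents. That is considerably lighter than running the twisted-$\NN_\omega$ argument coset by coset. Your final step---lifting $N_0$ to $\F[Q_0]$ and using Corollary~\ref{cor_notpol} to exclude monomials with negative $\Pi(\Phi_0)$-exponents---is correct and matches the paper.
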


\begin{proof}
We distinguish two cases.

\textit{Case I}: the root system $\Phi$ is not of type $A_r$. In this case, the number
of roots $\b\in \Pi_\new(\Phi_0)$ equals the number of neighbors of the node $i$ in the Dynkin diagram of $\Phi$,
as it can be seen from the tables in \S\ref{sec_Phi0} and  Appendix~\ref{sec_exc}.
By Proposition~\ref{prop_ssb} the function
$N_0(\ux; u)$ is even under the sign function $\e_i$, which changes the sign of
$x_j$ precisely for $\a_j$ such that $\la\a_i, \a_j\ra=-1$ (necessarily, $\a_j$ is a short root). Using that the cardinality of
$\Pi_\new(\Phi_0)$ equals the number of neighbors of the node $i$ in the Dynkin diagram of $\Phi$,
it follows that in each monomial appearing in $N_0(\ux; u)$ we can make a substitution
\[\prod_{\la \a_j,\a_i\ra<0} x_j^{a_j} =m \cdot \prod_{\b\in \Pi_\new(\Phi_0)} (\ux^\b )^{c_\b},
\]
where $a_j\ge 0$ and $m$ is a Laurent monomial in $u$ and variables $x_k$ with $\la \a_k,\a_i\ra=0$.
Taking into account that $\displaystyle \sum_{\la\a_j,\a_i\ra=-1} a_j$ is even,
a verification of the cases in \S\ref{sec_Phi0} and in Appendix~\ref{sec_exc} shows that all
exponents $c_\b$ on the right-hand side are integral, and at least one is positive if one of the
$a_j$ is positive on the left-hand side. Therefore, after the substitution above, $N_0(\ux; u)$ becomes a Laurent polynomial in the variables
$\ux^\g$ for $\g\in \Pi(\Phi_0)$, such that each monomial that contains some negative exponents also
contains a factor $(\ux^\b )^c$ with $c>0$, for some $\b\in \Pi_\new(\Phi_0)$.
By Corollary~\ref{cor_notpol}, this is possible only
if $N_0(\ux; u)$ is a polynomial in the variables
$\ux^\g$ after the substitution above.

\textit{Case II}: the root system $\Phi$ is of type $A_r$. If $i=1$, then $\Phi_0$ is the
root system of type $A_{r-2}$ with simple roots $\a_k$, $k\ge 3$. The bound on degree in
Proposition~\ref{prop_den} shows that $N_0(\ux; u)$ does not depend on~$x_2$, which is our claim.

If $i\ne 1,r$, then $\b=\a_{i-1}+\a_i+\a_{i+1}$ is the unique root in
$\Pi_\new(\Phi_0)$. We want to show that $N_0(\ux; u)$ is a
polynomial in $u\ux^\b=x_{i-1}x_{i+1}$ and $x_k$, $k\not\in \{i-1,i,i+1\}$, so we decompose
\[
F(\ux)=\sum_{a>0} x_{i-1}^{2a} f_a(\ux) +f_0(\ux)+\sum_{a>0} x_{i+1}^{2a} g_{a}(\ux)
\]
where $f_a(\ux), g_a(\ux)$ are of the form $P_a(\ux)/D_{\Phi_0}(\xx)|_\evu$ with $P_a(\ux)$ a polynomial in
$\ux^\g$, $\g\in\Pi(\Phi_0)$. The exponents of $x_{i\pm 1}$ are even in this expression because
$F(\ux)$ is even with respect to $\e_i$.

We claim that $f_a(\ux)=g_a(\ux)=0$, for all $a\ne 0$. By symmetry we concentrate on $f_a(\ux)$.
The decomposition above is preserved by the actions $\CG \ss_k$ for $\la \a_k, \a_i\ra=0$,  and $\CG \ss_\b$. It follows that, for $a>0$, the function $f_a(\ux)$ is the specialization
at $x_i=1\slash u$ of a function invariant under the twisted action of the Weyl group of $\Phi_0$,
for some twisting parameter $\om$ in the weight lattice of $\Phi_0$, as in Section~\ref{sec_numzeta}.
We now identify $f_a(\ux)$ with this invariant function.

For a contradiction, assume that  $f_a(\ux)\ne 0$, and write $f_a(\ux)=P_a(\ux)/D_{\Phi_0}(\xx)|_\evu$
as above. Proposition~\ref{prop_orb} applied to $\Phi_0$, implies that  there is a strongly dominant
weight $\xi$ such that $O_\xi\subset \supp(P_a(\ux))$. The bound on degree in Proposition~\ref{prop_den},
implies that $0\le\lam<2\rho_0$ for $\lam\in \supp(P_a(\ux))$, with $\rho_0$ and $w_\circ$ being the half-sum of positive roots in $\Phi_0$ and the longest element  in the Weyl group of $\Phi_0$.
Setting $\theta=\om+\rho_0$ as in Section~\ref{sec_numzeta}, we have
\[0\le\theta-\xi\le\theta-w_0\xi<2\rho_0=\rho_0-w_\circ\rho_0.
\]
It follows that $\xi-\rho_0<w_\circ (\xi-\rho_0)$,
which is impossible since
$\xi-\rho_0\in Q_0^+$ (non-negative integral linear combinations of elements in $\Phi_0^+$), and $w_\circ$ maps $\Phi_0^+$ onto $\Phi_0^-$.
The contradiction shows that $f_a(\ux)=0$. Therefore, $F(\ux)=f_0(\ux)$, which is precisely our claim.
\end{proof}

\subsection{Proof of Theorem~\ref{thm_res}}\label{sec_proof}
We are now ready to assemble all the results in this section to prove Theorem~\ref{thm_res}.
By Proposition~\ref{prop_den} and Proposition \ref{e_polyn}, we have
\[
F(\ux)=\frac{N_0(\ux;u)}{D_{\Phi_0}(\xx; u)|_\evu}~~,
\]
where  $N_0(\ux; u)$ is a polynomial in $\ux^\g$, $\g\in \Pi(\Phi_0)$. Proposition~\ref{prop_ssb} shows that $F(\ux)$
has the same transformation properties as $Z_{\Phi_0}(\xx; u)|_\evu$.
If $\Phi_0$ is irreducible,
Proposition~\ref{prop_unique} and the fact that $N_0(\underline{0}; u)=1$ finishes
the proof of~Theorem~\ref{thm_res} in this case.

If $\Phi_0$ is reducible (which is the case when $\Phi$ is of type $C_n$, $n\ge 3$, or $D_n$, $n\ge 4$), the argument in the previous paragraph
has to be slightly adjusted. If $\Phi$ is not of type $D_4$, we have $\Phi_0=\Phi_0^\prime\cup \{\pm \g\}$
with $\Phi_0^\prime$ irreducible and $\g\in\Pi(\Phi_0)$ orthogonal to $\Phi_0^\prime$.
From Corollary~\ref{cor_div}, we have   $N_0(\ux)=(1+u \ux^{\g}) N_0^\prime(\ux)$ for some polynomial
$N_0^\prime(\ux)$. We obtain that
\[
F(\ux)=\frac{1}{1-u \ux^{\g}} F^\prime(\ux), \quad
F^\prime(\ux):=\frac{N_0'(\ux)}{D_{\Phi_0^\prime}(\xx)|_\evu},
\]
and $F^\prime(\ux)$ is invariant under the Weyl group of the irreducible component $\Phi_0^\prime$. Moreover,
$N_0^\prime(\ux)$ satisfies the conclusion of Proposition \ref{e_polyn} for $\Phi_0$ replaced with $\Phi_0^\prime$. Therefore, we can
apply Proposition~\ref{prop_unique} as before to conclude that $F^\prime(\ux)=Z_{\Phi_0^\prime}(\xx; u)|_\evu $. In consequence,
we have
$$F(\ux)=Z_{\Phi_0'}(\xx; u)|_\evu \cdot  Z_{A_1}(\ux^\g; u)=Z_{\Phi_0}(\xx; u)|_\evu. $$

Finally, if $\Phi$ is of type $D_4$, then $\Phi_0$ is isomorphic to the direct sum of three root systems of type   $A_1$, and
a similar argument applies. Therefore, the proof of  Theorem~\ref{thm_res} is concluded.

\section{Parabolic subgroup averages}\label{sec_main}

\subsection{}\label{sec_cascade}
This section is dedicated to the proof of Theorem \ref{thmD}. We continue to work under the hypothesis that $\Phi$ is an irreducible root system not of type $G_2$, and $\a_i$ is a fixed short root.
We first describe the kernel function that appears in the statement.
The description involves a finite directed graph $\mathcal{K}_{\Phi}(\a_i)$ with vertices labeled by positive
roots, akin to Kostant's cascade construction \cite{K}.
The directed graph is the Hasse diagram (the graphical representation of the cover relations) of a finite partial order relation on the set labeling the vertices. If  $\b,\g\in \mathcal{K}_\Phi(\a_i)$ and $\b$ is immediately  followed by $\g$ in the partial order (i.e. there is a directed edge from $\b$ to $\g$), we write $\b\lessdot \g$. We will routinely interchange between these two equivalent descriptions of $\mathcal{K}_{\Phi}(\a_i)$ (directed graph and partial order relation).
The partial order will match the order $\le$ restricted to the set of positive roots that label the vertices of $\mathcal{K}_\Phi(\a_i)$.


Associated to  $\mathcal{K}_\Phi(\a_i)$, there is an auxiliary copy $\mathcal{F}_\Phi(\a_i)$ of the same graph,
whose vertices are  irreducible root sub-systems (with corresponding bases)
of $\Phi$. If $\g\lessdot \g'$ is a directed edge in $\mathcal{K}_\Phi(\a_i)$, then the corresponding
vertices $\Psi$, $\Psi'$ in  $\mathcal{F}_\Phi(\a_i)$ are irreducible root systems with $\Psi'\subset \Psi$,
such that $\g\in \Pi(\Psi)$ and  $\g'\in \Pi(\Psi')$.
The bases for the root systems in $\mathcal{F}_\Phi(\a_i)$ are inherited from the basis $\Pi(\Phi)$,
 and they will not be included in the notation.

The two directed graphs are constructed recursively. The minimal element of
$\mathcal{K}_\Phi(\a_i)$ is $\a_i$, and the corresponding vertex in $\mathcal{F}_\Phi(\a_i)$
is $\Phi$, with basis $\Pi(\Phi)$. Given a vertex labelled $\b$ in
$\mathcal{K}_\Phi(\a_i)$ and the corresponding irreducible root system $\Psi$ in $\mathcal{F}_\Phi(\a_i)$,
with basis $\Pi(\Psi)$, the vertices $\g$ such that $\b\lessdot \g$ in $\mathcal{K}_\Phi(\a_i)$
and their corresponding root systems in $\mathcal{F}_\Phi(\a_i)$  are constructed as follows. If $\b$ is a long
root, then it is a terminal vertex in $\mathcal{K}_\Phi(\a_i)$. Otherwise,
let  $\b^\perp\subset \Psi$ be the orthogonal sub-system  that consists of roots orthogonal to  $\b$.
The basis $\Pi(\Psi)$ induces a basis $\Pi(\b^\perp)$, and we
denote by $\Pi_\new(\b^\perp)$ the set of elements of  $\Pi(\b^\perp)$ that are not in $\Pi(\Psi)$.
Then the vertices $\g$ such that $\b\lessdot \g$ in $\mathcal{K}_\Phi(\a_i)$ are precisely the elements in $\Pi_\new(\b^\perp)$; the corresponding root system in $\mathcal{F}_\Phi(\a_i)$ is the irreducible component
of $\b^\perp$ that contains $\g$. Naturally, if $\Pi_\new(\b^\perp)$ is empty, then $\b$ is a terminal vertex in
$\mathcal{K}_\Phi(\a_i)$.

The description of $\Pi_\new(\b^\perp)$
given in Section \ref{sec: roots} applies, so it is easy to construct the two graphs in all classifications.
In particular, one checks that the graph  $\mathcal{F}_\Phi(\a_i)$ is well-defined, namely the
root system associated with a given vertex only depends on the corresponding root of $\mathcal{K}_\Phi(\a_i)$.

 For certain nodes $i$, the directed graph $\mathcal{K}_\Phi(\a_i)$ is a rooted tree   isomorphic to the so-called Kostant cascade, a decreasing rooted tree of strongly orthogonal roots defined in~\cites{J, K}. Each vertex in the Kostant cascade is the highest root of an associated irreducible root system. The root vertex in the Kostant cascade is the highest root in $\Phi$. For a fixed $\b$ vertex, the vertices immediately lower in the tree order are the highest roots of the irreducible components of the root sub-system $\b^\perp$.

\begin{Exp} We give two examples for which  $\mathcal{K}_\Phi(\a_i)$ is a rooted tree isomorphic to the cascade of roots in~\cite{J}*{Table~III}.  For $\Phi$ of type $A_r$ and $r=2i-1$, then   $\mathcal{K}_\Phi(\a_i)$ is
a chain  (that is, a directed tree with one terminal vertex)
$$ \b_1\lessdot \b_2\lessdot \ldots \lessdot \b_i,$$
with $\b_j=\a_{i-j+1}+\ldots+ \a_{i+j-1}$.
For $\Phi$ of type $D_r$ with $r=2i$ even, the directed graph  $\mathcal{K}_\Phi(\a_i)$ is pictured
in Figure \ref{fig2}. For these examples, the kernel functions $K_{\Phi,\a_i}(\xx)$  defined in \S\ref{sec: kernel} below, are explicitly indicated in \S\ref{sec: thmD}.
 \end{Exp}
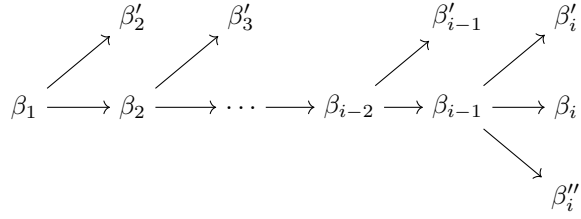
\begin{figure}[!ht]
\begin{tikzpicture}[scale=0.8,
level 1/.style={level distance=18mm},
a/.style={edge from parent/.style={draw, ->, solid} }]
\node {$\b_1$}[grow=right]
 child[missing]
 child [a]{ node{$\b_2$}
            child [missing]
            child[a]{ node{$\ldots$}
                        child [missing]
                        child [a]{ node{$\b_{i-2}$}
                                    child [missing]
                                    child [a]{ node{$\b_{i-1}$}
                                            child [a]{ node{$\b_{i}''$}}
                                            child [a]{ node{$\b_{i}$}}
                                            child [a]{ node{$\b_{i}'$}}
                                            }
                                    child [a]{ node{$\b_{i-1}'$} }
                                  }
                        child [missing]
                    }
            child [a]{ node{$\b_{3}'$} }
  		 }
  child [a]{ node{$\b_2'$}};
\end{tikzpicture}\caption{\label{fig2} The cascade  $\mathcal{K}_{D_{2i}}(\a_i)$}
\end{figure}

\subsection{}  The kernel function $K_{\Phi,\a_i}(\xx)$ in
Theorem~\ref{thmD} is defined only when the directed graph $\mathcal{K}_\Phi(\a_i)$ has a special structure,
as described in the next lemma.

\begin{lemma}\label{lem_cond}
Let $\Phi$ be an irreducible root system not of type $G_2$ and $\a_i$ a simple short root. The following  are equivalent
\begin{enumerate}
\item All the roots  $\b\in\mathcal{K}_\Phi(\a_i)$ have $n_i(\b)=1$.
\item The node $i$ is one of the admissible nodes in Table \ref{table1}.
\end{enumerate}
Furthermore, if these conditions are satisfied, then removing from $\mathcal{K}_\Phi(\a_i)$
the terminal vertices yields a chain.
\end{lemma}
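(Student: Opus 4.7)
The proof is a verification by case analysis over the classification of irreducible finite root systems, exploiting the recursive nature of the cascade construction. Starting from the root $\alpha_i$, the successors of a vertex $\beta$ of $\mathcal{K}_\Phi(\alpha_i)$ are the new simple roots of the orthogonal sub-system $\beta^\perp$ inside the irreducible root system $\Psi$ associated with $\beta$ in $\mathcal{F}_\Phi(\alpha_i)$. By Remark~\ref{rem_1}, each such successor is the dominant short or long root of a connected subdiagram of $\Psi$ containing $\beta$ in one of four prescribed positions. This furnishes a mechanical algorithm for computing the cascade and the coefficient $n_i(\gamma)$ of each successor, by expanding the new simple roots in the basis $\Pi(\Phi)$.

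For the implication $(2)\Rightarrow(1)$, I would construct $\mathcal{K}_\Phi(\alpha_i)$ explicitly for each admissible pair from Table~\ref{table1}. In type $A_r$ the symmetry of the Dynkin diagram reduces the analysis to $2i\le r+1$, and induction produces the chain $\beta_1\lessdot \cdots\lessdot \beta_i$ with $\beta_j=\alpha_{i-j+1}+\cdots+\alpha_{i+j-1}$, each of which has $n_i(\beta_j)=1$. For the other admissible pairs, iteration of the orthogonal sub-system construction, using the tables in \S\ref{sec_Phi0} and Appendix~\ref{sec_exc}, shows that the cascade either is a chain or branches into several terminal short/long roots at (and only at) the penultimate level; Figure~\ref{fig2} for $D_{2i}$ illustrates the typical pattern. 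In every case one reads off both that all $n_i$ values equal $1$ and that removing the terminal vertices yields a chain.

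For the implication $(1)\Rightarrow(2)$, for each simple short root $\alpha_i$ \emph{not} listed in Table~\ref{table1} I would exhibit a cascade vertex $\beta$ with $n_i(\beta)\ge 2$. In most cases such a $\beta$ appears at the first or second level of the recursion. For instance, in $F_4$ with $i=3$, the successor $\beta'=\alpha_2+2\alpha_3+2\alpha_4\in\Pi_\new(\alpha_3^\perp)$, which is the dominant long root of the $C_3$ subdiagram on $\{\alpha_2,\alpha_3,\alpha_4\}$, satisfies $n_3(\beta')=2$; in $C_r$ with $2i>r$, a successor of the form $2\alpha_{j}+\cdots+2\alpha_{r-1}+\alpha_r$ arises once the cascade reaches the long end of the diagram, with $n_i\ge 2$; analogous ``doubling'' phenomena occur in the excluded cases for $E_6$, $E_7$, $E_8$, and $D_r$ by direct inspection of the explicit root data in \S\ref{sec_Phi0} and Appendix~\ref{sec_exc}.

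The principal difficulty is the bookkeeping required to track the coefficient $n_i$ through several layers of the recursion, because the root system $\Psi$ associated with a vertex in $\mathcal{F}_\Phi(\alpha_i)$ is generally \emph{not} a standard parabolic of $\Phi$ and its basis is not a subset of $\Pi(\Phi)$. This is managed by systematically re-expressing each new simple root in the basis $\Pi(\Phi)$ using the classification data for orthogonal sub-systems. The admissible nodes turn out to be precisely those for which every successor $\gamma$ of a cascade vertex $\beta$ equals $\beta$ plus a sum of roots orthogonal to $\alpha_i$, so that the property $n_i(\gamma)=n_i(\beta)=1$ propagates through the entire construction and, simultaneously, the non-terminal part of the cascade is linear.
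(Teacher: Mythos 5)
Your proposal matches the paper's approach exactly: the paper's proof is a one-sentence statement that the equivalence (and the chain structure of the non-terminal part) follow from the recursive construction of $\mathcal{K}_\Phi(\a_i)$ together with a case-by-case verification using the tables in \S\ref{sec_Phi0} and Appendix~\ref{sec_exc}. You spell out the same mechanism (iterating Remark~\ref{rem_1} and re-expressing new simple roots in $\Pi(\Phi)$), and your concrete witnesses for the $(1)\Rightarrow(2)$ direction — e.g.\ $\beta'=\theta^\ell_{\{2,3,4\}}$ with $n_3(\beta')=2$ in $F_4$, and the long-root doubling that eventually forces $n_i\ge 2$ in $C_r$ when $2i>r$ — are the kind of computations the paper's proof implicitly appeals to.
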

In particular, $\mathcal{K}_\Phi(\a_i)$ is a rooted tree under the assumptions of the lemma.
\begin{proof} The equivalence is verified using the recursive construction
of $\mathcal{K}_\Phi(\a_i)$, and the information in the tables in \S\ref{sec_Phi0} and Appendix~\ref{sec_exc}.
The last statement also follows from a case by case analysis.
\end{proof}
\begin{remark}\label{rem_theta}
If the conditions in Lemma \ref{lem_cond} are satisfied,
then the highest root $\theta\in\Phi$ has $n_i(\theta)\le 2$. This condition is satisfied automatically except in the four exceptional root systems.
\end{remark}

\subsection{}  \label{sec: kernel}
Let $i$ be one of the admissible nodes in Table \ref{table1}.
By the lemma, removing from $\mathcal{K}_\Phi(\a_i)$ the terminal vertices yields
a (possibly empty) chain
$$\a_i=\b_1\lessdot \b_2\lessdot \ldots \lessdot \b_N;$$
let  $\Psi_1\supset\Psi_2\supset\ldots\supset \Psi_N$ be the corresponding root
systems in $\mathcal{F}_\Phi(\a_i)$.
This chain structure is used to define the kernel function $K_{\Phi,\a_i}(\xx)$, and ultimately makes possible the induction argument in the proof of Theorem~\ref{thm_main}. We recursively define the kernel function
 $K_{\Phi,\a_i}(\xx)$ as follows.
 \begin{itemize}
  \item  When $\Phi$ is simply-laced, we define
 \be\label{e_ker}
K_{\Phi,\a_i}(\xx) =\prod_{j=1}^{N} \prod_{\g\in \Pi_\new(\b_j^\perp)} \frac{1}{1-\xx^{\g-\b_j}}~~,
\ee
so that we have for $1\le j\le N$:
\[
K_{\Psi_j, \b_j}(\xx)=K_{\Psi_{j+1}, \b_{j+1}}(\xx)\cdot
 \prod_{\g\in \Pi_\new(\b_j^\perp)} \frac{1}{1-\xx^{\g-\b_j}},
 \]
 setting $K_{\Psi_{N+1}, \b_{N+1}}(\xx)=1$.
 \item When $\Phi$ is double-laced, we define
\be\label{e_ker_double}
K_{\Phi,\a_i}(\xx) =\prod_{j=1}^{N} \prod_{\g\in \Pi^*_\new(\b_j^\perp)} \frac{1}{1-\xx^{\g-\b_j}}~~.
\ee
Recall that $\Pi_\new^*(\b^\perp)$ consists of the elements in $\Pi_\new(\b^\perp) $
which are highest roots in a subdiagram of type $A_3$.
Therefore $K_{\Phi,\a_i}(\xx)=1$, unless $\Phi$ is of type $C_r$ and $1<i<r-1$, when
the explicit formula for $K_{\Phi, \a_i}(\xx)$ is given in \S\ref{sec: thmD}.
 \end{itemize}

Remark that, according to the conditions in Lemma \ref{lem_cond}, we have  $n_i(\b)=1$ for all $\b\in \mathcal{K}_\Phi(\a_i)$. In consequence,  $K_{\Phi,\a_i}(\xx)$ is independent of $x_i$ and there are no extra poles involving $x_i$ in the right-hand side of the formula in Theorem~\ref{thm_main} below.

\subsection{} We consider the following modified version of $Z_{\Phi}(\xx; u)$, obtained by removing the poles $x^\a=\pm 1\slash u$ with $n_i(\a)\ge 2$, as follows
\be
Z_{\Phi}^{(i)}(\xx; u)=Z_{\Phi}(\xx; u)\cdot
\prod_{\substack{\a\in\Phi^s\\ n_i(\a)\ge 2 }} (1-u^2\xx^{2\a}), \quad \quad
Z_{\Phi_0}^{(i)}(\xx; u)=Z_{\Phi_0}(\xx; u)\cdot
\prod_{\substack{\a\in\Phi_0^s\\ n_i(\a)\ge 2 }} (1-u^2\xx^{2\a}).
\ee
This is motivated by the observation that the  right-hand side of formula~\eqref{e_main} below
has poles involving $x_i$ only at $x^\a=\pm 1\slash u$ for $\a\in\Phi^+$ with $n_i(\a)=1$.  With his notation, Theorem \ref{thmD} can be restated as follows.

\begin{Thm}\label{thm_main} Let $\Phi$ be an irreducible root system not of type $G_2$, and let
$i$ one of the admissible nodes specified in Table \ref{table1}. We have,
\be\label{e_main}
Z_{\Phi}^{(i)}(\xx; u)=\frac{\displaystyle\sum_{w\in W^i}\left.\frac{1}{1-ux_i}K_{\Phi,\a_i}(\xx)\right|w }{\DD_{\Phi^i}(\xx)}~~.
\ee
\end{Thm}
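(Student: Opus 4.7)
The plan is to apply the uniqueness principle for rational functions alluded to in Section \ref{sec1.9} (Lemma \ref{lem_invar}), which characterizes such a function by its residue at $x_i = 1/u$, by $W^i$-invariance under $\CG$, and by controlled polar divisor and $x_i$-degree. I will show both sides of \eqref{e_main} satisfy these conditions with matching data.

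First I verify $W^i$-invariance. The left-hand side is invariant because $Z_\Phi\CG w=Z_\Phi$ for all $w\in W$, and the prefactor $\prod_{\a\in\Phi^s,\,n_i(\a)\ge 2}(1-u^2\xx^{2\a})$ is $W^i$-invariant under $\CG$: the simple reflections $\ss_j$ with $j\ne i$ preserve the $\a_i$-coefficient $n_i$, so the indexing set is $W^i$-stable, and the monomials $\xx^{2\a}$ lie in $Q_{\rm ev}$, on which $\CG$ reduces to the ordinary Weyl action. The right-hand side is invariant by the same mechanism used in \eqref{zeta-avg}: the identity $\DD_{\Phi^i}(\ss_j\xx)=-x_j^{-m_j}\DD_{\Phi^i}(\xx)$ for $j\ne i$ turns the $|w$-average into a $\CG$-invariant quotient. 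Since $K_{\Phi,\a_i}$ and $\DD_{\Phi^i}$ are independent of $x_i$, the $x_i$-dependent poles on the right-hand side come only from $W^i$-translates of $(1-ux_i)^{-1}$, which sweep the hyperplanes $\xx^\a=1/u$ for $\a$ in the $W^i$-orbit of $\a_i$; by the admissibility of $i$ (Lemma~\ref{lem_cond}), these are precisely the positive short roots with $n_i(\a)=1$, matching the $x_i$-poles of $Z_\Phi^{(i)}$ after the cancellations in its definition, and giving comparable degree bounds in $x_i$.

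The heart of the argument is matching the residues at $x_i=1/u$. On the right-hand side only $w\in W^{(i)}=\stab_{W^i}(\a_i)$ contribute, since only for these does the $|w$-translate of $(1-ux_i)^{-1}$ retain the pole at $x_i=1/u$; the residue thus becomes an average over $W^{(i)}$ of the evaluated kernel, divided by $\DD_{\Phi^i}|_{x_i=1/u}$. On the left-hand side, Theorem~\ref{thm_res} gives the residue as $Z_{\Phi_0}(\xx;u)|_{x_i=1/u}$ times an explicit product of factors $(1-u^2\xx^{2\a})$ for $\a$ with $\la\a,\a_i\ra=1$, combined with the $n_i(\a)\ge 2$ cancellations. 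I will match the two expressions by induction on the length $N$ of the chain $\b_1\lessdot\b_2\lessdot\cdots\lessdot\b_N$ obtained by removing terminal vertices from $\mathcal{K}_\Phi(\a_i)$, which is indeed a chain by Lemma~\ref{lem_cond}. The recursive factorization $K_{\Phi,\a_i}(\xx) = K_{\Psi_2,\b_2}(\xx)\cdot \prod_{\g\in\Pi_\new(\b_1^\perp)}(1-\xx^{\g-\b_1})^{-1}$ dovetails with the inductive step: the second factor, specialized at $x_i=1/u$, encodes the product of factors $(1-u^2\xx^{2\a})^{-1}|_{x_i=1/u}$ with $\la\a,\a_i\ra=1$ from Theorem~\ref{thm_res}, while the first factor is supplied by the inductive hypothesis applied to $Z^{(j)}_{\Phi_0}$ with $j$ the admissible node of $\Phi_0$ corresponding to $\b_2$ in its own cascade. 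The base case corresponds to $N=1$, where $\b_1=\a_i$ is the only non-terminal vertex and the residue formula reduces directly to the $W^{(i)}$-average of $K_{\Psi_1,\a_i}|_{x_i=1/u}$.

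The principal obstacle lies in this residue-matching step: one must set up a precise bijection between $\{\a\in\Phi^+:\la\a,\a_i\ra=1\}$ and the monomials $1-\xx^{\g-\b_1}$ with $\g\in\Pi_\new(\b_1^\perp)=\Pi_\new(\Phi_0)$, using the case-by-case description from Section~\ref{sec: roots} and the combinatorial consequences of $i$ being admissible. The double-laced case further requires treating the modified kernel \eqref{e_ker_double}, which involves only the $A_3$-type children $\Pi_\new^*(\b_j^\perp)$; here one may either argue uniformly by a careful residue count that recognizes which factors are absorbed into $\DD_{\Phi^i}|_{x_i=1/u}$, or reduce to the simply-laced case by invoking Proposition~\ref{prop_double}.
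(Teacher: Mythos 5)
Your proposal is correct and follows essentially the same strategy as the paper: reduce via Lemma~\ref{lem_invar} to matching residues at $x_i=1/u$ (this is Proposition~\ref{prop_res1}), express the left-hand residue via Theorem~\ref{thm_res} (restated as Proposition~\ref{thm_res1}), and close the argument by induction using the recursive factorization of $K_{\Phi,\a_i}$ along the chain, with the double-laced case handled through Proposition~\ref{prop_double} (which the paper packages as Lemma~\ref{lemma: special-double}). The only cosmetic difference is that the paper runs the induction on $\mathrm{rank}(\Phi)$ while you run it on the chain length $N$; these track the same recursion.
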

\begin{remark}\label{rem_nitheta}
 The theorem is sharp, in the sense that the identity in the theorem does not hold as stated
for nodes $i$ not listed  in Table \ref{table1}, with $K_{\Phi,\a_i}$ defined as  above, using a longest chain
in the directed graph obtained from $\mathcal{K}_\Phi(\a_i)$ by removing its terminal vertices.
We verified this numerically for  root systems $\Phi$
of small rank ($r\le 10$), by evaluating some of the variables to random numbers.

However there are similar formulas if one allows
for more general kernel functions; we give here an example for $\Phi$ of type $D_6$ and
$i=4$. The graph $\mathcal{K}_\Phi(\a_i)$ is given in Figure~\ref{fig21},
where $\b_1=\a_4$, $\Pi_\new(\b_1^\perp)=\{\b_2$, $\b_2'$, $\b_2''\}$,
and $\theta$ is the highest root in~$\Phi$.
Using a computer, we verified that formula~\eqref{e_main} holds with
\[K_{\Phi,\a_i}(\xx)=(1+u\xx^\theta|_\evu)\cdot \prod_{\g\in \Pi_\new(\b_1^\perp)} \frac{1}{1-\xx^{\g-\b_1}}.
\]
One can prove this along the same lines as Theorem~\ref{thm_main}, but we leave a more thorough investigation
of the cases not covered in Theorem~\ref{thm_main} for future work.
\begin{figure}[!ht]
\begin{tikzpicture}[scale=0.8]
\tikzset{edge/.style = {->}}
\node (a) at (0,0) {$\b_1$};
\node (b) at (2,0) {$\b_2''$};
\node (c) at (2,1) {$\b_2$};
\node (d) at (2,-1) {$\b_2'$};
\node (e) at (4,0) {$\theta$};
\draw[edge] (a) to (b);
\draw[edge] (a) to (c);
\draw[edge] (a) to (d);
\draw[edge] (c) to (e);
\draw[edge] (d) to (e);
\end{tikzpicture}\caption{\label{fig21} The directed graph  $\mathcal{K}_{D_{6}}(\a_4)$}
\end{figure}
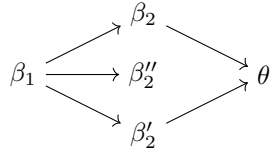
\end{remark}

The proof of Theorem \ref{thm_main} will  occupy the remainder of this section. The structure of the argument is the following. What  makes the argument possible, is a basic uniqueness result for rational functions with prescribed poles and invariance properties. This is stated as Lemma \ref{lem_invar}. We show in Proposition \ref{prop_res1} that the uniqueness result ultimately reduces the proof of Theorem \ref{thm_main} to the equality of the residues at $\evu$  of both sides of \eqref{e_main}. The residue of  $Z_{\Phi}^{(i)}(\xx; u)$ is computed in Proposition \ref{thm_res1}, which is essentially a reformulation of Theorem~\ref{thm_res}. Finally, the equality of the residues on both sides of \eqref{e_main} follows from Theorem~\ref{thm_res1}, by induction on the rank of $\Phi$.

\subsection{}\label{sec_reduction}
The next result shows that a rational function is determined uniquely by its residues at $x_i=\pm 1\slash u$,
provided it is invariant under the Chinta-Gunnells action of the maximal parabolic subgroup $W^i$
and satisfies some easily verified conditions. For later use,
we formulate the lemma allowing for potentially more general parabolic subgroups in place of $W^i$.
For a rational function $f(\xx)$, we denote by $\deg_{x_i}f(\xx)$
the degree of its numerator minus the degree of its denominator with respect to the variable $x_i$.

\begin{lemma}\label{lem_invar} Let $\Phi$ be a  simply-laced root system and fix $\a_i$, a simple root. Let $W^\prime$ be a parabolic subgroup corresponding to a subdiagram of the Dynkin
diagram of $\Phi$ such that $\ss_i\not\in W^\prime$. Let $f(\xx)$ be a rational function that satisfies the following properties
\begin{enumerate}[label=(\alph*)]
 \item $f(\xx)=f(\xx){\CG} w$ for all $w\in W^\prime$;
 \item $f(\xx)$ has only simple poles as a function of the variable $x_i$,
 and all poles involving $x_i$ occur among $\xx^{w\a_i}=\pm 1\slash u$, for $w\in W^\prime$;
\item We have $\deg_{x_i}f(\xx) <0 $.
\end{enumerate}
Then,  $f(\xx)$ is uniquely determined by the two residues $\displaystyle\res_{x_i=\pm 1\slash u} f(\xx)$.
\end{lemma}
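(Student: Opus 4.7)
The plan is to prove the lemma by showing that any $g$ satisfying (a)--(c) with $\res_{x_i=1/u}g = \res_{x_i=-1/u}g = 0$ must vanish identically; the full statement then follows by setting $g = f_1 - f_2$ for two candidate solutions with the same residues and using linearity.

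First, I would establish a partial fraction decomposition in the variable $x_i$. Since $W'$ is generated by simple reflections $\ss_j$ with $j \neq i$, and in the simply-laced setting $\ss_j\a_i \in \{\a_i, \a_i+\a_j\}$, an easy induction on $\ell(w)$ shows that $n_i(w\a_i) = 1$ for every $w \in W'$. Hence each hyperplane $\xx^{w\a_i} = \pm 1/u$ cuts out a value $x_i = \pm\xx^{\a_i-w\a_i}/u$. Conditions (b) and (c) then give, with the remaining variables $\ux$ treated as parameters,
\[
g(\xx) \;=\; \sum_{w\in W'}\sum_{\varepsilon \in \{\pm 1\}} \frac{a_{w,\varepsilon}(\ux)}{x_i - \varepsilon\,\xx^{\a_i - w\a_i}/u},
\]
where $a_{w,\varepsilon}(\ux)$ is, up to an explicit nonvanishing factor, the residue of $g$ on the hyperplane $\xx^{w\a_i} = \varepsilon/u$. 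The hypothesis says $a_{e,\pm}(\ux) = 0$.

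The heart of the argument is an induction on $\ell(w)$ showing that all $a_{w,\varepsilon}(\ux) = 0$. For the inductive step, write $w = \ss_j u$ with $\ell(u) = \ell(w) - 1$ and $j$ in the generator set of $W'$, and exploit the identity $g = g\CG\ss_j$. The right-hand side, given by the Chinta-Gunnells formula in \S\ref{secpm}, is a sum of $g^\pm_j(\ss_j\xx)$ multiplied by coefficients $(1-u/x_j)/(1-ux_j)$ and $1/x_j$ that depend only on $x_j$; these coefficients are generically finite and nonzero along the hyperplane $\xx^{w\a_i} = \varepsilon/u$, and therefore do not affect residues along it. The substitution $\xx \mapsto \ss_j\xx$ converts a pole of $g^\pm_j$ on $\xx^{u\a_i} = \delta/u$ into a pole of $g^\pm_j(\ss_j\xx)$ on $\xx^{\ss_ju\a_i}=\xx^{w\a_i} = \delta/u$, while the sign operator $\e_j$ inside $g^\pm_j$ maps the family $\{\xx^{u\a_i}=\pm 1/u\}$ to itself. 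Hence $\res_{\xx^{w\a_i}=\varepsilon/u}(g\CG\ss_j)$ is a linear combination of $a_{u,+}(\ux)$ and $a_{u,-}(\ux)$, both zero by induction, so $a_{w,\varepsilon}(\ux) = 0$. Once every coefficient vanishes, the displayed partial fraction expansion forces $g \equiv 0$.

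The main obstacle I anticipate is the bookkeeping in the inductive step: one has to verify that no accidental cancellation among the Chinta-Gunnells prefactors conceals a residue, and that the sign operator $\e_j$ genuinely permutes the hyperplane family $\{\xx^{u\a_i}=\pm 1/u\}$ rather than producing something outside the list. Both points ultimately reduce to the simply-laced hypothesis controlling the parity of $\langle u\a_i,\a_j\rangle$ and to the fact that the Chinta-Gunnells prefactors depend only on $x_j$, hence are orthogonal to the hyperplane arrangement we are tracking.
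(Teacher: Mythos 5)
Your proposal is correct and takes essentially the same approach as the paper: both rely on a partial-fraction decomposition of $f$ in the variable $x_i$ whose terms are indexed by the $W'$-orbit of $\a_i$, and then use the Chinta--Gunnells invariance under $W'$ to propagate the (vanishing of the) residue data from the trivial coset to all cosets. The paper applies a general $w'\in W'$ once and matches coefficients via uniqueness of the partial fraction decomposition to get $f_w\CG w' = f_{w'^{-1}w}$, whereas you organize the same mechanism as an induction on the length of minimal coset representatives for $W'/\stab_{W'}(\a_i)$; modulo the bookkeeping you already flag (summing over cosets rather than over all of $W'$), the argument is sound.
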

\begin{proof}
 Using the fact that $\deg_{x_i}f(\xx) <0 $, we can decompose $f(\xx)$ into simple fractions
 \[f(\xx)=\sum_{w\in W^\prime/W^{\prime\prime}} \frac{f_w(\xx)}{1-u^2 \xx^{2w \a_i}}~,
 \]
 where $W^{\prime\prime}=\stab_{W^\prime} \a_i$, the sum is over a system of representatives
 for the coset space $W^\prime\slash W^{\prime\prime}$ and  $$f_w(\xx)=a_w(\ux)+b_w(\ux) x_i,$$ for $a_w(\ux),~b_w(\ux)$ polynomials in $\ux$.
 Since
 the denominators are even with respect to all the signs $\e_i$, it follows that for $w^\prime\in W^\prime$ we have
 \[f(\xx)=f(\xx)\CG w^\prime= \sum_{w\in W^\prime/W^{\prime\prime}}\frac{f_w(\xx){\CG} w^\prime}{1-u^2 \xx^{2w^{\prime-1}w \a_i}}
 \]
 The functions $f_w(\xx){\CG} w^\prime$ are also of the type $a(\ux)+b(\ux) x_i$. The uniqueness
 of the decomposition into simple fractions implies that $f_w(\xx){\CG} w^\prime=f_{w^{\prime-1}w}(\xx)$, so
 all the coefficients $f_w(\xx)$ are determined by $f_{I}(\xx)$, with $I$ representing the trivial coset.
But $f_{I}(\xx)$ is determined by $\displaystyle\res_{x_i=\pm 1\slash u} f(\xx)$. Therefore, $f(\xx)$ is uniquely determined by the residues $\displaystyle\res_{x_i=\pm 1\slash u} f(\xx)$.
\end{proof}

\subsection{}
We illustrate the use of Lemma \ref{lem_invar} on three examples. The first illustrates in a simple case
the proof of Theorem~\ref{thm_main}, while the other two will be needed later in the proof.
For a list $I$ of indices, we denote by $\Phi^I$ the parabolic root sub-system of $\Phi$
obtained by removing the nodes in $I$, and by $W^I$ the corresponding parabolic sub-group of $W$.

\begin{Exp}\label{exp: case1}
 Let $\Phi$ be the root system of type $A_2$. Then,
\[
 Z_\Phi(\xx; u)=
\frac{\displaystyle \sum_{w\in W^{2}}\left.\frac{1}{(1-ux_2)}\frac{1}{(1-x_1x_3)} \right|w }{\DD_{\Phi^{2}}(\xx)} ~~.
\]
This is a particular example of the equality in Theorem \ref{thm_main}. To verify it, use  Lemma~\ref{lem_invar} for $\Phi$, the node  $i=2$,
and $W^\prime=W^{2}$, to show that it is enough to prove the corresponding equality of residues at $x_2=1\slash u$. The equality of the residues is equivalent to
\[
\res_{x_2=1\slash u} Z^{[2]}_\Phi(\xx; u)=\frac{1}{(1-x_1x_3)},
\]
which is precisely the claim of Theorem \ref{thm_res} for this case.
\end{Exp}
\begin{Exp}\label{exp: case2}
 Let $\Phi$ be the root system of type $A_3$,  and let $\theta$ denote its highest root. Then,
\[
(1-u^2\xx^{\theta})\cdot Z_\Phi(\xx; u)=
\frac{\displaystyle \sum_{w\in W^{1,3}}\left.\frac{1}{(1-ux_1)(1-ux_3)} \right|w }{\DD_{\Phi^{1,3}}(\xx)} ~~.
\]
Indeed,  Lemma~\ref{lem_invar} for $\Phi$, the node  $i=3$,
and $W^\prime=W^{1,3}$, is used to show that it is enough to prove the corresponding equality for the residues at $x_3=1\slash u$. This equality of the residues follows from the application of Theorem \ref{thm_res}.
\end{Exp}
\begin{Exp}\label{exp: case3}
Let $\Phi$ be the root system of type $A_5$,  and let $\theta$ denote its highest root. Then,
\[
(1-u^2\xx^{\theta})(1-u^2\xx^{\theta-\a_1})\cdot Z_\Phi(\xx; u)=
\frac{\displaystyle \sum_{w\in W^{2, 5}}\left.\frac{1}{(1-ux_2)(1-x_1 x_3)(1-ux_5)} \right|w }{\DD_{\Phi^{2, 5}}(\xx)} ~~.
\]
To see this, we apply Lemma~\ref{lem_invar} for $A_5$, the node  $i=5$, and $W^\prime=W^{2,5}$,  to conclude that it is enough to prove the corresponding equality for the residues at $x_5=1\slash u$. The equality of residues
reduces precisely to the equality considered above in Example \ref{exp: case1}.
\end{Exp}

\subsection{} \label{sec: UST}
We use Lemma \ref{lem_invar} to show that Theorem \ref{thm_main} reduces to proving the equality of the residues at $\evu$ of both sides of \eqref{e_main}. Before presenting the argument we need some technical preparation.

As before, fix an index $i$ such that
$\a_i$ is short. By Remark~\ref{rem_theta}, we  can restrict to nodes $i$ such that $n_i(\theta)\le 2$, for $\theta$ the highest root in $\Phi$, since this
assumption is implied by the hypothesis of Theorem~\ref{thm_main}. Denote
\be\label{e_ust}
U=\{\a\in \Phi^+: n_i(\a)=0 \},\quad S=\{\a\in \Phi^+: n_i(\a)=1 \},  \quad T=\{\a\in \Phi^+: n_i(\a)=2 \}.
\ee
Since $n_i(\theta)\le 2$, we have $\Phi^+=U\cup S \cup T$.  For $e\in\{0,\pm 1,\pm 2 \}$,  denote
\[
A_e=\{\a \in A: \la \a_i, \a\ra =e \},
\]
where $A$ is any subset of $\Phi$. By $A^s$, respectively $A^\ell$ we denote the short,
respectively long roots in the set~$A$.
\begin{lemma} \label{lem_setstruct} Let $\a_i$ be a short root such that $n_i(\theta)\le 2$. We have
\begin{enumerate}
 \item  $U_1=U_2=\emptyset $, $\ S_{-2}=\emptyset $, $\ S_2=\{\a_i\} $, $\ T_{-1}=T_{-2}=\emptyset$;
 \item  $U_{-1}=(\Phi^{i,+}\setminus \Phi^{(i),+})^s $, $\ U_{-2}=(\Phi^{i,+}\setminus \Phi^{(i),+})^\ell $,
 $\ U_{0}=\Phi^{(i),+} $;
\item
 $S_{1}=\{\a_i+\a: \a\in U_{-1} \} $, $\ S_{-1}=\{\a-\a_i: \a \in T_{1} \} $, $\ T_{2}=\{\a+2\a_i: \a \in U_{-2} \}$.
 \end{enumerate}
\end{lemma}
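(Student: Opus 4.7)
The plan is to prove all three parts by combining two standard facts: (a) the inner product of distinct simple roots is non-positive, and (b) the $\a_i$-string property, which under the standing hypothesis $n_i(\theta)\le 2$ restricts how $\ss_i$ can shift positive roots through the decomposition $\Phi^+=U\cup S\cup T$.

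First I would dispatch the vanishings in part~(1). If $\a\in U$, then expanding $\a=\sum_{j\ne i} c_j\a_j$ with $c_j\ge 0$ gives $\la \a_i,\a\ra=\sum_{j\ne i} c_j \la \a_i,\a_j\ra\le 0$, so $U_1=U_2=\emptyset$. If $\a\in T$ (resp.\ $\a\in S$) and $\la\a_i,\a\ra\le -1$ (resp.\ $=-2$), then $\ss_i\a=\a-\la\a_i,\a\ra\a_i$ would be a positive root with $n_i\ge 3$, contradicting $n_i(\theta)\le 2$; this handles $T_{-1}$, $T_{-2}$, and $S_{-2}$. For $S_2=\{\a_i\}$, I would assume $\a\in S_2$ and write $\a=\a_i+\b$ with $\b=\sum_{j\ne i}c_j\a_j$, $c_j\ge 0$. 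The condition $\la\a_i,\a\ra=2$ forces $\la\a_i,\b\ra=0$, which together with $c_j\la\a_i,\a_j\ra\le 0$ makes every summand vanish, so $\b$ is supported on simple roots orthogonal to $\a_i$. The $\a_i$-string through $\a$ has $p-q=2$ and, by the $n_i(\theta)\le 2$ bound, $q\le 1$, hence $p\ge 2$; so $\a-\a_i=\b$ is a root. But then $\a_i-\b=-(\b-\a_i)\in\Phi^+$, and expressing $\a_i-\b$ in the basis $\Pi(\Phi)$ produces non-positive coefficients on every node where $\b$ is supported, forcing $\b=0$ and hence $\a=\a_i$.

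For part~(2), the same non-positivity reasoning applied to $\la\a_i,\a\ra=0$ shows that every $\a\in U_0$ has $c_j\la\a_i,\a_j\ra=0$ for all $j$, so $\a$ is supported on nodes $j$ with $\la\a_i,\a_j\ra=0$, i.e.\ $\a\in\Phi^{(i),+}$; the converse is immediate. For $\a\in U\setminus\Phi^{(i),+}$, at least one simple component of $\a$ is a neighbor of $\a_i$, so $\la\a_i,\a\ra<0$, and by part~(1) the value is $-1$ or $-2$. Since $G_2$ is excluded, root lengths squared lie in $\{2,4\}$, and the integrality of the two Cartan ratios $2\la\a_i,\a\ra/\la\a_i,\a_i\ra$ and $2\la\a_i,\a\ra/\la\a,\a\ra$ (combined with Cauchy--Schwarz, which forbids $\a=\pm\a_i$ here) pins down $\la\a_i,\a\ra=-1$ exactly when $\a$ is short and $-2$ exactly when $\a$ is long, yielding the claimed decomposition of $U\setminus\Phi^{(i),+}$ into $U_{-1}$ and $U_{-2}$.

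Finally, part~(3) will be realized entirely by the simple reflection $\ss_i\colon\a\mapsto\a-\la\a_i,\a\ra\a_i$: on $U_{-1}$ this is $\a\mapsto\a+\a_i$, on $T_1$ it is $\a\mapsto\a-\a_i$, and on $U_{-2}$ it is $\a\mapsto\a+2\a_i$. I would verify each map sends the source set into the claimed target by computing the new $n_i$ (which stays in $[0,2]$, so positivity is preserved) and the new value of $\la\a_i,\cdot\ra$; the inverse map is again given by $\ss_i$, yielding a bijection. The only non-mechanical step I expect is the argument for $S_2=\{\a_i\}$, where the $\a_i$-string bound and the positivity of the simple-root coefficients must interact in just the right way; everything else reduces to writing $\a\in\Phi$ in the basis $\Pi(\Phi)$ and reading off signs.
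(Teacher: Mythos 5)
Your proof is correct and follows essentially the same route as the paper's: non-positivity of $\la\a_i,\a_j\ra$ for $j\ne i$ kills $U_1,U_2$, root-string/reflection arguments together with $n_i(\theta)\le 2$ kill $T_{-1},T_{-2},S_{-2}$ and pin down $S_2$, Cartan-integer parity splits $U\setminus\Phi^{(i),+}$ by length, and $\ss_i$ realizes the three bijections in part~(3). The only cosmetic point worth noting is that your $p,q$ labels in the $\a_i$-string argument for $S_2$ are swapped relative to the usual convention (so $p-q=\la\a,\a_i^\vee\ra$ with $p$ counting subtractions), and the bound $q\le 1$ from $n_i(\theta)\le 2$ is not actually needed there since $q\ge 0$ already gives $p\ge 2$; for $U_0=\Phi^{(i),+}$ the paper instead argues via $n_i(\b)>0$ for $\b\in\Pi_\new(\Phi_0)$, but both arguments are standard and equivalent.
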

\begin{proof}
We use the fact that if $\b, \b^\prime\in \Phi$ with $\la \b, \b' \ra=-1$
then $\b+\b^\prime\in \Phi$, and if $\la \b, \b^\prime \ra=1$, then $\b-\b^\prime\in \Phi$.
Part (i) immediately follows, taking into account that there are no roots
$\a$ with $n_i(\a)>2$.

Clearly $\Phi^{(i),+}\subset U_0$, and since $n_i(\b)>0$ for
$\b\in \Pi_\new(\Phi_0)$, the other inclusion also holds. We have
$U=\Phi^{i,+}$ and, from part (i),  we have a disjoint union
$U=U_0\cup U_{-1}\cup U_{-2}$. Since $U_{-1}$, respectively $U_{-2}$ are the short,
respectively long roots in $U\setminus U_0$, the proof of part (ii) is finished.

The reflection $\ss_i$ gives bijections $U_{-1}\simeq S_1 $, $S_{-1}\simeq T_1$, $U_{-2}\simeq T_2$,
proving  part (iii).
\end{proof}

\subsection{}
Some subsets of the sets $U$, $S$, $T$ above are orbits under the parabolic groups $W^i$ or $W^{(i)}$.
\begin{lemma}\label{lem_setstruct2} Let $\a_i$ be a short root. Then,
\begin{enumerate}
\item  $W^i\a_i=S^s$;
\item $U_{-2}=W^{(i)} (\g-\a_i)$, where $\g\in \Pi_\new(\Phi_0)$ is the unique element such that $\g-\a_i\in (\Phi^{i})^\ell$;
\item If $n_i(\theta^s)=2$, then $W^i\theta^s=T^s$, where $\theta^s$ is the dominant short root in $\Phi$.
\end{enumerate}
\end{lemma}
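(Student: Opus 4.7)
The plan is to prove the three parts in order using inductions on (co)height, with parts (i) and (iii) admitting uniform Cauchy--Schwarz-style arguments and part (ii) requiring case analysis.

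For (i), any simple reflection $\ss_j$ with $j\ne i$ fixes the coefficient of $\a_i$, so $W^i$ preserves both $n_i$ and root length, giving $W^i\a_i\subseteq S^s$. For the reverse, I would induct on $\hh(\a)$ for $\a\in S^s$: the base case $\hh(\a)=1$ forces $\a=\a_i$, and in the inductive step, writing $\a=\sum c_k\a_k$ with $c_i=1$, the hypothesis $\la\a,\a_k\ra\le 0$ for all $k\ne i$ would give $\la\a,\a-\a_i\ra\le 0$, hence $\la\a,\a_i\ra\ge|\a|^2=2$; since both $\a$ and $\a_i$ are short, this forces $\a=\a_i$, contradicting $\hh(\a)>1$. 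Thus some $\ss_k\in W^i$ provides a height reduction in $S^s$, and induction concludes.

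For (iii), the inclusion $W^i\theta^s\subseteq T^s$ is analogous. The key additional input is that $\theta^s$ is the unique $W$-maximum of its orbit $\Phi^s$ in root order, so the hypothesis $n_i(\a)=n_i(\theta^s)=2$ gives $\theta^s-\a\in\sum_{k\ne i}\Z_{\ge 0}\a_k$. I would induct on $\hh(\theta^s)-\hh(\a)$: if $\a\ne\theta^s$, a hypothetical $\la\a,\a_k\ra\ge 0$ for all $k\ne i$ would yield $\la\a,\theta^s-\a\ra\ge 0$, hence $\la\a,\theta^s\ra\ge|\a|^2=2$; the Cauchy--Schwarz bound $\la\a,\theta^s\ra\le|\a||\theta^s|=2$ would then be sharp, forcing $\a=\theta^s$, a contradiction. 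So some $\ss_k\in W^i$ raises the height, and the induction concludes.

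Part (ii) is the main obstacle. The inclusion $W^{(i)}(\g-\a_i)\subseteq U_{-2}$ is immediate since $W^{(i)}$ fixes $\a_i$, hence length, $n_i$, and the pairing $\la\a_i,\cdot\ra$. For existence and uniqueness of $\g$, I would inspect the tables in \S\ref{sec_Phi0} to observe that $U_{-2}$ is non-empty only in types $B_r$, $C_r$, and $F_4$, and that in each such case exactly one element of $\Pi_\new(\Phi_0)$ satisfies $\g-\a_i\in(\Phi^i)^\ell$. For transitivity $U_{-2}\subseteq W^{(i)}(\g-\a_i)$, I would argue analogously to (i) and (iii), relying on case analysis to verify that $\g-\a_i$ is the unique $W^{(i)}$-antidominant element of $U_{-2}$; granted this, for any other $\a\in U_{-2}$, non-antidominance yields a simple reflection $\ss_k$ with $k\ne i$, $\la\a_i,\a_k\ra=0$, and $\la\a,\a_k\ra>0$, so that $\ss_k\a\in U_{-2}$ has smaller height. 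The verification is brief: $|U_{-2}|=1$ throughout type $C_r$; in $B_r$ with $i=r$, $U_{-2}=\{e_k-e_r:k<r\}$ is a single $S_{r-1}$-orbit under $W^{(r)}$; and in $F_4$ with $i\in\{3,4\}$, $U_{-2}$ has $2$ or $3$ elements computed as a single $W^{(i)}$-orbit. Unlike in (i) and (iii), no uniform Cauchy--Schwarz inequality seems to furnish the reflection in $W^{(i)}$ directly, making the case analysis unavoidable.
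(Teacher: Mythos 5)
Your proof is correct, though the route differs from the paper's --- most notably in part (ii). For (i) and (iii) the paper also examines an extremal element of the relevant $W^i$-orbit (minimal height in (i), maximal in (iii)) and shows that it must equal $\a_i$ (resp. be dominant, hence $\theta^s$); your Cauchy--Schwarz estimate is a clean, uniform device to manufacture the needed height-changing reflection, while the paper's argument for (i) goes one step further and exhibits an explicit $\ss_{\b-\a_i}\in W^i$ carrying $\a_i$ to the minimal element $\b$ (using $\la\b,\a_i\ra=1$ and $n_i(\b-\a_i)=0$), bypassing the iteration. The underlying extremal-element idea in these two parts is essentially the same.

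The genuine divergence is part (ii). You fall back on a case-by-case check that $U_{-2}$ is a single $W^{(i)}$-orbit containing $\g-\a_i$, and you remark that no uniform argument seems to furnish the reflection in $W^{(i)}$ directly. The paper shows otherwise, avoiding case analysis altogether: for $\a\in U_{-2}$, the root $\a+\a_i$ is the short middle term of the $\a_i$-string $\{\a,\,\a+\a_i,\,\a+2\a_i\}$ and lies in $\Phi_0$, so the injection $\a\mapsto\a+\a_i$ sends $U_{-2}$ into the set $S_0^s$ of positive short roots of $\Phi_0$ whose $\g$-coefficient in the basis $\Pi(\Phi_0)$ equals $1$. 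Part (i), applied to the lower-rank system $\Phi_0$ with short simple root $\g$ (plus a small adjustment in type $C_r$ where $\Phi_0$ is reducible), shows that this set is precisely the $W^{(i)}$-orbit of $\g$; transporting back along $\a\mapsto\a+\a_i$ yields (ii) uniformly. This reduction of (ii) to (i) on $\Phi_0$ is the structural content your proof misses; the case analysis reaches the same endpoint but at the cost of the conceptual link that the paper is making.
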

\begin{proof}
(i) Since $n_i(w\a_i)=n_i(\a_i)$ for $w\in W^i$, the inclusion $W^i\a_i \subseteq S^s$ is clear. For the reverse inclusion, let $\a\in S^s$, and let $\b\in W^i\a$ of minimal height. We show that $\b\in W^i\a_i$. If $\b$ is a simple root, then
$\b=\a_i$ as $n_i(\b)=1$, and we are done. If $\b$ is not simple, let $\a_j$ simple such that $\ss_\b \a_j\in \Phi^-$. Therefore, $\la \b, \a_j \ra>0$, and if $j\neq i$, it follows that $\b>\ss_j\b\in W^i\b=W^i\a$, contradicting the minimality of $\b$. In consequence,  $j=i$, so $\la\b,\a_i\ra=1$ (since $\b$ is a short root),
and $\ss_i\b=\b-\a_i$ has $n_i(\b-\a_i)=0$. But, in this situation, $\ss_{\b-\a_i}\in W^i$ and $\ss_{\b-\a_i}\a_i=\b$, showing that $\b\in W^i\a_i$. In conclusion, $W^i\a_i=S^s$, finishing the proof of (i).

(ii) Let $\a\in U_{-2}$. Since $\a+2\a_i\in T_2$ is also a root, it follows that $\a+\a_i\in S_0^s$. The root  $\a+\a_i$ is short because
both $\a, \a+2\a_i$ are long. We consider two cases.

If $\Phi$ is of type $B_r$ or $F_4$, the root system $\Phi_0$ is irreducible, and there is a unique short root
$\g\in \Pi_\new(\Phi_0)$. The set $S_0^s$ consists of those roots $\g$ in $\Phi_0$ having $n_{\g}=1$
and, by part (i), we deduce $S_0^s=W^{(i)} \g$. Therefore,
$\a+\a_i=w\g$ for some $w\in W^{(i)}$, so $\g=w^{-1}\a+\a_i\in U_{-2}+\a_i$, as $W^{(i)}$ permutes both the
sets $U_{-2}$ and $S_0^s$. It follows that $U_{-2}=W^{(i)} (\g-\a_i) $.

If $\Phi$ is of type $C_r$, the root system $\Phi_0$ has an irreducible component of type $A_1$ generated by a short
root $\g\in \Pi_\new(\Phi_0)$. If $i=1$, or $i=r-1$, we have that $S_0^s=\{\g\}$; otherwise, part (i) implies that
$S_0^s=W^{(i)}\b \cup\{\g\}$, for $\b\in \Pi_\new^*(\Phi_0)$.
Since $\g$ satisfies $\g-\a_i\in (\Phi^i)^\ell$, but $\b$ does not, it follows
that $U_{-2}=W^{(i)}(\g-\a_i)=\{\g-\a_i\}$ is a set with one element.  This proves  part (ii).

(iii) The inclusion  $W^i\theta^s\subseteq T^s$ is clear.  The reverse inclusion follows if we show that if $\b\in T^s$ has the largest height in its $W^i$-orbit, then $\b$ is dominant. This is indeed the case. For $j\neq i$, we have $\la\beta,\a_j\ra\ge 0$, otherwise $\sigma_j(\beta)\in W^i\beta$ has larger height than $\beta$. Also, $\la\beta,\a_i\ra\ge 0$, otherwise  $n_i(\sigma_i(\beta))>2=n_i(\theta^s).$
\end{proof}

\subsection{}
As a consequence of Lemmas \ref{lem_setstruct} and \ref{lem_setstruct2}, we obtain
the following explicit description of the evaluation at $\evu$ of a zeta average
in a special situation. This formula will be used in the proof of Theorem \ref{thm_main}.
To ease notation, we write $\DD_{\Phi^i}^\ell(\xx)$ for $\DD_{({\Phi^i})^\ell}(\xx)$,
and $\DD_{\Phi^{(i)}}^\ell(\xx)$ for the $\DD_{({\Phi^{(i)}})^\ell}(\xx)$.

\begin{lemma}\label{lemma: special-double}
Assume $\Phi$ is double-laced, and $\a_i$ is a simple short root such that  $n_i(\theta)\le 2$. Let
 $\g$ be the unique root in $\Pi_\new(\Phi_0)\setminus \Pi_\new^*(\Phi_0)$,  and let $\Phi_0^\prime$ be the irreducible
component of $\Phi_0$ which contains $\g$. Then,
\[
\left. Z_{\Phi_0^\prime}(\xx; u)\right|_\evu=\frac{\DD^\ell_{\Phi^{(i)}}(\xx)}{\DD^\ell_{\Phi^i}(\xx)}~~.
\]
\end{lemma}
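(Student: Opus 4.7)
The plan is to apply Proposition~\ref{prop_double} to $\Phi_0^\prime$, reducing $Z_{\Phi_0^\prime}$ to a product of rank one zeta averages indexed by the short positive roots of $\Phi_0^\prime$, and then to match this product against $\DD^\ell_{\Phi^{(i)}}/\DD^\ell_{\Phi^i}$ via an explicit bijection between those short positive roots and the long positive roots of $\Phi^i\setminus\Phi^{(i)}$.

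Under the hypotheses of the lemma the root $\g$ arises in case (3) of Remark~\ref{rem_1}: $\g=\theta^s_D$ for a $C_n$-subdiagram $D$ of $\Phi$ with $\a_i$ at its short endpoint, so in particular $n_i(\g)=1$. Inspection of \S\ref{sec_Phi0} shows that the irreducible component $\Phi_0^\prime$ is of type $A_1^s$ when $\Phi=C_r$, of type $B_{r-1}$ when $\Phi=B_r$ and $i=r$, and of type $B_3$ when $\Phi=F_4$ and $i=4$. In every case Proposition~\ref{prop_double}, together with the explicit factorization of $Z_{B_n}$ stated there, yields
\[
Z_{\Phi_0^\prime}(\xx;u)=\prod_{\a\in(\Phi_0^\prime)^{s,+}}\frac{1}{1-u\xx^\a}.
\]

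The remaining work is combinatorial. First I would check that every short positive root $\a$ of $\Phi_0^\prime$ satisfies $n_i(\a)=1$: indeed $n_i(\g)=1$, while every other element of $\Pi(\Phi_0^\prime)$ lies in $\Pi(\Phi^{(i)})$ and hence has vanishing $i$-th coefficient, and every short positive root of $\Phi_0^\prime$ expands as $\g$ plus a non-negative integer combination of the remaining basis elements. Combined with the evaluation $\xx^\a|_\evu=u^{-1}\xx^{\a-\a_i}$, this turns each factor into $1/(1-\xx^{\a-\a_i})$. Second, I would verify that $\a\mapsto\a-\a_i$ is a bijection of $(\Phi_0^\prime)^{s,+}$ onto the long positive roots of $\Phi^i\setminus\Phi^{(i)}$: for $\Phi=B_r$ it sends $\a_j+\cdots+\a_r$ to $\a_j+\cdots+\a_{r-1}$ for $j=1,\ldots,r-1$; for $\Phi=C_r$ it sends the unique element $\theta^s_{\{i,\ldots,r\}}$ to $2\a_{i+1}+\cdots+2\a_{r-1}+\a_r$; and for $\Phi=F_4$ with $i=4$ it sends the three short positive roots $\a_2+2\a_3+\a_4$, $\a_1+\a_2+2\a_3+\a_4$, $\a_1+2\a_2+2\a_3+\a_4$ to the three long positive roots of $\Phi^4$ that contain $\a_3$. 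Both sides are enumerated explicitly from \S\ref{sec_Phi0}, so each bijection is an immediate inspection.

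Combining these two ingredients rewrites the evaluated product as $\prod_{\gamma}(1-\xx^\gamma)^{-1}$ over the long positive roots $\gamma\in\Phi^i\setminus\Phi^{(i)}$, which is precisely $\DD^\ell_{\Phi^{(i)}}/\DD^\ell_{\Phi^i}$, finishing the proof. The main obstacle is the lack of a uniform argument for the bijection: since the type of $\Phi_0^\prime$ depends intrinsically on the Cartan type of $\Phi$, it must be exhibited separately for each of the three families $B_r$, $C_r$, $F_4$; fortunately each case is routine once the enumerations in Section~\ref{sec: roots} are consulted.
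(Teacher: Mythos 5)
Your proof is correct, and it follows essentially the same skeleton as the paper's while swapping out the structural lemmas for hands-on enumeration. The paper's proof of Lemma~\ref{lemma: special-double} proceeds by (a) invoking Lemma~\ref{lem_setstruct}~(ii) to identify $(\Phi^{i,+}\setminus\Phi^{(i),+})^\ell$ with the set $U_{-2}=\{\a\in\Phi^{i,+}:\la\a,\a_i\ra=-2\}$, (b) observing that evaluation at $x_i=1/u$ turns the product over $U_{-2}$ into one over $\a_i+U_{-2}$, (c) invoking Lemma~\ref{lem_setstruct2}~(ii), which says $\a_i+U_{-2}=W^{(i)}\g$, and then (d) using Proposition~\ref{prop_double} to recognize $Z_{\Phi_0'}$ as the product of rank-one zeta averages over the $W^{(i)}$-orbit of $\g$. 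You replace steps (a) and (c) — the structural identification $\a_i+U_{-2}=W^{(i)}\g=(\Phi_0')^{s,+}$ — with a direct case-by-case exhibition of the bijection $\a\mapsto\a-\a_i$ between $(\Phi_0')^{s,+}$ and the long positive roots of $\Phi^i\setminus\Phi^{(i)}$, together with a verification that $n_i(\a)=1$ on $(\Phi_0')^{s,+}$ (which is what makes the evaluation work out). Both approaches lean on Proposition~\ref{prop_double}, and both amount to the same underlying combinatorics. What the paper's route buys is conceptual packaging (the lemmas about $U_{-2}$ and $W^{(i)}\g$ are reused elsewhere), whereas your route is shorter here but requires re-doing the enumeration for each of $B_r$, $C_r$, $F_4$. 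One small wrinkle: your appeal to Proposition~\ref{prop_double} when $\Phi_0'$ is of type $A_1^s$ (the $C_r$ case) is not literally an application of that proposition, which only treats double-laced systems; but the formula $Z_{A_1}(\xx^\g;u)=1/(1-u\xx^\g)$ is immediate, so this costs nothing.
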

\begin{proof} By Lemma~\ref{lem_setstruct} (ii), we have $U_{-2}=(\Phi^{i,+}\setminus \Phi^{(i),+})^\ell $.
It follows that
\[
\frac{\DD^\ell_{\Phi^i}(\xx)}{\DD^\ell_{\Phi^{(i)}}(\xx)}=\prod_{\a\in U_{-2}}(1-\xx^\a)~~=\prod_{\a\in \a_i+U_{-2}}(1-u \ux^\a).
\]
By Lemma~\ref{lem_setstruct2} (ii), we have $U_{-2}+\a_i=W^{(i)}\g$, as the unique
$\g\in \Pi_\new(\Phi_0)\setminus \Pi_\new^*(\Phi_0)$  satisfies $\g-\a_i\in U_{-2}$.

If $\Phi$ is of type $C_r$ with $i\le r-1$, then $\Phi_0^\prime=\{\pm \g\}$ is of type $A_1$,
and $W^{(i)}\g=\{\g\}$. It follows that
\[\left.Z_{\Phi_0^\prime}(\xx; u)\right|_\evu= \frac{1}{1-u\ux^{\g}}=\frac{\DD^\ell_{\Phi^{(i)}}(\xx)}{\DD^\ell_{\Phi^i}(\xx)}.
\]

If $\Phi$ is of type $F_4$ with $i=4$, or  of type $B_r$ with $i=r$, then $\Phi_0=\Phi_0^\prime$ is irreducible
of type $B_3$ or, respectively, $B_{r-1}$, with $W^{(i)}$ generated by the simple reflections associated to long roots.
Proposition~\ref{prop_double} implies that
\[
\left.Z_{\Phi_0^\prime}(\xx; u)\right|_\evu
=\prod_{\a\in W^{(i)}\g} \frac{1}{1-u \ux^\a}=\frac{\DD^\ell_{\Phi^{(i)}}(\xx)}{\DD^\ell_{\Phi^i}(\xx)}.\qedhere
\]
\end{proof}

\subsection{} We are now ready to show that Theorem~\ref{thm_main} reduces to proving an equality of the residues at $\evu$.

\begin{Prop} \label{prop_res1} Let $\Phi$ be an irreducible root system not of type $G_2$, and let
$i$ be one of the admissible nodes specified in Table \ref{table1}. Then the identity \eqref{e_main}
in Theorem~\ref{thm_main} is equivalent to
\be\label{e_res}
\frac{\DD_{\Phi^i}(\xx)}{\DD_{\Phi^{(i)}}(\xx)} \res_{\evu}  Z_{\Phi}^{(i)}=
\frac{\displaystyle\sum_{w\in W^{(i)}} K_{\Phi,\a_i}(\xx)|w }{\DD_{\Phi^{(i)}}(\xx)} ~~.
\ee
\end{Prop}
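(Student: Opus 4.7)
The plan is to apply Lemma~\ref{lem_invar} with $W' = W^i$ to both sides of \eqref{e_main}: once both are shown to satisfy its hypotheses and to have vanishing residues at $x_i = -1/u$, the identity \eqref{e_main} becomes equivalent to equality of residues at $x_i = 1/u$, which I then compute explicitly to recover \eqref{e_res}. Invariance under $\CG W^i$ holds for the LHS because $Z_\Phi$ is $W$-invariant and the correction factor $\prod_{n_i(\a)\ge 2}(1-u^2\xx^{2\a})$ is $W^i$-stable and even under every $\e_j$ (since $m_\a\a\in Q_{\rm ev}$); for the RHS it holds by construction as a parabolic average. The pole and degree hypotheses (b)--(c) on the LHS follow from Lemma~\ref{lem_div}, which constrains $x_i$-poles of $Z_\Phi$ to short roots, while the correction factor eliminates those with $n_i(\a)\ge 2$; the remaining pole locations lie in the orbit $W^i\a_i$ by Lemma~\ref{lem_setstruct2}(i). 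On the RHS, the only $x_i$-pole of $\frac{1}{1-ux_i}K_{\Phi,\a_i}$ is at $x_i = 1/u$, and Lemma~\ref{lem_CG} transports this under $|w$ to $\xx^{w\a_i} = \pm 1/u$, yielding the same orbit. Although Lemma~\ref{lem_invar} is stated for simply-laced $\Phi$, its proof goes through verbatim in the admissible double-laced cases because $\a_i$ is short and hence $2w\a_i\in Q_{\rm ev}$.

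The vanishing of the residues at $x_i = -1/u$ is immediate on the LHS from Lemma~\ref{lem_div}(i): $1+ux_i$ divides $N_\Phi(\xx;u)$, and the correction factor is regular there. On the RHS, the function $J(x,\dd)$ in Lemma~\ref{lem_CG} has no pole at $x = -1/u$, so the only possible $x_i$-pole contributed by $|w$-transport of $\frac{1}{1-ux_i}$ lies at $\xx^{w\a_i} = +1/u$, and no pole at $-1/u$ appears. By Lemma~\ref{lem_invar}, \eqref{e_main} then reduces to the single residue identity at $x_i = 1/u$.

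To compute $\res_\evu$ on the RHS, I would decompose $W^i = \bigsqcup W^{(i)}\sigma$ into cosets of $W^{(i)} = \stab_{W^i}(\a_i)$. For $w\notin W^{(i)}$ the transported $x_i$-pole of $\frac{1}{1-ux_i}|w$ sits at $\xx^{w\a_i} = 1/u$ with $w\a_i \ne \a_i$, which intersects $\{x_i = 1/u\}$ in codimension two and so contributes nothing generically. For $w\in W^{(i)}$, every generator $\ss_j$ satisfies $\la\a_i,\a_j\ra = 0$, so $\ss_j$ and $\e_j$ both fix $x_i$; a direct check shows that $\frac{1}{1-ux_i}$ is inert under the $|w$ action, giving $\left(\frac{K_{\Phi,\a_i}}{1-ux_i}\right)\Big|w = \frac{K_{\Phi,\a_i}|w}{1-ux_i}$, where $K_{\Phi,\a_i}|w$ remains independent of $x_i$ (since $\Phi(w)\subset\Phi^{(i),+}$ contains no root with $n_i>0$). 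Taking the residue yields $\res_\evu(\text{RHS of \eqref{e_main}}) = \sum_{w\in W^{(i)}} K_{\Phi,\a_i}|w\,/\,\DD_{\Phi^i}(\xx)$; equating this with $\res_\evu Z_\Phi^{(i)}$ and multiplying through by $\DD_{\Phi^i}/\DD_{\Phi^{(i)}}$ produces \eqref{e_res}.

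The principal technical obstacle is the verification of hypotheses (b) and (c) of Lemma~\ref{lem_invar} for the parabolic average on the RHS: tracking the pole location of the CG-transported factor $\frac{1}{1-ux_i}$ through every coset of $W^{(i)}$ in $W^i$, and confirming the negative degree in $x_i$, require systematic bookkeeping via Lemma~\ref{lem_CG}. Once this is in place, the coset decomposition and the fact that both $K_{\Phi,\a_i}$ and $\DD_{\Phi^i}$ are independent of $x_i$ make the final residue computation essentially mechanical.
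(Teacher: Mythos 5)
Your overall strategy matches the paper's — apply Lemma~\ref{lem_invar} with $W'=W^i$, verify the hypotheses on both sides of \eqref{e_main}, observe that the residues at $x_i=-1/u$ vanish, and reduce to equality of residues at $x_i=1/u$, which you compute correctly (your coset decomposition and the observation that only $w\in W^{(i)}$ contribute is exactly how the paper proceeds). Your remark that Lemma~\ref{lem_invar}, stated for simply-laced $\Phi$, applies here because $\a_i$ is short is a reasonable supplement.

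However, there is a genuine gap. You write that hypotheses (b)--(c) for the left-hand side ``follow from Lemma~\ref{lem_div}.'' Lemma~\ref{lem_div} constrains \emph{pole locations}; it says nothing about the \emph{degree} condition (c), $\deg_{x_i} f<0$. The paper handles the LHS degree by combining Corollary~\ref{cor_degree} with the sets $U,S,T$ of Lemma~\ref{lem_setstruct}--\ref{lem_setstruct2} to compute $\deg_{x_i} Z_\Phi^{(i)}=2|T^s|-|S^s|$, and then verifies the inequality $|S^s|>2|T^s|$ by a case analysis on admissible nodes — this is precisely where the restriction to Table~\ref{table1} enters the argument and the step is not at all a formality. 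You have also inverted the location of the difficulty: you identify the RHS pole and degree bookkeeping as the ``principal technical obstacle,'' but the RHS degree is manifestly $\le -1$ since $W^i$ preserves the grading by $n_i$, $\DD_{\Phi^i}$ and $K_{\Phi,\a_i}$ are $x_i$-independent, and each summand is $\frac{1}{1-ux_i}$ acted on by $W^i$; similarly the RHS poles are visibly confined to $\xx^{w\a_i}=\pm 1/u$ by Lemma~\ref{lem_CG}. So the part you declare mechanical is indeed mechanical, but the part you declare an immediate consequence of Lemma~\ref{lem_div} is the one that actually carries the weight of the hypothesis on $i$ and must be done by hand.
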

\begin{proof}
We show that both sides of~\eqref{e_main} satisfy the assumptions of Lemma \ref{lem_invar}. We use the notation in \S\ref{sec: UST}. Since $n_i(\theta)\le 2$, and $W^i$ permutes the elements of $T^s$, we have
that $Z_{\Phi}^{(i)}(\xx)$  is invariant under the Chinta-Gunnells actions of $W^i$. The right-hand side of~\eqref{e_main} is also invariant under $W^i$.

The poles involving $x_i$ of the right-hand side of~\eqref{e_main} are precisely $\xx^{\a}=\pm 1/u$
for $\a$ in the orbit $W^i\a_i$, while the poles of the
left-hand side occur at $\xx^{\a}=\pm 1/u$ for $\a\in S^s$. By Lemma~\ref{lem_setstruct} (iv), we have
$W^i  \a_i=S^s$. Therefore condition (b) in Lemma~\ref{lem_invar} is satisfied.

The degree in $x_i$ of the right-hand side of~\eqref{e_main}
is clearly negative and, by Corollary~\ref{cor_degree}, we have
\[\deg_{x_i} Z_\Phi^{(i)}=4|T^s|-n_i(2\rho)+|S^\ell|+2|T^\ell|=
4|T^s|-|S|-2|T|+|S^\ell|+2|T^\ell|= 2|T^s|-|S^s|.
\]
 If $n_i(\theta^s)=1$, the inequality $|S^s|>2|T^s|$ is trivial (as $T^s=\emptyset$). If $n_i(\theta^s)=2$, by Lemma~\ref{lem_setstruct2}, the same inequality reduces to
\[\frac{|W^i|}{|W^{(i)}|} > 2 \frac{|W^i|}{|\stab_{\theta^s} W^{i} |}.
\]
This inequality can be directly verified in all cases in Table \ref{table1}. For
example, we include here the verification for $D_r$ and $2i\le r+1$, $i\ne 1$. In this case, we have $\stab_{W^{i}} \theta^s=W^{2,i}$, the Weyl group of  the parabolic root sub-system obtained by excluding the nodes $2$ and $i$ from the Dynkin  diagram
of $\Phi$. The inequality above is equivalent to
\[|W_{A_1}\times W_{A_{i-3}} \times W_{D_{r-i}}|>2 |  W_{A_{i-2}} \times W_{D_{r-i-1}}|.
\]
Since $|W_{A_r}|=(r+1)!$,
$|W_{D_r}|=2^{r-1} r!$, the inequality reduces to $3i<2r+1$, which is satisfied in the
range  $2i\le r+1$.

The residues of the right-hand side of~\eqref{e_main} at $x_i=\pm 1\slash u$
involve only the terms  in the sum that correspond to $w\in W^{(i)}=\stab_{W^i}\a_i$, and the residues at
$x_i=-1\slash u$ of both sides clearly vanish. Formula~\eqref{e_res} expresses
the equality of the residues at $x_i=1\slash u$ of the two sides of~\eqref{e_main}, and out conclusion follows from  Lemma~\ref{lem_invar}.
\end{proof}

\subsection{}

Before presenting the proof of Theorem \ref{thm_main}, we need to restate Theorem~\ref{thm_res} in terms of
the residue of $Z_\Phi^{(i)}$.
we collect one preliminary residue computation. To ease notation, we write $\DD_{\Phi^i}^s(\xx)$ for $\DD_{({\Phi^i})^s}(\xx)$, and $\DD_{\Phi^{(i)}}^s(\xx)$ for the $\DD_{({\Phi^{(i)}})^s}(\xx)$.
\begin{Prop}\label{thm_res1}
Let $\Phi$ be an irreducible root system not of type $G_2$, and $\a_i$ be a short simple root for which $n_i(\theta)\le 2$. We have,
\[
\frac{\DD_{\Phi^i}^s(\xx)}{\DD_{\Phi^{(i)}}^s(\xx)} \res_{\evu}  Z_{\Phi}^{(i)}(\xx; u)=Z_{\Phi_0}^{(i)}(\xx; u)|_{\evu}
\]

\end{Prop}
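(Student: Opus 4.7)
The plan is to deduce the proposition directly from Theorem~\ref{thm_res} together with the combinatorial information in Lemma~\ref{lem_setstruct}. The strategy is to express the modification factor that distinguishes $Z_\Phi^{(i)}$ from $Z_\Phi^{[i]}$ explicitly, and to match it against the modification factor distinguishing $Z_{\Phi_0}^{(i)}$ from $Z_{\Phi_0}$.

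Concretely, since $\Phi$ is not of type $G_2$, one has $\Phi_1^+\subseteq \Phi^{s,+}$, and the definitions give
\[
Z_\Phi^{(i)}(\xx;u) \;=\; Z_\Phi^{[i]}(\xx;u)\cdot \frac{\prod_{\a\in T^s}(1-u^2\xx^{2\a})}{\prod_{\a\in \Phi_1^+}(1-u^2\xx^{2\a})}.
\]
Under the hypothesis $n_i(\theta)\le 2$, Lemma~\ref{lem_setstruct} yields the disjoint decompositions $T^s = T_0^s\cup T_1$ and $\Phi_1^+ = S_1\cup T_1$, so after cancelling the common $T_1$ factor the correction simplifies to $\prod_{T_0^s}(1-u^2\xx^{2\a})/\prod_{S_1}(1-u^2\xx^{2\a})$, a rational function regular at $x_i=1/u$. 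Taking residues at $\evu$ and applying Theorem~\ref{thm_res}, we obtain
\[
\res_\evu Z_\Phi^{(i)} \;=\; Z_{\Phi_0}|_\evu \cdot \frac{\prod_{T_0^s}(1-u^2\xx^{2\a})|_\evu}{\prod_{S_1}(1-u^2\xx^{2\a})|_\evu}.
\]
Since $\{\a\in\Phi_0^{s,+}\colon n_i(\a)\ge 2\} = T_0^s$, we also have $Z_{\Phi_0}^{(i)}|_\evu = Z_{\Phi_0}|_\evu \cdot \prod_{T_0^s}(1-u^2\xx^{2\a})|_\evu$, so substituting and simplifying, the proposition reduces to the single identity
\[
\frac{\DD^s_{\Phi^i}(\xx)}{\DD^s_{\Phi^{(i)}}(\xx)} \;=\; \prod_{\a\in S_1}(1-u^2\xx^{2\a})\bigg|_\evu.
\]

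This last identity, which is the main bookkeeping step, is verified directly. Lemma~\ref{lem_setstruct}(ii) gives $(\Phi^{i,+}\setminus\Phi^{(i),+})^s = U_{-1}$, hence the left-hand side equals $\prod_{\a\in U_{-1}}(1-\xx^{2\a})$, which is already independent of $x_i$ since $n_i(\a)=0$ on $U_{-1}$. For the right-hand side, Lemma~\ref{lem_setstruct}(iii) provides the bijection $\ss_i\colon U_{-1}\to S_1$, $\beta\mapsto\a_i+\beta$, under which $u^2\xx^{2(\a_i+\beta)}|_\evu = \ux^{2\beta}$; the two products therefore coincide. Apart from this elementary combinatorial identification, no serious obstacle is encountered: the content of the proposition is essentially that of Theorem~\ref{thm_res}, repackaged using the set $T^s$ of positive short roots with $n_i\ge 2$ in place of $\Phi_1^+$.
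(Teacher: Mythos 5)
Your proof is correct and follows essentially the same route as the paper: both express $Z_\Phi^{(i)}$ as $Z_\Phi^{[i]}$ times a correction factor regular at $x_i=1/u$, use the decompositions $\Phi_1^+=S_1\cup T_1$ and $T^s=T_0^s\cup T_1$ from Lemma~\ref{lem_setstruct}, identify $\prod_{S_1}(1-u^2\xx^{2\a})|_\evu$ with $\DD^s_{\Phi^i}/\DD^s_{\Phi^{(i)}}$ via the bijection $U_{-1}\to S_1$, and then invoke Theorem~\ref{thm_res}. Your write-up spells out the cancellation and the evaluation $u^2\xx^{2(\a_i+\b)}|_\evu=\ux^{2\b}$ a bit more explicitly than the paper's terse display, but there is no substantive difference in method.
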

 \begin{proof}
Lemma \ref{lem_setstruct2} implies that $\Phi_1^+=S_1\cup T_1$.
From the description of $S_1$ and $U_{-1}$ in Lemma \ref{lem_setstruct2} it follows that
\[
\res_{\evu}  \frac{Z_{\Phi}^{[i]}(\xx; u)}{Z_{\Phi}^{(i)}(\xx; u)}
=\frac{\displaystyle\prod_{\a\in S_1 \cup T_1} (1-u^2\ux^{2\a})}{\displaystyle \prod_{\a\in T_0^s\cup T_1 } (1-u^2\ux^{2\a})}
=\frac{\DD_{\Phi^i}^s(\xx)}{\DD_{\Phi^{(i)}}^s(\xx)}
\frac{1}{\displaystyle\prod_{\a\in T_0^s}(1-u^2\ux^{2\a})} ~~.
\]
The conclusion follows from Theorem~\ref{thm_res}.
\end{proof}

\subsection{Proof of Theorem~\ref{thm_main}}\label{sec_induction}
We are now ready to assemble all the results in this section to prove Theorem~\ref{thm_main}.
By Proposition~\ref{prop_res1} and Theorem~\ref{thm_res1}, the identity~\eqref{e_main} reduces to
\be\label{e_ind}
\left. Z_{\Phi_0}^{(i)}(\xx; u)\right|_\evu=
\frac{\displaystyle \sum_{w\in W^{(i)}} K_{\Phi,\a_i}(\xx)|w }{\DD_{\Phi^{(i)}}(\xx)} \cdot
\frac{\DD^\ell_{\Phi^{(i)}}(\xx)}{\DD^\ell_{\Phi^i}(\xx)},
\ee
where $\DD^\ell_{\Phi}(\xx):=\DD_{\Phi^\ell}(\xx)=\prod_{\a\in(\Phi^\ell)^+} (1-u^2\xx^\a)$, as in Lemma \ref{lemma: special-double}.
We prove~\eqref{e_main} by induction on the rank of $\Phi$, by showing that the
identity~\eqref{e_ind} is of the same type but for a smaller rank root system. For the base cases, \eqref{e_ind}  is verified directly.

Throughout the proof, it is useful to refer to the tables in \S\ref{sec_Phi0}
and in Appendix~\ref{sec_exc} for the structure of~$\Phi_0$. Note that
when $\Phi_0$ is reducible, we have an orthogonal root system decomposition
\[\Phi_0=\Psi\oplus \Psi^\prime ,\]
with $\Psi$ irreducible and $\Psi'$ of type $A_1$, except when $\Phi$ is of type $D_4$, in which case $\Psi$ is of type $A_1\times A_1$.
Therefore, if $\Phi_0$ is reducible, $Z_{\Phi_0}(\xx; u)$ factors as
\be \label{e_fact}Z_{\Phi_0}(\xx; u)= Z_{\Psi}(\xx; u)\cdot Z_{\Psi'}(\xx; u).
\ee

\subsubsection{Simply-laced root systems} In this case, the fraction involving long roots in~\eqref{e_ind}
is not present. We have three cases.

(i) { $\Phi$ is of type $A_r$.} For $i=1$, or $i=r$, we have $\Phi_0=\Phi^{(i)}$,
and $K_{\Phi,\a_i}(\xx)=1$, so~\eqref{e_ind} holds by the definition of $Z_{\Phi^{(i)}}$.
If $1<i<r$, we have $\Pi_\new(\Phi_0)=\{\b\}$,
\be\label{e_KAr}
K_{\Phi,\a_i}(\xx)=\frac{1}{1-u\ux^{\b}}K_{\Phi_0,\b}(\xx),
\ee
and the root system  $\Phi^{(i)}$ is the parabolic sub-system of $\Phi_0$ obtained by removing the node $\b$ from its  Dynkin diagram. Therefore
formula~\eqref{e_ind} follows by induction on $r$,  the base cases being $i=1$ or $i=r$.

(ii) { $\Phi$ is of type $D_r$, $r\ge 4$.} For $i=1$, we have
\[
\Psi=\Phi^{(i)},\quad \Psi^\prime=\{ \pm \b\},\quad
K_{\Phi,\a_i}(\xx)=\frac{1}{1-u\ux^{\b}}=Z_{\Psi^\prime}(\xx)|_\evu.
\]
Therefore, \eqref{e_ind} follows from the definition of $Z_\Psi(\xx; u)$ and the factorization~\eqref{e_fact}.

For $i=r$ (and, similarly, for $i=r-1$), we have $\Psi^\prime=\{\pm \a_{r-1}\}$, and $\Psi$ is of type $D_{r-2}$, with $\b$
playing the role of the node $r-2$. Therefore~\eqref{e_ind} follows by induction on $r$,
with the base case being $r=4$, $i=1$.

For $1<i<r-2$, we have  $\Psi^\prime=\{\pm \b^\prime\}$ and $\Psi$ is of type $D_{r-2}$, with $\b$
playing the role of the node $i-1$. Since $0\le r+1-2i=r-2+1-2(i-1)$, and
\be\label{e_KDr}
K_{\Phi,\a_i}(\xx)=\frac{1}{1-u\ux^{\b}}K_{\Phi_0,\b}(\xx)\cdot Z_{\Psi'}(\xx; u)|_\evu,
\ee
formula~\eqref{e_ind} follows again by induction on $r$.
The base cases are $i=1$, which was already proved, and $r=4, 5$, and $i=r-2$.

For $r=4$, $i=2$, we have that $W^{(i)}$ is trivial, $\Phi_0^+=\{\pm\b\}\oplus \{\pm\b'\}\oplus\{\pm\b''\}$,
and~\eqref{e_ind} follows from the definition of $K_{\Phi,\a_i}(\xx)$.
The case of $D_5$ with $i=3$ is different from those encountered so far, since $\Psi$ contains two
roots $\b,\b'\in\Pi_\new(\Phi_0)$. In this case, $\Psi$ is of type $A_3$ and $\b$, $\b^\prime$ play the role of nodes $1$ and $3$.
Therefore, formula~\eqref{e_ind} reduces to the identity discussed in Example \ref{exp: case2}.

(iii) { $\Phi$ is of type $E_r$, $6\le r\le 8$.} If $i$ is an extremal node, then $\Pi_\new(\Phi_0)=\{\b\}$.
The root system $\Phi_0$ is irreducible, and $\Phi^{(i)}$ is the parabolic sub-system of $\Phi_0$
obtained by removing node $\b$ from its Dynkin diagram. By definition,
$K_{\Phi,\a_i}$ satisfies~\eqref{e_KAr}, and~\eqref{e_ind} follows by induction.

The only case when the node $i$ is not extremal is  for $E_6$ and $i=3$. In this case, $\Phi_0$ is of type $A_5$ and $\b$, $\b^\prime\in\Pi_\new(\Phi_0)$ play the role of nodes $2$ and $5$. Therefore, formula~\eqref{e_ind} reduces to the
identity proved in Example \ref{exp: case3}. This concludes the proof of Theorem~\ref{thm_main} for simply-laced root systems.

\subsubsection{Double-laced root systems} We distinguish two cases.

(i) { $\Phi$ is of type $B_r$, $r \ge 3$, or $F_4$.} In this case $i=r$, or $i=4$, respectively.
Then, $\Phi^{(i)}$ consists of long roots only, and $K_{\Phi,\a_i}(\xx)=1$. The first fraction in~\eqref{e_ind} equals $1$, by the Weyl denominator formula for $A_{r-2}$ and, respectively, for $A_2$.
Formula~\eqref{e_ind} is then precisely the formula proved in  Lemma \ref{lemma: special-double}.

(ii){ $\Phi$ is of type $C_r$, $r \ge 2$}. In this case, $\Psi$ is of type $C_{r-2}$, and $\Psi^\prime=\Phi_0^\prime=\{\pm \g\}$ with the notation of Lemma \ref{lemma: special-double}.  If $i=1$, we have $K_{\Phi,\a_i}(\xx)=1$, and formula~\eqref{e_ind} follows from Lemma \ref{lemma: special-double} and the definition of $Z_\Psi(\xx; u)$. If $i>1$, then $2i\le r$ implies that $i<r-1$. Therefore, $\Psi$ contains a root $\b\in \Pi_\new^*(\Phi_0)$ that plays the role of node $i-1$ in its Dynkin diagram.
Formula~\eqref{e_KAr} holds for $K_{\Phi,\a_i}(\xx)$, and~\eqref{e_ind} follows by induction and the use of  Lemma \ref{lemma: special-double}, the base case being $i=1$. This completes the proof of Theorem~\ref{thm_main}.

\appendix
\section{Exceptional simply-laced root systems}\label{sec_exc}

We consider the root systems of type $E_6$, $E_7$, and $E_8$, and
describe the orthogonal complement $\Phi_0$ to a simple root $\a_i$, as defined in Section~\ref{sec: roots}. We will use the notation set-up at the beginning of \S\ref{sec_Phi0}.
The description of  $\Pi_\new(\Phi_0)$ and the Dynkin diagram of $\Phi_0$ can be found in the relevant table below. We mark in boldface the indices satisfying the conditions in Lemma~\ref{lem_cond}.
Using the information in the tables, one can verify in these cases the equivalence of conditions (i) and (ii) in Lemma~\ref{lem_cond}, as well as Remark~\ref{rem_theta}.
We first recall the standard labeling of the Dynkin diagrams and the formula for the longest root $\theta$:
\begin{itemize}
 \item $E_6$: \dynkin[labels={\a_1, \a_2,\a_3,\a_4,\a_5,\a_6 }]E6, \ $\theta=\a_1+2\a_2+2\a_3+3\a_4+2\a_5+\a_6$;
 \item $E_7$: \dynkin[labels={\a_1, \a_2,\a_3,\a_4,\a_5,\a_6,\a_7 }]E7, \ $\theta=2\a_1+2\a_2+3\a_3+4\a_4+3\a_5+2\a_6+\a_7$;
 \item $E_8$: \dynkin[labels={\a_1, \a_2,\a_3,\a_4,\a_5,\a_6,\a_7,\a_8 }]E8 , \  $\theta=2\a_1+3\a_2+4\a_3+6\a_4+5\a_5+4\a_6+3\a_7+2\a_8$.
\end{itemize}

\begin{table}[!ht]
\begin{center}\small
\begin{tabular}{lcc}
 Node $i$ & $\Pi_\new(\Phi_0)$  &  $\Phi_0$ \\\hline
$\mathbf{i=1}$ & $\b=\theta_{\{1,\ldots,5\}}$ &  \dynkin[labels={\a_2,\a_4,\a_5,\a_6,\b,\a_7}]D6\\ \hline
$\mathbf{i=2}$ & $\b=\theta_{\{2,\ldots,5\}}$ & \dynkin[labels={\a_3,\a_1,\b,\a_6,\a_5,\a_7}]D6\\ \hline
$i=3$ & \makecell{$\b=\theta_{\{1,3,4\}}$\\
$  \b'=\theta_{\{2,\ldots,5\}}  $} & \dynkin[labels={\a_2,\b,\a_5,\a_6,\a_7,\b'}]D6\\ \hline
$i=4$ & \makecell{$\b=\theta_{\{3,4,5\}}$\\
$  \b'=\theta_{\{2,3,4\}}  $\\
$  \b''=\theta_{\{2,4,5\}}  $} & \dynkin[labels={\b',\a_1,\b,\a_6,\a_7,\b''}]D6\\ \hline
$i=5$ & \makecell{$\b=\theta_{\{4,5,6\}}$\\
$  \b'=\theta_{\{2,\ldots,5\}}  $} & \dynkin[labels={\b',\a_1,\a_3,\b,\a_2,\a_7}]D6\\ \hline
$i=6$ &
   \makecell{ $\b=\theta_{\{5,6,7\}}$ \\
   $\b'=\theta_{\{2,\ldots,6\}}$  }&   \dynkin[labels={\b',\a_1,\a_3,\a_4,\b,\a_2}]D6  \\ \hline
$\mathbf{i=7}$ & $\b=\theta_{\{2,\ldots,7\}}$ &  \dynkin[labels={\b, \a_1,\a_3,\a_4,\a_5,\a_2}]D6\\ \hline
\end{tabular}\vspace{1em}
\end{center}\caption{\label{table8} The orthogonal root system in type $E_7$}
\end{table}
\newpage
\begin{table}[!ht]\small
\begin{center}
\begin{tabular}{lcc}
Node $i$ & $\Pi_\new(\Phi_0)$  & $\Phi_0$ \\\hline
$\mathbf{i=1}$ & $\b=\theta_{\{1,\ldots,5\}}$ &  \dynkin[labels={\a_2,\a_4,\a_5,\a_6,\b}]A5 \\\hline
$\mathbf{i=2}$ & $\b=\theta_{\{2,\ldots,5\}}$ & \dynkin[labels={\a_3,\a_1,\b,\a_6,\a_5}]A5   \\\hline
$\mathbf{i=3}$ & \makecell{$\b=\theta_{\{1,3,4\}}$ \\
    $\b'=\theta_{\{1,\ldots,5\}}$} & \dynkin[labels={\a_2,\b,\a_5,\a_6,\b'}]A5 \\\hline
$i=4$ & \makecell{$\b=\theta_{\{3,4,5\}}$ \\
    $\b'=\theta_{\{2,3,4\}}$\\
    $\b''=\theta_{\{2,4,5\}}$
    } & \dynkin[labels={\b',\a_1,\b,\a_6,\b''}]A5 \\\hline
\end{tabular}\vspace{1em}
\end{center}\caption{\label{table7} The orthogonal root system in type $E_6$}
\end{table}

\begin{table}[!ht]\small
\begin{center}
\begin{tabular}{lcc}
Node $i$ & $\Pi_\new(\Phi_0)$  & $\Phi_0$ \\\hline
$\mathbf{i=1}$ & $\b=\theta_{\{1,\ldots,5\}}$ &  \dynkin[labels={\a_8,\b,\a_7,\a_6,\a_5,\a_4,\a_2}]E7\\ \hline
$i=2$ & $\b=\theta_{\{2,\ldots,5\}}$ & \dynkin[labels={\a_8,\a_5,\a_7,\a_6,\b,\a_1,\a_3}]E7\\ \hline
$i=3$ & \makecell{$\b=\theta_{\{1,3,4\}}$\\
$  \b'=\theta_{\{2,\ldots,5\}}  $} & \dynkin[labels={\a_8,\b',\a_7,\a_6,\a_5,\b,\a_2}]E7\\ \hline
$i=4$ & \makecell{$\b=\theta_{\{3,4,5\}}$\\
$  \b'=\theta_{\{2,3,4\}}  $\\
$  \b''=\theta_{\{2,4,5\}}  $} & \dynkin[labels={\a_8,\b'',\a_7,\a_6,\b,\a_1,\b'}]E7\\ \hline
$i=5$ & \makecell{$\b=\theta_{\{4,5,6\}}$\\
$  \b'=\theta_{\{2,\ldots,5\}}  $} & \dynkin[labels={\a_8,\a_2,\a_7,\b,\a_3,\a_1,\b'}]E7\\ \hline
$i=6$ &
   \makecell{ $\b=\theta_{\{5,6,7\}}$ \\
   $\b'=\theta_{\{2,\ldots,6\}}$  }&   \dynkin[labels={\a_8,\a_2,\b,\a_4,\a_3,\a_1,\b'}]E7 \\ \hline
$i=7$ &
   \makecell{ $\b=\theta_{\{6,7,8\}}$ \\
   $\b'=\theta_{\{2,\ldots,7\}}$  }&   \dynkin[labels={\b,\a_2,\a_5,\a_4,\a_3,\a_1,\b'}]E7 \\ \hline
$\mathbf{i=8}$ & $\b=\theta_{\{2,\ldots,8\}}$ &  \dynkin[labels={\a_6,\a_2,\a_5,\a_4,\a_3,\a_1,\b}]E7\\ \hline
\end{tabular}\vspace{1em}
\end{center}\caption{\label{table9} The orthogonal root system in type $E_8$}
\end{table}

\newpage
\section{The root system of type \texorpdfstring{$G_2$}{G2}}\label{sec_G2}

\subsection{} Throughout this section we assume that the root system $\Phi$ is of type $G_2$: \hspace{2mm} \dynkin[reverse arrows, labels={\a_1,\a_2}]G2

 The dominant short root is $\theta_s=2\a_1+\a_2$ and the dominant long root is $\theta_\ell=3\a_1+2\a_2$.
 As in Section~\ref{sec: roots}, we fix a node $i$ and consider the orthogonal complement $\Phi_0=\a_i^\perp:=\{\a\in\Phi : \la\a_i, \a\ra =0 \}.$
 \begin{table}[!ht]
\begin{center}
\begin{tabular}{l|c|c}
Node $i$ & $\Pi(\Phi_0)$  & $\Phi_0$ \\\hline
$i=1$ & $\b=\theta_\ell$ &  $A_{1}^\ell$ \\\hline
$i=2$ & $\b=\theta_s$ & $A_{1}^s $  \\\hline
\end{tabular}\vspace{1em}
\end{center}\caption{\label{table10} The orthogonal root system in type $G_2$}
\end{table}
The Dynkin diagram of $\Phi_0$ is of rank one, with basis  $\Pi(\Phi_0)=\{\b\}$ as described in Table \ref{table10}.

We define
$$
Z_{\Phi}^{[i]}(\xx; u)=Z_{\Phi}(\xx; u)\cdot \prod_{\substack{\a\in\Phi^+_{>0}\\m_{\a}=2 }} (1-u^2\xx^{2\a}),
$$
where $\Phi^+_{>0}=\{\a\in \Phi^+\mid \la\a,\a_i\ra>0\}$. The condition $m_{\a}=2$ is superfluous,
as it holds for all $\a\in\Phi$, but we include it since this definition is consistent with the one
for  simply-laced and double-laced root systems.

\subsection{}

The zeta average $Z_{\Phi_0}(\xx; u)$ defined using the basis $\Pi(\Phi_0)$
will henceforth be regarded  as an element of $\F(Q)$ via $\F(Q_0)\subset \F(Q)$. Since $\Phi_0=\{\pm \b\}$ is of type $A_1$ in both cases, we have
$Z_{\Phi_0}(\xx; u)=1\slash(1-u\xx^\b).$

\begin{Thm} \label{thm_res_triple}
We have
\[
\res_{x_1=1\slash u} Z_{\Phi}^{[1]}(\xx;u)=
\left.Z_{\Phi_0}(\xx;u^3)\right|_{\substack{x_{1}=1\slash u }}\quad \text{and} \quad
\res_{x_2=1\slash u} Z_{\Phi}^{[2]}(\xx;u)=
\left.Z_{\Phi_0}(\xx;u)\right|_{x_{2}=1\slash u}.
\]
\end{Thm}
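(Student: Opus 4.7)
The plan is to adapt the strategy of Theorem~\ref{thm_res} to the $G_2$ setting. Since $\Phi_0 = \{\pm \beta\}$ is of rank one (with $\beta = \theta_\ell$ for $i=1$ and $\beta = \theta_s$ for $i=2$), the target identity reads
\[
F_i(\ux) := \res_{x_i=1/u} Z_\Phi^{[i]}(\xx;u) = \frac{1}{1 - u^{c_i}\ux^\beta}\bigg|_{x_i=1/u},
\]
where $c_i := n_i(\beta)$, so that $c_1 = 3$ and $c_2 = 1$. First I would establish the denominator structure $F_i(\ux) = N_i(\ux)/(1 - u^{c_i}\ux^\beta)|_{x_i=1/u}$ for some polynomial $N_i$ with $N_i(\underline{0}) = 1$. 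This follows from $G_2$-specific analogs of Lemma~\ref{lem_res1} (vanishing of double residues at polar loci coming from roots not orthogonal to $\alpha_i$) and Lemma~\ref{lem_res2} (vanishing at $\ux^\beta = -u^{-c_i}$), proved by applying the Chinta-Gunnells functional equation of $Z_\Phi$ to suitable elements of $W(G_2)$ and pairing terms as in the proof of Lemma~\ref{lem_res1}.

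Next, I would show that $F_i$ satisfies the functional equation characterizing the rank-one zeta average with metaplectic parameter $u^{c_i}$, i.e.\ invariance under the Chinta-Gunnells action of $\ss_\beta$. This is the $G_2$ analog of Proposition~\ref{prop_ssb}. The reflection $\ss_\beta$ has length $5$ in $W(G_2)$, and with a suitable reduced expression the inversion set orders as $\Phi(\ss_\beta) = \{\gamma_1 \prec \gamma_1+\alpha_i \prec \beta \prec \gamma_2 \prec \gamma_2+\alpha_i\}$, mirroring Lemma~\ref{lem_phissb}(i) with $t = 1$. Expanding $Z_\Phi \CG \ss_\beta = Z_\Phi$ via Lemma~\ref{lem_CG} into a sum of $2^5 = 32$ terms, extracting the residue at $x_i = 1/u$, and collapsing pairs of terms with opposite signs on the $(\gamma_j, \gamma_j+\alpha_i)$ positions via identities analogous to \eqref{e_Jident} produces the required functional equation. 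Applying Proposition~\ref{prop_unique} to $\Phi_0 = A_1$ with the metaplectic parameter $u^{c_i}$ then identifies $F_i$ with the right-hand side of the theorem.

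The main obstacle is the emergence of the parameter $u^{c_i}$, particularly the factor $u^3$ for $i = 1$. This reflects the $3{:}1$ length ratio between long and short roots that is unique to $G_2$, encoded algebraically in the coefficient $n_1(\theta_\ell) = 3$. Carefully tracking the $u$-powers across the $32$-term expansion, and verifying that the surviving $J$-factors combine into an $A_1$-type $J$-function with parameter $u^3$ rather than $u$ or $u^2$, is the subtlest step. An alternative direct approach is also feasible: by Proposition~\ref{prop_unique}, $Z_{G_2}(\xx;u)$ is the unique polynomial in $\xx$ of bidegree at most $2\rho = 10\a_1 + 6\a_2$, invariant under the $12$-element Chinta-Gunnells action of $W(G_2)$, with constant term~$1$; an explicit computation of $N_\Phi$ and its residues would verify both parts of the theorem concretely and provide an independent check.
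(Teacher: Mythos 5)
Your proposal matches the paper's stated strategy: the paper remarks that the $G_2$ case ``can be proved along the same lines as Theorems~\ref{thm_res} and~\ref{thm_main},'' but then opts to present only the direct verification, exhibiting the closed-form expression for $N_{G_2}(x_1,x_2;u)$ and leaving the residue check to the reader. Your alternative route (explicit computation of $N_\Phi$) is precisely what the paper carries out, while your primary route (adapting Lemmas~\ref{lem_res1}, \ref{lem_res2}, Proposition~\ref{prop_ssb} and Proposition~\ref{prop_unique}) fleshes out the approach the paper asserts works but does not spell out.

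One inaccuracy worth flagging in your sketch of the analog of Proposition~\ref{prop_ssb}: for $i=1$, $\beta=\theta_\ell$, the inversion set does order as $\gamma_1\prec\gamma_1+\alpha_1\prec\beta\prec\gamma_2\prec\gamma_2+\alpha_1$ (with $\gamma_1=\alpha_2$, $\gamma_2=2\alpha_1+\alpha_2$), but this does not literally mirror Lemma~\ref{lem_phissb}(i): in the simply-laced situation all the $\gamma_j$ and $\gamma_j+\alpha_i$ are short with $\langle\gamma_j,\alpha_i\rangle=-1$, whereas here $\gamma_1$ and $\gamma_2+\alpha_1$ are \emph{long} roots with $\langle\cdot,\alpha_1\rangle=\mp 3$. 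This changes the argument of the $J$-functions at the evaluation $x_1=1/u$ and is in fact the mechanism through which the parameter $u^3$ (rather than $u$) emerges in the $A_1$ functional equation you need; you correctly anticipate this as the subtle step, but your claim that the structure is that of Lemma~\ref{lem_phissb}(i) with $t=1$ understates the new phenomenon. Your sanity check via the explicit formula (which gives the residue $1/(1-x_2^2)$ for $i=1$ and $1/(1-x_1^2)$ for $i=2$, both matching the right-hand sides) is exactly the verification the paper offers in lieu of a full argument.
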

Remark the extra change of variable in the case $i=1$, which is a singular feature of the $G_2$ case.

 We adopt the notation in Section \ref{sec: roots} with respect to the parabolic sub-system $\Phi^i$ and its Weyl group $W^i$. Also,  for $\a\in\Phi$,  $n_i(\a)\in \Z$ denotes the coefficient of $\a_i$ in the expansion of $\a$ in the basis $\Pi(\Phi)$.
Let
\[Z_{\Phi}^{(i)}(\xx; u)=Z_{\Phi}(\xx; u)\cdot
\prod_{\substack{\a\in\Phi\\ n_i(\a)\ge 2 }} (1-u^2\xx^{2\a}).
\]
To state the analogue of Theorem~\ref{thm_main}, remark that  $\a_2$ is the only simple root $\a_i$ for which
$n_i(\b)=1$ (see Lemma~\ref{lem_cond}). This is is also the only simple root  for which $n_i(\theta_\ell)\le 2$ (see Remark~\ref{rem_nitheta}).
The product in the definition of $Z_{\Phi}^{(2)}(\xx; u)$ contains only one term, for $\a=\theta_\ell$.
\begin{Thm}\label{thm_main_triple}We have
\[
Z_{\Phi}^{(2)}(\xx; u)=\frac{\displaystyle\sum_{w\in W^2}\left.\frac{1}{1-ux_2}\frac{1}{1-u\xx^{\theta_s}} \right|w }{\DD_{\Phi^2}(\xx; u)}~~.
\]
\end{Thm}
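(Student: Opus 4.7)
The plan is to follow the strategy of Theorem~\ref{thm_main}: show that both sides of the identity are $\CG W^2$-invariant rational functions with matching residues at key points, then apply a uniqueness lemma to conclude equality. First I would verify $\CG W^2$-invariance of both sides. The right-hand side is invariant by construction as an average over $W^2 = \{1, \ss_1\}$. For the left-hand side $Z_\Phi^{(2)} = Z_\Phi \cdot (1 - u^2 \xx^{2\theta_\ell})$, the $\CG W$-invariance of $Z_\Phi$ combined with the observations that $\ss_1 \theta_\ell = \theta_\ell$ (since $\la \theta_\ell, \a_1 \ra = 0$) and that $2\theta_\ell \in Q_{\rm ev}$ implies the extra factor is simultaneously $\ss_1$-invariant and $\e_1$-invariant, and hence commutes with the $\CG \ss_1$ action. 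This factor is \emph{not} $\ss_2$-invariant (as $\ss_2 \theta_\ell = 3\a_1 + \a_2$), consistent with the theorem asserting only $\CG W^2$-invariance rather than $\CG W$-invariance.

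Next I would prove a $G_2$-analog of Lemma~\ref{lem_invar}. The poles in $x_2$ of both sides fall into two $W^2$-orbits: the long-root orbit $\{\a_2, 3\a_1 + \a_2\}$ (producing poles at $x_2 = \pm 1/u$ and $x_2 = \pm 1/(u x_1^3)$), and the short-root orbit $\{\theta_s, \a_1 + \a_2\}$ (producing poles at $x_2 = \pm 1/(u x_1^2)$ and $x_2 = \pm 1/(u x_1)$). On the right-hand side these arise from the two kernel factors $(1-u x_2)^{-1}$ and $(1-u\xx^{\theta_s})^{-1}$ together with their $\ss_1$-transforms; on the left-hand side from the corresponding factors of $D_\Phi$. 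Because every root in $G_2$ has $m_\a = 2$, the partial-fraction argument underlying Lemma~\ref{lem_invar} extends, with a refinement to accommodate both orbits: a $\CG W^2$-invariant rational function with simple poles of this prescribed form and negative $x_2$-degree is determined by its residues at one representative of each orbit.

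Finally I would match the residues. At $x_2 = -1/u$ the left-hand side vanishes by Lemma~\ref{lem_div}(i) (which gives $1+u x_2 \mid N_\Phi$) and the right-hand side vanishes by direct inspection of the kernel. At $x_2 = 1/u$, Theorem~\ref{thm_res_triple} provides $\res_{x_2=1/u} Z_\Phi^{[2]} = 1/(1-x_1^2)$, and the relation $Z_\Phi^{(2)} = Z_\Phi^{[2]}/(1 - u^2 \xx^{2(\a_1+\a_2)})$ then yields $\res_{x_2=1/u} Z_\Phi^{(2)} = 1/(1-x_1^2)^2$; on the right-hand side only the $w=1$ term contributes and produces the same value. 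An analogous argument at $x_2 = \pm 1/(u x_1^2)$ matches the second-orbit residues, using a divisibility of $N_\Phi$ by $1 + u\xx^{\theta_s}$ that extends Lemma~\ref{lem_div}(i) from the simple short root case. The main obstacle is adapting the uniqueness lemma to the two-orbit setting and establishing this extended divisibility of $N_\Phi$, both of which are absent from the simply-laced framework of Lemma~\ref{lem_invar}.
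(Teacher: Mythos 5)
Your plan follows the route the paper only mentions in passing (``can be proved along the same lines as Theorems~\ref{thm_res} and~\ref{thm_main}''); the paper's actual proof is a direct verification from the explicit numerator polynomial for $Z_{G_2}$ displayed at the end of Appendix~B, from which the factorization $N_\Phi=N\cdot(1+ux_1)(1+ux_2)(1+u\xx^{\theta_s})$ and all residue values can be read off. So you are taking a genuinely different, and harder, route. The framework is sound: the $\CG W^2$-invariance of both sides, the two-orbit pole structure under $W^2=\la\ss_1\ra$ with long orbit $\{\a_2,3\a_1+\a_2\}$ and short orbit $\{\theta_s,\a_1+\a_2\}$, the proposed two-orbit extension of Lemma~\ref{lem_invar}, and the residue match $1/(1-x_1^2)^2$ at $x_2=1/u$ are all correct.

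The genuine gap is the second-orbit data, which you dismiss with ``an analogous argument.'' To use your uniqueness lemma you must compute the residue of $Z_\Phi^{(2)}$ at $\xx^{\theta_s}=1/u$, and $\theta_s=2\a_1+\a_2$ is not a simple root, so neither Theorem~\ref{thm_res_triple} nor the underlying Theorem~\ref{thmA} (both concern residues at simple-root poles) applies. One would need a separate computation, e.g.\ by transporting the pole via $\CG W$-equivariance through an element $w$ with $w\a_1=\theta_s$ (one may take $w=\ss_1\ss_2$) and tracking the $\ddd$-sum of Lemma~\ref{lem_CG}; that is an argument of the same weight as the proof of Lemma~\ref{lem_res1}, not a one-liner. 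The same obstruction affects the claimed divisibility $1+u\xx^{\theta_s}\mid N_\Phi$: the proof of Lemma~\ref{lem_div}(i) works precisely because $\a_i$ is the first element of $\Phi(w)$ under $\prec$, so the sign multiplying $\xx^{\a_i}$ in the relevant $J$-factor is always $+1$ and the absence of a pole at $x_i=-1/u$ is a termwise observation; for a non-simple short root $\b\in\Phi(w)$ the sign depends on $\ddd$, and the vanishing requires a cancellation across the $\ddd$-sum in the spirit of Lemma~\ref{lem_res1} or Proposition~\ref{prop_ssb}. Until these two steps are supplied the argument is incomplete; they are exactly what the paper's explicit polynomial short-circuits.
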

The presence of two terms involving $u$ in the average above is explained by the fact that the
roots $\a$ with $n_2(\a)=1$ form two orbits under the group $W^2=\la\ss_1\ra$: one orbit consisting of long roots with
representative $\a_2$, and one orbit consisting of short roots with representative $\theta_s$.

Theorems \ref{thm_res_triple} and Theorem \ref{thm_main_triple} can be proved along the same lines as Theorems~\ref{thm_res} and~\ref{thm_main}.
However, they can be  verified directly, using the explicit formula
\[Z_{\Phi}(x_1,x_2; u)=\frac{u^5  x_1^7  x_2^4 - u^3 x_1^6  x_2^3  -u^3   x_1^4  x_2^3 + u^2 x_1^4  x_2^2  + u^3 x_1^3  x_2^2  -  u^2 x_1^3  x_2  - u^2  x_1  x_2 + 1    }{D_{\Phi}(\xx; u)/\left[(1+u x_1)(1+u x_2)(1+u\xx^{\theta_s})\right]},
\]
where $D_{\Phi}(\xx; u)=\prod_{\a\in\Phi^+}(1-u^2\xx^{2\a})$.

\section{Proof of Theorem~\ref{thmC}}\label{appC}

\subsection{}
In this appendix we give a proof of Theorem~\ref{thmC}. We let $\mathbb{K}=\Q(\sqrt{-1})$ and $\Phi$
an irreducible root system not of type $G_2$.
The argument given here applies with obvious modifications to give an
alternative proof of Theorem~\ref{thmB} over $\F_q(T)$ with $q\equiv 1 \pmod 4$.
What simplifies the argument,
and guides our choice of number field and the congruence condition in Theorem~\ref{thmB},
is the fact that the quadratic reciprocity law takes the simple shape $\qres{a}{b}=\qres{b}{a}$
under these assumptions, for $a,b$ coprime ideals of odd norm in $\Q(\sqrt{-1})$, or coprime
monic polynomials in $\F_q(T)$ with  $q\equiv 1 \pmod 4$. We emphasize that these assumptions are made
only to simplify the arguments, and similar results hold over arbitrary number fields. However in general 
one needs to consider MDS twisted by characters, as introduced in~\cite{CG}, and the statements are more 
involved. 

The idea of the proof is straightforward: we show that both sides of~\eqref{eqC} are
multiple Dirichlet series with the same $p$-part, 
and they satisfy the same twisted multiplicativity property. 
First, in Lemma~\ref{lem_resglobal} we derive a formula for the residue of
$\ZZ_\Phi(\bs)$ as an MDS in $s_j$ for $j\ne i$. Using this formula,
we show that both sides of~\eqref{eqC} have the same $p$-part; it is here that we crucially use
Theorem~\ref{thmA}, which is the main difficulty in the argument. Using again the formula
in Lemma~\ref{lem_resglobal}, we show that both sides of~\eqref{eqC} satisfy the 
same twisted multiplicativity, inherited from the root system $\Phi_0$.

\subsection{} We recall the definition of the MDS $\ZZ_\Phi(\bs)$, following~\cite{CG}. We have
$$
\ZZ_\Phi(\bs) = \sum_{} \frac{H(m_1,\dots,m_r)}{|m_1|^{s_1}\cdot\ldots\cdot |m_r|^{s_r}},
$$
where the sum is over integers $m_j$ in $\mathbb{K}$ of odd norm, modulo units, and the norms are the norms
of the principal ideals generated by $m_j$. In what follows we use the language of ideals, and we regard 
the $m_j$ as integral ideals  in $\mathbb{K}$ of odd norm. 
The coefficients $H$ satisfy the following properties, which uniquely determine $\ZZ_\Phi$.
\begin{itemize}
 \item Twisted multiplicativity: if the ideals $\prod m_j$ and $\prod m_j'$ are coprime, then\footnote{
 Here we assume that $\Phi$ is not of type $G_2$; for $G_2$ the condition in the product would be
 $\la \a_k , \a_j\ra<0$. Recall also that the Weyl invariant pairing is normalized as in \S\ref{sec2.1}. }
\be H(m_1 m_1', \dots, m_r m_r')=H(m_1, \dots, m_r)H(m_1', \dots, m_r')\cdot
\prod_{\substack{k<j\\ \la \a_k , \a_j\ra=-1 }}
 \qres{m_k}{m_j'} \qres{m_k'}{m_j};
\ee
 \item Determination of $p$-part: for a prime $p$ and $\lam=\sum n_j \a_j \in Q^+$, we have
\be\label{e_ppart}
H(p^{n_1}, \dots, p^{n_r})=a_\lam(|p|^{-1 \slash 2}),
\ee
where $a_\lam(u)$ are the coefficients of the zeta average $Z_\Phi(\xx;u)=\sum_\lam a_\lam(u) \xx^\lam$
defined in \S \ref{secZCG}.
\end{itemize}
The analytic properties of $\ZZ_\Phi(\bs)$ have been established in~\cite{CG}. In particular,
it has meromorphic continuation to $\CC^r$ and satisfies a group of functional 
equations isomorphic to the Weyl group of $\Phi$. 


\subsection{}
In this subsection, we prove the following formula for the residue of the MDS over $\mathbb{K}$.
\begin{lemma}\label{lem_resglobal}
Let $\mathbb{K}=\mathbb{Q}(\sqrt{-1})$. Let $\Phi$ be an irreducible root system not of type $G_2$,
and let $\a_i$ be a short simple root. Then,
\[\res_{s_i=1\slash 2} \ZZ_\Phi(\bs)=\frac{\pi}{8}
\sum_{ \substack{m_j, j\ne i\\ \prod_{\la\a_j,\a_i\ra=-1}m_j=\square } }
\frac{1}{\prod_{j\ne i} |m_j|^{s_j}}
\prod_{p|\prod_{j\ne i} m_j} (1-|p|^{-1})
\sum_{\substack{m\\ p|m\Rightarrow p|\prod_{j\ne i} m_j} } \frac{H(m_1,\ldots, m,\ldots, m_r) }{|m|^{1\slash 2}},
\]
where the ideal $m$ is on position $i$ in the argument of $H$, and the sums are over integral ideals of odd norm. The series converges for $\Re s_j$ large enough for $j\ne i$. 
\end{lemma}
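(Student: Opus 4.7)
The plan is to treat $\ZZ_\Phi(\bs)$ as a Dirichlet series in $s_i$ whose coefficients are parametrized by the remaining $(m_j)_{j\ne i}$, and to compute the residue at $s_i = 1/2$ one tuple $(m_j)_{j\ne i}$ at a time. Fix such a tuple of ideals of odd norm, and let $M = \prod_{j\ne i} m_j$ and $M^* = \prod_{j \ne i,\, \la\a_j,\a_i\ra = -1} m_j$. Every ideal $m_i$ of odd norm factors uniquely as $m_i = n m$ with $m \mid M^\infty$ and $\gcd(n, M) = 1$. Applying twisted multiplicativity to split $H(m_1,\dots,nm,\dots,m_r)$ as $H(m_1,\dots,m,\dots,m_r) \cdot H(1,\dots,n,\dots,1)$ times a product of quadratic symbols, and using the simple form of reciprocity $\qres{a}{b} = \qres{b}{a}$ valid over $\Q(\sqrt{-1})$ for coprime odd-norm ideals, one obtains
\[
H(m_1,\dots,nm,\dots,m_r) \= H(m_1,\dots,m,\dots,m_r)\,H(1,\dots,n,\dots,1)\,\qres{n}{M^*}.
\]

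Substituting this into the inner Dirichlet series in $m_i$ decouples the $n$-sum from the $m$-sum, and the former can be expressed as an Euler product via the key identity $a_{n\a_i}(u) = u^n$ for a short simple root $\a_i$. This identity follows from restricting $Z_\Phi$ to the axis $x_j = 0$ ($j\ne i$): the reflection $\ss_i$ preserves this axis, and since $(\e_i \xx)|_{x_j=0,\,j\ne i}$ coincides with $\xx|_{x_j=0,\,j\ne i}$ on the axis, the Chinta-Gunnells action reduces to the rank-one action there; combined with the normalization $Z_\Phi(0;u) = 1$ and the uniqueness of Proposition~\ref{prop_unique} this forces $Z_\Phi|_{x_j=0,\,j\ne i} = Z_{A_1}(x_i; u) = 1/(1-ux_i)$. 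Hence $H(1,\dots,p^n,\dots,1) = |p|^{-n/2}$, and the $n$-sum becomes the partial quadratic $L$-series
\[
L^{(2M)}\bigl(s_i + \tfrac12,\, \chi_{M^*}\bigr) \= \prod_{p \nmid 2M} \frac{1}{1 - \qres{p}{M^*}\,|p|^{-s_i-1/2}},
\]
where $\chi_{M^*}(\cdot) = \qres{\cdot}{M^*}$ is the quadratic Hecke character associated to $M^*$.

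The residue at $s_i = 1/2$ is then extracted from this $L$-function. If $M^*$ is not a square, then $\chi_{M^*}$ is a non-trivial quadratic character and $L^{(2M)}(s_i+1/2,\chi_{M^*})$ is holomorphic at $s_i = 1/2$, so such tuples contribute nothing. If $M^*$ is a square, then $\chi_{M^*}$ is trivial and
\[
L^{(2M)}(s_i+\tfrac12,\mathbf{1}) \= \zeta_{\mathbb{K}}^{(2)}(s_i+\tfrac12)\prod_{p\mid M,\,p\nmid 2}\bigl(1-|p|^{-s_i-1/2}\bigr),
\]
whose residue at $s_i = 1/2$ equals $\tfrac{\pi}{8}\prod_{p\mid M,\,p\nmid 2}(1-|p|^{-1})$ by the class-number-formula residue $\pi/4$ of $\zeta_{\mathbb{K}}$ at $s=1$, corrected by the removal of the Euler factor at the prime above $2$. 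The remaining sum $\sum_{m\mid M^\infty} H(m_1,\dots,m,\dots,m_r)|m|^{-1/2}$ is a finite product over primes $p \mid M$ of local evaluations of $Z_\Phi(\xx;u)$ at $u = |p|^{-1/2}$ and $x_i = |p|^{-1/2}$; these evaluations stay away from the polar divisor $x_i = \pm 1/u$ as long as $|p| \ne 1$, so the sum converges for $\Re s_j$ sufficiently large. Multiplying these ingredients and summing over admissible $(m_j)_{j\ne i}$ yields the claimed identity.

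The main obstacle is the clean factorization of the inner Dirichlet series in $m_i$. It rests on two ingredients: the reduction of the twisted-multiplicativity symbols to the single Jacobi symbol $\qres{n}{M^*}$ (which uses the symmetric form of reciprocity available over $\Q(\sqrt{-1})$), and the explicit identification $a_{n\a_i}(u) = u^n$, which converts the $n$-sum into a recognizable partial quadratic $L$-function. Once these are in hand, the residue extraction---dichotomized by whether $M^*$ is a square---follows from standard analytic properties of Hecke $L$-functions on $\Q(\sqrt{-1})$.
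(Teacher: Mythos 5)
Your proposal is correct and follows the same overall strategy as the paper's proof, which simply cites \cite{CG}*{\S5} for the inner-sum computation and then applies the class number formula; what you have done is reconstruct the details of that cited computation (the decomposition $m_i = nm$, the reduction of the twisted-multiplicativity symbols to $\qres{n}{M^*}$ via reciprocity over $\Q(\sqrt{-1})$, the identification of the $n$-sum with a quadratic Hecke $L$-series, and the dichotomy on whether $M^*$ is a square). Your observation that the needed identity $a_{n\a_i}(u) = u^n$ follows from restricting $Z_\Phi$ to the $x_i$-axis and invoking Proposition~\ref{prop_unique} for $A_1$ is a clean and correct way to supply that ingredient; note it works because for $A_1$ the sign operator $\e_i$ acts trivially (since $\la\a_i,\a_i\ra = 2$), so the restricted functional equation $g(x_i) = \frac{1-u/x_i}{1-ux_i}g(1/x_i)$ really is the rank-one CG invariance.
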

\noindent By essentially the same argument, the same residue formula, but without the factor $\pi\slash 8$, holds over $\mathbb{K}=\F_q(T)$ for $q\equiv 1\!\! \pmod 4$. 
\begin{proof}
One sums first over $m_i$, keeping $m_j$ fixed for $j\ne i$, as in~\cite{CG}*{\S5}. 
From this sum one extracts a Dirichlet series with quadratic character, whose 
residue is 0 unless the character is trivial. In the latter case, we use the 
Dirichlet class number formula to compute the residue
\[\res_{s=1} \zeta_{\mathbb{K}}^{(2 \prod_{j\ne i} m_j)}(s)=
\frac{\pi}{4} \prod_{p\vert 2\prod_{j\ne i} m_j} (1-|p|^{-1}),
\]
where $\zeta_{\mathbb{K}}^{(c)}$ is the Dedekind zeta function of $\mathbb{K}$ with the Euler factors at the primes
dividing $c=2\prod_{j\ne i} m_j$ removed. The conclusion immediately follows.
\end{proof}

\subsection{}

Using the formula in Lemma~\ref{lem_resglobal}, we now show that the $p$-parts of
both sides in~\eqref{eqC} match.
We denote by $\mathcal{L}(\us)$ the series in Lemma~\ref{lem_resglobal} as an MDS in the multivariable
$\us=(s_1, \dots,s_{i-1},s_{i+1}, \dots s_r)$:
\be\label{e_MDS}
\mathcal{L}(\us):=\sum_{\substack{\um\\ \prod_{\la\a_j,\a_i\ra=-1}m_j=\square}}\frac{H'(\um)}{\prod_{j\ne i} |m_j|^{s_j}},
\ee
with $H'(\um)$ as resulting from Lemma~\ref{lem_resglobal} and  $\um=(m_1, \dots,m_{i-1}, m_{i+1}, \dots , m_r)$.
We leave aside for now the question as to what root system is the MDS $\mathcal{L}(\us)$ attached to.

To compute its $p$-part for $p$ a prime of odd norm,
let $m_j=p^{k_j}$ for $j\ne i$, $m=p^{k_i}$ and
make the change of variables $x_j=|p|^{-s_j}$ for $j\ne i$, $x_i=|p|^{-1/2}$. 
Because of the $p$-part property~\eqref{e_ppart}, we also denote 
$u=|p|^{-1/2}$. 
Let  $L_p(\ux;u)$ be the $p$-part of $\mathcal{L}(\us)$, after the substitutions above,
where $\ux$ denotes, as before, the multivariable $(x_1, \ldots, x_{i-1},x_{i+1},\ldots, x_r)$. 
Let  $R_p(\ux;u)$ denote the $p$-part of the right-hand side of~\eqref{eqC} (without the factor
$\pi\slash 8$), after the same  substitutions.

\begin{lemma}\label{lem_pparts}
With the notation above, we have
\[L_p(\ux;u)=R_p(\ux;u). \]
\end{lemma}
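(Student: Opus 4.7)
The plan is to compute $L_p$ explicitly from Lemma~\ref{lem_resglobal} and then identify it, after a change of variables, with the right-hand side $R_p$ via Theorem~\ref{thmA}.

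First I would unpack $L_p$. Setting $m_j = p^{k_j}$ and writing $\lam = \sum_j k_j\a_j$, the square condition $\prod_{\la\a_j,\a_i\ra = -1}m_j = \square$ becomes $\sum_{\la\a_j,\a_i\ra = -1}k_j \equiv 0 \pmod 2$. This is equivalent to $\la\lam,\a_i\ra$ being even, since for $j \ne i$ with $\la\a_j,\a_i\ra \in \{0,-2\}$ the contribution to $\la\lam,\a_i\ra$ is already even, and hence equivalent to $\e_i \xx^\lam = \xx^\lam$ by~\eqref{e_sign}. The factor $\prod_{p\vert\prod m_j}(1-|p|^{-1})$ gives $(1-u^2)$ for $\underline{k}\neq 0$, and the inner sum yields $\sum_{k_i\ge 0} a_\lam(u)\,u^{k_i}$, which is the coefficient of $\prod_{j\ne i}x_j^{k_j}$ in $Z_\Phi(\xx;u)|_{x_i=u}$. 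The boundary case $\underline{k} = 0$ fits the same formula uniformly: by Chinta-Gunnells invariance applied to the restriction $x_j = 0$ for $j \ne i$, one has $Z_\Phi|_{\mathrm{axis}}(x_i) = Z_{A_1}(x_i;u) = 1/(1-ux_i)$, hence $a_{k_i\a_i}(u) = u^{k_i}$ and $(1-u^2)\sum_{k_i\ge 0}u^{2k_i} = 1$. Summing over $\underline{k}$ subject to the parity condition produces precisely the $\e_i$-even symmetrization, giving
\[
L_p(\xx;u) = (1 - u^2)\,(Z_\Phi)^+_i(\xx;u)\big|_{x_i = u}.
\]

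Next I would relate this to the residue in Theorem~\ref{thmA}. The identity~\eqref{e_ressi}, obtained by multiplying $Z_\Phi = Z_\Phi\CG\ss_i$ by $(1 - ux_i)$ and taking $x_i\to 1/u$, yields
\[
(1 - u^2)\,(Z_\Phi)^+_i(\ss_i\xx;u)\big|_{x_i = 1/u} = \res_{x_i = 1/u} Z_\Phi(\xx;u),
\]
which by Theorem~\ref{thmA} equals $Z_{\Phi_0}(\xx;u)|_{x_i=1/u}\cdot \prod_{\la\a,\a_i\ra = 1}(1 - u^2\xx^{2\a})^{-1}|_{x_i=1/u}$.

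Finally, I would apply the change of variables $\tau: x_j \mapsto x_j\,u^{-\la\a_j,\a_i^\vee\ra}$ (for $j\ne i$) to both sides. On the left, since $(\ss_i\xx)_j|_{x_i=1/u} = x_j\,u^{\la\a_j,\a_i^\vee\ra}$, the substitution $\tau$ converts $(Z_\Phi)^+_i(\ss_i\xx;u)|_{x_i=1/u}$ into $(Z_\Phi)^+_i(\xx;u)|_{x_i=u}$, yielding $L_p$. On the right, the key observations are: for $\b\in\Pi_\new(\Phi_0)$ one has $\la\b,\a_i^\vee\ra = 0$, whence a direct computation gives $\tau(\xx^\b|_{x_i=1/u}) = \xx^\b|_{x_i=u}$, so $Z_{\Phi_0}(\xx;u)|_{x_i=1/u}$ transforms into $Z_{\Phi_0}(\xx;u)|_{x_i=u}$; and for $\a\in\Phi^+$ with $\la\a,\a_i\ra = 1$ one has $\la\a,\a_i^\vee\ra = 1$, whence $u^2\,\tau(\xx^{2\a}|_{x_i=1/u}) = \xx^{2\a}|_{x_i=u}$, so each factor transforms into $(1 - \xx^{2\a}|_{x_i=u})^{-1}$. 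This last factor is precisely the $p$-part of $\zeta^{(2)}_\mathbb{K}(2\bs_\a)|_{s_i = 1/2}$ (since $|p|^{-2\bs_\a} = \xx^{2\a}$ and $x_i = u$ when $s_i = 1/2$), so $L_p = R_p$. The main technical hurdle is the careful tracking of the substitution~$\tau$ and verifying that its effect on both sides is compatible; the conceptual content is supplied by Theorem~\ref{thmA}.
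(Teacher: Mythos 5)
Your proof is correct and follows essentially the same route as the paper: express $L_p$ as $(1-u^2)(Z_\Phi)^+_i(\xx;u)|_{x_i=u}$, invoke~\eqref{e_ressi} and Theorem~\ref{thmA}, and transfer the evaluation from $x_i=1/u$ to $x_i=u$ via the change of variables $x_j\mapsto u^{-\la\a_j,\a_i\ra}x_j$, which is exactly the paper's~\eqref{eFact}. The only superficial difference is direction (you go from $L_p$ to $R_p$, the paper from $R_p$ to $L_p$), and you supply a bit more detail on unpacking $L_p$ from Lemma~\ref{lem_resglobal}, including the boundary case $\underline{k}=0$, which the paper passes over.
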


\begin{proof}
By definition, we have
\[\begin{aligned}
L_p(\ux;u)&=
\sum_{\substack{\lam\in Q^+ \\ \la\lam , \a_i\ra\ \mathrm{even}}}
(1-u^2) a_\lam(u) \xx^\lam|_{x_i=u}  \\
&=(1-u^2) (Z_{\Phi})^+_i (\xx;u)|_{x_i=u}, 
\end{aligned}
\]
where the notation $f^{+}_i$ for a function $f(\xx;u)$ is introduced in  \S \ref{secpm}.

On the other hand, the $p$-part of $\ZZ_{\Phi_0}(\bs)|_{s_i=1\slash 2}$ is, by definition and after the substitutions above,
$Z_{\Phi_0}(\xx;u)|_{x_i=u}$, and we can write
\[
R_p(\ux;u)= \prod_{\substack{\a\in \Phi^+\\ \la\a, \a_i\ra=1}}
\frac{1}{1-\xx^{2\a}|_{x_i=u}}  \cdot  Z_{\Phi_0}(\xx;u)|_{x_i=u}.
\]
We now use Theorem~\ref{thmA} to express the term $Z_{\Phi_0}(\xx;u)|_{x_i=u}$. 
To apply the theorem, we need to replace 
the evaluation of $x_i$ at~$u$ with an evaluation at $1\slash u$. 
We use repeatedly the following simple change of variables formula: if $f(\xx;u)$ is any function such that
the evaluations below are well-defined, then
\be\label{eFact}
f(\ss_i \xx;u)\big|_{\substack{x_i=1\slash u\\ x_j\mapsto u^{-\la\a_j,\a_i \ra}x_j } }=f(\xx;u)\big|_{x_i=u },
\ee
where here and below the substitution  $x_j\mapsto u^{-\la\a_j,\a_i \ra}x_j$
takes place for all $j\ne i$. It follows that
\[
Z_{\Phi_0}(\xx;u)|_{x_i=u}=Z_{\Phi_0}(\ss_i\xx; u)|_{\substack{x_i=1\slash u\\  x_j\mapsto u^{-\la\a_j,\a_i \ra}x_j}}
=Z_{\Phi_0}(\xx; u)|_{\substack{x_i=1\slash u\\  x_j\mapsto u^{-\la\a_j,\a_i \ra}x_j}},
\]
where the second equality uses the fact that $Z_{\Phi_0}(\xx; u)\in \F(Q_0)\subset \F(Q)$.
By~\eqref{eFact}, we also have
\[
\prod_{\la\a, \a_i\ra=1} (1-\xx^{2\a})|_{x_i=u}=\prod_{\la\a, \a_i\ra=1} (1-u^2 \xx^{2\ss_i \a})|_{x_i=u}
=\prod_{\la\a, \a_i\ra=1} (1-u^2 \xx^{2\a})|_{\substack{x_i=1\slash u\\  x_j\mapsto u^{-\la\a_j,\a_i \ra}x_j} }.
\]
Using this identity and applying Theorem~\ref{thmA}, we obtain the first equality below
\[\begin{aligned}
R_p(\ux;u)&=\res_{x_i=1\slash u} Z_{\Phi}(\xx;u)|_{ x_j\mapsto u^{-\la\a_j,\a_i \ra}x_j }\\
&= (1-u^2) (Z_{\Phi})^+_i(\ss_i\xx;u)|_{\substack{x_i=1\slash u\\  x_j\mapsto u^{-\la\a_j,\a_i \ra}x_j} } \\
&=(1-u^2)(Z_{\Phi})^+_i (\xx;u)|_{x_i=u} .
\end{aligned}
\]
The second equality follows from~\eqref{e_ressi}, while in the third we use again~\eqref{eFact}.
Comparing with the formula for $L_p(\ux;u)$ above, we conclude that $R_p(\ux; u)=L_p(\ux; u)$.
\end{proof}

\subsection{}
Using Lemma~\ref{lem_resglobal}, the identity~\eqref{eqC} becomes 
\be\label{e_twisted}
\prod_{{\substack{\a\in\Phi^+ \\ \la\a,\a_i \ra=1}}} 
\zeta_{\mathbb{K}}^{(2)}({2{\bs_\a}})^{-1}{\vert_{s_{i}=1\slash 2}} \cdot \mathcal{L}(\us)
=\ZZ_{\Phi_0}(\bs)|_{s_i=1\slash 2},
\ee
with $\mathcal{L}(\us)$ defined in~\eqref{e_MDS}.
 In the previous subsection we have shown that the $p$-parts of both sides match,
and now we show that both sides satisfy the same twisted multiplicativity property. 
The product of zeta functions on the left does not affect the twisted multiplicativity, 
so we concentrate on the coefficients $H'(\um)$ of $\mathcal{L}(\us)$.

Let $\um$, $\um'$ be tuples as in the summation defining~$\mathcal{L}(\us)$, and $m, m'$ ideals  of odd norm
such that $m\prod_{j\ne i} m_j$ and $m' \prod_{j\ne i} m_j'$ are coprime. Using twisted
multiplicativity for $H(m_1 m_1, \dots, mm', \dots, m_r m_r')$ under the condition that
$\prod_{\la\a_j,\a_i\ra=-1} m_j$ and $\prod_{\la\a_j,\a_i\ra=-1} m_j'$ are squares, 
one checks that the residue symbols involving $m,m'$  multiply to 1, 
so the formula for $H'$ gives
\be\label{e_Hprime}
H'(\um\cdot \um')=H'(\um)\cdot H'(\um') \cdot 
\prod_{\substack{k<j\\ \la \a_k , \a_j\ra=-1 }}
 \qres{m_k}{m_j'} \qres{m_k'}{m_j}.
\ee

To illustrate the last part of the argument in a concrete situation, and to simplify the notation, let us assume that $\Phi=A_r$, and $1<i<r$. In this case,  
$$\Pi(\Phi_0)=\{\a_j \mid |j- i|>1  \} \cup \{\b\}, \quad \text{with $\b=\a_{i-1}+\a_i+\a_{i+1}$.}  $$ 
Denote by $H''(\um)$  the coefficients of the left-hand side of~\eqref{e_twisted} when 
written as a MDS. They satisfy the same twisted multiplicativity as $H'(\um)$, and the
the matching of $p$-parts of both sides in~\eqref{e_twisted} shows that $H''(\um)=0$ unless 
$m_{i-1}=m_{i+1}$. Property~\eqref{e_Hprime} then reduces to the twisted multiplicativity satisfied 
by the coefficients of~$\ZZ_{\Phi_0}(\bs)|_{s_i=1\slash 2}$ with respect to the system $\Phi_0$.  
Together with Lemma~\ref{lem_pparts}, this finishes the proof of~\eqref{eqC} in this particular case. 

The general case is entirely similar, but it requires heavier notation, so we leave the verification 
to the interested reader. 


\begin{bibdiv}
\begin{biblist}[\normalsize]
\BibSpec{article}{%
+{}{\PrintAuthors} {author}
+{,}{ }{title}
+{.}{ \textit}{journal}
+{}{ \textbf} {volume}
+{}{ \PrintDatePV}{date}
+{,}{ no. }{number}
+{,}{ }{pages}
+{,}{ }{status}
+{.}{}{transition}
}

\BibSpec{book}{%
+{}{\PrintAuthors} {author}
+{,}{ \textit}{title}
+{.}{ }{series}
+{,}{ vol. } {volume}
+{,}{ \PrintEdition} {edition}
+{,}{ }{publisher}
+{,}{ }{place}
+{,}{ }{date}
+{,}{ }{status}
+{.}{}{transition}
}

\BibSpec{collection.article}{
+{}{\PrintAuthors} {author}
+{,}{ \textit}{title}
+{.}{ In: \textit}{conference}
+{,}{ }{pages}
+{.}{ }{series}
+{,}{ vol. } {volume}
+{,}{ }{publisher}
+{,}{ }{place}
+{,}{ }{date}
+{,}{ }{status}
+{.}{}{transition}
}

\bib{Bou}{book}{
   author={Bourbaki, Nicolas},
   title={Lie groups and Lie algebras. Chapters 4--6},
   series={Elements of Mathematics (Berlin)},
   note={Translated from the 1968 French original by Andrew Pressley},
   publisher={Springer-Verlag, Berlin},
   date={2002},
   pages={xii+300},
   isbn={3-540-42650-7},
   review={\MR{1890629}},
   doi={10.1007/978-3-540-89394-3},
}

\bib{BB}{article}{
   author={Brubaker, Benjamin},
   author={Bump, Daniel},
   title={Residues of Weyl group multiple Dirichlet series associated to
   $\widetilde{\rm GL}_{n+1}$},
   conference={Multiple Dirichlet series, automorphic forms, and analytic
      number theory},
      series={Proc. Sympos. Pure Math.},
      volume={75},
      publisher={Amer. Math. Soc., Providence, RI},
   date={2006},
   pages={115--134},
   review={\MR{2279933}},
   doi={10.1090/pspum/075/2279933},
}

\bib{BBCFH}{article}{
   author={Brubaker, Benjamin},
   author={Bump, Daniel},
   author={Chinta, Gautam},
   author={Friedberg, Solomon},
   author={Hoffstein, Jeffrey},
   title={Weyl group multiple Dirichlet series. I},
   conference={Multiple Dirichlet series, automorphic forms, and analytic
      number theory},
      series={Proc. Sympos. Pure Math.},
      volume={75},
      publisher={Amer. Math. Soc., Providence, RI},
   date={2006},
   pages={91--114},
   review={\MR{2279932}},
   doi={10.1090/pspum/075/2279932},
}

\bib{BBCFG}{article}{
   author={Brubaker, Ben},
   author={Bump, Daniel},
   author={Chinta, Gautam},
   author={Friedberg, Solomon},
   author={Gunnells, Paul E.},
   title={Metaplectic ice},
   conference={Multiple Dirichlet series, L-functions and automorphic forms},
      series={Progr. Math.},
      volume={300},
      publisher={Birkh\"{a}user/Springer, New York},
   date={2012},
   pages={65--92},
   review={\MR{2952572}},
   doi={10.1007/978-0-8176-8334-4_3},
}

\bib{BBF}{article}{
   author={Brubaker, Ben},
   author={Bump, Daniel},
   author={Friedberg, Solomon},
   title={Weyl group multiple Dirichlet series, Eisenstein series and
   crystal bases},
   journal={Ann. of Math. (2)},
   volume={173},
   date={2011},
   number={2},
   pages={1081--1120},
   issn={0003-486X},
   review={\MR{2776371}},
   doi={10.4007/annals.2011.173.2.13},
}

\bib{BBFH07}{article}{
   author={Brubaker, B.},
   author={Bump, D.},
   author={Friedberg, S.},
   author={Hoffstein, J.},
   title={Weyl group multiple Dirichlet series. III. Eisenstein series and
   twisted unstable $A_r$},
   journal={Ann. of Math. (2)},
   volume={166},
   date={2007},
   number={1},
   pages={293--316},
   issn={0003-486X},
   review={\MR{2342698}},
   doi={10.4007/annals.2007.166.293},
}

\bib{BD}{article}{
   author={Bucur, Alina},
   author={Diaconu, Adrian},
   title={Moments of quadratic Dirichlet $L$-functions over rational
   function fields},
   language={English, with English and Russian summaries},
   journal={Mosc. Math. J.},
   volume={10},
   date={2010},
   number={3},
   pages={485--517, 661},
   issn={1609-3321},
   review={\MR{2732571}},
   doi={10.17323/1609-4514-2010-10-3-485-517},
}

\bib{MR2952570}{collection.article}{
   author={Bump, Daniel},
   title={Introduction: multiple Dirichlet series},
   conference={Multiple Dirichlet series, L-functions and automorphic forms},
      series={Progr. Math.},
      volume={300},
      publisher={Birkh\"{a}user/Springer, New York},
   date={2012},
   pages={1--36},
   review={\MR{2952570}},
   doi={10.1007/978-0-8176-8334-4_1},
}



\bib{BFH96}{article}{
   author={Bump, Daniel},
   author={Friedberg, Solomon},
   author={Hoffstein, Jeffrey},
   title={On some applications of automorphic forms to number theory},
   journal={Bull. Amer. Math. Soc. (N.S.)},
   volume={33},
   date={1996},
   number={2},
   pages={157--175},
   issn={0273-0979},
   review={\MR{1359575}},
   doi={10.1090/S0273-0979-96-00654-4},
}

\bib{BFH04}{collection.article}{
   author={Bump, Daniel},
   author={Friedberg, Solomon},
   author={Hoffstein, Jeffrey},
   title={Sums of twisted ${\rm GL}(3)$ automorphic $L$-functions},
   conference={Contributions to automorphic forms, geometry, and number
      theory},
      publisher={Johns Hopkins Univ. Press, Baltimore, MD},
   date={2004},
   pages={131--162},
   review={\MR{2058607}},
}

\bib{C}{article}{
   author={Chinta, Gautam},
   title={Multiple Dirichlet series over rational function fields},
   journal={Acta Arith.},
   volume={132},
   date={2008},
   number={4},
   pages={377--391},
   issn={0065-1036},
   review={\MR{2413360}},
   doi={10.4064/aa132-4-7},
}

\bib{CFG}{article}{
   author={Chinta, Gautam},
   author={Friedberg, Solomon},
   author={Gunnells, Paul E.},
   title={On the $p$-parts of quadratic Weyl group multiple Dirichlet
   series},
   journal={J. Reine Angew. Math.},
   volume={623},
   date={2008},
   pages={1--23},
   issn={0075-4102},
   review={\MR{2458038}},
   doi={10.1515/CRELLE.2008.070},
}

\bib{CG}{article}{
   author={Chinta, Gautam},
   author={Gunnells, Paul E.},
   title={Weyl group multiple Dirichlet series constructed from quadratic
   characters},
   journal={Invent. Math.},
   volume={167},
   date={2007},
   number={2},
   pages={327--353},
   issn={0020-9910},
   review={\MR{2270457}},
   doi={10.1007/s00222-006-0014-1},
}

\bib{CG1}{article}{
   author={Chinta, Gautam},
   author={Gunnells, Paul E.},
   title={Constructing Weyl group multiple Dirichlet series},
   journal={J. Amer. Math. Soc.},
   volume={23},
   date={2010},
   number={1},
   pages={189--215},
   issn={0894-0347},
   review={\MR{2552251}},
   doi={10.1090/S0894-0347-09-00641-9},
}

\bib{CGP}{article}{
   author={Chinta, Gautam},
   author={Gunnells, Paul E.},
   author={Pusk\'{a}s, Anna},
   title={Metaplectic Demazure operators and Whittaker functions},
   journal={Indiana Univ. Math. J.},
   volume={66},
   date={2017},
   number={3},
   pages={1045--1064},
   issn={0022-2518},
   review={\MR{3663336}},
   doi={10.1512/iumj.2017.66.6068},
}

\bib{CM}{article}{
   author={Chinta, Gautam},
   author={Mohler, Joel B.},
   title={Sums of $L$-functions over rational function fields},
   journal={Acta Arith.},
   volume={144},
   date={2010},
   number={1},
   pages={53--68},
   issn={0065-1036},
   review={\MR{2660555}},
   doi={10.4064/aa144-1-4},
}

\bib{CO}{article}{
   author={Chinta, Gautam},
   author={Offen, Omer},
   title={A metaplectic Casselman-Shalika formula for ${\rm GL}_r$},
   journal={Amer. J. Math.},
   volume={135},
   date={2013},
   number={2},
   pages={403--441},
   issn={0002-9327},
   review={\MR{3038716}},
   doi={10.1353/ajm.2013.0013},
}

\bib{DGH}{article}{
   author={Diaconu, Adrian},
   author={Goldfeld, Dorian},
   author={Hoffstein, Jeffrey},
   title={Multiple Dirichlet series and moments of zeta and $L$-functions},
   journal={Compositio Math.},
   volume={139},
   date={2003},
   number={3},
   pages={297--360},
   issn={0010-437X},
   review={\MR{2041614}},
   doi={10.1023/B:COMP.0000018137.38458.68},
}

\bib{DPP}{article}{
   author={Diaconu, Adrian},
   author={Pa\c sol, Vicent\c iu},
    author={Popa, Alexandru A.},
   title={Quadratic Weyl group multiple Dirichlet series of Type $D_4^{(1)}$},
   journal={},
   volume={},
   date={2021},
   number={},
   pages={},
   status={arXiv:2111.11062},
}

\bib{DIPP}{article}{
   author={Diaconu, Adrian},
   author={Ion, Bogdan},
   author={Pa\c sol, Vicent\c iu},
    author={Popa, Alexandru A.},
   title={Residues of affine multiple Dirichlet series},
   journal={},
   volume={},
   date={},
   number={},
   pages={},
   status={In preparation},
}

\bib{FF}{article}{
   author={Fisher, Benji},
   author={Friedberg, Solomon},
   title={Sums of twisted $\rm GL(2)$ $L$-functions over function fields},
   journal={Duke Math. J.},
   volume={117},
   date={2003},
   number={3},
   pages={543--570},
   issn={0012-7094},
   review={\MR{1979053}},
   doi={10.1215/S0012-7094-03-11735-4},
}

\bib{FHL}{article}{
   author={Friedberg, Solomon},
   author={Hoffstein, Jeffrey},
   author={Lieman, Daniel},
   title={Double Dirichlet series and the $n$-th order twists of Hecke
   $L$-series},
   journal={Math. Ann.},
   volume={327},
   date={2003},
   number={2},
   pages={315--338},
   issn={0025-5831},
   review={\MR{2015073}},
   doi={10.1007/s00208-003-0455-4},
}

\bib{Fr}{article}{
   author={Friedlander, Holley},
   title={On the $p$-parts of Weyl group multiple Dirichlet series},
   journal={Acta Arith.},
   volume={179},
   date={2017},
   number={4},
   pages={301--317},
   issn={0065-1036},
   review={\MR{3684395}},
   doi={10.4064/aa8309-5-2017},
}

\bib{Fr1}{article}{
   author={Friedlander, Holley},
   title={Twisted Weyl group multiple Dirichlet series over the rational function field},
   journal={Preprint},
   volume={},
   date={2018},
   number={},
   pages={},
   status={arXiv:1811.00988},
}

\bib{GH85}{article}{
   author={Goldfeld, Dorian},
   author={Hoffstein, Jeffrey},
   title={Eisenstein series of ${1/2}$-integral weight and the mean
   value of real Dirichlet $L$-series},
   journal={Invent. Math.},
   volume={80},
   date={1985},
   number={2},
   pages={185--208},
   issn={0020-9910},
   review={\MR{788407}},
   doi={10.1007/BF01388603},
}

\bib{Hum}{book}{
   author={Humphreys, James E.},
   title={Reflection groups and Coxeter groups},
   series={Cambridge Studies in Advanced Mathematics},
   volume={29},
   publisher={Cambridge University Press, Cambridge},
   date={1990},
   pages={xii+204},
   isbn={0-521-37510-X},
   review={\MR{1066460}},
   doi={10.1017/CBO9780511623646},
}

\bib{Ion}{article}{
   author={Ion, Bogdan},
   title={The Cherednik kernel and generalized exponents},
   journal={Int. Math. Res. Not.},
   date={2004},
   number={36},
   pages={1869--1895},
   issn={1073-7928},
   review={\MR{2058356}},
   doi={10.1155/S1073792804133485},
}

\bib{J}{article}{
   author={Joseph, A.},
   title={A preparation theorem for the prime spectrum of a semisimple Lie
   algebra},
   journal={J. Algebra},
   volume={48},
   date={1977},
   number={2},
   pages={241--289},
   issn={0021-8693},
   review={\MR{453829}},
   doi={10.1016/0021-8693(77)90306-4},
}

\bib{K}{article}{
   author={Kostant, Bertram},
   title={The cascade of orthogonal roots and the coadjoint structure of the
   nilradical of a Borel subgroup of a semisimple Lie group},
   language={English, with English and Russian summaries},
   journal={Mosc. Math. J.},
   volume={12},
   date={2012},
   number={3},
   pages={605--620, 669},
   issn={1609-3321},
   review={\MR{3024825}},
   doi={10.17323/1609-4514-2012-12-3-605-620},
}

\bib{M1}{article}{
   author={Macdonald, I. G.},
   title={Affine root systems and Dedekind's $\eta $-function},
   journal={Invent. Math.},
   volume={15},
   date={1972},
   pages={91--143},
   issn={0020-9910},
   review={\MR{357528}},
   doi={10.1007/BF01418931},
}

\bib{M2}{article}{
   author={Macdonald, I. G.},
   title={Affine Hecke algebras and orthogonal polynomials},
   note={S\'{e}minaire Bourbaki, Vol. 1994/95},
   journal={Ast\'{e}risque},
   number={237},
   date={1996},
   pages={Exp. No. 797, 4, 189--207},
   issn={0303-1179},
   review={\MR{1423624}},
}

\bib{M3}{book}{
   author={Macdonald, I. G.},
   title={Affine Hecke algebras and orthogonal polynomials},
   series={Cambridge Tracts in Mathematics},
   volume={157},
   publisher={Cambridge University Press, Cambridge},
   date={2003},
   pages={x+175},
   isbn={0-521-82472-9},
   review={\MR{1976581}},
   doi={10.1017/CBO9780511542824},
}

\bib{McN11}{article}{
   author={McNamara, Peter J.},
   title={Metaplectic Whittaker functions and crystal bases},
   journal={Duke Math. J.},
   volume={156},
   date={2011},
   number={1},
   pages={1--31},
   issn={0012-7094},
   review={\MR{2746386}},
   doi={10.1215/00127094-2010-064},
}

\bib{McN16}{article}{
   author={McNamara, Peter J.},
   title={The metaplectic Casselman-Shalika formula},
   journal={Trans. Amer. Math. Soc.},
   volume={368},
   date={2016},
   number={4},
   pages={2913--2937},
   issn={0002-9947},
   review={\MR{3449262}},
   doi={10.1090/tran/6597},
}

\bib{SSV}{article}{
   author={Sahi, Siddhartha},
   author={Stokman, Jasper V.},
   author={Venkateswaran, Vidya},
   title={Metaplectic representations of Hecke algebras, Weyl group actions,
   and associated polynomials},
   journal={Selecta Math. (N.S.)},
   volume={27},
   date={2021},
   number={3},
   pages={Paper No. 47, 42},
   issn={1022-1824},
   review={\MR{4273644}},
   doi={10.1007/s00029-021-00654-1},
}

\bib{S}{article}{
   author={Siegel, Carl Ludwig},
   title={Die Funktionalgleichungen einiger Dirichletscher Reihen},
   language={German},
   journal={Math. Z.},
   volume={63},
   date={1956},
   pages={363--373},
   issn={0025-5874},
   review={\MR{74533}},
   doi={10.1007/BF01187948},
}

\end{biblist}
\end{bibdiv}

\end{document}